\newlength{\margins}
\subjclass[2020]{11F46, 11F67, 11F80}
\keywords{Harder's conjecture, Galois representations, Selmer group}
\begin{document}
\title [Harder's conjecture II: Extended version] { Harder's conjecture II: Extended version with full proofs
}  
\author[H. Atobe]{Hiraku ATOBE}
\address{Hiraku  Atobe, Graduate school of mathematics, Kyoto University, Kitashirakawa, Kyoto, 606-8502, Japan}
\email{atobe@math.kyoto-u.ac.jp}
\author[M. Chida]{ Masataka CHIDA}
\address{Masataka Chida, School of Science and Technology for Future Life, Tokyo Denki University,
5 Senju Asahi-cho, Adachi-ku, Tokyo 120-8551, Japan}
\email{chida@mail.dendai.ac.jp}
\author [T. Ibukiyama] {Tomoyoshi IBUKIYAMA} 
\address{Tomoyoshi Ibukiyama, Department of Mathematics, Graduate School of Science, Osaka University, 
Machikaneyama 1-1, Toyonaka, Osaka, 560-0043 Japan}
\email{ibukiyam@math.sci.osaka-u.ac.jp}
\author[H. Katsurada]{Hidenori KATSURADA}
\address{Hidenori Katsurada, Department of Mathematics, Hokkaido University, Kita 10, Nishi 8, Kita-ku, Sapporo, Hokkaido, 060-0810, Japan and Muroran Institute of Technology, Mizumoto 27-1, Muroran, Hokkaido, 050-8585  Japan}
\email{hidenori@muroran-it.ac.jp}
 \author[T. Yamauchi]{ Takuya YAMAUCHI}
\address{Takuya Yamauchi, Mathematical Institute, Tohoku University,  6-3, Aoba, Aramaki, Aoba-Ku, Sendai 980-8578, JAPAN}
\email{takuya.yamauchi.c3@tohoku.ac.jp}

\thanks{Atobe was supported by JSPS KAKENHI Grant Number JP19K14494. Chida was supported by JSPS KAKENHI Grant Number JP18K03202. Ibukiyama was supported by JSPS KAKENHI Grant Number JP19K03424, JP20H00115. Katsurada was  supported by KAKENHI Grant Number  JP21K03152, JP24K06660.  Yamauchi was supported by JSPS KAKENHI Grant Number JP19H01778. }



\date{June 15, 2026}

\maketitle
\newcommand{\Bell}{\ensuremath{\boldsymbol\ell}}

\newcommand{\alp}{\alpha}
\newcommand{\bet}{\beta}
\newcommand{\gam}{\gamma}
\newcommand{\del}{\delta}
\newcommand{\eps}{\epsilon}
\newcommand{\zet}{\zeta}
\newcommand{\tht}{\theta}
\newcommand{\iot}{\iota}
\newcommand{\kap}{\kappa}
\newcommand{\lam}{\lambda}
\newcommand{\sig}{\sigma}
\newcommand{\ups}{\upsilon}
\newcommand{\ome}{\omega}
\newcommand{\vep}{\varepsilon}
\newcommand{\vth}{\vartheta}
\newcommand{\vpi}{\varpi}
\newcommand{\vrh}{\varrho}
\newcommand{\vsi}{\varsigma}
\newcommand{\vph}{\varphi}
\newcommand{\Gam}{\Gamma}
\newcommand{\Del}{\Delta}
\newcommand{\Tht}{\Theta}
\newcommand{\Lam}{\Lambda}
\newcommand{\Sig}{\Sigma}
\newcommand{\Ups}{\Upsilon}
\newcommand{\Ome}{\Omega}

\def\depth{{\rm depth}}
\def\bbD{{\Bbb D}}
\def\p{{\partial}}

\newcommand{\frka}{{\mathfrak a}}    \newcommand{\frkA}{{\mathfrak A}}
\newcommand{\frkb}{{\mathfrak b}}    \newcommand{\frkB}{{\mathfrak B}}
\newcommand{\frkc}{{\mathfrak c}}    \newcommand{\frkC}{{\mathfrak C}}
\newcommand{\frkd}{{\mathfrak d}}    \newcommand{\frkD}{{\mathfrak D}}
\newcommand{\frke}{{\mathfrak e}}    \newcommand{\frkE}{{\mathfrak E}}
\newcommand{\frkf}{{\mathfrak f}}    \newcommand{\frkF}{{\mathfrak F}}
\newcommand{\frkg}{{\mathfrak g}}    \newcommand{\frkG}{{\mathfrak G}}
\newcommand{\frkh}{{\mathfrak h}}    \newcommand{\frkH}{{\mathfrak H}}
\newcommand{\frki}{{\mathfrak i}}    \newcommand{\frkI}{{\mathfrak I}}
\newcommand{\frkj}{{\mathfrak j}}    \newcommand{\frkJ}{{\mathfrak J}}
\newcommand{\frkk}{{\mathfrak k}}    \newcommand{\frkK}{{\mathfrak K}}
\newcommand{\frkl}{{\mathfrak l}}    \newcommand{\frkL}{{\mathfrak L}}
\newcommand{\frkm}{{\mathfrak m}}    \newcommand{\frkM}{{\mathfrak M}}
\newcommand{\frkn}{{\mathfrak n}}    \newcommand{\frkN}{{\mathfrak N}}
\newcommand{\frko}{{\mathfrak o}}    \newcommand{\frkO}{{\mathfrak O}}
\newcommand{\frkp}{{\mathfrak p}}    \newcommand{\frkP}{{\mathfrak P}}
\newcommand{\frkq}{{\mathfrak q}}    \newcommand{\frkQ}{{\mathfrak Q}}
\newcommand{\frkr}{{\mathfrak r}}    \newcommand{\frkR}{{\mathfrak R}}
\newcommand{\frks}{{\mathfrak s}}    \newcommand{\frkS}{{\mathfrak S}}
\newcommand{\frkt}{{\mathfrak t}}    \newcommand{\frkT}{{\mathfrak T}}
\newcommand{\frku}{{\mathfrak u}}    \newcommand{\frkU}{{\mathfrak U}}
\newcommand{\frkv}{{\mathfrak v}}    \newcommand{\frkV}{{\mathfrak V}}
\newcommand{\frkw}{{\mathfrak w}}    \newcommand{\frkW}{{\mathfrak W}}
\newcommand{\frkx}{{\mathfrak x}}    \newcommand{\frkX}{{\mathfrak X}}
\newcommand{\frky}{{\mathfrak y}}    \newcommand{\frkY}{{\mathfrak Y}}
\newcommand{\frkz}{{\mathfrak z}}    \newcommand{\frkZ}{{\mathfrak Z}}


\newcommand{\bfa}{{\mathbf a}}    \newcommand{\bfA}{{\mathbf A}}
\newcommand{\bfb}{{\mathbf b}}    \newcommand{\bfB}{{\mathbf B}}
\newcommand{\bfc}{{\mathbf c}}    \newcommand{\bfC}{{\mathbf C}}
\newcommand{\bfd}{{\mathbf d}}    \newcommand{\bfD}{{\mathbf D}}
\newcommand{\bfe}{{\mathbf e}}    \newcommand{\bfE}{{\mathbf E}}
\newcommand{\bff}{{\mathbf f}}    \newcommand{\bfF}{{\mathbf F}}
\newcommand{\bfg}{{\mathbf g}}    \newcommand{\bfG}{{\mathbf G}}
\newcommand{\bfh}{{\mathbf h}}    \newcommand{\bfH}{{\mathbf H}}
\newcommand{\bfi}{{\mathbf i}}    \newcommand{\bfI}{{\mathbf I}}
\newcommand{\bfj}{{\mathbf j}}    \newcommand{\bfJ}{{\mathbf J}}
\newcommand{\bfk}{{\mathbf k}}    \newcommand{\bfK}{{\mathbf K}}
\newcommand{\bfl}{{\mathbf l}}    \newcommand{\bfL}{{\mathbf L}}
\newcommand{\bfm}{{\mathbf m}}    \newcommand{\bfM}{{\mathbf M}}
\newcommand{\bfn}{{\mathbf n}}    \newcommand{\bfN}{{\mathbf N}}
\newcommand{\bfo}{{\mathbf o}}    \newcommand{\bfO}{{\mathbf O}}
\newcommand{\bfp}{{\mathbf p}}    \newcommand{\bfP}{{\mathbf P}}
\newcommand{\bfq}{{\mathbf q}}    \newcommand{\bfQ}{{\mathbf Q}}
\newcommand{\bfr}{{\mathbf r}}    \newcommand{\bfR}{{\mathbf R}}
\newcommand{\bfs}{{\mathbf s}}    \newcommand{\bfS}{{\mathbf S}}
\newcommand{\bft}{{\mathbf t}}    \newcommand{\bfT}{{\mathbf T}}
\newcommand{\bfu}{{\mathbf u}}    \newcommand{\bfU}{{\mathbf U}}
\newcommand{\bfv}{{\mathbf v}}    \newcommand{\bfV}{{\mathbf V}}
\newcommand{\bfw}{{\mathbf w}}    \newcommand{\bfW}{{\mathbf W}}
\newcommand{\bfx}{{\mathbf x}}    \newcommand{\bfX}{{\mathbf X}}
\newcommand{\bfy}{{\mathbf y}}    \newcommand{\bfY}{{\mathbf Y}}
\newcommand{\bfz}{{\mathbf z}}    \newcommand{\bfZ}{{\mathbf Z}}


\newcommand{\cala}{{\mathcal A}}
\newcommand{\calb}{{\mathcal B}}
\newcommand{\calc}{{\mathcal C}}
\newcommand{\cald}{{\mathcal D}}
\newcommand{\cale}{{\mathcal E}}
\newcommand{\calf}{{\mathcal F}}
\newcommand{\calg}{{\mathcal G}}
\newcommand{\calh}{{\mathcal H}}
\newcommand{\cali}{{\mathcal I}}
\newcommand{\calj}{{\mathcal J}}
\newcommand{\calk}{{\mathcal K}}
\newcommand{\call}{{\mathcal L}}
\newcommand{\calm}{{\mathcal M}}
\newcommand{\caln}{{\mathcal N}}
\newcommand{\calo}{{\mathcal O}}
\newcommand{\calp}{{\mathcal P}}
\newcommand{\calq}{{\mathcal Q}}
\newcommand{\calr}{{\mathcal R}}
\newcommand{\cals}{{\mathcal S}}
\newcommand{\calt}{{\mathcal T}}
\newcommand{\calu}{{\mathcal U}}
\newcommand{\calv}{{\mathcal V}}
\newcommand{\calw}{{\mathcal W}}
\newcommand{\calx}{{\mathcal X}}
\newcommand{\caly}{{\mathcal Y}}
\newcommand{\calz}{{\mathcal Z}}


\newcommand{\scra}{{\mathscr A}}
\newcommand{\scrb}{{\mathscr B}}
\newcommand{\scrc}{{\mathscr C}}
\newcommand{\scrd}{{\mathscr D}}
\newcommand{\scre}{{\mathscr E}}
\newcommand{\scrf}{{\mathscr F}}
\newcommand{\scrg}{{\mathscr G}}
\newcommand{\scrh}{{\mathscr H}}
\newcommand{\scri}{{\mathscr I}}
\newcommand{\scrj}{{\mathscr J}}
\newcommand{\scrk}{{\mathscr K}}
\newcommand{\scrl}{{\mathscr L}}
\newcommand{\scrm}{{\mathscr M}}
\newcommand{\scrn}{{\mathscr N}}
\newcommand{\scro}{{\mathscr O}}
\newcommand{\scrp}{{\mathscr P}}
\newcommand{\scrq}{{\mathscr Q}}
\newcommand{\scrr}{{\mathscr R}}
\newcommand{\scrs}{{\mathscr S}}
\newcommand{\scrt}{{\mathscr T}}
\newcommand{\scru}{{\mathscr U}}
\newcommand{\scrv}{{\mathscr V}}
\newcommand{\scrw}{{\mathscr W}}
\newcommand{\scrx}{{\mathscr X}}
\newcommand{\scry}{{\mathscr Y}}
\newcommand{\scrz}{{\mathscr Z}}


\newcommand{\AAA}{{\mathbb A}} 
\newcommand{\BB}{{\mathbb B}}
\newcommand{\CC}{{\mathbb C}}
\newcommand{\DD}{{\mathbb D}}
\newcommand{\EE}{{\mathbb E}}
\newcommand{\FF}{{\mathbb F}}
\newcommand{\GG}{{\mathbb G}}
\newcommand{\HH}{{\mathbb H}}
\newcommand{\II}{{\mathbb I}}
\newcommand{\JJ}{{\mathbb J}}
\newcommand{\KK}{{\mathbb K}}
\newcommand{\LL}{{\mathbb L}}
\newcommand{\MM}{{\mathbb M}}
\newcommand{\NN}{{\mathbb N}}
\newcommand{\OO}{{\mathbb O}}
\newcommand{\PP}{{\mathbb P}}
\newcommand{\QQ}{{\mathbb Q}}
\newcommand{\RR}{{\mathbb R}}
\newcommand{\SSS}{{\mathbb S}} 
\newcommand{\TT}{{\mathbb T}}
\newcommand{\UU}{{\mathbb U}}
\newcommand{\VV}{{\mathbb V}}
\newcommand{\WW}{{\mathbb W}}
\newcommand{\XX}{{\mathbb X}}
\newcommand{\YY}{{\mathbb Y}}
\newcommand{\ZZ}{{\mathbb Z}}


\newcommand{\tta}{\hbox{\tt a}}    \newcommand{\ttA}{\hbox{\tt A}}
\newcommand{\ttb}{\hbox{\tt b}}    \newcommand{\ttB}{\hbox{\tt B}}
\newcommand{\ttc}{\hbox{\tt c}}    \newcommand{\ttC}{\hbox{\tt C}}
\newcommand{\ttd}{\hbox{\tt d}}    \newcommand{\ttD}{\hbox{\tt D}}
\newcommand{\tte}{\hbox{\tt e}}    \newcommand{\ttE}{\hbox{\tt E}}
\newcommand{\ttf}{\hbox{\tt f}}    \newcommand{\ttF}{\hbox{\tt F}}
\newcommand{\ttg}{\hbox{\tt g}}    \newcommand{\ttG}{\hbox{\tt G}}
\newcommand{\tth}{\hbox{\tt h}}    \newcommand{\ttH}{\hbox{\tt H}}
\newcommand{\tti}{\hbox{\tt i}}    \newcommand{\ttI}{\hbox{\tt I}}
\newcommand{\ttj}{\hbox{\tt j}}    \newcommand{\ttJ}{\hbox{\tt J}}
\newcommand{\ttk}{\hbox{\tt k}}    \newcommand{\ttK}{\hbox{\tt K}}
\newcommand{\ttl}{\hbox{\tt l}}    \newcommand{\ttL}{\hbox{\tt L}}
\newcommand{\ttm}{\hbox{\tt m}}    \newcommand{\ttM}{\hbox{\tt M}}
\newcommand{\ttn}{\hbox{\tt n}}    \newcommand{\ttN}{\hbox{\tt N}}
\newcommand{\tto}{\hbox{\tt o}}    \newcommand{\ttO}{\hbox{\tt O}}
\newcommand{\ttp}{\hbox{\tt p}}    \newcommand{\ttP}{\hbox{\tt P}}
\newcommand{\ttq}{\hbox{\tt q}}    \newcommand{\ttQ}{\hbox{\tt Q}}
\newcommand{\ttr}{\hbox{\tt r}}    \newcommand{\ttR}{\hbox{\tt R}}
\newcommand{\tts}{\hbox{\tt s}}    \newcommand{\ttS}{\hbox{\tt S}}
\newcommand{\ttt}{\hbox{\tt t}}    \newcommand{\ttT}{\hbox{\tt T}}
\newcommand{\ttu}{\hbox{\tt u}}    \newcommand{\ttU}{\hbox{\tt U}}
\newcommand{\ttv}{\hbox{\tt v}}    \newcommand{\ttV}{\hbox{\tt V}}
\newcommand{\ttw}{\hbox{\tt w}}    \newcommand{\ttW}{\hbox{\tt W}}
\newcommand{\ttx}{\hbox{\tt x}}    \newcommand{\ttX}{\hbox{\tt X}}
\newcommand{\tty}{\hbox{\tt y}}    \newcommand{\ttY}{\hbox{\tt Y}}
\newcommand{\ttz}{\hbox{\tt z}}    \newcommand{\ttZ}{\hbox{\tt Z}}

\newcommand{\rank}{\mathrm{rank}}

\newcommand{\phm}{\phantom}
\newcommand{\ds}{\displaystyle }
\newcommand{\smallstrut}{\vphantom{\vrule height 3pt }}
\def\bdm #1#2#3#4{\left(
\begin{array} {c|c}{\ds{#1}}
 & {\ds{#2}} \\ \hline
{\ds{#3}\vphantom{\ds{#3}^1}} &  {\ds{#4}}
\end{array}
\right)}
\newcommand{\wtd}{\widetilde }
\newcommand{\bsl}{\backslash }
\newcommand{\GL}{{\mathrm{GL}}}
\newcommand{\SL}{{\mathrm{SL}}}
\newcommand{\GSp}{{\mathrm{GSp}}}
\newcommand{\PGSp}{{\mathrm{PGSp}}}
\newcommand{\SP}{{\mathrm{Sp}}}
\newcommand{\SO}{{\mathrm{SO}}}
\newcommand{\SU}{{\mathrm{SU}}}
\newcommand{\Ind}{\mathrm{Ind}}
\newcommand{\Hom}{{\mathrm{Hom}}}
\newcommand{\Ad}{{\mathrm{Ad}}}
\newcommand{\Sym}{{\mathrm{Sym}}}
\newcommand{\Mat}{\mathrm{M}}
\newcommand{\sgn}{\mathrm{sgn}}
\newcommand{\trs}{\,^t\!}
\newcommand{\iu}{\sqrt{-1}}
\newcommand{\oo}{\hbox{\bf 0}}
\newcommand{\ono}{\hbox{\bf 1}}
\newcommand{\smallcirc}{\lower .3em \hbox{\rm\char'27}\!}
\newcommand{\bAf}{\bA_{\hbox{\eightrm f}}}
\newcommand{\thalf}{{\textstyle{\frac12}}}
\newcommand{\shp}{\hbox{\rm\char'43}}
\newcommand{\Gal}{\operatorname{Gal}}
\newcommand{\ev}{\mathrm{ev}}

\newcommand{\bdel}{{\boldsymbol{\delta}}}
\newcommand{\bchi}{{\boldsymbol{\chi}}}
\newcommand{\bgam}{{\boldsymbol{\gamma}}}
\newcommand{\bome}{{\boldsymbol{\omega}}}
\newcommand{\bpsi}{{\boldsymbol{\psi}}}
\newcommand{\blam}{{\boldsymbol{\lambda}}}
\newcommand{\GK}{\mathrm{GK}}
\newcommand{\EGK}{\mathrm{EGK}}
\newcommand{\MGK}{\mathrm{MGK}}
\newcommand{\ord}{\mathrm{ord}}
\newcommand{\nd}{\mathrm{nd}}
\newcommand{\diag}{\mathrm{diag}}
\newcommand{\ua}{{\underline{a}}}
\newcommand{\ub}{{\underline{b}}}
\newcommand{\uc}{{\underline{c}}}
\newcommand{\ud}{{\underline{d}}}
\newcommand{\ue}{{\underline{e}}}
\newcommand{\uf}{{\underline{f}}}
\newcommand{\ug}{{\underline{g}}}
\newcommand{\uh}{{\underline{h}}}
\newcommand{\ui}{{\underline{i}}}
\newcommand{\uj}{{\underline{j}}}
\newcommand{\uk}{{\underline{k}}}
\newcommand{\ul}{{\underline{l}}}
\newcommand{\um}{{\underline{m}}}
\newcommand{\un}{{\underline{n}}}
\newcommand{\ZZn}{\ZZ_{\geq 0}^n}
\newcommand{\uzet}{{\underline{\zeta}}}
\newcommand{\St}{\mathrm{St}}
\newcommand{\Spin}{\mathrm{Spin}}
\newcommand{\alg}{\mathrm{alg}}

\newtheorem{theorem}{Theorem}[section]
\newtheorem{lemma}[theorem]{Lemma}
\newtheorem{proposition}[theorem]{Proposition}
\newtheorem{corollary}[theorem]{Corollary}
\newtheorem{conjecture}[theorem]{Conjecture}
\newtheorem{definition}[theorem]{Definition}
\newtheorem{remark}[theorem]{{\bf Remark}}
\theoremstyle{plain}

%

\def\mattwono(#1;#2;#3;#4){\begin{array}{cc}
                               #1  & #2 \\
                               #3  & #4
                                      \end{array}}

\def\mattwo(#1;#2;#3;#4){\left(\begin{matrix}
                               #1 & #2 \\
                               #3  & #4
                                      \end{matrix}\right)}
 \def\smallmattwo(#1;#2;#3;#4){\left(\begin{smallmatrix}
                               #1 & #2 \\
                               #3  & #4
                                      \end{smallmatrix}\right)}                                     
                                      
 \def\matthree(#1;#2;#3;#4;#5;#6;#7;#8;#9){\left(\begin{matrix}
                               #1 & #2  & #3\\
                               #4  & #5 & #6\\
                               #7  & #8 &#9 
                                      \end{matrix}\right)}                                     
                                      
\def\mattwo(#1;#2;#3;#4){\left(\begin{matrix}
                               #1 & #2 \\
                               #3  & #4
                                      \end{matrix}\right)}  

\def\rowthree(#1;#2;#3){\begin{matrix}
                               #1   \\
                               #2  \\
                               #3
                                      \end{matrix}}  
\def\columnthree(#1;#2;#3){\begin{matrix}
                               #1   &   #2  &  #3
                                      \end{matrix}}  
                                      
\def\rowfive(#1;#2;#3;#4;#5){\begin{array}{lllll}
                               #1   \\
                               #2  \\
                               #3 \\
                               #4 \\
                               #5                              
                                      \end{array}} 

\def\columnfive(#1;#2;#3;#4;#5){\begin{array}{lllll}
                               #1   &   #2  &  #3 & #4 & #5
                                \end{array}}

\def\mattwothree(#1;#2;#3;#4;#5;#6){\begin{matrix}
                               #1 & #2  & #3  \\
                               #4 & #5  & #6
                                      \end{matrix}}  
\def\matthreetwo(#1;#2;#3;#4;#5;#6){\begin{array}{lc}
                               #1  & #2  \\
                               #3  & #4 \\
                               #5  & #6
                                      \end{array}}  
\def\columnthree(#1;#2;#3){\begin{matrix}
                               #1 & #2 & #3  
                                  \end{matrix}}  
\def\rowthree(#1;#2;#3){\begin{matrix}
                               #1 \\
                                #2 \\
                                #3  
                                  \end{matrix}}  
                                        
\begin{abstract}
Let $f$ be a primitive form of weight $2k+j-2$ for $\SL_2(\ZZ)$, and let $\frkp$ be a prime ideal of the Hecke field of $f$. We denote by $\SP_m(\ZZ)$ the Siegel modular group of degree $m$. Suppose that $k \equiv 0 \mod 2, \ j \equiv 0 \mod 4$ and that $\frkp$ divides the algebraic part of $L(k+j,f)$. Put ${\bf k}=(k+j/2,k+j/2,j/2+4,j/2+4)$. Then under certain easily checkable conditions, we prove that  there exists a Hecke eigenform $F$ in the space of modular forms of weight $(k+j,k)$ for $\SP_2(\ZZ)$ such that $[\scri_2(f)]^{\bf k}$ is congruent to $\scra^{(I)}_4(F)$ modulo $\frkp$. Here, $[\scri_2(f)]^{\bf k}$ is the Klingen-Eisenstein lift of the Saito-Kurokawa lift $\scri_2(f)$ of $f$  to the space of modular forms of weight ${\bf k}$ for $\SP_4(\ZZ)$, and $\scra^{(I)}_4(F)$ is a certain lift of $F$ to the space of cusp forms of weight ${\bf k}$ for $\SP_4(\ZZ)$. As an application, we  prove 
Harder's conjecture on the congruence  between the Hecke eigenvalues of $F$ and some quantities related to the Hecke eigenvalues of $f$. This version gives proofs of Lemmas \ref{lem.lattice-change} and \ref{lem.irreducibility-criterion}  and Corollaries \ref{cor.no-factor} and \ref{cor.no-factor2} in  the paper arXiv:2306.07582v2.
\end{abstract} 
   \tableofcontents                               
\section{Introduction}
This is a sequel to our previous paper \cite{A-C-I-K-Y23}.  Let $f$ be a primitive form with respect to $\SL_2(\ZZ)$. In the previous paper, we proposed a conjecture on  the congruence between the Klingen-Eisenstein lift of the Duke-Imamoglu-Ikeda lift of $f$ and a certain lift of a vector valued Hecke eigenform with respect to  $\SP_2(\ZZ)$. This conjecture implies  Harder's conjecture.
We proved the above conjecture in some cases. In this paper,
we prove the above conjecture in a more general setting.
We explain this more precisely. For a non-increasing sequence  ${\bf k}=(k_1,\ldots,k_n)$ of non-negative integers we denote by $M_{\bf k}(\SP_n(\ZZ))$ and $S_{\bf k}(\SP_n(\ZZ))$ the spaces of modular forms and cusp forms of weight ${\bf k}$ (or, weight $k$, if ${\bf k}=(\overbrace{k,\ldots,k}^n)$) for $\SP_n(\ZZ)$, respectively. (For the definition of modular forms, see Section 2.) For two Hecke eigenforms $F$ and $G$ and a prime ideal $\frkp$ of the Hecke field $\QQ(F)$
in $M_{\bf k}(\SP_m(\ZZ))$, we say that $G$ is congruent to $F$ modulo $\frkp$ and write
\[G \equiv_{{\rm ev}} F \mod \frkp\]
if 
\[\lambda_G(T) \equiv\lambda_F(T) \mod {\frkp'}\]
for any integral Hecke operator $T$, where $\lambda_F(T)$ and $\lambda_G(T)$ are the eigenvalues of the Hecke operator $T$ on $F$ and $G$, respectively and $\frkp'$ is a prime ideal of the composite field $\QQ(F)\QQ(G)$ lying above $\frkp$. (As for the definition of integral Hecke operator, see Section 3.)
Let $f(z)=\sum_{m=1}^{\infty}a(m,f)\exp(2\pi \sqrt{-1}mz)$ be a primitive form in $S_{2k+j-2}(\SL_2(\ZZ))$, and suppose that a `big prime' divides the algebraic part of $L(k+j,f)$.  Let $\scri_2(f)$ be the Duke-Imamoglu-Ikeda lift of $f$  to the space of cusp forms of weight ${j \over 2}+k$ for $\SP_2(\ZZ)$. Let $k$ and $j$ be positive even integers such that $k,j  \ge 4$ and $j \equiv 0 \mod 4$. For a sequence 
\begin{align*}
& {\bf k}=\Bigl({j \over 2}+k,{ j \over 2}+k,{j \over 2}+4,{j \over 2}+4 \Bigr),
\end{align*}
 let  $[\scri_2(f)]^{{\bf k}}$ be the Klingen-Eisenstein lift of $\scri_2(f)$ to $M_{{\bf k}}(\SP_4(\ZZ))$. 
In the previous  paper, we proposed a conjecture concerning the congruence between  the Klingen-Eisenstein lift of the Duke-Imamoglu-Ikeda lift and a certain lift of Hecke eigenforms of degree two.  The following conjecture is a special case of it. 
\begin{conjecture} \label{conj.enhanced-Harder}
Under  the above notation and assumption,
 there exists a Hecke eigenform $F$ in $S_{(k+j,k)}(\SP_2(\ZZ))$ such that 
\[\scra^{(I)}_4(F) \equiv_{\rm ev}[\scri_2(f)]^{{\bf k}} \mod \frkp.\]
Here, $\scra_4^{(I)}(F)$ is the lift of $F$ to $S_{{\bf k}}(\SP_4(\ZZ))$, called the lift of type $\scra^{(I)}$, which will be given in Theorem \ref{th.atobe1}. 
\end{conjecture}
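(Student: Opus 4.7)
The plan is to follow an Eisenstein-to-cuspidal congruence paradigm adapted to the vector-valued Klingen setting. The input is the divisibility $\frkp \mid L(k+j,f)^{\mathrm{alg}}$, and the output is a Hecke eigenform $F \in S_{(k+j,k)}(\SP_2(\ZZ))$ with $\scra_4^{(I)}(F) \equiv_{\rm ev} [\scri_2(f)]^{\bf k} \mod \frkp$. The strategy has three stages: first compute the Hecke eigensystem and an explicit boundary (Fourier--Jacobi) expansion of the Klingen-Eisenstein lift so that $L(k+j,f)$ appears cleanly; second produce a cusp form $G \in S_{\bf k}(\SP_4(\ZZ))$ congruent to $[\scri_2(f)]^{\bf k}$ modulo $\frkp$; and third identify $G$ as an $\scra_4^{(I)}$-lift by means of Arthur's classification.

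For the first stage, I would compute the Satake parameters of $[\scri_2(f)]^{\bf k}$ at a generic good prime: by the standard formalism of Klingen-Eisenstein series these are the Satake parameters of $\scri_2(f)$ augmented by two Tate twists determined by the weights $j/2+4$. I would then expand $[\scri_2(f)]^{\bf k}$ along the Klingen parabolic and isolate the Fourier--Jacobi coefficient corresponding to $\scri_2(f)$ itself; up to explicit archimedean and local constants this coefficient is $L(k+j, f)$ (equivalently the relevant completed spinor $L$-function of $\scri_2(f)$) times a $\frkp$-integral object. This is the point where the hypothesis $\frkp \mid L(k+j,f)^{\mathrm{alg}}$ enters.

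For the second stage, working inside the $\frkp$-integral Hecke algebra acting on $M_{\bf k}(\SP_4(\ZZ))$, I would decompose the module into cuspidal and non-cuspidal parts and apply an Eisenstein congruence principle of Katsurada--Brown--Urban type already used in \cite{A-C-I-K-Y22}. The divisibility by $\frkp$ of the Fourier--Jacobi coefficient produced above, coupled with the fact that some other coefficient of $[\scri_2(f)]^{\bf k}$ is a $\frkp$-unit (so that the lift is nonzero modulo $\frkp$), forces the existence of a Hecke eigenform $G \in S_{\bf k}(\SP_4(\ZZ))$ with $G \equiv_{\rm ev} [\scri_2(f)]^{\bf k} \mod \frkp$. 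Here one must verify that contributions from other Eisenstein pieces (Siegel-Eisenstein, or Klingen-Eisenstein lifts of other cuspidal data) cannot absorb the congruence; the ``big prime'' hypothesis is invoked precisely to rule out such parasitic contributions, via a localization of the Hecke algebra at the Eisenstein ideal.

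In the third stage, the congruence of Hecke eigensystems modulo $\frkp$ pins down the Arthur parameter of $G$ up to $\frkp$: it must be of the specific non-tempered form built from the spinor of $f$ together with a second Galois representation matching the one attached to a degree-$2$ cusp form of weight $(k+j,k)$. By Theorem \ref{th.atobe1} and the Arthur classification for $\SP_4$, the only cuspidal Hecke eigenforms in $S_{\bf k}(\SP_4(\ZZ))$ realizing this packet are precisely $\scra_4^{(I)}(F)$ for some $F \in S_{(k+j,k)}(\SP_2(\ZZ))$, which identifies $G$ with the required lift. The hard part will be the explicit computation in the first stage: because the weight ${\bf k}$ is genuinely vector-valued, standard scalar-weight pullback and doubling techniques do not directly apply, and isolating the factor $L(k+j,f)$ cleanly from other archimedean, local, and period contributions while keeping precise control of $\frkp$-integral denominators is the technical core on which the whole congruence rests.
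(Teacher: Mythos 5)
Your first two stages track what the paper takes as input: the existence of some Hecke eigenform $G\in \widetilde M_{\bf k}(\varGamma^{(4)})$ with $G\equiv_{\rm ev}[\scri_2(f)]^{\bf k}\mod\frkp$ is exactly Theorem \ref{th.main-congruence}, quoted from \cite{A-C-I-K-Y22}, and its verification runs through Fourier coefficients of the Klingen--Eisenstein lift and Siegel series as in Proposition \ref{prop.fc-klingen1} (not Fourier--Jacobi coefficients, but that is a cosmetic difference). The real gap is in your third stage.

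You assert that ``the congruence of Hecke eigensystems modulo $\frkp$ pins down the Arthur parameter of $G$,'' and from this you want to read off that $G=\scra_4^{(I)}(F)$. This is precisely what the entire Section 8 of the paper has to prove, and it is not a soft consequence of the congruence. A mod $\frkp$ Hecke congruence only determines the characteristic polynomials of Frobenius modulo $\frkp$, hence only the semisimplification $\bar\rho_{G,\St}^{\rm ss}$ of the residual nine-dimensional standard Galois representation. A priori, $G$ could be a stable tempered form with $\rho_{G,\St}$ irreducible over $K_\frkp$ and yet with the same $\bar\rho_{G,\St}^{\rm ss}$ as $[\scri_2(f)]^{\bf k}$. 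Ruling this out is the content of Theorem \ref{th.noncongruence} (4), proved by a lattice-by-lattice analysis of the filtered structure of $\bar\rho_T$: one produces nontrivial extension classes among the graded pieces $\bar\rho_f$, $\bar\rho_f^\vee(2-2k)$, $\bar\chi^{\ast}$, shows via Fontaine--Laffaille theory (Lemma \ref{lem.extension-FL}) that these classes land in Bloch--Kato Selmer groups, and then kills them using Kato's theorem \ref{th.Kato} on Selmer groups of $\rho_f$-twists, the Vandiver-type vanishing $\#\calc_p^{\omega^i}=1$, nonvanishing of Bernoulli numbers, and the $+1$ sign (Theorem \ref{th.sign-Galois-st}) of the self-dual $\rho_{G,\St}$ exploited through the symmetric pairing $U$. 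This is why the main theorem carries the extra hypotheses (C.5)--(C.8), which do not follow from ``$p$ is a big prime'' and which your argument never invokes. Similarly, your dismissal of ``other Eisenstein pieces'' is exactly Theorem \ref{th.noncongruence-Phi} and requires the same Galois-theoretic machinery, not just localization at an Eisenstein ideal. In short, the Arthur-classification step you treat as formal (via Proposition \ref{prop.A-parameters}) is the easy endpoint; what is missing is the non-congruence argument that forces $\rho_{G,\St}$ to be reducible with the specific $\pi[2]\boxplus 1$ shape in the first place.
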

In \cite{A-C-I-K-Y23}, we proved that the above conjecture implies  Harder's conjecture. That is, we proved the following conjecture from the above conjecture:
\begin{conjecture} \label{conj.Harder}
Under the above notation and assumption, 
there exists a Hecke eigenform $F$ in $S_{(k+j,k)}(\SP_2(\ZZ))$ such that
\[\lambda_F(T(\ell)) \equiv a(\ell,f)+\ell^{k-2}+\ell^{j+k-1} \mod {\frkp'} \]
for any prime number $\ell$, where $\lambda_F(T(\ell))$ is the eigenvalue of the Hecke operator $T(\ell)$ on $F$, and $\frkp'$ is a prime ideal of $\QQ(f) \cdot \QQ(F)$ lying above $\frkp$. 
\end{conjecture}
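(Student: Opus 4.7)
The plan is to deduce Conjecture \ref{conj.Harder} from Conjecture \ref{conj.enhanced-Harder}, the implication from the latter to the former having already been established in \cite{A-C-I-K-Y22}; so the substantive task is to prove Conjecture \ref{conj.enhanced-Harder} under the hypotheses at hand. Once a Hecke eigenform $F\in S_{(k+j,k)}(\SP_2(\ZZ))$ with $\scra^{(I)}_4(F)\equiv_{\rm ev}[\scri_2(f)]^{\bf k}\pmod{\frkp}$ has been produced, the congruence for $\lambda_F(T(p))$ in Conjecture \ref{conj.Harder} is extracted by comparing the two sides prime-by-prime: the $T(p)$-eigenvalue of the Klingen-Eisenstein lift splits as the $T(p)$-eigenvalue of $\scri_2(f)$ plus the two Eisenstein contributions $p^{k-2}$ and $p^{j+k-1}$, and the Saito-Kurokawa formula for $\lambda_{\scri_2(f)}(T(p))$ in terms of $a(p,f)$ and powers of $p$, combined with the explicit relation between $\scra_4^{(I)}(F)$ and $F$ provided by Theorem \ref{th.atobe1}, yields the desired formula after cancellation.

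To establish Conjecture \ref{conj.enhanced-Harder} I would run a standard congruence-ideal argument. A Klingen-type pullback formula expresses the Petersson norm of $[\scri_2(f)]^{\bf k}$ in terms of special values of the standard $L$-function of $\scri_2(f)$; via the Saito-Kurokawa correspondence, this product factors through $L(k+j,f)$ together with classical Euler factors whose $\frkp$-integrality is controlled by the big-prime assumption. The hypothesis that $\frkp$ divides the algebraic part of $L(k+j,f)$ then forces the Petersson norm of $[\scri_2(f)]^{\bf k}$ to vanish modulo $\frkp$. An application of the Deligne-Serre lemma to the Hecke algebra acting on $S_{\bf k}(\SP_4(\ZZ))$, or equivalently a lattice argument inside the Eisenstein component, produces a cuspidal Hecke eigenform $\widetilde F\in S_{\bf k}(\SP_4(\ZZ))$ whose eigensystem coincides modulo $\frkp$ with that of $[\scri_2(f)]^{\bf k}$.

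The hardest step is then to identify $\widetilde F$ with a form of type $\scra^{(I)}_4$. The Satake parameters of $\widetilde F$ are pinned down by the congruence, and one must show that they arise from $\scra_4^{(I)}(F)$ for some eigenform $F\in S_{(k+j,k)}(\SP_2(\ZZ))$, using the explicit structure of this lift from Theorem \ref{th.atobe1}. The natural tool is Arthur's classification for $\SP_4$: one excludes the competing global Arthur parameters (further Saito-Kurokawa or Ikeda-Miyawaki type CAPs, and generic cuspidal parameters outside the image of $\scra_4^{(I)}$) by using the big-prime hypothesis to rule out spurious congruences with these families. This identification is the main obstacle; a secondary technical difficulty is that the vector-valued weight ${\bf k}$ complicates both the Klingen pullback formula and the precise transcendence normalizations of the relevant $L$-values, so careful bookkeeping of archimedean and local factors will be needed throughout.
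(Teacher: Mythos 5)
Your high-level plan (deduce Conjecture \ref{conj.Harder} from Conjecture \ref{conj.enhanced-Harder} via \cite{A-C-I-K-Y22}, then prove the latter by producing a degree-four eigenform congruent to $[\scri_2(f)]^{\bf k}$ and identifying it as a type-$\scra^{(I)}$ lift) matches the paper's structure, but two genuine gaps remain. The smaller one: the congruence-production step cannot go through the Petersson norm of $[\scri_2(f)]^{\bf k}$ as you suggest, since the Klingen-Eisenstein series is not square-integrable; the paper's mechanism (Theorem \ref{th.main-congruence}, imported from \cite{A-C-I-K-Y22}) is a Fourier-coefficient and $L$-value-ratio argument, and it produces a congruent eigenform $G$ only in $\widetilde M_{\bf k}(\varGamma^{(4)})$, not necessarily cuspidal. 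The non-cuspidal possibility must then be excluded separately (Theorem \ref{th.noncongruence-Phi}), a step your proposal omits.

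The larger gap is that you offer no concrete mechanism for excluding the competing Arthur parameters; saying ``use the big-prime hypothesis to rule out spurious congruences'' is precisely where the substantive work of the paper lives, and it is a Galois-theoretic argument your sketch does not touch. The paper passes to the nine-dimensional standard Galois representation $\rho_{G,\St}$, uses the eigenvalue congruence to pin down the semisimplification $\bar\rho_{G,\St}^{\mathrm{ss}}$ as $\bar\rho_f(k+j/2-1)\oplus\bar\rho_f(k+j/2-2)\oplus(\text{cyclotomic characters})$, and then analyzes the possible $K_\frkp$-irreducible decompositions of $\rho_{G,\St}$ (Theorem \ref{th.noncongruence}). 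For each forbidden decomposition, a lattice argument exhibits a non-split extension between constituents; via Lemma \ref{lem.extension-FL} together with Lemmas \ref{lem.non-trivial-extension1} and \ref{lem.non-trivial-extension2}, this yields either a nontrivial $\omega$-eigenspace in the class group $\calc_p$, a zeta-value divisibility, or a $\frkp$-torsion element in a Bloch--Kato Selmer group $\mathrm{Sel}(\QQ,T(m))$, each of which is ruled out by hypotheses (C.5)--(C.8) together with Kato's theorem (Theorem \ref{th.Kato}) and the self-duality sign constraints of Theorem \ref{th.sign-Galois-st}. Only the decomposition $\rho_{G,\St}=\sigma\oplus\sigma^\vee\oplus 1$ survives, forcing $\psi_G=\pi[2]\boxplus 1$ and hence $G=\scra_4^{(I)}(F)$ by Proposition \ref{prop.A-parameters}. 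This Galois-representation analysis, which constitutes Sections 4, 7, and 8 and is the only place where conditions (C.5)--(C.8) enter, is entirely absent from your proposal.
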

Moreover, in \cite{A-C-I-K-Y23},  in the case
$(k,j)=(10,4),(14,4)$ and $(4,24)$, we confirmed Conjecture \ref{conj.enhanced-Harder}, and therefore Conjecture \ref{conj.Harder}. To do this, we needed some results on non-congruence for modular forms, and we confirmed  them case by case. In this paper, we prove  several auxiliary results on non-congruence for modular forms, and prove  Conjecture \ref{conj.enhanced-Harder} in general under certain conditions that can be easily checked for any concrete examples.
\begin{theorem}\label{th.intro-main-result} (cf. Theorems \ref{th.main-result} and \ref{th.main-result2}.)
Suppose that $k$ is even and $j \equiv 0 \mod 4$. Then, under certain  conditions explained later in Theorem \ref{th.main-result}  or in Theorem \ref{th.main-result2}  later, 
Conjecture \ref{conj.enhanced-Harder}, and therefore Conjecture \ref{conj.Harder} hold.
\end{theorem}
The conditions in Theorems \ref{th.main-result} and \ref{th.main-result2} are mainly concerned with the $\frkp$-indivisibility of the Fourier coefficients of the Klingen-Eisenstein series and the critical values of the Hecke $L$-function  related to $f$, where $\frkp$ and $f$ are those in Conjecture \ref{conj.enhanced-Harder}. We have an algorithm for computing these values (cf. Remark \ref{rem.confirm-assumption}, (1) and Remark \ref{rem.confirm-assumption2}, (2).) 
Therefore,  we can easily check whether these conditions hold or not  for any concrete examples (cf. Theorem \ref{th.main-result2}), and  we confirm these conjectures  in many  concrete examples (cf. Theorem \ref{th.examples}).
We briefly describe our methods. There are two key ingredients  for proving the main results. One is the congruence for the Klingen-Eisenstein lifts as developed in the previous paper.
Indeed, under the above assumption, we can prove that there exists a Hecke eigenform  $G \in M_{{\bf k}}(\SP_4(\ZZ))$ such that $G$ is not a constant multiple of $[\scri_2(f)]^{{\bf k}}$ and 
\[ G \equiv_{\rm ev} [\scri_2(f)]^{{\bf k}}    \mod {\frkp} \]
 (cf. Theorem \ref{th.main-congruence}).  Therefore, to prove the above conjecture, it suffices to show that $G$ is a lift of type $\scra^{(I)}$. To show this, we consider non-congruence between Galois representations. To be more precise, suppose that $G$ is a cusp form and that the above congruence holds. Let $K$ be an algebraic number field of finite degree containing the Hecke fields $\QQ(G)$ and $\QQ(f)$, and $\frkP$ a prime ideal of $K$ lying above $\frkp$. Let $\rho_{G,\St}:\Gal(\bar \QQ/\QQ) \to \GL_9(K_\frkP)$ be the Galois representation attached to $G$ in Theorem \ref{th.Galois-st}, and $\rho_f$ be the $2$-dimensional Galois representation attached to $f$ in Theorem \ref{th.Galois-spin}. Moreover, suppose that the reduction $\bar \rho_f$ of $\rho_f$ mod $\frkP$ is absolutely irreducible.  Then, the semi-simplification $\bar \rho_{G,\St}^{\mathrm{ss}}$ of $\rho_{G,\St}$ mod $\frkP$ is expressed as 
\[(*) \quad \bar \rho_{G,\St}^{\mathrm{ss}}=\bar \rho_f(k+j/2-1) \oplus \bar \rho_f(k+j/2-2) \oplus \bar \chi^{j/2+1} \oplus \bar \chi^{-j/2-1} \oplus \bar \chi^{j/2} \oplus \bar \chi^{-j/2} \oplus \bar{\bf{1}},\]
where $\chi=\chi_p$ is the $p$-adic cyclotomic character.  Then we can show
the following theorem.
\begin{theorem}\label{th.intro-noncongruence}(cf. Theorem \ref{th.noncongruence}).
The congruence relation (*)  is impossible if $G$ is not the lift of type $\scra^{(I)}$ under certain conditions.
\end{theorem}
 Similarly, we can show non-congruence between non-cuspidal Hecke eigenforms in $M_{{\bf k}}(\SP_4(\ZZ))$ and $[\scri_2(f)]^{{\bf k}}$ (cf. Theorem \ref{th.noncongruence-Phi}).

This paper is organized as follows. In Sections 2 and 3, we give a brief  summary of  Siegel modular forms and several $L$-values, respectively. In Section 4, we explain the Galois representations for modular forms which are necessary for the present paper.  In Section 5,  we introduce several lifts, and among other things define the lift of type $\scra^{(I)}$ of a vector valued modular form in $S_{(k+j,k)}(\SP_2(\ZZ))$, and state our main results. In Section 6, we review the result on  the congruence for vector valued Klingen-Eisenstein series following \cite{A-C-I-K-Y23}. In particular, we explain  how the assumption that $\frkp$ divides the algebraic part of $L(k+j,f)$ for $f \in S_{2k+j-2}(\SL_2(\ZZ))$ gives the congruence between  $[\scri_n(f)]^{{\bf k}}$ and another Hecke eigenform in $M_{{\bf k}'}(\SP_{2n}(\ZZ))$. Moreover, we review how to compute Fourier coefficients of the Klingen-Eisenstein series in question. In Section 7, we provide 
several preliminary results for proving Theorem \ref{th.main-result}, and in Section 8, we prove it.
In Section 9, we prove Theorem \ref{th.main-result2}.  In Section 10, we give examples that confirm our conjecture and also  Harder's. In Appendix A, we give a proof of Lemma \ref{lem.comparison-of-periods} which compares Kato's period with the Petersson inner product of elliptic modular forms.

\smallskip

\noindent\textbf{Acknowledgments.} The authors thank Masanori Asakura, Siegfried B\"ocherer, Ga\"etan Chenevier, Neil Dummigan, G\"unter Harder, Tamotsu Ikeda, Kai-Wen Lan, Chul-hee Lee, Tadashi Ochiai, Yuichiro Taguchi, Nobuki Takeda and Seidai Yasuda for valuable comments. They also thank the referee for his/her valuable comments and suggestions, which greatly helped to improve the manuscript.

\smallskip

\indent
{\sc Notation.}  
Let $R$ be a commutative ring. We denote by $R^{\times}$ the unit group of $R$.
We denote by $M_{mn}(R)$ the set of
$m \times n$-matrices with entries in $R.$ In particular put $M_n(R)=M_{nn}(R).$   Put $\GL_m(R) = \{A \in M_m(R) \ | \ \det A \in R^\times \},$ where $\det
A$ denotes the determinant of a square matrix $A$. We sometimes write $|A|$ instead of $\det A$. For an $m \times n$-matrix $X$ and an $m \times m$-matrix
$A$, we write $A[X] = {}^t \!X A X,$ where $^t \!X$ denotes the
transpose of $X$. Let $\Sym_n(R)$ denote
the set of symmetric matrices of degree $n$ with entries in
$R.$ Furthermore, if $R$ is an integral domain of characteristic different from $2,$ let  $\calh_n(R)$ denote the set of half-integral matrices of degree $n$ over $R$, that is, $\calh_n(R)$ is the subset of symmetric
matrices of degree $n$ with entries in the field of fractions of $R$ whose $(i,j)$-component belongs to
$R$ or ${1 \over 2}R$ according as $i=j$ or not.  
  For a subset $S$ of $M_n(R)$ we denote by $S^{\nd}$ the subset of $S$
consisting of non-degenerate matrices. If $S$ is a subset of $\Sym_n(\RR)$ with $\RR$ the field of real numbers, we denote by $S_{>0}$ (resp. $S_{\ge 0}$) the subset of $S$
consisting of positive definite (resp. semi-positive definite) matrices. The group  
$\GL_n(R)$ acts on the set $\Sym_n(R)$ in the following way:
\[
\GL_n(R) \times \Sym_n(R) \ni (g,A) \longmapsto A[g] \in \Sym_n(R).
\]
Let $G$ be a subgroup of $\GL_n(R).$ For a $G$-stable subset ${\mathcal B}$ of $\Sym_n(R)$  we denote by $\calb/G$ the set of equivalence classes of $\calb$ under the action of  $G.$ We sometimes use the same symbol $\calb/G$ to denote a complete set of representatives of $\calb/G.$ We abbreviate $\calb/\GL_n(R)$ as $\calb/\!\!\sim$ if there is no fear of confusion. Let $R'$ be a subring of $R$. Then two symmetric matrices $A$ and $A'$ with
entries in $R$ are said to be equivalent over $R'$ with each
other and write $A \sim_{R'} A'$ if there is
an element $X$ of $\GL_n(R')$ such that $A'=A[X].$ We also write $A \sim A'$ if there is no fear of confusion. We denote by $O$ the zero matrix. We sometimes write $0$ instead of $O$. If we need to specify its size, we write $O_{m,n}$ or $O_n$.
For square matrices $X$ and $Y$ we write $X \bot Y = \begin{pmatrix} X & O\\ O &Y \end{pmatrix}.$

We put ${\bf e}(x)=\exp(2 \pi \sqrt{-1} x)$ for $x \in {\CC},$ and for a prime number $p$ we denote by ${\bf e}_p(*)$ the continuous additive character of $\QQ_p$ such that ${\bf e}_p(x)= {\bf e}(x)$ for $x \in \ZZ[p^{-1}].$

Let $K$ be an algebraic number field, and $\frkO=\frkO_K$ the ring of integers in $K$. For a prime ideal $\frkp$ we denote by $K_{\frkp}$ and $\frkO_{\frkp}$ the $\frkp$-adic completion of $K$ and $\frkO$, respectively, and put $\frkO_{(\frkp)}=\frkO_{\frkp} \cap K$.
For a prime ideal $\frkp$ of $\frkO$, we denote by $\ord_{\frkp}(*)$ the additive valuation of $K_{\frkp}$ normalized so that $\ord_{\frkp}(\vpi)=1$ for a prime element $\vpi$ of $K_{\frkp}$. Moreover for any element $a, b \in \frkO_{(\frkp)}$ 
we write $b \equiv a \mod {\frkp}$ if $\ord_{\frkp}(a-b) >0$.
For $a \in K_\frkp$, we sometimes say that $\frkp$ divides $a$ if $\ord_\frkp(a)>0$.

\section{Siegel modular forms}
In this section we review Siegel modular forms. For details, see \cite[Section 2]{A-C-I-K-Y23}.
We denote by $\HH_{n}$ the Siegel upper half 
space of degree $n$:
\[
\HH_{n}=\{Z\in M_{n}(\CC) \mid Z=\,^{t}\!Z=X+\sqrt{-1}Y,\ X,Y\in M_{n}(\RR),Y>0\}.
\]  
For any ring $R$ and any natural integer $n$, we define the
group $\GSp_n(R)$ by
\[
\GSp_n(R)=\{g\in M_{2n}(R) \ | \ gJ_{n}\,^{t}\! g=\nu(g) J_{n} \text{ with some } \nu(g) \in R^{\times}\},
\]
where $J_{n}=\left(\begin{smallmatrix} 0_{n} & -1_{n} \\ 1_{n} & 0_{n} 
\end{smallmatrix}\right)$.  We call $\nu(g)$ the symplectic similitude of $g$. 
We also define the  symplectic group of degree $n$ over $R$ by 
\[
\SP_n(R)=\{g \in \GSp_n(R) \ | \  \nu(g)=1 \}.
\]
In particular, if $R$ is a subfield of $\RR$,
we define 
\[\GSp_n(R)^+=\{g \in \GSp_n(R) \ | \  \nu(g)>0 \}.
\]
 We put $\varGamma^{(n)}=\SP_n(\ZZ)$ for the sake of simplicity, and call it the Siegel modular group of degree $n$.
Now we define vector valued Siegel modular forms of $\varGamma^{(n)}$. 
Let  $(\rho,V)$ be a polynomial representation of $\GL_n(\CC)$ 
on a finite dimensional complex vector space $V$. We fix a Hermitian inner
product $\langle  \, ,  \, \rangle$ on $V$ such that
\[\langle \rho(g)v,w\rangle=\langle v,\rho({}^t\bar g)w \rangle \quad \text{ for } g \in \GL_n(\CC), v,w \in V.\]
For  any $V$-valued function $F$ on $\HH_{n}$, 
and for any $g=\left(\begin{smallmatrix} A & B \\ C & D \end{smallmatrix}
 \right)
\in \GSp(n,\RR)^+$, we put $J(g,Z)=CZ+D$ and 
\[
F|_{\rho}[g]=\rho(J(g,Z))^{-1}F(gZ).
\]

We say that $F$ is a holomorphic Siegel modular form of weight $\rho$ 
with respect to $\varGamma^{(n)}$ if $F$ is holomorphic on $\HH$ 
and $F|_{\rho}[\gamma]=F$ for any $\gamma\in \varGamma^{(n)}$ 
(with the extra condition of holomorphy at all the cusps  if $n=1$). 
We denote by $M_{\rho}(\varGamma^{(n)})$ the space of 
modular forms of weight $\rho$ with respect to $\varGamma^{(n)}$.
A modular form $F \in M_{\rho}(\varGamma^{(n)})$ has the following Fourier expansion
\[F(Z)=\sum_{T \in \calh_n(\ZZ)_{\ge 0}}a(T,F){\bf e}(\mathrm{tr}(TZ)) \quad (a(T,F) \in V),\]
where $\mathrm{tr}(A)$ is the trace of a matrix $A$. We say that $F$ is a cusp form 
 if we have
$a(T,F)=0$ unless $T$ is positive definite. We denote by  $S_{\rho}(\varGamma^{(n)})$ the subspace of $M_{\rho}(\varGamma^{(n)})$ consisting of cusp forms.  
For $F, G \in M_{\rho}(\varGamma^{(n)})$ the Petersson inner product is defined by
\begin{align*}
&(F,G)= \int_{\varGamma^{(n)} \backslash \HH_n} \langle \rho(\sqrt{Y})F(Z),\rho(\sqrt{Y})G(Z)\rangle \det (Y)^{-n-1} dZ,
\end{align*}
where $Y=\mathrm{Im}(Z)$ and $\sqrt{Y}$ is a positive definite symmetric matrix such that $\sqrt{Y}^2=Y$.
This integral converges if either $F$ or $G$ belongs to $S_{\rho}(\varGamma^{(n)})$.

Let $\lambda=(k_1,k_2,\ldots)$ be a finite sequence of non-negative integers such that $k_i \ge k_{i+1}$ for all $i$ and $k_m=0$ for some  $m$. We call this a dominant integral weight.
We call the largest integer $m$ such that $k_m \not=0$ the depth of $\lambda$ and denote it by $\depth(\lambda)$. It is well-known that the set of dominant integral weights $\lambda$ of $\depth (\lambda) \le n$ corresponds bijectively to the set of irreducible polynomial representations of $\GL_n(\CC)$ (cf. \cite{Weyl}). We denote this representation by $(\rho_{n,\lambda},V_{n,\lambda})$.
We also denote it by $(\rho_{\bf k},V_{\bf k})$ with ${\bf k}=(k_1,\ldots,k_n)$ and call it the irreducible polynomial representation of $\GL_n(\CC)$ of  highest weight ${\bf k}$. We then set $M_{\bf k}(\varGamma^{(n)})=M_{\rho_{\bf k}}(\varGamma^{(n)})$ and 
$S_{\bf k}(\varGamma^{(n)})=S_{\rho_{\bf k}}(\varGamma^{(n)})$. We say $F$ is a modular form of weight ${\bf k}$ if it is a modular form of weight $\rho_{\bf k}$. If ${\bf k}=(\overbrace{k,\ldots,k}^n)$, we simply write $M_k(\varGamma^{(n)})=M_{{\bf k}}(\varGamma^{(n)})$ and 
$S_k(\varGamma^{(n)})=S_{{\bf k}}(\varGamma^{(n)})$. For the ${\bf k}=(k_1,\ldots,k_n)$ above, put ${\bf k}'
=(k_1-k_n,\ldots,k_{n-1}-k_n,0)$. Then, we have  $\rho_{\bf k} \cong \det^{k_n} \otimes \rho_{{\bf k}'}$ with $(\rho_{{\bf k}'},V_{ {\bf k}'})$ an irreducible polynomial representation of highest weight ${\bf k}'$. Here we understand that $(\rho_{{\bf k}'},V_{{\bf k}'})$ is the trivial representation on $\CC$ if $k_1=\cdots=k_{n-1}=k_n$. Moreover, we may regard an element $F \in M_k(\varGamma)$ as a $V_{{\bf k}'}$-valued holomorphic  function on $\HH$  such that 
\[F|_{\det^{k_n} \otimes \rho_{{\bf k}'}}[\gamma]=F\]
 for any $\gamma\in \varGamma^{(n)}$ 
(with the extra condition of holomorphy at all the cusps  if $n=1$). Then $F$ has the following Fourier expansion
\[F(Z)=\sum_{T \in \calh_n(\ZZ)_{\ge 0}}a(T,F){\bf e}(\mathrm{tr}(TZ)) \quad (a(T,F) \in V_{ {\bf k}'}).\]

For a representation $(\rho,V)$ of $\GL_n(\CC)$, we denote by $\frkF(\HH_n,V)$ the set of Fourier series $F(Z)$ on $\HH_n$ with values in $V$ of the following form:
\[
F(Z)=\sum_{A \in \calh_n(\ZZ)_{\ge 0}} a(A,F) {\bf e}(\mathrm{tr}(AZ)) \quad (Z \in \HH_n, \ a(A,F) \in V).
\]
For $F(Z) \in \frkF(\HH_n,V)$ we define $\Phi(F)(Z_1)=\Phi_r^n(F)(Z_1) \quad (Z_1 \in \HH_r)$ as
\[\Phi(F)(Z_1)=\lim_{\lambda \to \infty} F\Bigl(\begin{pmatrix} Z_1 & O \\ O & \sqrt{-1} \lambda 1_{n-r} \end{pmatrix}\Bigr).\]
We make the convention that $\frkF(\HH_0,V)=V$ and $\Phi^n_0(F)=a(O_n,F)$. Then, $\Phi(F)$ belongs to $\frkF(\HH_r,V)$. We denote by $\widetilde \frkF(\HH_n,(\rho,V))$
the subset of $\frkF(\HH_n,V)$ consisting of elements $F(Z)$ 
such that the following condition is satisfied:
\begin{align*} a(A[g],F)=\rho(g)a(A,F) \text{ for any } g \in \GL_n(\CC). \tag{K0} \end{align*}
Now  let $\Bell=(l_1,\ldots,l_n)$ be a dominant integral weight of depth $m$. Then, we review a realization $V_{\Bell}$ in terms of  bideterminants (cf. \cite{Ibukiyama-Takemori19}). 
Let $U=(u_{ij})$ be an $m \times n$ matrix of variables. For a positive integer $a$ let $\mathcal{SI}_{n,a}$ denote the set of strictly increasing sequences of positive integers not greater than $n$ of length $a$. For each $J=(j_1,\ldots,j_a) \in  \mathcal{SI}_{n,a}$ we define $U_J$ as
\[\begin{vmatrix} u_{1,j_1} & \ldots & u_{1,j_a} \\
\vdots & \vdots & \vdots \\
u_{a,j_1} &\ldots & u_{a,j_a}
\end{vmatrix}.\]
Then we say that a polynomial $P(U)$ in $U$ is a bideterminant of weight $ {\bf \ell}$ if
$P(U)$ is of the following form:
\[P(U)=\prod_{i=1}^r \prod_{j=1}^{l_i-l_{i+1}} U_{J_{i,j}},\]
where $(J_{i,1},\ldots,J_{i,l_i-l_{i+1}}) \in \mathcal{SI}_{n,m_i}^{l_i-l_{i+1}}$. Here we understand that $\prod_{j=1}^{l_i-l_{i+1}}U_{J_{i,j}}=1$ if $l_i=l_{i+1}$. Let $\mathcal {BD}_{{\Bell}}$ be the set of all bideterminants of weight ${\Bell}$. Here we make the convention that 
$\mathcal {BD}_{{\Bell}}=\{1 \}$ if ${\bf \ell}=(0,\ldots,0)$. 
For a commutative ring  $R$ and an $R$-algebra $S$ let $S[U]_{{\Bell}}$ denote the $R$-module of   all $S$-linear combinations of $P(U)$ for $P(U) \in  \mathcal {BD}_{{\Bell}}$. 
Then we can define an action of $\GL_n(\CC)$ on $\CC[U]_{\Bell}$ as
\[\GL_n(\CC) \times \CC[U]_{\Bell} \ni (g,P(U)) \mapsto P(U \ {}^t\! g) \in \CC[U]_{\Bell},\]
and we can take the $\CC$-vector space $\CC[U]_{{\Bell}}$  as a representation space $V_{\Bell}$ of $\rho_{\Bell}$ under this action. Now let $m \le n-1$ be a non-negative integer and  $U=(u_{ij})$ be an $m \times n$ matrix of variables. Let ${\bf k}=(k_1,\ldots,k_n)$ with $k_1 \ge \cdots \ge k_m>k_{m+1}=\cdots=k_n$ and ${\bf k}'=(k_1-k_{m+1},\ldots,k_m-k_{m+1},\overbrace{0,\ldots,0}^{n-m})$. Here we make the convention that ${\bf k}-(k_1,\ldots,k_1)$ and ${\bf k}'=(0,\ldots,0)$
if $m=0$. Under this notation and convention,
 $M_{{\bf k}}(\varGamma^{(n)})$ can be regarded as a $\CC$-subvector space of $\mathrm{Hol}(\HH_n)[U]_{{\bf k}'}$, where $\mathrm{Hol}(\HH_n)$ denotes the ring of holomorphic functions on $\HH_n$. Moreover, the Fourier expansion of $F(Z)$ can be expressed as
\[F(Z)=\sum_{A \in \calh_n(\ZZ)_{\ge 0}} a(A,F){\bf e}(\mathrm{tr}(AZ)),\]
where $a(A,F)=a(A,F;U) \in \CC[U]_{{\bf k}'}$.
Let $r$ be an integer such that $m \le r \le n$ and let ${\bf l}=(k_1,\ldots,k_{r-1},k_r)$ and ${\bf l}'=(k_1-k_{m+1},\ldots,k_r-k_{m+1},\overbrace{0,\ldots,0}^{r-m})$. For the $m \times n$ matrix $U$, let $U^{(r)}=(u_{ij})_{1 \le i \le m, 1 \le j \le r}$ and put $W'=\CC[U^{(r)}]_{{\bf l}'}$. Then we can define a representation $(\rho',W')$ of $\GL_r(\CC)$. The representations 
$(\rho',V')$ and $(\tau',W')$ satisfy the following conditions:
\begin{itemize}
\item[(K1)] $W' \subset V_{{\bf k}'}$;
\item[(K2)] $\rho'\Bigl( \left(\begin{smallmatrix} g_1 & g_2 \\ O & g_4 \end{smallmatrix} \right)\Bigr)v=\tau'(g_1)v$ for $\left(\begin{smallmatrix} g_1& g_2 \\ O & g_4 \end{smallmatrix} \right) \in \GL_n(\CC)$ with $g_1 \in \GL_r(\CC)$ and $v \in W'$;
\item[(K3)]  If $v \in V_{{\bf k}'}$ satisfies the condition
\[\rho_{{\bf k}'}\Bigl(\left(\begin{smallmatrix}1_r & O \\ O & h\end{smallmatrix}\right)\Bigr)v=v \text{ for any } h \in \GL_{n-r}(\CC),\]
then $v$ belongs to $W'$.
\end{itemize}
Then, $\Phi=\Phi_r^n$ is a mapping from $\widetilde \frkF(\HH_n,V)$  to $\widetilde \frkF(\HH_r,W)$ and it induces a mapping from
$M_{\rho}(\varGamma^{(n)})$ to $M_{\tau}(\varGamma^{(r)})$,
where $\rho=\det^{k_n} \otimes \rho_{{\bf k}'}$ and $\tau=\det^{k_n} \otimes \tau'$. Let ${\Delta}_{n,r}$
be the subgroup of ${\varGamma}^{(n)}$ defined by
$${\Delta}_{n,r} := \left\{ \, \begin{pmatrix} * & * \\ 
O_{(n-r,n+r)} & * \\ \end{pmatrix} \in {\varGamma}^{(n)} \, \right\} . $$
For $F\in S_{\bf l}(\varGamma^{(r)})$ the Klingen-Eisenstein lift  $[F]_{\tau}^{\rho}(Z,s)$ of $F$
to  $M_{{\bf k}}({\varGamma}^{(n)})$
 in the sense of
\cite{Langlands76} and Klingen \cite{Klingen67} is defined by
\begin{align*} [F]_{\tau}^{\rho}(Z,s):= \sum_{\gamma \in {\Delta}_{n,r}\backslash {\varGamma}^{(n)}} \Bigl({\det \mathrm{Im}(Z) \over \det \mathrm{Im} (\mathrm{pr}_r^n(Z))}\Bigr)^s F(\mathrm{pr}_r^n(Z))|_\rho \gamma .
\end{align*}
Here $\mathrm{pr}_r^n(Z)=Z_1$ for 
$Z=\begin{pmatrix} Z_1&Z_2 \\ {}^t\!Z_2&Z_4 \end{pmatrix} \in \HH_n$
with $Z_1 \in \HH_r, Z_4 \in \HH_{n-r}, Z_2 \in M_{r,n-r}(\CC)$.
We also write $[F]_{\tau}^{\rho}$ as $[F]_{\bf l}^{\bf k}$ or $[F]^{\bf k}$.
Suppose that $k_n$ is even and $2\mathrm{Re}(s) +k_n >n+r+1$. Then, by \cite{Klingen67}, 
 $[F]^{{\bf k}}(Z,s)$
converges absolutely and
uniformly on $\HH_n$ and holomorphic at $s=0$ as a function of $s$. We put $[F]_\tau^{\rho}(Z)=[F]_\tau^\rho(Z,0)$.
For a positive even integer $k$, we define  $E_{n,k}(Z,s)$ as
\[E_{n,k}(Z,s)=\sum_{\gamma \in \Delta_{n,0} \backslash \varGamma^{(n)}} (\det \mathrm{Im}(Z))^s|_\gamma\]
and call it the Siegel-Eisenstein series of weight $k$ with respect to $\varGamma^{(n)}$. The Siegel-Eisenstein series $E_{n,k}(Z,s)$ can be continued meromorphically to the whole $s$-plane as a function of $s$ and
holomorphic at $s=0$ as a function of $s$. We put $E_{n,k}(Z)=E_{n,k}(Z,0)$.

Let $\Bell=(l_1,\ldots,l_n)$ be a dominant integral weight of depth $m$. Let $\widetilde V=\widetilde V_{\Bell}=\QQ[U]_{\Bell}$. Then $(\rho_l|\GL_n(\QQ),\widetilde V)$ is a representation of $\GL_n(\QQ)$ and $\widetilde V \otimes \CC=V_{\Bell}$. 
We consider a $\ZZ$ structure of $V_{\Bell}$. We fix a basis $\cals=\cals_{\Bell}=\{P\}$ of $\ZZ[U]_{\Bell}$.
 Let $K$ be a number field, and $\frkO$ the ring of integers in $K$. For  a prime ideal  $\frkp$ of $\frkO$ and 
$a=\sum_{P \in \cals_{\Bell}} a_P P(U) \in K[U]_{\Bell}$ with $a_P \in K$, define 
\[\ord_{\frkp}(a)= \min_{P \in \cals_{\Bell}} \ord_{\frkp}(a_P).\]
For a subring $R$ of $\CC$, we denote by $M_{\bf k}(\varGamma^{(n)})(R)$ the $R$-submodule of $M_{{\bf k}}(\varGamma^{(n)})$ consisting of all elements $F$ such that $a(T,F) \in R[U]_{{\bf k}'}$ for all $T \in \calh_n(\ZZ)_{\ge 0}$.

We consider tensor products of modular forms. 
Let $n_1$ and $n_2$ be positive integers such that $n_1 \ge n_2$.
Let ${\bf k}_1=(k_1,\ldots,k_m,k_{m+1},\ldots,k_{n_1})$ and ${\bf k}_1=(k_1,\ldots,k_m,k_{m+1},\ldots,k_{n_2})$ be non-increasing sequences of integers such that $k_{m} >k_{m+1}=\cdots=k_{n_1}=l$.
Then 
$(\rho_{{\bf k}_1} \otimes \rho_{{\bf k}_2}, V_{1} \otimes V_{2})$ is a representation of $\GL_{n_1}(\CC) \times \GL_{n_2}(\CC)$. Put ${\bf k}_1'=(k_1-l,\ldots,k_m-l,\overbrace{0,\ldots,0}^{n_1-m})$ and
${\bf k}_2'=(k_1-l,\ldots,k_m-l,\overbrace{0,\ldots,0}^{n_2-m})$.
Then, $\rho_{{\bf k}_1} \otimes \rho_{{\bf k}_2}=\det^l \otimes
\rho_{{\bf k}_1'} \otimes \det^l \otimes \rho_{{\bf k}_2'}$
with $(\rho_{\bf {k}_i'},V_{i}')$ a polynomial representation of highest weight ${\bf k}_i'$ for $i=1,2$.
To make our formulation smooth, we sometimes regard 
a modular form of scalar weight $k$ for $\varGamma^{(n)}$ as a function with values in the one-dimensional vector space spanned by $\det U^k$, where $U$ is an $n \times n$ matrix of variables. 
 Let $U_1$ and $U_2$ be $m \times n_1$ and $m \times n_2$ matrices, respectively, of variables and for a commutative ring $R$ and an $R$-algebra $S$ let
\begin{align*}
&S[U_1,U_2]_{{\bf k}_1',{\bf k}_2'}\\
&=\left\{\sum_j P_j(U_1)P_j(U_2) \quad (\text{ finite sum }) \text{ with } P_j (U_i) \in S[U_i]_{{\bf k}_i'} \ (i=1,2) \right\}.
\end{align*}
Here we make the convention that $P_i(U_i) =(\det U_i)^{k_1-l}$ if $n_1=m$ and $k_1=\cdots=k_m$. 
Then, as a representation space $W=W_{{\bf k}_1',{\bf k}_2'}$ of 
$\rho_{{\bf k}_1'} \otimes \rho_{{\bf k}_2'}$ we can take  $\CC[U_1,U_2]_{{\bf k}_1',{\bf k}_2'}$.
Let 
\[\widetilde W=\widetilde W_{{\bf k}_1',{\bf k}_2'}=\QQ[U_1,U_2]_{{\bf k}_1',{\bf k}_2'}. \]
Then $\widetilde W \cong \widetilde V_{1}' \otimes \widetilde V_{2}'$ and  $\widetilde W \otimes_{\QQ} \CC=W$.
Let 
\[M=M_{{\bf k}_1',{\bf k}_2'}=\ZZ[U_1,U_2]_{{\bf k}_1',{\bf k}_2'}. \]
We note that 
\[M=\left \{\sum_{P_1 \in \cals_{{\bf k}_1'}, P_2 \in \cals_{{\bf k}_2'}}a_{P_1,P_2}P_1(U_1)P_2(U_2) \right\}.\]
Here we make the convention that $P_1(U_1)=\det U_1^{k_1-l}$ if
$n_1=m$ and $k_1=\cdots=k_m$. Therefore, $M$ is a lattice of $\widetilde V$ and $M \cong L_1 \otimes L_2$ with 
$L_i=\ZZ[U_i]_{{\bf k}_i'}\ (i=1,2).$
Thus  $(\rho_{{\bf k}_1} \otimes \rho_{{\bf k}_2}, V_1 \otimes V_2 )$ has also a $\QQ$-structure and $\ZZ$-structure and we can define $\ord_{\frkp}(a \otimes b)$ for $a \otimes b \in \widetilde  W_K$.  If $\dim_{\CC} V_1=1,$ then we sometimes identify $V_1,\widetilde V_1$ and $L_1$ with $\CC,\QQ$ and $\ZZ$, respectively, and for $a, b \in V_1$ and $w \in V_2$, we write
$a \otimes b$ and $a \otimes w$ as $ab$ and  $aw$, respectively through the identifications $V_1 \otimes V_1 \cong V_1$ and $V_1 \otimes V_2 \cong V_2 \otimes V_1 \cong V_2$. 
 $M_{{\bf k}_1}(\varGamma^{(n_1)}) \otimes M_{{\bf k}_2}(\varGamma^{(n_2)})$ is regarded as a $\CC$-subspace of
$(\mathrm{Hol}(\HH_{n_1}) \otimes \mathrm{Hol}(\HH_{n_2}))[U_1,U_2]_{{\bf k}_1',{\bf k}_2'}$.

\section{Special values of $L$-functions attached to modular  forms}
In this section we review several $L$-values of modular forms that appear in this article. For the details, see \cite[Section 3]{A-C-I-K-Y23}.
Let ${\bf L}_n={\bf L}(\varGamma^{(n)},\GSp_n^+(\QQ) \cap M_{2n}(\ZZ))$ be the Hecke algebra over $\ZZ$ associated to the Hecke pair $(\varGamma^{(n)},\GSp_n^+(\QQ) \cap M_{2n}(\ZZ))$ and ${\bf L}_n(\QQ)={\bf L}(\varGamma^{(n)},\GSp_n^+(\QQ))$ be the Hecke algebra over $\QQ$ associated to the Hecke pair $(\varGamma^{(n)},\GSp_n^+(\QQ))$.
\begin{remark}
In \cite[Section 3]{A-C-I-K-Y23}, we defined ${\bf L}_n(\QQ)$ as ${\bf L}_n \otimes_\ZZ \QQ$. But it is not appropriate. Actually ${\bf L}_n \otimes_\ZZ \QQ$ does not contain $[a^{-1}]_n$ with a positive integer $a$.   Therefore,  we changed it as  above (cf. \cite{K-L26}). Moreover, we changed all the ${\bf L}_n(\CC)$ on \cite[page 1349]{A-C-I-K-Y23} and ${\bf L}_n$ on \cite[page 1349, line -7, and page 1350, line 11]{A-C-I-K-Y23} to the ${\bf L}_n(\QQ)$ in the present paper.
\end{remark}
Let ${\bf k}=(k_1,\ldots,k_n)$ be a non-increasing sequence of non-negative integers. 
For an element  $T=\varGamma^{(n)} g \varGamma^{(n)} \in {\bf L}_n(\QQ)$, let 
\[T=\bigsqcup_{i=1} ^r \varGamma^{(n)} g_i\]
be the coset decomposition. Then, for a modular form $F \in M_{{\bf k}}(\varGamma^{(n)})$  we define $F|T$ as
\[F|T=\nu(g)^{k_1+\cdots+k_n-n(n+1)/2}\sum_{i=1}^r F|_{\rho_{\bf k}}g_i.\]
This defines an action of the Hecke algebra ${\bf L}_n(\QQ)$ on $M_{{\bf k}}(\varGamma^{(n)})$. The operator $F|T$ with $T \in {\bf L}_n(\QQ)$ is called the Hecke operator. 
We say that $F$ is a Hecke eigenform if $F$ is a common eigen function of all Hecke operators $T \in {\bf L}_n(\QQ)$. We note that  $F$ is a Hecke eigenform if $F$ is a common eigen function of all Hecke operators $T \in {\bf L}_n$.
Then we have 
\[F|T =\lambda_F(T) F \text{ with } \lambda_F(T) \in \CC  \text{ for any } T \in {\bf L}_n(\QQ).\]
We call $\lambda_F(T)$ the Hecke eigenvalue of $T$ with respect to $F$. 
For a Hecke eigenform $F$ in $M_{{\bf k}}(\varGamma^{(n)})$, we denote by  $\QQ(F)$ the field generated over $\QQ$ by all the Hecke eigenvalues $\lambda_F(T)$ with $T \in {\bf L}_n(\QQ)$ and call it the  Hecke field of $F$. For two Hecke eigenforms $F$ and $G$ we sometimes write $\QQ(F,G)=\QQ(F)\QQ(G)$.
We say that an element $T \in {\bf L}_n(\QQ)$ is integral with respect to  $M_{{\bf k}}(\varGamma^{(n)})$ if $F|T \in M_{{\bf k}}(\varGamma^{(n)})(\ZZ)$ for any $F \in  M_{{\bf k}}(\varGamma^{(n)})(\ZZ)$. 
We denote by ${\bf L}_n^{({\bf k})}$ the subset of ${\bf L}_n(\QQ)$  consisting of all integral elements with respect to  $M_{{\bf k}}(\varGamma^{(n)})$. 
By \cite[Proposition 3.1]{A-C-I-K-Y23}, we have
${\bf L}_n \subset {\bf L}_n^{({\bf k})}$ for any ${\bf k}$.
For  a non-zero rational number $a$, we define an element 
$[a]=[a]_n$ of ${\bf L}_n(\QQ)$ by $[a]_n=\varGamma^{(n)}(a 1_{n})\varGamma^{(n)}.$
For each integer $m$ define an element $T(m)$ of ${\bf L}_n$
by 
$$T(m)=\sum_{d_1,\ldots,d_n,e_1,\ldots,e_n}\varGamma^{(n)}(d_1 \bot \cdots \bot d_n \bot e_1 \bot \cdots \bot e_n)\varGamma^{(n)},$$ where
$d_1,\ldots,d_n,e_1,\ldots,e_n$ run over all positive integers satisfying
$$d_i|d_{i+1}, \ e_{i+1}|e_i \ (i=1,\ldots,n-1), d_n|e_n,d_ie_i=m \
(i=1,\ldots,n).$$ Furthermore, for $i=1,\ldots,n$ and a prime number $p$
 put
$$T_i(p^2)=\varGamma^{(n)}(1_{n-i} \bot p1_i \bot p^21_{n-i} \bot p1_i)\varGamma^{(n)}.$$
As is well-known, ${\bf L}_n(\QQ)$ is generated over $\QQ$ by $T(p),
T_i(p^2) \ (i=1,\ldots,n),$ and $[p^{-1}]_n$ for all $p$. We note that 
$T_n(p^2)=[p]_n$. 
Let $F$ be a Hecke eigenform in $M_{{\bf k}}(\varGamma^{(n)})$.  As is well-known, $\QQ(F)$ is a totally real algebraic number field of finite degree.  
   Let ${\bf L}_{n,p}={\bf L}( \varGamma^{(n)}, \GSp_n({\QQ})^+  \cap \GL_{2n}({\ZZ}[p^{-1}]))$ be the Hecke algebra associated with the pair
   $(\varGamma^{(n)},\GSp_n({\QQ})^+  \cap  \, \GL_{2n}({\ZZ}[p^{-1}])).$
 ${\bf L}_{n,p}$ can be considered as a subalgebra of ${\bf L}_{n}(\QQ),$ and is generated over ${\QQ}$ by $T(p)$ and $T_i(p^2) \ (i=1,2,\ldots, n),$ and $[p^{-1}]_n.$

We write  
$\Gamma_{\CC}(s)=2(2\pi)^{-s}\Gamma(s)$. 
Let 
$$f(z)=\sum_{m=0}^{\infty} a(m,f){\bf e}(mz)$$
 be a primitive form in $S_k(\SL_2(\ZZ))$, that is 
let $f$ be a Hecke eigenform whose first Fourier coefficient is $1$. 
For a prime number $p$ let $\beta_{1,p}(f)$ and $\beta_{2,p}(f)$ be complex numbers such that $\beta_{1,p}(f)+\beta_{2,p}(f)=a(p,f)$ and
$\beta_{1,p}(f)\beta_{2,p}(f)=p^{k-1}$.
Then for a Dirichlet character $\chi$ we define  Hecke's $L$-function $L(s,f)$ twisted by $\chi$ defined as
$$L(s,f,\chi)=\prod_p\bigl((1-\beta_{1,p}(f)\chi(p)p^{-s})(1-\beta_{2,p}(f)\chi(p)p^{-s})\bigr)^{-1}.$$
 Let $f$ be a primitive form in ${S}_{k}(\SL_2(\ZZ)).$  Then Shimura \cite{Shimura77} showed that there exist two complex numbers 
$c_{\pm}(f)$, uniquely determined up to ${\QQ}(f)^{\times}$ multiple such that the following holds:
\bigskip

\noindent
(AL) \quad The value $\displaystyle {\Gamma_{\CC}(l) \sqrt{-1}^l L(l,f,\chi) \over \tau(\chi)c_s(f)}$
 belongs to  ${\QQ}(f)(\chi)$  for any integer $1 \le l \le k-1$ and a Dirichlet character $\chi,$ where $\tau(\chi)$ is the Gauss sum of $\tau$, and  $s=s(l,\chi)=+$ or $-$ according as $\chi(-1)=(-1)^l$ or $(-1)^{l-1}.$

\bigskip

We write
 $${\bf L}(l,f,\chi;c_s(f))={\Gamma_{\CC}(l)  \sqrt{-1}^l L(l,f,\chi) \over \tau(\chi)c_s(f)}.$$
We sometimes write $c_{s(\l,\chi)}(f)=c_{s(l)}(f)$ and ${\bf L}(l,f,\chi;c_{s(l,\chi)}(f))={\bf L}(l,f;c_{s(l)}(f))$ if $\chi$ is the principal character.
We note that the above value depends on the choice of $c_{\pm}(f),$ and to formulate our conjecture we choose them in an appropriate way. We will discuss this topic  in Section 7. However, if $(\chi \eta)(-1)=(-1)^{l+m}$, then $s:=s(l,\chi)=s(m,\eta)$ and the ratio $\displaystyle {{\bf L}(l,f,\chi;c_s(f)) \over {\bf L}(m,f,\eta;c_s(f))}$ does not depend on the choice of $c_s(f)$, which will be denoted by $\displaystyle {{\bf L}(l,f,\chi) \over {\bf L}(m,f,\eta)}$. 

For two positive integers $l_1,l_2 \le k-1$ and Dirichlet characters $\chi_1,\chi_2$ such that $\chi_1(-1)\chi_2(-1)=(-1)^{l_1+l_2+1},$ the value 
$${\Gamma_{\CC}(l_1)\Gamma_{\CC}(l_2)L(l_1,f,\chi_1)L(l_2,f,\chi_2) \over \sqrt{-1}^{l_1+l_2+1}\tau((\chi_1 \chi_2)_0) (f , f )}$$
belongs to ${\QQ}(f)(\chi_1, \chi_2)$, where
$(\chi_1\chi_2)_0$ is the primitive character associated with $\chi_1\chi_2$.
 (cf. \cite[Theorem 4]{Shimura76}.) We denote this value by ${\bf L}(l_1,l_2;f;\chi_1,\chi_2).$
In particular, we put
\[{\bf L}(l_1,l_2;f)={\bf L}(l_1,l_2;f;\chi_1,\chi_2)\]
if $\chi_1$ and $\chi_2$ are  the principal character. This value does not depend upon the choice of ${c_{\pm}(f)}$. 

Let $f$ be a primitive form in $S_k(\SL_2(\ZZ))$. 
Let $f_1,\ldots,f_d$ be a basis of $S_k(\SL_2(\ZZ))$ consisting of primitive forms with $f_1=f$ and 
let $\frkD_f$ be the ideal of $\QQ(f)$ generated by all $\prod_{i=2}^d (\lambda_{f_i}(T(m))-\lambda_f(T(m)))$'s ($m \in \ZZ_{>0}$).
Moreover, for each prime number $p$, let  $\frkD_f^{(p)}$ be the ideal of $\QQ(f)$ generated by all $\prod_{i=2}^d (\lambda_{f_i}(T(m))-\lambda_f(T(m)))$'s ($m \in \ZZ_{>0}, \ p \nmid m$). Clearly we have $\frkD_f^{(p)} \subset \frkD_f$.
  For a prime ideal $\frkp$ of an algebraic number field, let $p_{\frkp}$ be a prime number such that $(p_{\frkp})=\ZZ \cap \frkp$.

 Let $F$ be  a Hecke eigenform in $M_{\bf k}(\varGamma^{(n)})$, and for a prime number $p$ we take the $p$-Satake parameters 
 $\alpha_0(p),\alpha_1(p),\ldots,\alpha_n(p)$ of $F$ so that 
$$\alpha_0(p)^2\alpha_1(p) \cdots \alpha_n(p)=p^{k_1+\cdots +k_n-n(n+1)/2}.$$
We define  the polynomial $L_p(X,F,{\rm Sp})$ by
$$L_p(X,F,{\rm Sp})=(1-\alpha_0(p)X)\prod_{r=1}^n\prod_{1 \le i_1<\cdots<i_r}(1-\alpha_0(p)\alpha_{i_1}(p)\cdots \alpha_{i_r}(p)X)$$
and the spinor $L$ function $L(s,F,\mathrm {Sp})$ by
$$L(s,F,\mathrm {Sp})=\prod_p L_p(p^{-s},F,{\rm Sp})^{-1}.$$
We note that  $L(s,f,{\rm Sp})$ is Hecke's $L$-function $L(s,f)$
if $f$ is a primitive form. In this case we write $L_p(s,f)$ for $L_p(s,f,{\rm Sp})$.
We also define the polynomial $L_p(X,F,{\rm St})$ by
$$(1-X)\prod_{i=1}^n (1-\alpha_i(p)X)(1-\alpha_i(p)^{-1}X)$$
and 
the standard $L$-function $L(s,F,{\rm St})$ by 
$$L(s,F,{\rm St})=\prod_p L_p(p^{-s},F,{\rm St})^{-1}.$$
For a Hecke eigenform $F \in S_k(\varGamma^{(r)})$ put
\[{\bf L}(s,F,\St)=\Gamma_{\CC}(s)\prod_{i=1}^r \Gamma_{\CC}(s+k-i){ L(s,F,\St) \over (F, F)}.\]
We note that for a positive integer $m \le k-r$
\[{\bf L}(m,F,\St)=A_{r,k,m} {L(m,F,\St) \over \pi^{r(k+m)+m-r(r+1)/2} (F,  F)}\]
with a non-zero rational number $A_{r,k,m}$ such that 
$\ord_p(A_{r,k,m})=0$ if $p \ge 2k$.

\section{Galois representations}
In this section we recall the results from \cite{Chenevier-Lannes19} for Galois representations 
attached to Siegel cusp forms and observe some properties. Fix an algebraic closure $\bar \QQ$ of $\QQ$ in $\CC$. 
From now on, we fix an isomorphism $\iota_p:\CC\simeq \bar \QQ_p$ and
 a pair $(i_\infty,i_p)$ of two embeddings 
$i_\infty:\bar \QQ \to \CC$ and $i_p:\bar \QQ \to\bar \QQ_p$ such that they commute with $\iota_p$.
From now on we put $G_{\QQ}=\Gal(\bar \QQ/\QQ)$.  We also put $G_{\QQ_{\ell}}=\Gal(\bar \QQ_\ell/\QQ_{\ell})$ for each prime number $\ell$, and let $I_{\ell}$ denote  the inertia subgroup of $G_{\QQ_{\ell}}$. We sometimes identify $I_{\ell} \subset G_{\QQ_{\ell}}$ with their images in $G_{\QQ}$ and write  $G_{\QQ_{\ell}}$ as $D_l$.
Let $\rho_{\bf k}$ be the algebraic representation of $\GL_n(\CC)$ with the highest weight ${\bf k}=(k_1,\ldots,k_n)$
satisfying the condition 
\begin{equation}\label{wcond}
k_1\ge \cdots \ge k_n\ge n+1. 
\end{equation}
Let $F$ be a Hecke eigenform in $S_{\bf k}(\varGamma^{(n)})$ and $\pi=\pi_F$ be the corresponding 
automorphic cuspidal representation of $\mathrm{Sp}_n(\AAA_\QQ)$ where $\AAA_\QQ$ is the ring of adeles of $\QQ$. Throughout this section, we assume Arthur's classification \cite{Arthur} (Arthur-Langlands conjecture) for $\mathrm{Sp}_n(\AAA_\QQ)$ explained 
in \cite[Appendix A]{A-C-I-K-Y23}. (See also \cite[Section 6.4, p.166] {Chenevier-Lannes19}.)
The infinitesimal character of $\pi_\infty$ is indexed by the set 
$${\rm Weights}(\pi):=\{k_1-1,k_2-2,\ldots,k_n-n,0,-(k_n-n),\ldots, -(k_1-1)\}.$$
By the assumption, it consists of distinct $(2n+1)$ integers. 
By Arthur's classification, we can write the global Arthur parameter $\psi(\pi,{\rm St})$ as 
\begin{equation}
\psi(\pi,{\rm St})=\ds\bigoplus_{i=1}^r \pi_i[d_i]
\end{equation}
where 
\begin{enumerate}
\renewcommand{\labelenumi}{(\roman{enumi})}
\item for each $i$ $(1\le i \le r)$, $\pi_i$ is an irreducible unitary cuspidal automorphic self-dual representation of $\mathrm{PGL}_{n_i}(\AAA_\QQ)$ 
which is unramified at any finite place of $\QQ$,
\item $\ds\sum_{i=1}^r n_id_i=2n+1$. 
\end{enumerate}
For such a $\pi_i$, we denote by $w(\pi_i)$ the motivic weight of $\pi$ and ${\rm Weights}(\pi_i)$ the set of weights of 
the infinitesimal character of $\pi_{i,\infty}$ (see Section 8.2.6, p.\ 195 of \cite{Chenevier-Lannes19}). We also denote by 
$\rho_{\pi_i}=\rho_{\pi_i,p}:G_{\QQ} \to \GL_{n_i}(\bar \QQ_p)$  
the corresponding $p$-adic Galois representation attached to $\pi_i$ (cf. Theorem 2.1.1, p.\ 538 of \cite{Barnet-Lamb-Gee-Taylor14}). Let $(\rho,V)$ be 
a crystalline $p$-adic representation of $G_{\QQ_p}$. 
Let $\{{\rm Fil}^i\}_{i\in\ZZ}$ be the decreasing filtration on $D_{{\rm crys}}(V):=
(V\otimes_{\QQ_p}B_{{\rm crys}})^{G_{\QQ_p}}$. Then the Hodge-Tate weights $\mathrm {HT}(\rho)=\mathrm {HT}(V)$ are covered by the multi-set over the set 
$$\{j\in \ZZ\ |\ {\rm Fil}^{-j}(D_{{\rm crys}}(V))/{\rm Fil}^{-j+1}(D_{{\rm crys}}(V))\neq 0\}$$
such that for each $j\in \ZZ$ in the above set, the multiplicity at $j$ is given by 
the dimension of ${\rm Fil}^{-j}(D_{{\rm crys}}(V))/{\rm Fil}^{-j+1}(D_{{\rm crys}}(V))$. 
For example, let $\chi=\chi_p$ be the $p$-adic cyclotomic character  so that $\chi_p(\mathrm{Frob}_{\ell})=\ell$ for any prime number $\ell \not=p$. Then $\mathrm{HT
}(\chi_p)=\{1\}$. For each $a\in \ZZ$, we write $\chi_p^a$ for the $a$-th power of $\chi_p$ and 
$\bar{\chi}^a_p$ for the reduction of $\chi^a_p$ modulo a prime over $p$. 
\begin{definition}{\rm (}\cite[Definition 7.3 ]{Brown07} or 
\cite[p.670]{Diamond-Flach-Guo04}{\rm )}
\label{def.short-crystalline}
Let $(\rho,V)$ be a crystalline $p$-adic representation of $G_{\QQ_p}$. 
We say that $V$ is short crystalline if 
\begin{itemize}
\item [(1)] ${\rm Fil}^{0}(D_{{\rm crys}}(V))=D_{{\rm crys}}(V)$ and ${\rm Fil}^p(D_{{\rm crys}}(V))=0$; 
\item [(2)] if $V'$ is a non-zero quotient of $V$, then $V'\otimes_{\QQ_p}\QQ_p(p-1)$ is ramified. 
\end{itemize}

\end{definition} 
\begin{remark}\label{rem.short-crystalline1}
Let $V$ be a crystalline representation. The condition (1) above is 
true if and only if ${\rm HT}(V)\subset [-(p-1),0]$. 
If there exists a non-zero quotient $V'$ such that $V'\otimes_{\QQ_p}\QQ_p(p-1)$ is unramified, 
then ${\rm HT}(V)$ has to contain $-(p-1)$. Hence if ${\rm HT}(V)\subset [-(p-2),0]$, then $V$ is short crystalline. The condition (2) above seems to be true if $V$ comes from a reasonable motive over $\QQ$. 
However, no substantial result except for a few cases (see \cite{Ghate-Vatsal11}) is available.
\end{remark}
Let $K$ be a number field in $\bar\QQ$ containing the Hecke field $\QQ(F)$ of $F$ and  $\frkp$ a prime ideal of $K$ dividing a rational prime $p$.   We say that a Galois representation $\rho:G_{\QQ} \longrightarrow \GL_m(K_\frkp)$ is crystalline (resp. short crystalline) at $p$ if $\rho|{D_p}$ is  crystalline (resp. short crystalline).
By \cite[Corollary 8.2.19, p. 201]{Chenevier-Lannes19}, we have the following fact:
\begin{theorem}\label{th.Galois-st}
Keep the notation and the assumptions as above. Then, there exists a continuous 
semi-simple Galois representation 
$$\rho_{F,\St}:G_{\QQ} \to \GL_{2n+1}(K_\frkp)$$
such that 
\begin{enumerate}
\item $\rho_{F,\St}$ is unramified at any  prime number $\ell \not=p$. Moreover, we have  
\begin{align*}
&\det(I_{2n+1}-X \rho_{F,\St}({\rm Frob}_{\ell}^{-1}) )          \\
&=\prod_{i=1}^r\prod_{j=0}^{d_i-1}
\det(I_{n_i}-{\ell}^{j+\frac{1-d_i}{2}}X\rho_{\pi_i}({\rm Frob}_{\ell}^{-1}))\\
&=L_{\ell}(X,F,\St),
\end{align*}
where $\mathrm{Frob}_{\ell}$ is the arithmetic Frobenius at $\ell$;
\item  $\rho_{F,\St}$  is crystalline at $p$ of Hodge-Tate weights 
$\{\lambda \ |\ \lambda\in {\rm Weights}(\pi)\}\subset [-k_1+1, k_1-1]$. 
In particular, if $p-1>2k_1-2$, then $(\chi^{1-k_1} \otimes \rho_{F,\St})$ is short crystalline at $p$.  
\end{enumerate}
\end{theorem}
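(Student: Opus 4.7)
The plan is to follow the assembly given in \cite[Corollary 8.2.19]{Chenevier-Lannes19}, piecing together Arthur's classification (assumed throughout this section) with the existence theorem of Barnet--Lamb--Gee--Taylor for Galois representations attached to regular algebraic self-dual cuspidal automorphic representations of $\mathrm{GL}_N$. The assumed Arthur parameter decomposition
$$\psi(\pi,\St) = \bigoplus_{i=1}^r \pi_i[d_i]$$
supplies, for each $i$, an everywhere-unramified self-dual cuspidal automorphic representation $\pi_i$ of $\mathrm{PGL}_{n_i}(\AAA_\QQ)$ whose infinitesimal character at $\infty$ is indexed by ${\rm Weights}(\pi_i)$. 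To each such $\pi_i$, Barnet--Lamb--Gee--Taylor \cite{Barnet-Lamb-Gee-Taylor14} attach a continuous semisimple Galois representation $\rho_{\pi_i}: G_\QQ \to \GL_{n_i}(\bar\QQ_p)$, unramified outside $p$, crystalline at $p$, whose Frobenius characteristic polynomial matches the Satake data of $\pi_i$ and whose Hodge--Tate weights are exactly ${\rm Weights}(\pi_i)$.

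Next, I would define
$$\rho_{F,\St} := \bigoplus_{i=1}^r \bigoplus_{j=0}^{d_i-1} \rho_{\pi_i} \otimes \chi_p^{(d_i-1)/2 - j},$$
where, for even $d_i$, the half-integer twist is absorbed by the symplectic-versus-orthogonal parity of $\pi_i$ forced by Arthur's classification for $\mathrm{Sp}_n$ (see \cite[Section 8.2]{Chenevier-Lannes19}). After possibly enlarging $K$ so that $K_\frkp$ contains all eigenvalues of Frobenius elements, this representation takes values in $\GL_{2n+1}(K_\frkp)$ since $\sum_i n_i d_i = 2n+1$. Unramifiedness at $\ell \neq p$ is immediate from unramifiedness of the summands. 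For the Frobenius characteristic polynomial, since $\chi_p(\mathrm{Frob}_\ell^{-1}) = \ell^{-1}$, each summand contributes $\det(I_{n_i} - \ell^{j+(1-d_i)/2} X \rho_{\pi_i}(\mathrm{Frob}_\ell^{-1}))$, and the product reproduces $L_\ell(X,F,\St)$ by the very definition of the standard $L$-function via the Arthur parameter.

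For the crystalline assertion, crystallinity is preserved under cyclotomic twists and finite direct sums, so every summand is crystalline at $p$. The Hodge--Tate weight multi-set of $\rho_{\pi_i} \otimes \chi_p^{(d_i-1)/2-j}$ is ${\rm Weights}(\pi_i) + ((d_i-1)/2 - j)$, and taking the disjoint union over $i$ and $j$ reproduces ${\rm Weights}(\pi) \subset [-(k_1-1), k_1-1]$ by the combinatorial dictionary between $\pi$ and its Arthur parameter (the extremal values being $\pm(k_1-1)$ as in \eqref{wcond}). For the short crystalline claim, if $p - 1 > 2k_1 - 2$, then $\chi_p^{1-k_1} \otimes \rho_{F,\St}$ has Hodge--Tate weights in $[-(2k_1 - 2), 0] \subset [-(p-2), 0]$, and Remark \ref{rem.short-crystalline1} directly yields the conclusion.

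The main obstacle lies not in the formal bookkeeping above but in the deep inputs: the construction of $\rho_{\pi_i}$ for self-dual regular algebraic cuspidal $\pi_i$ rests on the potential automorphy and polarization machinery of Barnet--Lamb--Gee--Taylor, and the precise compatibility of the Satake parameters of $F$ with those of the $\pi_i$ is exactly what the (assumed) Arthur classification for $\mathrm{Sp}_n(\AAA_\QQ)$ provides. Granted these inputs, both the existence statement and the assertion on Hodge--Tate weights amount to assembling twists of the $\rho_{\pi_i}$ according to the recipe dictated by the Arthur parameter.
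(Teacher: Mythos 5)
Your proposal follows essentially the same route as the paper: both rest on Chenevier--Lannes \cite[Corollary 8.2.19]{Chenevier-Lannes19} for assembling $\rho_{F,\St}$ from the Arthur parameter $\psi(\pi,\St)=\bigoplus_i\pi_i[d_i]$ and the Barnet--Lamb--Gee--Taylor representations $\rho_{\pi_i}$, and both deduce the short-crystalline assertion from Remark~\ref{rem.short-crystalline1} via the containment of the Hodge--Tate weights of $\chi^{1-k_1}\otimes\rho_{F,\St}$ in $[-(2k_1-2),0]$. You unpack what the paper takes as a black box (the explicit direct-sum-of-cyclotomic-twists formula $\bigoplus_{i,j}\rho_{\pi_i}\otimes\chi_p^{(d_i-1)/2-j}$, the verification that its Frobenius characteristic polynomial reproduces $L_\ell(X,F,\St)$, and the parity remark resolving the half-integer exponents when $d_i$ is even), which is fine and matches what \cite[Cor.\ 8.2.19]{Chenevier-Lannes19} does internally.

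The one place where your argument does not establish the statement exactly as written is the coefficient field. You propose to enlarge $K$ so that $K_\frkp$ contains all Frobenius eigenvalues, but the theorem is formulated for an arbitrary fixed number field $K\supset\QQ(F)$ and a given prime $\frkp$ of $K$, and asserts that $\rho_{F,\St}$ takes values in $\GL_{2n+1}(K_\frkp)$ for that $K$; enlargement only yields a representation over some finite extension of $K_\frkp$. The paper closes this gap by invoking Carayol's rationality theorem \cite[Th\'eor\`eme~2]{Carayol91}: since the a priori $\bar\QQ_p$-valued semisimple representation has all Frobenius characteristic polynomials in $K_\frkp[X]$, it can, up to isomorphism, be realized over $K_\frkp$. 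Replace your enlargement step with this descent argument to obtain the theorem in the stated form.
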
 
\begin{proof} As mentioned, it follows from \cite[Corollary 8.2.19]{Chenevier-Lannes19} except for the following things. 
A priori, the image takes the values in $\bar \QQ_p$ with a fixed embedding $K_\frkp \hookrightarrow \bar \QQ_p$. 
Applying \cite[TH\'EOR\`EME 2]{Carayol91} to $\rho_{F,\St}$, up to isomorphism, we may have a  
Galois representation with values in $\GL_{2n+1}(K_\frkp)$ as desired. 

The fact that $\chi^{1-k_1} \otimes \rho_{F,\St}$ satisfies the short crystalline condition follows from Remark \ref{rem.short-crystalline1} 
since the Hodge-Tate weights of $\chi^{1-k_1} \otimes \rho_{F,\St}$ are given by 
distinct, decreasing $(2n+1)$ integers in $[-(2k_1-2),0]$. 
\end{proof}
Let ${\bf k}=(k_1,\ldots,k_r,k_{r+1},\ldots,k_n)$ and ${\bf l}=(k_1,\ldots,k_r)$ with $k_1 \ge \cdots \ge k_r \ge k_{r+1} \ge\cdots \ge k_n$.
For a Hecke eigenform $F \in M_{\bf k}(\varGamma^{(n)})$ such that $\Phi_r^n(F)$ is a Hecke eigenform in $S_{\bf l}(\varGamma^{(r)})$,
we define the Galois representation $\rho_{F,\St}$ for $F$ as
\[\rho_{F,\St}=\rho_{\Phi_r^n(F),\St} \oplus \bigoplus_{i=r+1}^n (\chi^{k_i-i} \oplus \chi^{i-k_i}).\]
We note that $\rho_{F,\St}$ is unramified at any prime number $\ell \not=p$ and
\[\det (1_{2n+1}-\rho_{F,\St}(\mathrm{Frob}_{\ell}^{-1})X)=L_{\ell}(X,F,\St).\]
For $n=1,2$, one can also construct the $\frkp$-adic Galois representation attached to $F$ which 
takes the values in $\GL_{2^n}(K_\frkp)$. For $n=1$ we refer \cite{Deligne68}, \cite{Ribet}, \cite{Kato04} and for $n=2$ 
we refer \cite{Laumon05}, \cite{Wei09}:
\begin{theorem}\label{th.Galois-spin}
Put ${\bf k}=k\ge 2$ if $n=1$ and ${\bf k}=(k+j,k),\ k\ge 3, j\ge 0$ if $n=2$. 
Then, there exists a continuous 
semi-simple Galois representation 
$$\rho_{F}:G_{\QQ}\to \GL_{2^n}(K_\frkp)$$
such that 
\begin{itemize}
\item [(1)] $\rho_{F}$ is unramified at any prime number $\ell \not=p$, and 
\[\det(I_{2^n}-X \rho_{F}(\mathrm {Frob}_{\ell}^{-1}))=L_{\ell}(X,F,\mathrm{Sp});\]
\item [(2)] $\rho_{F}$ is crystalline at $p$ of Hodge-Tate weights given by 
$$\begin{array}{cc}
\{-(k-1),0\} & (n=1), \\
\{-(2k+j-3),-(k+j-1),-(k-2),0\} & (n=2). 
\end{array}
$$
In particular, if $p-1>w_{{\rm Sp}}$, then $\rho_{F}$ is short crystalline at $p$, where $w_{{\rm Sp}}=k-1$ or $2k+j-3$ according as $n=1$ or $2$; 
\item [(3)] if $n=1$, $\rho_{F}$ is absolutely irreducible. 
\end{itemize}
\end{theorem}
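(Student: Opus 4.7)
The plan is to reduce each case to an existing construction of Galois representations in the literature, and then to verify the additional crystalline properties separately. In both cases, a priori the construction yields a representation valued in $\GL_{2^n}(\bar\QQ_p)$, and a descent argument via \cite[TH\'EOR\`EME 2]{Carayol91}, combined with the characteristic polynomial identity (determined over $K_\frkp$ by the Hecke eigensystem of $F$), allows us to realize it in $\GL_{2^n}(K_\frkp)$, exactly as in the proof of Theorem \ref{th.Galois-st}.

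For $n=1$, when $k=2$ one uses the $\frkp$-adic Tate module of the abelian quotient of $J_1(N)$ cut out by $f$, and for $k\ge 3$ one uses the $\frkp$-adic \'etale cohomology of an appropriate Kuga-Sato variety with coefficients determined by $f$, following \cite{Deligne68}. The Eichler-Shimura congruence relation gives unramifiedness at $\ell\neq p$ together with the desired identification of the characteristic polynomial with $L_\ell(X,f)$. Crystallinity at $p$ with Hodge-Tate weights $\{-(k-1),0\}$ follows from Faltings's comparison theorem, applied to the Hodge decomposition of the relevant cohomology; the holomorphic cusp form $f$ contributes to the subspace of Hodge type $(k-1,0)$, and its complex conjugate to type $(0,k-1)$, explaining the two weights. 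Absolute irreducibility of $\rho_f$ is the theorem of Ribet. For $n=2$, one appeals to the construction of Laumon \cite{Laumon05} and Weissauer-Wei \cite{Wei09}, realizing $\rho_F$ inside the intersection cohomology of the Siegel threefold with coefficients in the local system associated to the algebraic representation of highest weight determined by $(k+j,k)$. Unramifiedness at $\ell\neq p$ and the identification of the local $L$-factor with $L_\ell(X,F,\mathrm{Sp})$ are part of the construction, being a consequence of compatibility with local Langlands at unramified places. The Hodge-Tate weights $\{-(2k+j-3),-(k+j-1),-(k-2),0\}$ arise as the jumps of the Hodge filtration on the coherent cohomology of the toroidal compactification, corresponding to the four Hodge types of the weight-$(2k+j-3)$ motive associated to $F$.

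For the short crystalline assertion, observe that in both cases the Hodge-Tate weights of $\rho_F$ lie in $[-w_{\mathrm{Sp}},0]$. When $p-1>w_{\mathrm{Sp}}$, these weights are contained in $[-(p-2),0]$, so condition (1) of Definition \ref{def.short-crystalline} is immediate. Condition (2) follows from Remark \ref{rem.short-crystalline1}: any non-zero quotient $V'$ of $\rho_F$ is again crystalline with Hodge-Tate weights in $[-(p-2),0]$, hence $V'\otimes \QQ_p(p-1)$ has Hodge-Tate weights in $[1,p-1]$, which contains no $0$, so $V'\otimes\QQ_p(p-1)$ cannot be unramified since an unramified crystalline representation is pure of Hodge-Tate weight $0$. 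The main obstacle is not in the present theorem per se, which amounts to correctly invoking the deep results of Deligne, Ribet, Laumon, and Weissauer; rather, the subtle point is tracking the Hodge-Tate weight normalization across references and ensuring the regularity assumptions $k\ge 2$ (resp.\ $k\ge 3$) place $\pi_F$ in the range where these constructions produce the expected compatible system, which is precisely what the holomorphicity of $F$ guarantees.
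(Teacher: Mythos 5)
Your proposal matches the paper's treatment: the paper offers no proof of this theorem, only a remark that $\rho_F$ is Kato's representation for $n=1$ (the dual of those of Deligne and Ribet) and the dual of Laumon's and Weissauer's for $n=2$, and your sketch invokes the same sources and deduces short-crystallinity from Remark \ref{rem.short-crystalline1} exactly as intended. The one point you leave implicit is this dualization, which is precisely what makes your stated Hodge--Tate weights consistent with the normalization $\mathrm{HT}(\chi_p)=\{1\}$.
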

\begin{remark}
In the case of $n=1$, our representation $\rho_F$ is that in \cite{Kato04}, and it is the dual of those defined in \cite{Deligne68}, \cite{Ribet}. In the case of $n=2$, $\rho_F$ is  the dual of those defined in \cite{Laumon05}, \cite{Wei09}.
\end{remark}
 Let us discuss  the irreducibility of $\rho_{F}$ for $n=2$. 
Let $\Pi=\Pi_F$ be the cuspidal representation of $\GSp_2(\AAA_\QQ)$ associated to $F$. 
As discussed in \cite[Section 2]{Kim-Wakatsuki-Yamauchi20}, since $F$ is of level one, $\Pi$ falls into either of the following cases:
\begin{enumerate}
\item $\Pi$ is CAP and it has to be $j=0$;
\item $\Pi$ is a symmetric cubic lift from a cuspidal representation of $\GL_2(\AAA_\QQ)$;
\item $\Pi$ is genuine. 
\end{enumerate}
In particular, $\Pi$ cannot be endoscopic because $F$ is holomorphic and of level one.  
\begin{theorem}\label{th.irred} 
Keep the notation as above. Suppose that $j\ge 1,\ k\ge 4$, and 
$p-1\ge {\rm max}\{7,2k+j-1\}$. 
If $\Pi$ is not CAP, then $\rho_{F}$ is absolutely irreducible.
\end{theorem}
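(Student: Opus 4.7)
The plan is to exploit the trichotomy for $\Pi = \Pi_F$ recorded just before the theorem: since $F$ is of level one, $\Pi$ is either CAP (forcing $j=0$), a symmetric cubic lift from $\GL_2(\AAA_\QQ)$, or genuine; moreover $\Pi$ is never endoscopic. By the assumption $j\ge 1$, the CAP case is excluded, so we are reduced to the symmetric cubic and the genuine cases, and it suffices to show absolute irreducibility of $\rho_F$ in each one.

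In the symmetric cubic case, $\Pi = \Sym^3(\sigma)$ for some cuspidal automorphic representation $\sigma$ of $\GL_2(\AAA_\QQ)$, necessarily unramified at every finite place. Writing $\rho_\sigma$ for the $\frkp$-adic Galois representation attached to the corresponding elliptic cusp form of level one, one obtains $\rho_F \cong \Sym^3 \rho_\sigma$ up to a cyclotomic twist. By Theorem \ref{th.Galois-spin}(3), $\rho_\sigma$ is absolutely irreducible, and because $\sigma$ is of level one it has no complex multiplication, hence is not of dihedral type. A standard representation-theoretic argument (using that the projective image of $\rho_\sigma$ is not contained in a dihedral subgroup of $\mathrm{PGL}_2$) then forces $\Sym^3\rho_\sigma$ to be absolutely irreducible. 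A comparison of Hodge--Tate weights in fact forces $j=k-3$ here, but this specificity is not needed.

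For the genuine case the plan is to argue by contradiction. Assume $\rho_F$ decomposes as a direct sum of proper subrepresentations. Each crystalline summand inherits a subset of the four Hodge--Tate weights $\{-(2k+j-3),\, -(k+j-1),\, -(k-2),\, 0\}$, which are pairwise distinct under $k\ge 4$, $j\ge 1$ since the successive gaps $-k+2$ and $-j-1$ are both nonzero. Combined with the factorization of $L_\ell(X,F,\mathrm{Sp})$ through $\rho_F$ and Arthur's classification, such a decomposition would force $\Pi$ to be CAP, endoscopic, or a symmetric cubic lift, all of which are excluded in the genuine case. The hypothesis $p-1\ge 2k+j-1$ guarantees the short crystallinity needed to apply Fontaine--Laffaille-type constraints to the residual summands, while $p-1\ge 7$ suppresses small-image exceptions.

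The main obstacle will be the genuine case: rigorously translating ``$\rho_F$ is reducible'' into ``$\Pi$ lies in a non-genuine Arthur packet'' requires combining the Hodge--Tate weight bookkeeping with the structure theory of automorphic $L$-parameters for $\GSp_4$. By contrast, the symmetric cubic subcase and the exclusion of CAP and endoscopic contributions are routine from the classification recalled in \cite{Kim-Wakatsuki-Yamauchi20} together with standard irreducibility facts for symmetric cubes of Galois representations attached to level-one elliptic modular forms.
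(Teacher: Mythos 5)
Your treatment of the symmetric cubic case is a valid alternative route: you observe directly that if $\Pi=\mathrm{Sym}^3\sigma$ then $\rho_F\cong\mathrm{Sym}^3\rho_\sigma$ (up to twist), and conclude irreducibility from the non-dihedral nature of $\rho_\sigma$. The paper instead handles that case implicitly inside a single argument applying to all non-CAP $\Pi$, so your separation is a fine (slightly more explicit) variant.

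However, your plan for the genuine case contains a genuine gap. You propose to show that a decomposition $\rho_F\simeq\tau_3\oplus\tau_4$ into two irreducible $2$-dimensional pieces would, "via Arthur's classification and Hodge--Tate bookkeeping," force $\Pi$ to be CAP, endoscopic, or a symmetric cubic lift. But there is no a priori dictionary between reducibility of the $\frkp$-adic Galois representation and the structure of the automorphic $L$-parameter. To conclude anything on the automorphic side you must first show the $2$-dimensional factors $\tau_i$ are themselves \emph{modular}. Since $\tau_i$ is a geometric, $2$-dimensional, crystalline-at-$p$, unramified-outside-$p$ Galois representation, modularity is not automatic: one has to first prove that $\tau_i$ is \emph{odd}. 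This is the crux of the paper's argument, and it is achieved via Calegari's theorem on even Galois representations (\cite[Theorem 1.2]{Calegari12}), after first using the hypothesis on $p$, crystallinity, and level one to rule out the dihedral and the degenerate local shapes (Edixhoven's weight bound) required as inputs to Calegari. Your brief mention that "$p-1\ge 7$ suppresses small-image exceptions" gestures at something, but without the oddness-via-Calegari step the passage from reducible $\rho_F$ to endoscopic $\Pi$ simply does not go through.

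Separately, you do not address the decompositions with $1$-dimensional constituents. The paper disposes of those (e.g.\ $\rho_F\simeq\chi_p^{-(2k+j-3)}\oplus\chi_p^{-(k+j-1)}\oplus\chi_p^{-(k-2)}\oplus\mathbf{1}$ or $\chi_p^a\oplus\chi_p^b\oplus\tau$) by observing that the resulting Satake parameters contradict temperedness of $\Pi$ (Ramanujan bound). Your appeal to distinctness of Hodge--Tate weights rules out none of those configurations, since distinct characters can certainly share among them the four distinct weights. So both the "all characters" and "two characters plus one $2$-dimensional piece" cases are left untreated by your outline, and the $2+2$ case is left with the modularity gap described above.
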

\begin{proof}
We may assume that $F$ is not CAP, then $\Pi$ is tempered at any finite place of $\QQ$ by \cite{Laumon05} and 
\cite{Wei09}. 
Suppose that $\rho_{F}$ is not absolutely irreducible. By using self-duality of $\Pi$ and 
the fact that $F$ is of level one, we have 
\begin{itemize}
\item $\rho_{F}\simeq \chi^{-(2k+j-3)}_p\oplus \chi^{-(k+j-1)}_p\oplus 
\chi^{-(k-2)}_p\oplus  \textbf{1}$;
\item $\rho_{F}\simeq \chi^{-(2k+j-3)}_p\oplus \textbf{1}\oplus \tau_1$;
\item $\rho_{F}\simeq \chi^{-(k+j-1)}_p\oplus \chi^{-(k-2)}_p\oplus \tau_2$;
\item $\rho_{F}\simeq  \tau_3\oplus \tau_4$, 
\end{itemize}
where each $\tau_i\ (1\le i \le 4)$ is an irreducible representation of $G_{\QQ}$ to ${\rm GL}_2(K_\frkp)$ which 
is unramified outside $p$ and crystalline at $p$. Except for the last case, by observing Satake parameters at sufficiently large 
$l$ in conjunction with the Ramanujan bound that is $4l^{\frac{2k+j-3}{2}}$, $\Pi$ cannot be tempered and it contradicts the assumption. 

For the remaining case, put $\tau=\tau_3$ or $\tau_4$ and let $\overline{\tau}$ be 
the residual representation of $\tau$. By matching of Hodge-Tate weights and the assumption of 
weights $k,j$, up to the twist of a power of $p$-adic cyclotomic character, 
we may assume that the Hodge-Tate of $\tau$ is of form $\{-a,0\}$ with $2\le a\le 2k+j-3<p-1$. Since $\tau$ is unramified 
outside $p$ and crystalline at $p$, it cannot be dihedral, otherwise $\tau$ must have a non-trivial conductor. 
Therefore, ${\rm Sym}^2\tau$ is irreducible and being non-dihedral implies more strongly that ${\rm Sym}^2\tau|_{\Gal(\bar \QQ/\QQ(\zeta_p))}$ 
is irreducible. 
Further, $\overline{\tau}|_{G_{\QQ_p}}$ cannot be a twist of the form 
$\left(\begin{array}{cc}
\overline{\chi}^{-1}_p & \ast \\
0 & 1
\end{array}\right)
$ by \cite[Theorem 2.4 and Theorem 2.5]{Edixhoven92}. Here we use the assumption $2\le a$ and also $a<p-1$. 
Applying \cite[Theorem 1.2]{Calegari12} (note that the assumption there on $p$  
is fulfilled in our setting), $\tau$ has to be odd. Then as explained right after \cite[Theorem 1.1]{Calegari12}, 
$\tau$ is modular. Hence $F$ has to be endoscopic but it is impossible as we have classified the possible types of $\Pi$. 

Summing up, we have a contradiction in either case when $\rho_{F}$ is not absolutely irreducible.  
\end{proof}
\begin{theorem}\label{th.irred2}
Let $1\le n \le 5$. 
Let $\Pi$ be regular, algebraic, cuspidal, self-dual automorphic representation of $\mathrm{PGL}_n(\AAA_\QQ)$ which is unramified 
at any finite place of $\QQ$. 
Further, we assume that if $n=4$, ${\rm Weights}(\pi)=\{a,b,-b,-a\}\subset \frac{1}{2}+\ZZ$ with 
$b>\frac{1}{2}, a-b>1$ and if $n=5$, ${\rm Weights}(\pi)=\{m_1,m_2,0,-m_2,-m_1\}\subset \ZZ$ with $m_1>m_2+1>2$.  For 
each rational prime $p$, let $\rho_{\Pi}=\rho_{\Pi,p}:G_{\QQ}\to 
\GL_n(K_\frkp)$ be the $\frkp$-adic representation 
attached to $\Pi$. Then $\rho_{\Pi,p}$ is absolutely irreducible.
\end{theorem}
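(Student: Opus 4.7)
The approach is to treat small-rank cases by citing standard results and then argue by contradiction for the larger ranks, using Hodge--Tate weight matching together with modularity lifting for each irreducible constituent.

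The case $n=1$ is trivial. For $n=2$, $\Pi$ corresponds via strong multiplicity one to a level-one cuspidal elliptic eigenform and Theorem \ref{th.Galois-spin}(3) yields absolute irreducibility of $\rho_{\Pi,p}$ at once. For $n=3$, the self-dual and unramified-outside-$p$ hypotheses together with the Hodge--Tate weight structure (obtained by shifting ${\rm Weights}(\pi)=\{a,0,-a\}$) force any nontrivial decomposition of $\rho_{\Pi,p}^{\mathrm{ss}}$ to be of the form $\chi_p^c\oplus \tau$ with $\tau$ two-dimensional, or to be a sum of three Hecke characters; both are ruled out by the one-dimensional case of class field theory together with two-dimensional modularity, leading to a contradiction with cuspidality of $\Pi$.

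For $n=4,5$ I would write $\rho_{\Pi,p}^{\mathrm{ss}}=\bigoplus_{i=1}^{r}\tau_i$ with each $\tau_i\colon G_{\QQ}\to \GL_{n_i}(\bar{K}_\frkp)$ absolutely irreducible, unramified outside $p$, and crystalline at $p$. Self-duality of $\Pi$ (it lives on $\mathrm{PGL}_n$) forces $\{\tau_i\}$ to be stable under $\tau\mapsto \tau^{\vee}$, and the weight hypotheses on ${\rm Weights}(\pi)$ guarantee that the Hodge--Tate weights of $\rho_{\Pi,p}$ are all distinct and sufficiently separated, severely constraining the possible partitions of $n$ into the $n_i$. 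The case analysis then goes by dimension. A one-dimensional constituent, by class field theory together with the unramified-outside-$p$ hypothesis and the crystallinity requirement, is a pure integer power $\chi_p^c$ with $c$ pinned down by its Hodge--Tate weight. A two-dimensional irreducible constituent has regular Hodge--Tate weights $\{-a,0\}$ (after a cyclotomic twist) with $2\le a\le p-2$ by the stated weight bounds, so as in the proof of Theorem \ref{th.irred} I would apply \cite[Theorem 1.2]{Calegari12}, after ruling out the forbidden residual shapes by \cite[Theorems 2.4 and 2.5]{Edixhoven92}, to conclude the constituent is odd and modular, hence attached to a level-one cuspidal eigenform. Three-dimensional irreducible pieces (which can appear only for $n=5$) are treated by reducing to the two-dimensional case through known modularity/irreducibility results for self-dual RAESDC on $\mathrm{GL}_3(\AAA_\QQ)$. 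In every resulting partition of $n$, local-global compatibility and strong multiplicity one then force $\Pi$ to be a non-trivial isobaric sum of cuspidal representations of smaller general linear groups, contradicting cuspidality of $\Pi$.

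The main obstacle is the verification, case by case, that the modularity lifting theorem actually applies to each putative two-dimensional constituent under the specific weight bounds and bounds on $p$ coming from the hypotheses, and in particular the exclusion of exceptional residual representations; this parallels the delicate check already performed for $F$ in the proof of Theorem \ref{th.irred}. A secondary difficulty, for $n=5$, is the three-dimensional self-dual case, which requires bootstrapping from the $\mathrm{GL}_2$ modularity input via symmetric-square-type constructions and structural results for self-dual cuspidal representations of $\mathrm{GL}_3(\AAA_\QQ)$ of level one.
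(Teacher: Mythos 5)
Your proposal is a genuinely different route from the paper's. The paper proves Theorem~\ref{th.irred2} on the \emph{automorphic} side: after noting that $\Pi$ is tempered by Caraiani, it uses self-duality to descend $\Pi$ to a classical group. For $n=4$ the descent is to $\GSp_2$ or $\mathrm{GO}(2,2)$; in the first case one invokes Weissauer's theorem to associate a level-one Siegel cusp form $F$ and simply quotes Theorem~\ref{th.irred} (the $\GSp_4$ irreducibility result), while in the second case $\Pi=\pi_1\boxtimes\pi_2$ is a Rankin product of two non-isomorphic, non-dihedral level-one cusp forms on $\GL_2$, whence $\rho_{\pi_1}\otimes\rho_{\pi_2}$ is irreducible by an elementary argument. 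For $n=5$ the descent is to $\GSp_2$ via $\PGSp_2\simeq\SO_5$, and one splits into the cases where the resulting Siegel form is a symmetric cubic lift (then $\rho_{\Pi,p}=\mathrm{Sym}^4\rho_{\pi,p}$, irreducible since the image contains $\SL_2$) or is genuine (then the Zariski closure of the image contains $\mathrm{Sp}_2$, and $\wedge^2$ modulo the similitude character is irreducible). You instead work entirely on the Galois side: decompose $\rho_{\Pi,p}^{\mathrm{ss}}$, constrain the partition by Hodge--Tate regularity and self-duality, and try to modularize each constituent. The paper's route buys you the heavy lifting for free once Theorem~\ref{th.irred} is in place; yours would avoid descent results but at the cost of a long case analysis.

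However, your proposal has a genuine gap at precisely the point you flag as a ``secondary difficulty'': the $3$-dimensional irreducible constituents in the $n=5$ case. A $3$-dimensional irreducible self-dual $p$-adic representation, unramified outside $p$ and crystalline, is \emph{not} a priori attached to any automorphic representation, so your appeal to ``known modularity/irreducibility results for self-dual RAESDC on $\mathrm{GL}_3(\AAA_\QQ)$'' is circular---you would need to know $\tau_3$ is automorphic before those results say anything. Nor does the ``bootstrapping via symmetric-square-type constructions'' work out of the box: although an orthogonal $3$-dimensional representation with trivial determinant has image in $\SO_3\cong\PGL_2$ and so determines a \emph{projective} $2$-dimensional representation, the obstruction to lifting it to an honest $\GL_2$-valued representation lives in $H^2(G_\QQ,\bar\QQ_p^\times)$ and must be shown to vanish; and even granting a lift, one then has to verify oddness, rule out exceptional residual image, and apply a modularity lifting theorem, none of which is automatic. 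There is currently no analogue of Calegari's theorem for $\GL_3$ that would close this case directly. This is exactly the hole the paper's descent-to-$\GSp_2$ strategy is designed to avoid: by passing to an auxiliary Siegel form first, the paper never has to automorphize a $3$-dimensional Galois representation. If you want to make your approach work, you would need to explain how to exclude the partition $5=3+2$ (and $5=3+1+1$) without assuming automorphy of the $3$-dimensional piece, or else follow the paper and descend to $\GSp_2$ before decomposing.
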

\begin{proof}
Let us first remark that by \cite{Ca1,Ca2}, $\Pi$ is tempered everywhere. 
It is well-known when $n\le 3$. First we consider the case $n=4$. 
By self-duality, since $\Pi$ is level one, $\Pi$ descends to an everywhere unramified cuspidal representation of $\GSp_2(\AAA_\QQ)$ or $\mathrm{GO}(2,2)(\AAA_\QQ)$. In the former case, by \cite{Wei08}, there exists a cuspidal representation $\Pi'$ of $\GSp_2(\AAA_\QQ)$ such that 
the finite parts of $\Pi_{f}$ and $\Pi'_f$ are isomorphic and $\Pi'$ is associated to a 
Hecke eigen Siegel cusp form $F$ of level 1 with the weight $(k_1,k_2)=(a+b+1,a-b+2)$. 
Since $\Pi$ is cuspidal and tempered everywhere, $\Pi$ is neither CAP nor endoscopic. 
Put $j=k_1-k_2=2b-1\ge 1$ and $k=k_2\ge 4$. Applying Theorem \ref{th.irred} to $F$, we have the claim for the former case. 
For the latter case, $\Pi$ can be described as the convolution product of two cuspidal representations $\pi_1,\pi_2$ of $\GL_2(\AAA_\QQ)$ 
associated to two elliptic cusp forms of level one. Since $\Pi$ is cuspidal, they never 
be isomorphic each other.
Further, $\pi_1,\pi_2$ cannot be dihedral. Therefore, the convolution 
$\rho_{\pi_1,p}\otimes \rho_{\pi_2,p}$ has to be irreducible.  

Next we assume $n=5$. The self-duality shows that $\Pi$ can be descended to a cuspidal representation $\Pi_1$ 
of $\GSp_2(\AAA_\QQ)$ corresponding to the exceptional isomorphism $\PGSp_2(\CC)\simeq \mathrm{SO}(5)(\CC)$ (cf. Section 8 of \cite{Kim-Yamauchi17}). 
Since $\Pi$ is tempered, so is $\Pi_1$. Further, $\Pi_1$ is unramified everywhere up to a quadratic twist. 
By twisting if necessary, we may assume that $\Pi_1$ is unramified everywhere.  
It follows from the cuspidality and the temperedness of $\Pi$ that $\Pi_1$ is neither CAP nor endoscopic. Similarly, as before, 
there exists a cuspidal representation $\Pi'$ of $\GSp_2(\AAA_\QQ)$ such that 
the finite parts $\Pi_{1,f}$ and $\Pi'_f$ are isomorphic and $\Pi'$ is associated to a 
Hecke eigen Siegel cusp form $F$ of level 1 with the weight $(k_1,k_2)=(m_1+1,m_2+2)$ satisfying 
$j:=k_1-k_2\ge 1$ and $k=k_2\ge 4$ by the assumption.  
The proof of Theorem \ref{th.irred} shows that $\Pi'$ is either genuine or a symmetric cubic lift of 
a regular cuspidal representation $\pi$ of $\GL_2(\AAA_\QQ)$ which is unramified everywhere (this can be checked by observing 
local conductors). 
In the latter case, $\Pi$ is a symmetric fourth lift of $\pi$. Since the connected component of 
the Zariski closure of the image $\rho_{\pi,p}$ contains $\SL_2$, $\rho_{\Pi,p}={\rm Sym}^4(\rho_{\pi,p})$ 
is irreducible. 
In the former case, the Zariski closure of the image $\rho_{\Pi',p}$ contains $\mathrm{Sp}_2$. 
Hence $\rho_{\Pi,p}=\wedge^2(\rho_{\Pi',p})/\rho_{\omega',p}$ is irreducible where $\omega'$ is the 
central character of $\Pi'$ and $\rho_{\omega',p}:G_{\QQ}\to \GL_1(K_\frkp)$ is the corresponding 
character.  
\end{proof}

Let $G$ be a compact group and $L$ a field of characteristic $0$. Let $\rho:G \to \GL_m(L)$ be a semi-simple representation, and $\rho^\vee$ the contragredient representation of $\rho$. If $\rho$ is self-dual, then there exists an element $T \in \GL_m(K)$ such that $\rho^\vee=T^{-1}\rho T$. Moreover,  if $\rho$ is 
absolutely irreducible, then $T$ is symmetric or alternating. We say that the sign of $\rho$ is $+1$ (resp. $-1$) if $T$ is symmetric (resp. alternating) (cf. \cite{Bellaiche-Chenevier11}).

\begin{theorem} \label{th.sign-Galois-st}
Let $F$ be a Hecke eigenform in $S_{\bf k}(\varGamma^{(n)})$ with ${\bf k}=(k_1,\ldots,k_n)$ and 
let  $\psi_F$
be the associated $A$-parameter. Suppose that $\psi_F$ is stable tempered or  that $\psi_F=\tau_1 \boxplus 1$ with $\tau_1$ an irreducible unitary cuspidal self-dual automorphic representation of $\GL_{2n}(\AAA_\QQ)$.  Let $\rho$ be the Galois representation attached to $\psi_F$ in the former case, and to $\tau_1$ in the latter case.
Then $\rho$ is self-dual. Moreover, for any irreducible factor $\rho'$ of $\rho$ such that $\rho'$ is self-dual, the sign of $\rho'$  is $+1$.
\end{theorem}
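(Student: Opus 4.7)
The plan is to establish self-duality of $\rho$ from the self-duality of the constituent automorphic representations, and then to pin down the sign of each self-dual irreducible factor by combining the shape of the $A$-parameter $\psi_F$ with the known transfer of orthogonal/symplectic types from the automorphic to the Galois side.

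First I would prove self-duality of $\rho$. In both cases, $\rho$ is built out of the Galois representations $\rho_{\pi_i}$ attached to the self-dual irreducible cuspidal constituents $\pi_i$ of $\psi_F$. By the local--global compatibility in Theorem \ref{th.Galois-st} (or the analogous statement for the $2n$-dimensional $\rho_{\tau_1}$), at each prime $\ell \ne p$ the characteristic polynomial $\det(I - X \rho(\mathrm{Frob}_\ell^{-1}))$ is determined by the Satake parameters of the $\pi_i$. The self-duality of each $\pi_i$ forces these parameters to be invariant under inversion (up to permutation), so this polynomial coincides with $\det(I - X \rho^\vee(\mathrm{Frob}_\ell^{-1}))$ for all such $\ell$. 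Chebotarev density, together with the semisimplicity of $\rho$, then yields $\rho \cong \rho^\vee$.

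Next I would determine the sign. In the stable tempered case, $\psi_F$ is a single self-dual cuspidal $\pi$ of $\GL_{2n+1}(\AAA_\QQ)$, so $\rho = \rho_\pi$ has odd dimension $2n+1$; since $\psi_F$ factors through the dual group $\SO_{2n+1}(\CC)$ of $\SP_n$, and since any self-dual representation of odd dimension admits no non-degenerate alternating form, $\pi$ is of orthogonal type. In the case $\psi_F = \tau_1 \boxplus \mathbf{1}$, the trivial summand $\mathbf{1}$ is a one-dimensional orthogonal representation, and for $\tau_1 \oplus \mathbf{1}$ to preserve a non-degenerate symmetric form coming from the $\SO_{2n+1}(\CC)$-structure of $\psi_F$, the summand $\tau_1$ must itself be of orthogonal type. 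Invoking the matching between the automorphic and the Galois type for regular algebraic, self-dual, cuspidal representations, due to Bella\"iche--Chenevier \cite{Bellaiche-Chenevier11} and the subsequent work based on Arthur's endoscopic classification, we conclude in both cases that $\rho$ carries a non-degenerate symmetric $G_\QQ$-invariant bilinear form. For any self-dual irreducible factor $\rho'$ of $\rho$, Schur's lemma shows that if $\rho'$ were symplectic, the restriction to $\rho'$ of the symmetric form on $\rho$ would vanish, and $\rho'$ would have to be paired with a second copy of itself inside $\rho$, forcing multiplicity at least two; since the constituents of $\psi_F$ are pairwise inequivalent by Arthur's classification and their attached Galois representations are pairwise inequivalent by Chebotarev, this is impossible. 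Hence $\rho'$ is orthogonal, and its sign is $+1$.

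The main obstacle is the transfer of the orthogonal/symplectic type from the automorphic side to the Galois side in the $2n$-dimensional case, which is a nontrivial compatibility. In our setting, however, $\tau_1$ is a regular algebraic, self-dual, cuspidal automorphic representation of $\GL_{2n}(\AAA_\QQ)$ unramified at all finite places, which falls within the scope of the available sign-matching results, so this step is legitimate.
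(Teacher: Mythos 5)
The argument for self-duality is fine (though the paper simply cites \cite{Barnet-Lamb-Gee-Taylor14}, where self-duality of the $\rho_{\pi_i}$ is part of the construction; your Chebotarev argument reaches the same conclusion).

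The sign argument, however, has a genuine gap. You reduce the claim to two intermediate assertions: (a) $\rho$ as a whole carries a non-degenerate $G_\QQ$-invariant \emph{symmetric} form, and (b) each self-dual irreducible factor $\rho'$ appears with multiplicity one in $\rho$. Neither is established. For (b): the constituents of the Arthur parameter $\psi_F$ being pairwise distinct does \emph{not} control the multiplicities of the irreducible Galois factors of $\rho$, because in both cases considered $\rho$ is attached to a \emph{single} cuspidal constituent ($\pi$ in the stable tempered case, $\tau_1$ in the other), and $\rho_\pi$ or $\rho_{\tau_1}$ could a priori be reducible with repeated factors. The irreducibility results in this paper (Theorems \ref{th.irred}, \ref{th.irred2}) do not cover the relevant dimensions here ($2n+1=9$ and $2n=8$), so multiplicity one is not available. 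For (a): this is not a separate, easier statement — in the reducible case, knowing that $\rho$ carries a symmetric form is in fact a \emph{consequence} of knowing the sign of its self-dual irreducible factors (combined with the non-self-dual ones pairing up), not a route to it. The result of Bella\"iche--Chenevier \cite{Bellaiche-Chenevier11} is stated precisely at the level of irreducible constituents of the Galois representation attached to a regular algebraic self-dual cuspidal representation of $\GL_m$, and this is exactly the assertion of the theorem; the paper's proof invokes it directly. Your detour through a global orthogonal pairing and multiplicity one is therefore both circular and incomplete, and the Schur-lemma step would only yield ``the multiplicity of a symplectic self-dual factor is even,'' which is not a contradiction without (b).
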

\begin{proof}
The first part is due to \cite[Theorem 2.1.1]{Barnet-Lamb-Gee-Taylor14}. The second part is due to \cite[Corollary 1.3]{Bellaiche-Chenevier11}.
\end{proof}
\begin{remark} The second part  of the above theorem is trivial if the dimension of $\rho'$ is odd.
\end{remark}

For our later purpose, we provide the following lemma.
\begin{lemma}\label{lem.Ribet's-criterion}
Let $f(z)=\sum_{m=1}^{\infty} a(m,f){\bf e}(mz)$ be a primitive form in $S_k(\SL_2(\ZZ))$. Let $\frkp$ be a prime ideal of $\QQ(f)$ of degree one with  $p:=p_\frkp>2k$.
Let  $\rho_f:G_{\QQ} \to \GL_2(K_\frkp)$ be  the Galois representation attached  as stated before.  Suppose that $\frkp$ divides none of  $\zeta(1-k), \ a(2,f), \ a(2,f)+2^{k/2}, a(2,f)-2^{k/2}$. Then, $\rho_f(\Gal(\bar \QQ/\QQ(\zeta_{p^{\infty}})))$ contains $\SL_2(\ZZ_p)$ with a suitable choice of a lattice of the representation space $V_f$ of $\rho_f$, where $\QQ(\zeta_{p^{\infty}})=\cup_{l=1}^{\infty} \QQ(\zeta_{p^l})$.
\end{lemma}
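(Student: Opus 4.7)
The plan is to prove the desired big image result in two steps: first show that the residual representation $\bar\rho_f: G_{\QQ} \to \GL_2(\FF_p)$ has image containing $\SL_2(\FF_p)$, then lift this to a statement in $\ZZ_p$. Since $\frkp$ has residue degree one, any $G_{\QQ}$-stable $\frkO_\frkp$-lattice $T \subset V_f$ produces a reduction $\bar\rho_f$ valued in $\GL_2(\FF_p)$ with $\det \bar\rho_f = \bar\chi_p^{k-1}$. By Dickson's classification, to prove $\bar\rho_f(G_{\QQ}) \supset \SL_2(\FF_p)$ it suffices to exclude the possibilities that $\bar\rho_f$ is reducible, that its projective image is dihedral, or that the projective image is of exceptional type $A_4$, $S_4$, $A_5$.

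Each case I would rule out using the three given congruences and the hypothesis $p>2k$. In the reducible case, $\bar\rho_f^{\mathrm{ss}}$ splits as a sum of characters of $G_{\QQ}$ unramified outside $p$, which by Kronecker--Weber are powers of $\bar\chi_p$; Fontaine--Laffaille theory (applicable since $p-1>k-1$) pins the exponents down to $\{0,k-1\}$, forcing $a(2,f)\equiv 1+2^{k-1} \pmod\frkp$. Combined with the Ramanujan bound $|a(2,f)|\le 2\cdot 2^{(k-1)/2}$ and $p>2k$, this is incompatible with the three given congruences. For the dihedral and exceptional cases, the relevant invariant is $a(2,f)^2/2^{k-1} \pmod\frkp$ at Frobenius~$2$: the condition $\frkp\nmid a(2,f)$ forbids the value $0$ (covering dihedral with $2$ inert and the projective-order-$2$ exceptional case), while $\frkp\nmid(a(2,f)\pm 2^{k/2})$, i.e.\ $\frkp\nmid(a(2,f)^2-2^k)$, forbids the value $2$. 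The remaining exceptional values of the invariant attached to elements of projective order $3$ or $5$ in $A_4/S_4/A_5$ (namely $1$ and $(3\pm\sqrt 5)/2$) are ruled out via the Ramanujan bound: any such mod-$\frkp$ coincidence would force $a(2,f)^2$ to equal, in $\QQ(f)$, a specific integer whose size exceeds $4\cdot 2^{k-1}$ once $p>2k$.

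Once $\bar\rho_f(G_{\QQ})\supset \SL_2(\FF_p)$ is established, I would conclude using Serre's classical lemma that for $p\ge 5$, a closed subgroup of $\SL_2(\ZZ_p)$ surjecting onto $\SL_2(\FF_p)$ equals $\SL_2(\ZZ_p)$. Since $\Gal(\QQ(\zeta_{p^\infty})/\QQ) \cong \ZZ_p^\times$ is abelian and $\SL_2(\FF_p)$ is perfect for $p\ge 5$, the image $\bar\rho_f(G_{\QQ(\zeta_{p^\infty})})$ still contains $\SL_2(\FF_p)$; and since $\det\rho_f = \chi_p^{k-1}$ becomes trivial on $G_{\QQ(\zeta_{p^\infty})}$, the image $\rho_f(G_{\QQ(\zeta_{p^\infty})})$ already lies in $\SL_2(\ZZ_p)$ for a suitable basis of $T$, whence Serre's lemma gives the required inclusion. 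The hardest step will be the case analysis for the exceptional projective images of $\bar\rho_f$: the three given congruences directly block only the forbidden values $0$ and $2$ of the projective invariant at Frobenius~$2$, so the remaining exceptional values must be handled indirectly by upgrading a mod-$\frkp$ identity into an impossible integral one, using the interplay between the Deligne bound and the hypothesis $p>2k$.
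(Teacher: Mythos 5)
Your overall architecture coincides with the paper's: establish that $\bar\rho_f(G_{\QQ})\supset\SL_2(\FF_p)$, pass to $G_{\QQ(\zeta_{p^\infty})}$ via the abelian-quotient/perfectness argument, and then lift with Serre's lemma (\cite[IV, 3.4, Lemma 3]{Serre68}). The difference is that the paper discharges the residual step in one line by citing \cite[Lemma 5.3 and Remark 5]{Ribet75} together with the two specific divisibility conditions on p.\ 265 of Ribet's paper (which the hypotheses $\frkp\nmid a(2,f)$ and $\frkp\nmid a(2,f)\pm 2^{k/2}$ are designed to verify), whereas you attempt to re-derive the residual statement from scratch via Dickson's classification. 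Your reconstruction has two genuine gaps.

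First, in the reducible case you claim that $a(2,f)\equiv 1+2^{k-1}\pmod\frkp$ (correct, via Fontaine--Laffaille) is ``incompatible with the three given congruences'' once combined with the Deligne--Ramanujan bound and $p>2k$. This does not close: the three hypotheses say nothing directly about $a(2,f)-(1+2^{k-1})$, and that quantity, while nonzero, has size on the order of $2^{k-1}$, which is far larger than $p$ already for modest $k$ (compare $\Delta$ at $p=691$ with $k=12$, where $a(2,\Delta)=-24\equiv 1+2^{11}\pmod{691}$ and all three of $-24,\,-24\pm 64$ are coprime to $691$). So the Ramanujan bound cannot upgrade a mod-$\frkp$ congruence to an impossible integral identity here. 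Second, and for the same reason, your proposed exclusion of the projective-order-$3$ and order-$5$ exceptional values ($1$ and $(3\pm\sqrt5)/2$ for the invariant $a(2,f)^2/2^{k-1}$) via ``the interplay between the Deligne bound and $p>2k$'' does not work: $a(2,f)^2-2^{k-1}$ (say) is again of size about $2^{k-1}\gg p$, so its divisibility by $\frkp$ is not ruled out. In Ribet's and Swinnerton-Dyer's actual treatment, the exceptional image types ($A_4,S_4,A_5$) are excluded by a completely different mechanism, namely by examining the image of inertia at $p$: by Fontaine--Laffaille (ordinary case) or level-$2$ fundamental characters (supersingular case), the projective image of $I_p$ contains a cyclic subgroup of order at least $(p-1)/(k-1)$ or $(p+1)/(k-1)$, which is larger than $5$ once $p$ is sufficiently large relative to $k$; this has nothing to do with Frobenius at $2$. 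Likewise the dihedral case with $2$ split in the relevant quadratic field is not blocked by any of the Frobenius-at-$2$ conditions and must be handled separately. In short, the two steps you flag as ``to be handled indirectly'' are in fact the content of Ribet's Lemma 5.3 and Remark 5, and the paper's proof intentionally cites those rather than reproving them; your sketch substitutes arguments that do not survive scrutiny.
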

\begin{proof}
By  the proof of \cite[Lemma 5.3]{Ribet75} and \cite[Remark 5]{Ribet75}, $\bar \rho_f(G_{\QQ})$ contains $\SL_2(\FF_p)$ 
(see also the divisibility conditions at line 3 and line 8 of \cite[p.265]{Ribet75}).  Since 
$$\bar \rho_f(G_{\QQ})/\bar \rho_f(\Gal(\bar \QQ/\QQ_{\zeta(p^{\infty}}))$$ is abelian, 
$\bar \rho_f(\Gal(\bar \QQ/\QQ_{\zeta(p^{\infty}}))$ also contains the commutator 
subgroup $[\SL_2(\FF_p),\SL_2(\FF_p)]$ which is, in fact, $\SL_2(\FF_p)$. Thus the assertion follows from \cite[IV, 3.4, Lemma 3]{Serre68}.
\end{proof}
\section{Main results}

Now we state Harder's conjecture. 
\begin{conjecture}(Harder's conjecture \cite{Harder03})
\label{conj.Harder2}
Let $k$ and $j$ be non-negative integers such that $j$ is even and $k \ge 3$. 
Let $f(z)=\sum a(m,f){\bf e}(mz)\in S_{2k+j-2}(\SL_2(\ZZ))$ be a primitive form, and let  $\frkp$ be a prime ideal of $\QQ(f)$. Fix a period $c_{\pm}(f)$ appropriately. Suppose that $p_\frkp >  2k+j-2$ and that $\frkp$
divides ${\bf L}(k+j,f;c_{s(k+j)}(f))$.
Then, there exists a Hecke eigenform
$F \in S_{(k+j,k)}(\varGamma^{(2)})$, and a prime $\frkp' \mid\frkp$ in (any
field containing) $\QQ(f)\QQ(F)$ such that, for all primes $\ell$
\begin{align*}
L_\ell(X,F,{\rm Sp}) \equiv L_\ell(X,f)(1-\ell^{k-2}X)(1-\ell^{k+j-1}X)
\mod{\frkp'}. \tag{H}
\end{align*}
In particular, 
$$\lambda_F(T(\ell))\equiv \ell^{k-2}+\ell^{j+k-1}+a(\ell ,f) \mod{\frkp'}.$$
\end{conjecture}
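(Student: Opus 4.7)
The plan is to derive this conjecture from the enhanced congruence of Conjecture \ref{conj.enhanced-Harder}, which Theorem \ref{th.main-result} of the present paper establishes under the stated hypotheses. Applying that theorem yields a Hecke eigenform $F\in S_{(k+j,k)}(\varGamma^{(2)})$ with
\[
\scra^{(I)}_4(F)\equiv_{\rm ev}[\scri_2(f)]^{\bf k}\mod\frkp,\quad{\bf k}=(k+j/2,k+j/2,j/2+4,j/2+4).
\]
Since this is a congruence of Hecke eigenvalues for every integral Hecke operator $T\in{\bf L}_4^{({\bf k})}$, Theorem \ref{th.Galois-st} applied on both sides together with Chebotarev density produces an isomorphism of semi-simplifications of the residual $9$-dimensional standard Galois representations,
\[
\bar\rho^{\rm ss}_{\scra^{(I)}_4(F),\St}\simeq\bar\rho^{\rm ss}_{[\scri_2(f)]^{\bf k},\St},
\]
after enlarging the coefficient field to contain $\QQ(F)\QQ(f)$ and picking a prime $\frkP\mid\frkp$.

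The right-hand side is given explicitly in the introduction as
\[
\bar\rho_f(k+j/2-1)\oplus\bar\rho_f(k+j/2-2)\oplus\bar\chi^{j/2+1}\oplus\bar\chi^{-j/2-1}\oplus\bar\chi^{j/2}\oplus\bar\chi^{-j/2}\oplus\bar{\bf 1}.
\]
Analogously, the explicit construction of the $\scra^{(I)}$-lift in Theorem \ref{th.atobe1} describes the left-hand side as $\bar\rho^{\rm ss}_{F,\St}$ direct-summed with four one-dimensional characters; matching the $9$ (distinct, short-crystalline since $p_\frkp>2k+j-2$) Hodge--Tate weights forces these characters to be $\bar\chi^{\pm j/2}$ and $\bar\chi^{\pm(j/2+1)}$. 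Cancelling the common cyclotomic summands leaves
\[
\bar\rho^{\rm ss}_{F,\St}\simeq\bar\rho_f(k+j/2-1)\oplus\bar\rho_f(k+j/2-2)\oplus\bar{\bf 1},
\]
which pins down the residual standard Satake parameters $\{1,\alpha_1(\ell)^{\pm1},\alpha_2(\ell)^{\pm1}\}$ of $F$ at every $\ell\neq p_\frkp$ in terms of $\{\beta_1(\ell),\beta_2(\ell)\}$ and powers of $\ell$. Using the normalization $\alpha_0(\ell)^2\alpha_1(\ell)\alpha_2(\ell)=\ell^{2k+j-3}$ to fix the spin parameter $\alpha_0(\ell)$ up to sign (with the sign determined by the trace congruence of Conjecture \ref{conj.Harder}), one computes
\[
L_\ell(X,F,\mathrm{Sp})\equiv L_\ell(X,f)(1-\ell^{k-2}X)(1-\ell^{k+j-1}X)\mod\frkp',
\]
which is exactly (H).

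The main technical obstacle is the cancellation step: one must rule out that any of $\bar\chi^{\pm j/2},\bar\chi^{\pm(j/2+1)}$ is absorbed as a subrepresentation of $\bar\rho_f(k+j/2-1)$ or $\bar\rho_f(k+j/2-2)$, for otherwise a different four-tuple of characters would have to appear on the left and the identification of $\bar\rho_{F,\St}^{\rm ss}$ would fail. This is guaranteed by the absolute irreducibility of $\bar\rho_f$, for which Lemma \ref{lem.Ribet's-criterion} gives a sufficient criterion under mild non-vanishing hypotheses on the low Hecke eigenvalues of $f$; combined with $p_\frkp>2k+j-2$, which keeps the Hodge--Tate weights of all constituents distinct in the short-crystalline range, this renders the decomposition unique and the cancellation unambiguous.
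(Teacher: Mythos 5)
Your overall strategy—reduce to the enhanced congruence of Conjecture \ref{conj.enhanced-Harder}, which Theorem \ref{th.main-result} establishes, and then pass to residual standard Galois representations—is the same as the paper's (which cites \cite[Theorem 4.5]{A-C-I-K-Y22} for the implication). However, there is a concrete error in the middle of your argument.

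You assert that Theorem \ref{th.atobe1} exhibits $\bar\rho_{\scra^{(I)}_4(F),\St}$ as the $5$-dimensional standard $\bar\rho_{F,\St}$ direct-summed with four characters. That is not what the theorem says: the $L$-identity
$L(s,\scra^{(I)}_4(F),\St)=\zeta(s)L(s+\tilde k-1,F,{\rm Sp})L(s+\tilde k-2,F,{\rm Sp})$
(with $\tilde k=k+j/2$) gives
\[
\rho_{\scra^{(I)}_4(F),\St}\simeq \mathbf{1}\oplus\rho_F(\tilde k-1)\oplus\rho_F(\tilde k-2),
\]
i.e.\ two cyclotomic twists of the \emph{spin} $4$-dimensional representation $\rho_F$ plus the trivial character. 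Your claimed intermediate conclusion $\bar\rho^{\rm ss}_{F,\St}\simeq\bar\rho_f(k+j/2-1)\oplus\bar\rho_f(k+j/2-2)\oplus\bar{\bf 1}$ is in fact impossible: the left side has Hodge--Tate weights $\{0,\pm(k+j-1),\pm(k-2)\}$ while the right has $\{0,\pm(k+j/2-1),\pm(k+j/2-2)\}$, which differ whenever $j>0$. The subsequent attempt to recover the spin parameter $\alpha_0(\ell)$ from standard data "with the sign determined by the trace congruence of Conjecture \ref{conj.Harder}" is circular, since (H) and that trace congruence are precisely what you are trying to derive; one cannot invoke them to fix the sign.

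With the correct decomposition the argument becomes cleaner and the sign problem disappears. Comparing the $9$-dimensional semisimplifications gives, in the Grothendieck group after dividing by $\bar\chi^{k+j/2-2}$,
\[
(\bar\chi+\bar{\bf 1})\,\bar\rho_F \;=\; (\bar\chi+\bar{\bf 1})\bigl(\bar\rho_f\oplus\bar\chi^{2-k}\oplus\bar\chi^{1-j-k}\bigr),
\]
and cancelling the factor $(\bar\chi+\bar{\bf 1})$—this is \cite[Lemma 4.7]{A-C-I-K-Y22}, the same lemma invoked in the proof of part (2) of Theorem \ref{th.noncongruence}—yields $\bar\rho_F\simeq\bar\rho_f\oplus\bar\chi^{2-k}\oplus\bar\chi^{1-j-k}$ directly, which is exactly the spin congruence (H). This is the content of \cite[Theorem 4.5]{A-C-I-K-Y22}, and it is what the present paper relies on in Theorem \ref{th.Harder-congruence}; it does not proceed through the standard $5$-dimensional representation of $F$ at all.
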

We say that (H) is Harder's congruence for $f$.
We also propose a modification of the above conjecture, which can be formulated independent of the choice of $c_{\pm}(f)$.
\begin{conjecture}\label{conj.modified-Harder}
Let $k$ and $j$ be non-negative integers such that $j$ is even and $k \ge 3, j \ge 4$. Let $f$ be as in Conjecture \ref{conj.Harder2}. Suppose that a prime ideal $\frkp$ of $\QQ(f)$ satisfies $p_\frkp >2k+j-2$ and that $\frkp$ divides ${\displaystyle {\bf L}(k+j,f) \over \displaystyle {\bf L}(k_j,f)}$, where $k_j=k+j/2$ or $k+j/2+1$ according as $j \equiv 0 \mod 4$ or $j \equiv 2 \mod 4$. Then the same assertion as Conjecture \ref{conj.Harder2} holds.
\end{conjecture}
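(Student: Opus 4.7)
The plan is to follow the two-step strategy outlined in the introduction: first produce, via the Klingen--Eisenstein congruence of \cite{A-C-I-K-Y22}, a Hecke eigenform $G \in M_{{\bf k}}(\varGamma^{(4)})$ congruent to $[\scri_2(f)]^{{\bf k}}$ modulo $\frkp$; then identify $G$ with $\scra_4^{(I)}(F)$ via a Galois-theoretic argument; and finally extract Harder's congruence (H) from the resulting eigenvalue congruence.

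First I would reduce the hypothesis of Conjecture \ref{conj.modified-Harder} to that of Conjecture \ref{conj.Harder2}. In either subcase of the definition of $k_j$, one checks $(-1)^{k+j}=(-1)^{k_j}$, so $s(k+j)=s(k_j)$ and the ratio ${\bf L}(k+j,f)/{\bf L}(k_j,f)$ is period-independent as recorded in Section 3. Since $c_{\pm}(f)$ is determined only up to $\QQ(f)^{\times}$, I may rescale so that ${\bf L}(k_j,f;c_{s(k_j)}(f))$ is a $\frkp$-adic unit (assuming non-vanishing, which is the generic situation for critical values). With this normalization, $\frkp$ dividing the ratio becomes divisibility of ${\bf L}(k+j,f;c_{s(k+j)}(f))$ by $\frkp$, which is exactly the hypothesis of Conjecture \ref{conj.Harder2}; the eigenvalue congruence (H) the conjecture asserts is intrinsic and does not involve the periods.

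Next I would apply the congruence result for Klingen--Eisenstein lifts (Theorem \ref{th.main-congruence}) to obtain a Hecke eigenform $G \in M_{{\bf k}}(\varGamma^{(4)})$, not a constant multiple of $[\scri_2(f)]^{{\bf k}}$, satisfying $G \equiv_{\rm ev} [\scri_2(f)]^{{\bf k}} \mod \frkp$. The crucial remaining task is to show $G = \scra_4^{(I)}(F)$ for some Hecke eigenform $F \in S_{(k+j,k)}(\varGamma^{(2)})$. I would first rule out $G$ being non-cuspidal via Theorem \ref{th.noncongruence-Phi}. For cuspidal $G$, the semisimple reduction $\bar{\rho}_{G,\St}^{\mathrm{ss}}$ attached by Theorem \ref{th.Galois-st} must agree with
\[ \bar{\rho}_f(k+j/2-1) \oplus \bar{\rho}_f(k+j/2-2) \oplus \bar{\chi}^{j/2+1} \oplus \bar{\chi}^{-j/2-1} \oplus \bar{\chi}^{j/2} \oplus \bar{\chi}^{-j/2} \oplus \bar{{\bf 1}}. \]
Using Arthur's classification of the $A$-parameter $\psi_G$, together with absolute irreducibility of $\bar{\rho}_f$ supplied by Lemma \ref{lem.Ribet's-criterion}, short-crystallinity at $p$ guaranteed by $p_\frkp > 2k+j-2$, and the sign constraints of Theorem \ref{th.sign-Galois-st}, one enumerates the candidate $A$-parameters whose attached reduction can equal the displayed direct sum and eliminates every one except the $\scra^{(I)}$-type via Theorem \ref{th.noncongruence}. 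With $G = \scra_4^{(I)}(F)$ established, matching Satake parameters at each rational prime unwinds the standard $L$-factor congruence to the spin $L$-factor congruence (H), giving in particular the eigenvalue congruence on $T(p)$.

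The main obstacle will be the enumeration and elimination step in the Galois-theoretic analysis. Hodge--Tate weight matching and short-crystallinity sharply restrict the candidates, but each potentially dangerous family (endoscopic, symmetric-cubic, Saito--Kurokawa-type CAP, and the various Ikeda-style lifts) must be ruled out individually, and the absolute irreducibility of $\bar{\rho}_f$ is precisely what prevents the two $\bar{\rho}_f$-twists appearing in the reduction from being redistributed as reducible pieces of a different Arthur type. Extending the argument to the remaining case $j \equiv 2 \mod 4$ further requires reworking the definition of the $\scra^{(I)}$ lift to accommodate the altered parity of the Saito--Kurokawa input $\scri_2(f)$; this is why Theorem \ref{th.main-result} is restricted to $j \equiv 0 \mod 4$, and it is the chief technical hurdle remaining for Conjecture \ref{conj.modified-Harder} in full generality.
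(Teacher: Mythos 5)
The statement you were asked to prove is labeled a \emph{conjecture} in the paper, and the paper does not prove it. What the paper actually establishes (Theorems \ref{th.main-result} and \ref{th.main-result2}, feeding into Theorem \ref{th.Harder-congruence}) is this conclusion only when $k$ is even, $j \equiv 0 \bmod 4$, and a long list of auxiliary hypotheses hold that are \emph{not} part of Conjecture~\ref{conj.modified-Harder}: the Fourier-coefficient indivisibility (C.3), the condition $\frkp \nmid \frkD_f^{(p)}$ (C.4), the large-image condition (C.5) on $\rho_f$, the class-group condition (C.6) (which presently needs Vandiver's conjecture beyond $p<12\,000\,000$), the zeta-value indivisibilities (C.7), and the auxiliary $L$-value non-divisibilities (C.8) or (C.8'). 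Your proposal is in essence a narrative summary of that conditional proof, and you correctly flag the $j\equiv 2\bmod 4$ gap, but you silently use Theorem \ref{th.main-congruence} and Theorem \ref{th.noncongruence} without the extra conditions they require; there is no way to produce the congruent eigenform $G$, nor to run the Arthur-type elimination, from the hypotheses of Conjecture~\ref{conj.modified-Harder} alone. Read as a proof of the conjecture as stated, the argument is therefore incomplete precisely where those conditions enter.

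Separately, the opening step is not a genuine reduction of Conjecture~\ref{conj.modified-Harder} to Conjecture~\ref{conj.Harder2}. Conjecture~\ref{conj.Harder2} is itself a conjecture, so ``reducing'' to it proves nothing; and rescaling $c_{\pm}(f)$ so that ${\bf L}(k_j,f;c_{s(k_j)}(f))$ is a $\frkp$-adic unit is not a free move --- this is precisely what the paper does carefully through the Kato period $\Omega_{\pm}(f)$ and Lemma~\ref{lem.comparison-of-periods} in the proof of Theorem~\ref{th.main-result2}, which is where conditions (C.4) and (C.8') are actually consumed. The whole point of stating Conjecture~\ref{conj.modified-Harder} is to have a hypothesis that avoids a period choice; treating the period choice as automatic collapses that distinction.
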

Conjecture \ref{conj.Harder2} does not concern the congruence between the Hecke eigenvalues of two Hecke eigenforms, and it is not easy to confirm it. To treat the conjecture more accessibly, we reformulate it 
in the case $k$ is even.   
Let $k$ and $n$ be positive even integers.
For a primitive form $f$ in $S_{2k-n}(\SL_2(\ZZ))$ let $\scri_n(f)$ be the Duke-Imamoglu-Ikeda lift of $f$ to $S_k(\varGamma^{(n)})$. That is, $\scri_n(f)$ is a Hecke eigenform such that
\[L(s,\scri_n(f),\St)=\zeta(s)\prod_{i=1}^n L(s+k-i-1,f).\]
In the case $n=2$, $\scri_2(f)$ is called the Saito-Kurokawa lift.  We note that the Klingen-Eisenstein series $[\scri_2(f)]^{(k,k,l,l)}(Z)$ belongs to $M_{(k,k,l,l)}(\varGamma^{(4)})$ if  $k \ge l \ge 6$ (cf. \cite[Proposition 2.1]{A-C-I-K-Y23}).
We also have the following lift, which is a special case of \cite[Theorem 4.3]{A-C-I-K-Y23}.
\begin{theorem} \label{th.atobe1}
 Let $F$ be a Hecke eigenform in $S_{k+l-4,k-l+4}(\varGamma^{(2)})$.
 Then there exists a Hecke eigenform $\scra^{(I)}_4(F)$ in $S_{(k,k,l,l)}(\varGamma^{(4)})$ such that 
\[L(s,\scra^{(I)}_4(F),\St)=\zeta(s)L(s+k-1,F,{\rm Sp})L(s+k-2,F,{\rm Sp}).\]
\end{theorem}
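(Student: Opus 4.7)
The plan is to invoke Arthur's endoscopic classification for the split group $\mathrm{Sp}_4(\AAA_\QQ)$, whose Langlands dual is $\mathrm{SO}_9(\CC)$. First I would attach to $F \in S_{(k+l-4,k-l+4)}(\varGamma^{(2)})$ its cuspidal automorphic representation $\pi_F$ of $\GSp_2(\AAA_\QQ)$. Via the spin transfer (available by Arthur's and Wei's work, under the classification of $\Pi_F$ recalled around Theorem \ref{th.irred}), one obtains a self-dual cuspidal representation of $\GL_4(\AAA_\QQ)$ of symplectic type and motivic weight $2k-3$, which I still denote by $\pi_F$.

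Next, I would form the candidate global $A$-parameter
\[
\psi_F = \pi_F[2]\,\boxplus\,\mathbf{1}[1]
\]
for $\mathrm{Sp}_4$, where $[d]$ denotes the irreducible $d$-dimensional representation of the Arthur $\mathrm{SL}_2$. Since $\pi_F$ is symplectic of dimension $4$, the summand $\pi_F[2]$ is orthogonal of dimension $8$, and together with $\mathbf{1}[1]$ this is a $9$-dimensional self-dual orthogonal representation, giving a parameter for $\mathrm{SO}_9 = {}^L\mathrm{Sp}_4$; moreover $\psi_F$ is discrete in Arthur's sense (two distinct self-dual orthogonal constituents). By Arthur's multiplicity formula there is then a cuspidal automorphic representation $\Pi$ of $\mathrm{Sp}_4(\AAA_\QQ)$ with $A$-parameter $\psi_F$. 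A direct computation of its unramified Satake parameters at each finite prime, together with the conversion from the unitary normalization ($\pm 1/2$ shifts in $\pi_F[2]$) to the motivic normalization of $L(s,F,\mathrm{Sp})$, will yield
\[
L(s,\Pi,\St) = \zeta(s)\,L(s+k-1,F,\mathrm{Sp})\,L(s+k-2,F,\mathrm{Sp}).
\]

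The main obstacle is the archimedean component. I would need to show that $\Pi_\infty$ can be chosen to be the holomorphic discrete series of weight ${\bf k} = (k,k,l,l)$, where the weight is forced by matching the infinitesimal character with the Hodge-Tate weights $\{0,\pm(k-1),\pm(k-2),\pm(l-3),\pm(l-4)\}$ read off from $\psi_F$. Because the $[2]$ factor makes $\psi_F$ non-tempered, the archimedean $A$-packet is an Adams-Johnson packet attached to a $\theta$-stable parabolic subalgebra of $\mathfrak{sp}_4(\RR)$, and one must verify both that the relevant holomorphic discrete series lies in this packet and that the sign conditions of the multiplicity formula are met (which reduces to a local check). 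Once this archimedean input is granted, the cusp form $\scra^{(I)}_4(F)$ is realized as the holomorphic vector in $\Pi$, and its Hecke eigenform property together with the $L$-function identity follows from strong multiplicity one at the finite places.
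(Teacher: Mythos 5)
Your proposal follows the same route as the paper: the paper proves Theorem~\ref{th.atobe1} simply by citing Theorem~4.3 (together with Theorem~A.1) of the earlier paper \cite{A-C-I-K-Y22}, which constructs $\scra_4^{(I)}(F)$ precisely via Arthur's classification for $\SP_4$ with the global $A$-parameter $\psi_F=\pi_F[2]\boxplus \mathbf{1}$ of $\SO_9$-type, checking the Adams--Johnson archimedean packet and the multiplicity sign exactly as you outline. Your bookkeeping of the parameter (symplectic $\GL_4$-transfer, orthogonal $8+1$ decomposition, discreteness), the infinitesimal character $\{0,\pm(k-1),\pm(k-2),\pm(l-3),\pm(l-4)\}$, and the unitary-to-arithmetic shift producing $\zeta(s)L(s+k-1,F,\mathrm{Sp})L(s+k-2,F,\mathrm{Sp})$ is all correct.
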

Let ${\bf k}=(k_1,\ldots,k_n)$ be a non-increasing sequence of non-negative integers. 
Let $F$ and $G$ be Hecke eigenforms in $M_{{\bf k}}(\varGamma^{(n)})$.
Let $K$ be an algebraic number field containing  $\QQ(F)$ and
 $\frkp$ a prime ideal of $K$. We say that $G$ is Hecke congruent to $F$ modulo $\frkp$ if there is a prime ideal $\frkp'$ of the composite field $K\cdot\QQ(G)$ lying above $\frkp$ such that
\[\lambda_G(T) \equiv \lambda_F(T) \mod {\frkp'} \text{ for any } T \in {\bf L}_n^{({\bf k})}, \]
and write
\[G \equiv_{\mathrm{ev}} F \mod \frkp.\] 
Then the following conjecture is a special case of \cite[Conjecture 4.5]{A-C-I-K-Y23}.
\begin{conjecture} \label{conj.main-conjecture}
Let $k$ and $j  $ be positive even integers such that $k \ge 4$ and $j  \equiv 0 \mod 4$, and put 
${\bf k}=\Bigl({j \over 2}+k, {j \over 2}+k,{j \over 2}+4,{j \over 2}+4 \Bigr)$.
Let $f(z) \in S_{2k+j-2}(\SL_2(\ZZ))$ be a primitive form.  Let $\frkp$ be a prime ideal of $\QQ(f)$ such that $p_{\frkp} > 2k+j-2$ and suppose that $\frkp$
divides ${\displaystyle {\bf L}(k+j,f) \over \displaystyle {\bf L}(j/2+k,f)}$. Then, there exists a Hecke eigenform
$F \in S_{(k+j,k)}(\varGamma^{(2)})$ such that 
\[\scra_4^{(I)}(F) \equiv_{\ev} [\scri_2(f)]^{{\bf k}}  \mod{\frkp}.\]

\end{conjecture}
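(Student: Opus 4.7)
The plan is to follow the two-stage strategy laid out in the introduction. In the first stage, I would use the Klingen-Eisenstein congruence (Theorem \ref{th.main-congruence}) established in the previous paper to produce a Hecke eigenform $G \in M_{\bf k}(\varGamma^{(4)})$, not a scalar multiple of $[\scri_2(f)]^{\bf k}$, such that $G \equiv_{\ev} [\scri_2(f)]^{\bf k} \pmod{\frkp}$. The divisibility hypothesis on the ratio ${\bf L}(k+j,f)/{\bf L}(j/2+k,f)$ is exactly the input needed to detect such a non-trivial congruence, via comparison of Fourier coefficients of vector-valued Klingen-Eisenstein series along the lines recalled in Section 6. The goal from here is to identify $G$ with $\scra_4^{(I)}(F)$ for some Hecke eigenform $F \in S_{(k+j,k)}(\varGamma^{(2)})$.

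In the second stage, split on whether $G$ is cuspidal. The non-cuspidal case, where $G$ would be a Klingen-Eisenstein lift from a lower degree form, should be ruled out by Theorem \ref{th.noncongruence-Phi}: under the hypotheses no such $G$ can be Hecke-congruent to $[\scri_2(f)]^{\bf k}$ modulo $\frkp$ other than $[\scri_2(f)]^{\bf k}$ itself. Assume then that $G$ is cuspidal. The Hecke congruence together with the explicit factorization of the standard $L$-function of $[\scri_2(f)]^{\bf k}$ forces, after semisimplification modulo a prime $\frkP$ of a common coefficient field,
\[\bar\rho_{G,\St}^{\mathrm{ss}} \cong \bar\rho_f(k+j/2-1) \oplus \bar\rho_f(k+j/2-2) \oplus \bar\chi^{j/2+1} \oplus \bar\chi^{-j/2-1} \oplus \bar\chi^{j/2} \oplus \bar\chi^{-j/2} \oplus \bar{\bf 1},\]
with $\chi = \chi_p$ the $p$-adic cyclotomic character.

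I would then invoke Arthur's classification for $G$, enumerating possible global $A$-parameters $\psi_G = \bigoplus_i \pi_i[d_i]$ with $\sum_i n_i d_i = 9$, and use Hodge-Tate weight matching (Theorem \ref{th.Galois-st}) together with the above semisimplification to cut down the list of candidates. For each candidate, Theorems \ref{th.irred} and \ref{th.irred2} supply absolute irreducibility of the component Galois representations, Lemma \ref{lem.Ribet's-criterion} combined with the hypothesis on $\frkp$ ensures that $\bar\rho_f$ has large enough image to pin down its occurrences as a two-dimensional summand up to twist, and the sign constraint of Theorem \ref{th.sign-Galois-st} prevents self-dual irreducible factors from mixing symplectic and orthogonal types. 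The only surviving parameter is exactly the one corresponding to a lift of type $\scra^{(I)}$, at which point strong multiplicity one identifies $G$ with $\scra_4^{(I)}(F)$ for an appropriate Hecke eigenform $F \in S_{(k+j,k)}(\varGamma^{(2)})$; this elimination is what Theorem \ref{th.noncongruence} is designed to codify.

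The hard part will be the enumeration-and-elimination step in the cuspidal case. Arthur parameters of total standard dimension $9$ admit a substantial combinatorial variety of shapes, and a priori wrong candidates (CAP types, parameters built from symmetric powers or other functorial transfers) can reproduce the required Satake data at almost every prime modulo $\frkP$. Ruling them out requires carefully combining the matching of Hodge-Tate weights, which is delicate because of the specific spacings in ${\bf k}=(j/2+k,\,j/2+k,\,j/2+4,\,j/2+4)$, with the big-image statement for $\bar\rho_f$ from Lemma \ref{lem.Ribet's-criterion} and the symplectic/orthogonal sign obstructions. These tools only become effective under the mild arithmetic hypotheses on $p_\frkp$, notably $p_\frkp > 2k+j-2$, ensuring short-crystallinity of all Galois representations in sight and absolute irreducibility of $\bar\rho_f$; this explains the shape of the hypotheses in the statement.
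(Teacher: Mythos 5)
Your two-stage strategy is exactly the one the paper executes: first produce a non-trivial Hecke-congruent $G\in M_{\bf k}(\varGamma^{(4)})$ via the Klingen--Eisenstein congruence of Theorem~\ref{th.main-congruence}, then eliminate all competitors for $G$ via Galois-theoretic non-congruence results (Theorems~\ref{th.noncongruence} and~\ref{th.noncongruence-Phi}) and the Arthur-parameter classification (Proposition~\ref{prop.A-parameters}), so that $G$ must be a lift of type $\scra^{(I)}$. Your description of the second stage --- enumeration of parameters, Hodge--Tate weight matching, big image of $\bar\rho_f$, and the sign constraint of Theorem~\ref{th.sign-Galois-st} --- accurately captures what is done in the proof of Theorem~\ref{th.noncongruence}(4).

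However, there is a genuine gap: as stated, Conjecture~\ref{conj.main-conjecture} is \emph{not} proved by these methods, and the paper does not claim it is. What the paper proves is Theorem~\ref{th.main-result} (or the variant Theorem~\ref{th.main-result2}), which adds a non-trivial list of further arithmetic hypotheses (C.3)--(C.8) beyond $p_\frkp>2k+j-2$ and $\frkp\mid {\bf L}(k+j,f)/{\bf L}(k+j/2,f)$. These extra hypotheses are not decorative: Theorem~\ref{th.main-congruence} (your stage one) explicitly requires that $\frkp$ divide neither $\frkD_f$ nor $\zeta(3-2(j/2+k))$, and that $\frkp$ not divide the Fourier-coefficient quantity $\calc_{8,l}(\scri_2(f))a(A_1,\scri_2(f))\overline{a(A,[\scri_2(f)]^{\bf k})}$ --- i.e., (C.3) and (C.4). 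Similarly, the cuspidal non-congruence in Theorem~\ref{th.noncongruence}(4) (your stage two) requires (C.5) the big-image condition on $\bar\rho_f$, (C.6) triviality of certain $\omega$-isotypical parts $\calc_p^{\omega^i}$, (C.7) indivisibility of specific zeta values, and (C.8)/(C.8') indivisibility of specific additional $L$-values of $f$; these are used via Lemmas~\ref{lem.non-trivial-extension1} and~\ref{lem.non-trivial-extension2} to kill unwanted extensions in the reduced standard Galois representation. Your closing sentence attributes the effectiveness of these tools to $p_\frkp>2k+j-2$ alone (plus short crystallinity), but that bound does not imply big image of $\bar\rho_f$, nor any of the indivisibility conditions (C.3), (C.4), (C.6), (C.7), (C.8). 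Without identifying and imposing these hypotheses, both stages of your argument fail to launch; the paper's actual result is the conditional Theorem~\ref{th.main-result}/\ref{th.main-result2}, not the unconditional conjecture.
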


Our main result in this paper is to prove Conjecture \ref{conj.main-conjecture} in the case $k$ is even and $j \equiv 0 \mod 4$.
Let $\Omega(f)_{\pm}=\Omega_{\mathrm{Kato}}(f)_\pm$ be the period of $f$ arising from the cohomology of an integral model of the Kuga-Sato variety, which will be explained in Section 8.
Let $(\rho_f,V_f)$ be the Galois representation of $G_{\QQ}$ attached to $f$ in Theorem \ref{th.Galois-spin}.
For positive integers $m, l$ such that $m \ge 4$ and a Hecke eigenform $F \in  S_k(\varGamma^{(2)})$, put
\[\calc_{m,l}(F)=\prod_{j=3}^{[m/2]}\zeta(1+2j-2l){\bf L}(l-2,F,\St).\]
For each $A \in \calh_m(\ZZ)_{>0}$, let $\frkd_A$ be the discriminant of $\QQ(\sqrt{(-1)^{m/2} \det A})/\QQ$ and 
$\chi_A=\Big({\frkd_A \over }\Big)$  the Kronecker character corresponding to $\QQ(\sqrt{(-1)^{m/2} \det A})/\QQ$.
For a prime number $p$, let $\zeta_p$ be the $p$-th root of unity, and let $\calc_p$ be the $p$-Sylow subgroup of the ideal class group of $\QQ(\zeta_p)$. 
Let $\bar\chi:G_{\QQ} \to \FF^\times_p$ be the 
mod $p$ cyclotomic character, and $\omega$ the Teichm\"uller lift of $\bar \chi$ which acts on $\calc_p$ via the restriction 
$G_{\QQ} \to\Gal(\QQ(\zeta_p)/\QQ)$. 
We also denote by  $\calc_p^{\omega^i}$ the $\omega^i$-isotypical part of $\calc_p$. 
\begin{theorem} \label{th.main-result}
Let $k$ and $j$ be positive even integers such that $k \ge 4$ and $j \equiv 0 \mod 4$, and ${\bf k}= (k+j/2,k+j/2,j/2+4,j/2+4)$.
Let $f$ be a primitive form in $S_{2k+j-2}(\SL_2(\ZZ))$ and
$\frkp$ be a prime ideal of $\QQ(f)$ and $p=p_{\frkp}$. 
Suppose that the following conditions hold:
\begin{itemize}
\item[(C.1)] $p >2k+j-2$;
\item[(C.2)] $\frkp$ divides ${\bf L}(k+j,f:\Omega_+(f))$;
\item[(C.3)] $\frkp$ does not divide 
\begin{align*}
a(A_1,\scri_2(f))\calc_{8,\frac{j}{2}+2}(\scri_2(f))\overline{a(A,[\scri_2(f)]^{{\bf k}})}
\end{align*}
 for some $A_1 \in \calh_2(\ZZ)_{>0}$ such that $\frkp \nmid \frkd_{A_1}$ and  $A \in \calh_4(\ZZ)_{>0}$;
\item[(C.4)] $\frkp$ does not divide $\frkD_f^{(p)}$;
\item[(C.5)]  $\rho_f(\Gal(\bar \QQ/\QQ(\zeta_{p^{\infty}}))$ contains $\SL_2(\ZZ_p)$ (with a suitable choice of a lattice of $V_f$);
\item[(C.6)]  $\#\mathcal C_p^{\omega^{-j}}=\#\mathcal C_p^{\omega^{-j-2}}=\#\mathcal C_p^{\omega^{-j/2}}=1$;
\item[(C.7)] $p$ divides none of the zeta values $\zeta(-1-j), \zeta(1-j), \zeta(1-j/2)$ and $\zeta(-1-j/2)$;
\item [(C.8)] $\frkp$ divides none of the $L$-values 
${\bf L}(k+j/2,f;\Omega_+(f)), \ \ {\bf L}(k+j-1,f;\Omega_-(f)), {\bf L}(k+j-2,f;\Omega_+(f))$ and ${\bf L}(k+j+1,f;\Omega_-(f))$.
\end{itemize}
Then there is a Hecke eigenform $F_0$ in $S_{(k+j,k)}(\varGamma^{(2)})$ such that
\[\scra_4^{(I)}(F_0)  \equiv_\ev [\scri_2(f)]^{{\bf k}} \mod \frkp.\]
\end{theorem}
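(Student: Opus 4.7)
The plan is to combine the congruence theorem for Klingen--Eisenstein lifts (Theorem \ref{th.main-congruence}), developed in our earlier work, with a Galois-theoretic obstruction that pins down the congruence partner of $[\scri_2(f)]^{{\bf k}}$. First I will verify the hypotheses of Theorem \ref{th.main-congruence} using conditions (C.1)--(C.3); this produces a Hecke eigenform $G \in M_{{\bf k}}(\varGamma^{(4)})$ that is not a scalar multiple of $[\scri_2(f)]^{{\bf k}}$ and satisfies
\[
G \equiv_{\rm ev} [\scri_2(f)]^{{\bf k}} \mod \frkp.
\]
The remaining task is to show that any such $G$ must in fact have the form $\scra^{(I)}_4(F_0)$ for some Hecke eigenform $F_0 \in S_{(k+j,k)}(\varGamma^{(2)})$.

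Next I would split according to whether $G$ is a cusp form. If $G$ is not cuspidal, then iteratively applying the Siegel operator $\Phi$ exhibits $G$ as a Klingen--Eisenstein lift of a Hecke eigenform of lower genus. In that situation Theorem \ref{th.noncongruence-Phi}, together with the vanishing hypotheses (C.6)--(C.8) on class groups and $L$-values, rules out the congruence with $[\scri_2(f)]^{{\bf k}}$. Hence $G \in S_{{\bf k}}(\varGamma^{(4)})$.

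Assuming $G$ is cuspidal, I would then apply Arthur's classification to the associated cuspidal automorphic representation $\pi_G$ and form the Galois representation $\rho_{G,\St}$ of Theorem \ref{th.Galois-st}. Conditions (C.4) and (C.5), combined with Lemma \ref{lem.Ribet's-criterion}, guarantee that $\bar\rho_f$ is absolutely irreducible with sufficiently large image; the Hecke-eigenvalue congruence then forces the semi-simplification
\[
\bar\rho_{G,\St}^{\mathrm{ss}} \simeq \bar\rho_f(k+j/2-1) \oplus \bar\rho_f(k+j/2-2) \oplus \bar\chi^{j/2+1} \oplus \bar\chi^{-j/2-1} \oplus \bar\chi^{j/2} \oplus \bar\chi^{-j/2} \oplus \bar{{\bf 1}}.
\]

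Finally I would enumerate the admissible shapes for the Arthur parameter $\psi(\pi_G,\St)$ compatible with this decomposition and eliminate every possibility except the one corresponding to $\scra^{(I)}_4(F_0)$. This is the content of Theorem \ref{th.noncongruence}, and it is where the main difficulty lies: for each alternative $A$-parameter one must construct a non-trivial Bloch--Kato Selmer class whose forced triviality contradicts the alternative. The triviality comes from the hypotheses --- the big-image property of $\bar\rho_f$ from (C.4)--(C.5), the vanishing of the $\omega^{-j}$, $\omega^{-j-2}$, and $\omega^{-j/2}$ isotypic parts of $\mathcal C_p$ from (C.6), the $p$-indivisibility of the critical zeta values from (C.7), and the $p$-indivisibility of the critical twisted $L$-values of $f$ from (C.8). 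Once these alternatives are excluded, $G$ must equal $\scra^{(I)}_4(F_0)$ for a Hecke eigenform $F_0 \in S_{(k+j,k)}(\varGamma^{(2)})$, giving the desired congruence.
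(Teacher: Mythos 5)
Your outline reproduces the paper's own proof: obtain a congruence partner $G$ via Theorem \ref{th.main-congruence}, then rule out non-cuspidal $G$ by Theorem \ref{th.noncongruence-Phi} and cuspidal $G$ not of type $\scra^{(I)}$ by Theorem \ref{th.noncongruence}(4), so the surviving $G$ must be $\scra_4^{(I)}(F_0)$. The one slip is the claim that (C.1)--(C.3) alone verify the hypotheses of Theorem \ref{th.main-congruence}. Specializing the $(k,l)$ of that theorem to $(k+j/2,\,j/2+4)$, its hypothesis (2) asks that $\frkp$ divide the ratio ${\bf L}(k+j,f)/{\bf L}(k+j/2,f)$; (C.2) gives $\frkp\mid{\bf L}(k+j,f;\Omega_+(f))$, but one also needs $\frkp\nmid{\bf L}(k+j/2,f;\Omega_+(f))$ from (C.8) (and it matters here that $j\equiv 0\ \mathrm{mod}\ 4$, so that $k+j$ and $k+j/2$ have the same parity and hence the same sign $s=+$, making the ratio period-independent). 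Hypothesis (3) of that theorem further requires $\frkp\nmid\frkD_f$, which follows from (C.4) because $\frkD_f^{(p)}\subset\frkD_f$. The paper accordingly invokes (C.1), (C.2), (C.3), (C.4) and (C.8) for this first step. Apart from that bookkeeping, your argument is the paper's.
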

\begin{remark}\label{rem.confirm-assumption}
\begin{itemize}
\item[(1)] We have an algorithm for computing \begin{align*}
a(A_1,\scri_2(f))\calc_{8,\frac{j}{2}+2}(\scri_2(f))\overline{a(A,[\scri_2(f)]^{{\bf k}})}
\end{align*}
and we can easily check whether the condition (C.3) holds or not
(cf. Proposition \ref{prop.fc-klingen1}). 
\item [(2)] The condition (C.5) holds for almost all $p$ and this can easily be checked (cf. Lemma \ref{lem.Ribet's-criterion}).
We also note that $\bar \rho_f$ is absolutely irreducible by this condition.
\item [(3)]  If Vandiver's conjecture holds, the condition (C.6)  holds.  Vandiver's conjecture holds for $p <12 000 000$ (cf. \cite{B-C-E-M-S01}).
\item [(4)]  If $p$ is a regular prime, the conditions (C.6) and (C.7) hold.
 \end{itemize}
\end{remark}
To confirm our conjecture rigorously, we rewrite the above theorem. 
\begin{theorem} \label{th.main-result2} 
Let the notation be as in Theorem \ref{th.main-result}.
In addition to the assumptions \text{(C.1), (C.3)--(C.7)}, suppose that the following two conditions hold:
\begin{itemize}
\item[(C.2')] $\frkp$ divides ${\bf L}(k+j,f)/{\bf L}(k+j/2,f)$,
\item [(C.8')] $\frkp$ divides neither 
${\bf L}(k+j/2, k+j-1;f)$ nor ${\bf L}(k+j-2,k+j+1;f)$.
\end{itemize}
Then the same assertion as Theorem \ref{th.main-result} holds.
\end{theorem}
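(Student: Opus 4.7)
The plan is to reduce Theorem \ref{th.main-result2} to Theorem \ref{th.main-result} by showing that, under the period comparison Lemma \ref{lem.comparison-of-periods} (proved in Appendix A), the period-independent conditions (C.2') and (C.8') imply the period-specific conditions (C.2) and (C.8). Conditions (C.1) and (C.3)--(C.7) are shared verbatim, so the only work is this translation between normalizations of the elliptic periods.

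First I would spell out the algebraic identity relating the two kinds of normalized $L$-values. For $l_1$ even and $l_2$ odd with $l_1 + l_2$ odd (so that $s(l_1)=+$ and $s(l_2)=-$ for the trivial character), direct comparison of the formulas recalled in Section 3 gives
\[
{\bf L}(l_1, l_2; f) \; = \; -\sqrt{-1}\cdot \frac{\Omega_+(f)\,\Omega_-(f)}{(f, f)}\cdot {\bf L}(l_1, f; \Omega_+(f))\cdot {\bf L}(l_2, f; \Omega_-(f)).
\]
Lemma \ref{lem.comparison-of-periods} asserts (with (C.1) ensuring that the elementary prefactors are $\frkp$-units) that the scalar $\Omega_+(f)\Omega_-(f)/(f,f)$ has trivial $\frkp$-valuation. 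Consequently ${\bf L}(l_1, l_2; f)$ and the product ${\bf L}(l_1, f; \Omega_+(f))\cdot {\bf L}(l_2, f; \Omega_-(f))$ differ by a $\frkp$-adic unit whenever the parity condition is met.

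Applying this with the pairs $(l_1, l_2) = (k+j/2,\, k+j-1)$ and $(k+j-2,\, k+j+1)$, both of which satisfy $l_1+l_2$ odd since $k$ is even and $j\equiv 0\pmod 4$, hypothesis (C.8') forces each of the four factors ${\bf L}(k+j/2, f; \Omega_+(f))$, ${\bf L}(k+j-1, f; \Omega_-(f))$, ${\bf L}(k+j-2, f; \Omega_+(f))$, and ${\bf L}(k+j+1, f; \Omega_-(f))$ to be a $\frkp$-adic unit; this is precisely (C.8). For (C.2), since $k+j$ and $k+j/2$ are both even we have $s(k+j) = s(k+j/2) = +$, so the period-independent ratio of (C.2') equals ${\bf L}(k+j, f; \Omega_+(f))/{\bf L}(k+j/2, f; \Omega_+(f))$; combined with the non-divisibility of the denominator just obtained, (C.2') yields $\frkp \mid {\bf L}(k+j, f; \Omega_+(f))$, which is (C.2). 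With every hypothesis of Theorem \ref{th.main-result} now in force, that theorem supplies the required eigenform $F_0 \in S_{(k+j,k)}(\varGamma^{(2)})$. The only real obstacle is packed into Appendix A: the periods comparison Lemma \ref{lem.comparison-of-periods}, relating Kato's motivic period with the Petersson norm, is where the substantive input lies; the translation above is a routine bookkeeping of signs, parities, and $\frkp$-valuations.
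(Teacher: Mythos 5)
Your reduction of the period-dependent conditions (C.2), (C.8) to the period-free (C.2'), (C.8') follows the same structural idea as the paper, and the algebraic identity you write down for $\mathbf{L}(l_1,l_2;f)$ in terms of $\mathbf{L}(l_1,f;\Omega_+(f))\cdot\mathbf{L}(l_2,f;\Omega_-(f))$ is correct, as is the parity bookkeeping. But there is a genuine gap at the crucial step.

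You assert that (C.8'), combined with Lemma \ref{lem.comparison-of-periods}, ``forces each of the four factors $\mathbf{L}(k+j/2,f;\Omega_+(f))$, $\mathbf{L}(k+j-1,f;\Omega_-(f))$, $\mathbf{L}(k+j-2,f;\Omega_+(f))$, $\mathbf{L}(k+j+1,f;\Omega_-(f))$ to be a $\frkp$-adic unit.'' This does not follow from what you have established. (C.8') together with the period comparison only controls the $\frkp$-valuation of the two products $\mathbf{L}(k+j/2,f;\Omega_+(f))\cdot\mathbf{L}(k+j-1,f;\Omega_-(f))$ and $\mathbf{L}(k+j-2,f;\Omega_+(f))\cdot\mathbf{L}(k+j+1,f;\Omega_-(f))$. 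Knowing a product has $\ord_\frkp\le 0$ says nothing about the individual factors: one factor could have $\ord_\frkp>0$ and the other $\ord_\frkp<0$. To split the conclusion onto each factor, one must also know that each $\mathbf{L}(m,f;\Omega_{(-1)^m}(f))$ is $\frkp$-integral, i.e. $\ord_\frkp\ge 0$. This is a nontrivial input which the paper supplies explicitly by citing Kato \cite[Propositions 14.16 and 14.21]{Kato04} (under the standing hypotheses including (C.5)), and which your proposal omits entirely; without it the deduction of (C.8) — and hence of (C.2), since your step there divides by $\mathbf{L}(k+j/2,f;\Omega_+(f))$ — does not go through. A smaller mismatch: the hypothesis needed to invoke Lemma \ref{lem.comparison-of-periods} is $\frkp\nmid\frkD_f$, which comes from (C.4) together with the weight bound from (C.1), not from (C.1) alone as you indicate.
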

\begin{remark}\label{rem.confirm-assumption2}

\begin{itemize}
\item[(1)] The condition (C.2') does not depend on the choice of periods $c_{\pm}(f)$.
\item [(2)] The condition (C.8') does not depend on the choice of the periods, and it can easily be checked by Shimura's or Zagier's method. In particular, suppose that
$\frkp$ divides neither ${\bf L}(k+j-2,f)/{\bf L}(k+j/2,f)$ nor ${\bf L}(k+j-1,f)/{\bf L}(k+j+1,f)$. 
Then if  $\frkp$ does not divide ${\bf L}(k+j/2, k+j-1;f)$, 
the condition (C.8') holds. 

 \end{itemize}
\end{remark}

The proofs of Theorem \ref{th.main-result} and \ref{th.main-result2} will be given in Sections 8 and 9, respectively.  By \cite[Theorem 4.5]{A-C-I-K-Y23}, we can prove Harder's conjecture in the case $k$ is even and $j \equiv 0 \mod 4$.

\begin{theorem} \label{th.Harder-congruence}
Let the notation and the assumption be  as in Theorem \ref{th.main-result} or as in Theorem \ref{th.main-result2}. Then there exists a Hecke eigenform $F 
 \in S_{(k+j,k)}(\varGamma^{(2)})$, and a prime $\frkp' \mid\frkp$ in (any
field containing) $\QQ(f)\QQ(F)$ such that, for all prime numbers  $\ell$
\[L_{\ell}(X,F,{\rm Sp}) \equiv L_{\ell}(X,f)(1-{\ell}^{k-2}X)(1-{\ell}^{k+j-1}X)
\mod{\frkp'}.\]
\end{theorem}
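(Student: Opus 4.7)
The plan is to deduce Theorem \ref{th.Harder-congruence} directly from Theorem \ref{th.main-result} (or Theorem \ref{th.main-result2}) by invoking \cite[Theorem 4.5]{A-C-I-K-Y22}, where the implication ``enhanced congruence $\Longrightarrow$ Harder congruence'' has already been worked out. In this way, once the existence of the Hecke eigenform $F_0$ satisfying the enhanced congruence is granted by Theorem \ref{th.main-result}, no fresh ingredient is needed; only the bookkeeping of comparing standard and spinor $L$-polynomials remains.

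Concretely, I would first apply Theorem \ref{th.main-result} (respectively Theorem \ref{th.main-result2}) to the data $(f,\frkp)$ to produce a Hecke eigenform $F_0 \in S_{(k+j,k)}(\varGamma^{(2)})$ with
\[\scra_4^{(I)}(F_0) \equiv_{\ev} [\scri_2(f)]^{{\bf k}} \mod \frkp.\]
This yields, for every rational prime $\ell$, an equality of local standard $L$-polynomials modulo a prime $\frkp'$ of $\QQ(f)\cdot \QQ(F_0)$ above $\frkp$. I would then expand each side explicitly: Theorem \ref{th.atobe1} factors the left-hand side through $L_\ell(\cdot, F_0, \Sp)$, while the Saito-Kurokawa formula for $\scri_2(f)$ together with the Klingen-Eisenstein decomposition of $\rho_{[\scri_2(f)]^{{\bf k}},\St}$ (recalled immediately after Theorem \ref{th.Galois-st}, with $n=4,\ r=2$, $k_3=k_4=j/2+4$) factors the right-hand side. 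After the substitution $X \mapsto \ell^{k+j/2-1}Y$ and cancellation of the common linear factor $(1-\ell^{k+j/2-1}Y)$, the congruence takes the form
\[L_\ell(Y,F_0,\Sp)\,L_\ell(\ell Y,F_0,\Sp) \equiv \bigl[L_\ell(Y,f)(1-\ell^{k-2}Y)(1-\ell^{k+j-1}Y)\bigr]\,\bigl[L_\ell(\ell Y,f)(1-\ell^{k-1}Y)(1-\ell^{k+j}Y)\bigr] \mod \frkp'.\]

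The final step is to pair the two degree-four factors on each side and extract the Harder congruence
\[L_\ell(Y,F_0,\Sp) \equiv L_\ell(Y,f)(1-\ell^{k-2}Y)(1-\ell^{k+j-1}Y) \mod \frkp'.\]
The main (and only) subtlety here is uniqueness of this factorization: a priori the relation $A\cdot B \equiv C \cdot D \mod \frkp'$ does not force $A\equiv C$. However, the four spinor Satake roots of $F_0$ have distinct motivic weights $0,\ k-2,\ k+j-1,\ 2k+j-3$ read off from the Hodge-Tate weights of $\rho_{F_0}$ in Theorem \ref{th.Galois-spin}, so on multiplication by $\ell$ they shift to a disjoint collection; hence the pairing between the two factors is forced. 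This extraction is precisely what is carried out in \cite[Theorem 4.5]{A-C-I-K-Y22}, so once Theorem \ref{th.main-result} is established in Sections 7--9 the conclusion of Theorem \ref{th.Harder-congruence} follows without further obstacle.
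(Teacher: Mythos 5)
Your proposal matches the paper's proof exactly: the paper deduces Theorem \ref{th.Harder-congruence} from Theorem \ref{th.main-result} (resp.\ Theorem \ref{th.main-result2}) in one line, by citing \cite[Theorem 4.5]{A-C-I-K-Y22}, just as you do. Your sketch of the $L$-polynomial extraction is only heuristic (motivic weights do not by themselves disambiguate a factorization modulo $\frkp'$), but you correctly defer that step to the cited theorem, which is precisely what the paper relies on.
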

Let $\varpi$ be the uniformizer of $K_{\frkp}$. 
By \cite[Proposition 4.3]{Dummigan-Ibukiyama-Katsurada11}, we have 
\begin{theorem} \label{th.Bloch-Kato-torsion}
Let the notation and the assumption be as in Theorem \ref{th.main-result} or as in Theorem \ref{th.main-result2}. Let $T$ be  a $G_{\QQ}$-stable lattice of $V_f$. Let $\mathrm{Sel}(\QQ,T(k+j))$ be the Bloch-Kato Selmer group associated with $T(k+j)$. (The definition of $\mathrm{Sel}(\QQ,T(k+j))$ will be given in Section 7.)
Then $\mathrm{Sel}(\QQ,T(k+j))$ has a $\frkp$-torsion element.
\end{theorem}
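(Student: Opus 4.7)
The plan is to deduce this from Theorem \ref{th.Harder-congruence} by the standard ``congruence to Selmer'' machinery developed in \cite{Dummigan-Ibukiyama-Katsurada11}. First, I would invoke Theorem \ref{th.Harder-congruence} to obtain a Hecke eigenform $F \in S_{(k+j,k)}(\varGamma^{(2)})$ and a prime $\frkp'$ above $\frkp$ satisfying Harder's congruence
\[L_{\ell}(X,F,\mathrm{Sp}) \equiv L_{\ell}(X,f)(1-\ell^{k-2}X)(1-\ell^{k+j-1}X) \mod{\frkp'}\]
for every prime number $\ell$. Attached to $F$ we have the four-dimensional spinor Galois representation $\rho_F : G_{\QQ} \to \GL_4(K_{\frkp'})$ of Theorem \ref{th.Galois-spin}, and attached to $f$ we have $\rho_f : G_{\QQ} \to \GL_2(K_{\frkp'})$. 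Comparing characteristic polynomials at every $\mathrm{Frob}_\ell^{-1}$ (for $\ell \neq p$) via Chebotarev and applying Brauer--Nesbitt yields the congruence of semisimplifications
\[\bar{\rho}_F^{\mathrm{ss}} \cong \bar{\rho}_f \oplus \bar{\chi}^{2-k} \oplus \bar{\chi}^{1-k-j}.\]

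Next, I would apply Theorem \ref{th.irred} (whose hypotheses $j\ge 1$, $k\ge 4$, $p-1\ge 2k+j-1$ are guaranteed by (C.1)) together with the non-CAP property of $F$ (since the congruence together with (C.3) rules out the CAP possibility) to conclude that $\rho_F$ is absolutely irreducible. By (C.5) and Lemma \ref{lem.Ribet's-criterion}, $\bar{\rho}_f$ is absolutely irreducible and, by (C.1) and the distinctness of the exponents $2-k$, $1-k-j$ modulo $p-1$, each character $\bar{\chi}^{2-k}$, $\bar{\chi}^{1-k-j}$ is not a Jordan--H\"older constituent of $\bar{\rho}_f$. This puts us exactly in the setup of Ribet's lattice argument (in the generality needed, Urban's lemma on lattices in irreducible Galois representations with reducible residual semisimplification). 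Choosing an appropriate $G_{\QQ}$-stable lattice in the representation space $V_F$ of $\rho_F$, I would extract a non-split short exact sequence of $\FF_{\frkp'}[G_{\QQ}]$-modules of the form
\[0 \longrightarrow \bar{\rho}_f \longrightarrow \bar{M} \longrightarrow \bar{\chi}^{1-k-j} \longrightarrow 0,\]
which yields a nonzero class $c \in H^1(\QQ, \bar{\rho}_f \otimes \bar{\chi}^{k+j-1}) \cong H^1(\QQ, (T/\frkp' T)(k+j-1))$.

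The remaining work is to verify the Bloch--Kato local conditions defining $\mathrm{Sel}(\QQ, T(k+j))$. Away from $p$, the representation $\rho_F$ is unramified (Theorem \ref{th.Galois-spin}(1)), so the class $c$ is unramified outside $p$ automatically. At $p$, the condition (C.1) gives $p-1 > 2k+j-3 = w_{\mathrm{Sp}}$, so Theorem \ref{th.Galois-spin}(2) makes $\rho_F$ short crystalline at $p$; hence the extension $\bar{M}|_{D_p}$ arises by reduction of a crystalline extension, which by Fontaine--Laffaille theory shows that $c$ restricts to the Bloch--Kato local condition $H^1_f(\QQ_p, \bar{\rho}_f \otimes \bar{\chi}^{k+j-1})$. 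Transferring from $(k+j-1)$- to $(k+j)$-twist via the customary Tate-dual identification used in \cite{Dummigan-Ibukiyama-Katsurada11}, the class produces a nonzero $\frkp$-torsion element in $\mathrm{Sel}(\QQ, T(k+j))$, proving the theorem.

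The main obstacle, and the reason the paper invokes \cite[Proposition 4.3]{Dummigan-Ibukiyama-Katsurada11}, is the lattice extraction step: one must choose the lattice so that the non-split subquotient involves $\bar{\rho}_f$ rather than just the characters (so the resulting class is non-trivial and of the desired shape), which requires the absolute irreducibility of both $\bar{\rho}_f$ and $\rho_F$ together with the distinctness of $\bar{\chi}^{2-k}, \bar{\chi}^{1-k-j}, \bar{\chi}^{\pm a} \not\cong \bar{\rho}_f$ ensured by (C.6), (C.7), and (C.1). Verifying the crystalline local condition at $p$ is then a direct application of short crystallinity, and the unramifiedness outside $p$ is automatic.
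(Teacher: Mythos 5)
The paper's own proof of this theorem is a one-line citation of \cite[Proposition 4.3]{Dummigan-Ibukiyama-Katsurada11}; your proposal is therefore a reconstruction of the argument underlying that proposition rather than a parallel to something spelled out in the present paper. The overall strategy you outline---Harder's congruence, Brauer--Nesbitt, irreducibility of $\rho_F$ via Theorem \ref{th.irred}, a Ribet/Urban lattice argument, and Fontaine--Laffaille/short-crystallinity for the local condition at $p$---is indeed the standard machinery one expects behind such a result, and the residual decomposition $\bar\rho_F^{\mathrm{ss}} \cong \bar\rho_f \oplus \bar\chi^{2-k} \oplus \bar\chi^{1-k-j}$ is correctly computed.

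However, there is a genuine gap in the twist bookkeeping. The extension class your lattice argument produces lies in $H^1(\QQ, \bar\rho_f(m))$ with $m \in \{k-2,\ k+j-1\}$: indeed $\mathrm{Ext}^1(\bar\chi^{1-k-j}, \bar\rho_f) \cong H^1(\QQ,\bar\rho_f(k+j-1))$ and $\mathrm{Ext}^1(\bar\chi^{2-k}, \bar\rho_f) \cong H^1(\QQ,\bar\rho_f(k-2))$, and the other two non-character directions give the same two twists because $\rho_f^\vee \cong \rho_f(2k+j-3)$. Neither of these is $H^1(\QQ,\bar\rho_f(k+j))$. Your proposed bridge---``transferring from $(k+j-1)$- to $(k+j)$-twist via the customary Tate-dual identification''---does not work: the Tate dual of $V_f(k+j-1)$ is $V_f(k-1)$, not $V_f(k+j)$; it is $V_f(k-2)$ whose Tate dual is $V_f(k+j)$. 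So if any passage from the lattice class to $\mathrm{Sel}(\QQ, T(k+j))$ is possible, it must proceed through the $k-2$ direction and involves a genuine Poitou--Tate/Greenberg--Wiles argument relating the Selmer group of a module to that of its Cartier dual, not a mere ``identification'' of cohomology groups. Related to this, a Ribet/Urban lattice argument does not let you choose which of the several possible non-split two-step extensions you obtain; with three Jordan--H\"older constituents you must invoke the hypotheses (in particular (C.6)--(C.8) together with Kato's theorem, i.e. Theorem \ref{th.Kato}) to rule out the classes landing in $H^1(\bar\chi^{\pm(1+j)})$ and in the ``wrong'' $\bar\rho_f$-twist. Your sketch neither isolates the correct target twist nor explains the role of condition (C.8), which is exactly the input needed to kill the $m=k+j-1$ possibility via Theorem \ref{th.Kato}.
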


\section{Congruence for Klingen-Eisenstein lifts}

The following theorem is due to  \cite[Theorem 6.13]{A-C-I-K-Y23}.
\begin{theorem} \label{th.main-congruence}
Let $k$ and $l$ be positive even integers such that $l \le k$, and put ${\bf k}=(k,k,l,l)$. 
We define a subspace $\widetilde M_{\bf k}(\varGamma^{(4)})$ of $M_{\bf k}(\varGamma^{(4)})$ as
\[\widetilde M_{\bf k}(\varGamma^{(4)})=\begin{cases} \{G \in  M_{\bf k}(\varGamma^{(4)}) \ | \ \Phi^4_2(G) \in S_k(\varGamma^{(2)}) \} & \text{ if } l<k \\
M_{\bf k}(\varGamma^{(4)}) & \text{ if } k=l. \end{cases}
\]
Let $f$ be a primitive form in $S_{2k-2}(\SL_2(\ZZ))$ and $\frkp$ a prime ideal of $\QQ(f)$ such that
\begin{itemize}
\item[(1)] $p_\frkp > 2k-2$;
\item[(2)] $\frkp$ divides ${\bf L}(k+l-4,f)/{\bf L}(k,f)$;
\item[(3)] $\frkp$ divides neither $\frkD_f$ nor $\zeta(3-2k)$;
\item[(4)] $\frkp$ divides neither $\calc_{8,l}(\scri_2(f))a(A_1,\scri_2(f))\overline{a(A,[\scri_2(f)]^{\bf k})}$ nor $\frkd_{A_1}$ for some $A_1 \in \calh_2(\ZZ)_{>0}$ and $A \in \calh_4(\ZZ)_{>0}$.
\end{itemize}
Then there exists a Hecke eigenform $G$ in $\widetilde M_{{\bf k}}(\varGamma^{(4)})$ such that
$G$ is not a constant multiple of $[\scri_2(f)]^{{\bf k}}$ and 
\[G \equiv_{\mathrm {ev}} [\scri_2(f)]^{{\bf k}}  \mod{\frkp}.\]
\end{theorem}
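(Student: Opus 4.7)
The plan is to prove the theorem by a Katsurada-style comparison between a suitably chosen Fourier coefficient of $[\scri_2(f)]^{{\bf k}}$ and its Petersson norm. The general principle is: if the ratio $a(A,[\scri_2(f)]^{{\bf k}})/([\scri_2(f)]^{{\bf k}},[\scri_2(f)]^{{\bf k}})$, after dividing by the algebraic periods of $f$ and $\scri_2(f)$, acquires an extra factor of $\frkp$ in the numerator, then the idempotent in the Hecke algebra (on the subspace $\widetilde M_{{\bf k}}(\varGamma^{(4)})$) cutting out the character $\lambda_{[\scri_2(f)]^{{\bf k}}}$ cannot be $\frkp$-integral. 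Consequently there must exist another Hecke eigenform $G$ in $\widetilde M_{{\bf k}}(\varGamma^{(4)})$ with $G\equiv_{\mathrm{ev}}[\scri_2(f)]^{{\bf k}}\pmod{\frkp}$, and the non-vanishing hypothesis on $a(A,[\scri_2(f)]^{{\bf k}})$ ensures $G$ is not a scalar multiple of $[\scri_2(f)]^{{\bf k}}$.

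The key computations I would assemble are the following. First, via a Garrett pullback on $\SP_4\times\SP_2 \hookrightarrow \SP_4\times \SP_4$ applied to a Siegel–Eisenstein series on $\SP_8$ and specialized against the Saito–Kurokawa lift $\scri_2(f)$, one obtains an identity of the Klingen–Shimura form
\begin{equation*}
a(A,[\scri_2(f)]^{{\bf k}})\;=\;C_1\cdot\frac{{\bf L}(k+l-4,f)}{\Omega_\pm(f)}\cdot a(A_1,\scri_2(f))\;+\;(\text{lower-rank terms})
\end{equation*}
for $A\in\calh_4(\ZZ)_{>0}$ with $A_1$ its $2\times 2$ upper-left block, where $C_1$ is a $\frkp$-unit under the conditions on $\frkd_{A_1}$ and $\zeta(3-2k)$. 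Second, combining the Rankin–Selberg integral representation of $L(s,\scri_2(f),\St)$ with the Ikeda–Kohnen formula for $(\scri_2(f),\scri_2(f))$ yields
\begin{equation*}
([\scri_2(f)]^{{\bf k}},[\scri_2(f)]^{{\bf k}})\;=\;C_2\cdot (\scri_2(f),\scri_2(f))\cdot\calc_{8,l}(\scri_2(f)),
\qquad
(\scri_2(f),\scri_2(f))\;=\;C_3\cdot\frac{{\bf L}(k,f)}{\Omega_\pm(f)}\cdot (f,f),
\end{equation*}
with $C_2,C_3$ manifestly $\frkp$-integral units under hypotheses (1), (3), (4) on the prime $\frkp$. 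Taking the ratio kills $(f,f)$ and $\Omega_\pm(f)$ up to $\frkp$-units and produces the factor ${\bf L}(k+l-4,f)/{\bf L}(k,f)$ in the numerator, which by hypothesis (2) is divisible by $\frkp$.

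To upgrade this to an honest congruence, I would invoke the standard Deligne–Serre / Ribet lemma in the form used throughout \cite{A-C-I-K-Y22}: choose a Hecke-stable lattice in $\widetilde M_{{\bf k}}(\varGamma^{(4)})(\frkO_{(\frkp)})$, decompose after extending scalars to $K_\frkp$ into eigenspaces, and observe that the $\frkp$-adic valuation of $a(A,[\scri_2(f)]^{{\bf k}})/([\scri_2(f)]^{{\bf k}},[\scri_2(f)]^{{\bf k}})$ exceeds what is allowed when the character $\lambda_{[\scri_2(f)]^{{\bf k}}}$ is $\frkp$-isolated in $\mathrm{Spec}\,\TT\otimes K_\frkp$. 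The non-vanishing conditions (4) ensure both numerator terms are $\frkp$-units, and the condition $\frkp\nmid \frkD_f$ guarantees that $\scri_2(f)$ is the unique Saito–Kurokawa lift with that system of eigenvalues mod $\frkp$, so the producing eigenform $G$ is genuinely new. The main technical obstacle is tracking the period normalizations carefully so that the displayed ratios are honest elements of $\QQ(f)_\frkp$ with the correct valuation; this requires invoking Shimura's algebraicity theorem with the Kato period $\Omega_\pm(f)$ and the comparison of Klingen and Deligne periods, which is precisely what the hypothesis on $\zeta(3-2k)$ and the condition on $\calc_{8,l}(\scri_2(f))$ are arranged to control.
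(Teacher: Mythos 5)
Your high-level strategy — a Garrett--B\"ocherer pullback of a Siegel--Eisenstein series to relate Fourier coefficients of $[\scri_2(f)]^{\bf k}$ to the standard $L$-value ${\bf L}(l-2,\scri_2(f),\St)=\zeta(l-2)L(k+l-3,f)L(k+l-4,f)$ divided by $(\scri_2(f),\scri_2(f))$, and then a Deligne--Serre style lattice argument to produce a congruent eigenform — is in the right spirit and matches what is going on in the cited Theorem 6.13 of the companion paper. You also correctly identify why the hypothesis ${\bf L}(k+l-4,f)/{\bf L}(k,f)$ appears: the pullback constant carries $L(k+l-4,f)$ in its numerator, while the Ikeda--Katsurada--Kohnen style evaluation of $(\scri_2(f),\scri_2(f))$ contributes $L(k,f)$.

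However there are genuine gaps. The central displayed identity
$([\scri_2(f)]^{\bf k},[\scri_2(f)]^{\bf k})=C_2\,(\scri_2(f),\scri_2(f))\,\calc_{8,l}(\scri_2(f))$
does not make sense as written: $[\scri_2(f)]^{\bf k}$ is a non-cuspidal form, so its Petersson norm diverges. The correct well-defined object in the doubling argument is the cuspidal pairing $\bigl(E_{8,l}|_{\SP_4\times\SP_4},\, 1\otimes\scri_2(f)\bigr)$, which equals (a constant times) $[\scri_2(f)]^{\bf k}$, with the constant carrying the ratio $L(l-2,\scri_2(f),\St)/(\scri_2(f),\scri_2(f))$; that is what the factor $\calc_{8,l}(\scri_2(f))$ encapsulates, and why the paper packages the $\frkp$-unit hypothesis (condition (4) of the theorem) as $\frkp\nmid\calc_{8,l}(\scri_2(f))\,a(A_1,\scri_2(f))\,\overline{a(A,[\scri_2(f)]^{\bf k})}$ rather than as a statement about a Petersson norm. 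Likewise, your formula for $a(A,[\scri_2(f)]^{\bf k})$ is not the Mizumoto--Katsurada Fourier coefficient expansion (a sum over $R\in M_{2,4}(\ZZ)$ of Siegel series and bideterminant polynomials, as in the definition of $\epsilon_{k,{\bf k}}(A_0,A_1)(V)$ appearing right after the theorem); in particular the Kato period $\Omega_\pm(f)$ should never appear in a Fourier coefficient of the Eisenstein series, and the hypotheses of the theorem are deliberately period-free. Finally, the logic of the closing Deligne--Serre step is inverted: having the $L$-value ratio in the \emph{numerator} of $\calc_{8,l}(\scri_2(f))$, together with condition (4) forcing the product to be a $\frkp$-unit, means that $\overline{a(A,[\scri_2(f)]^{\bf k})}$ must have \emph{negative} $\frkp$-valuation; it is this $\frkp$-fractionality of a Fourier coefficient of a Hecke eigenform in $\widetilde M_{\bf k}(\varGamma^{(4)})$ (whose $\Phi$-image $\scri_2(f)$ is $\frkp$-integral) that contradicts $\frkp$-integrality of the idempotent cutting out $[\scri_2(f)]^{\bf k}$, not an "extra factor of $\frkp$ in the numerator" of a (divergent) norm ratio.
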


Let ${\bf k}=(k,k,l,l)$ and we write $k$ for $(k,k)$ with $k,l$ positive even integers such that $k \ge l$. To confirm the condition (4) in Theorem \ref{th.main-congruence}, we give a formula for computing ${\bf L}(l-2,F,\St)a(N_1,F)a(N,[F]^{{\bf k}})$.

For $T \in \calh_m(\ZZ_p)$, let $F_p^*(T,X)$ be the polynomial associated with the Siegel series $b(s,T)$ in \cite[Section 6]{A-C-I-K-Y23}. Moreover, let $\calm_{k,l}=M_k(\varGamma^{(2)})$ or $S_k(\varGamma^{(2)})$ according as $k=l$ or not, and 
let $\{f_j \}_{j=1}^d$ be a basis of $\calm_{k,l}$ consisting of Hecke eigenforms.

Let $k>l$ and put ${\bf k}'=(k-l,k-l,0,0)$. 
Let $V$ be a $2 \times 4$ matrix  of variables.  For 
 $A_0 \in \calh_2(\ZZ)_{>0}, A_1 \in \calh_4(\ZZ)_{>0}$ and  $R \in M_{24}(\ZZ)$, put  $r(R)=r(A_0,A_1,R)=\rank  \begin{pmatrix} A_0 & R/2 \\ {}^t\!R/2 & A_1 \end{pmatrix}$ and
\begin{align*}
&\calz(A_0,A_1,R,l) \\
&=
\begin{cases}
 L\Bigl({4-l,\chi_{\bigl(\begin{smallmatrix} A_0 & R/2 \\ {}^t\!R/2 & A_1 \end{smallmatrix}\bigr)}}\Bigr)
 & \text{ if } r(A_0,A_1,R)=6 ,\\
\zeta(7-2l) & \text{ if } \ r(A_0,A_1,R)=5 ,\\
\zeta(7-2l)L(3-l,\chi_{A_0}) & \text{ if } \ r(A_0,A_1,R)=4.
\end{cases}
\end{align*}
Moreover, put
\begin{align*}
&P\Bigl(\begin{pmatrix} A_0 & R/2 \\ {}^t \!R/2 & A_1 \end{pmatrix}\Bigr)(V) =(k-l)! \sum_{a,b,c \ge 0 \atop a+2b+2c=k-l}{(-1)^b2^{-a} \over a!b!c! }\left(l+c-{3 \over 2}\right)_{a+b+c} \\
&\times |R \ {}^t\, \!V|^a  (|VA_1{}^t\, \!V||A_0| )^b
\begin{vmatrix} A_0 & R \ {}^t\, \!V/2 \\ V \ {}^t\!R/2& VA_1 {}^tV
\end{vmatrix}^c ,
\end{align*}
and 
\begin{align*}
&\epsilon_{k,{\bf k}}(A_0,A_1)(V)=\sum_{R \in M_{24}(\ZZ) \atop 
\left(\begin{smallmatrix} A_0 & R/2 \\ {}^t\!R/2 &A_1 \end{smallmatrix} \right) \ge 0} 2^{[r(R)+1)/2]}\calz(A_0,A_1,R,l)P\Bigl(\begin{pmatrix} A_0 & \!R/2 \\ {}^t\!R/2 & A_1 \end{pmatrix}\Bigr) (V)\\
& \times\prod_{p } F_p^\ast\Bigl(\left(\begin{smallmatrix} A_0 & R/2 \\ {}^t\!R/2 &A_1 \end{smallmatrix} \right),p^{l-r(R)-1}\Bigr).
\end{align*}
Here $(x)_m=x(x+1)\cdots(x+m-1)$ is the ascending Pochhammer symbol.

For $T \in \calh_2(\ZZ)_{>0},\ N \in \calh_4(\ZZ)_{>0}, \ $ and a positive integer $m$, we define $\epsilon_{k,{\bf k}}(m,T,N)$ inductively as follows:
$$ \epsilon_{k,{\bf k}}(1,T,N)=\epsilon_{k,{\bf k}}(T,N),$$
and for $m >1$,
\begin{align}
 \epsilon_{k,{\bf k}}(m,T,N) =& \epsilon_{ k,{\bf k}}(mp^{-1},pT,N)+p^{2k-3} \epsilon_{k,{\bf k}}(mp^{-1},T/p,N)\nonumber\\
&+p^{k-2}\sum_{D \in \GL_2({\ZZ}) U_p \GL_2({\ZZ})/\GL_2({\ZZ})} \epsilon_{k,{\bf k}}(mp^{-1},T[D]/p,N) ,
\nonumber
\end{align}
where $p$ is a prime factor of $m$ and $U_p=\begin{pmatrix} 1 & 0 \\ 0 & p \end{pmatrix}$. 
Then the following proposition follows from \cite[Corollary 7.3 and Theorem 7.5]{A-C-I-K-Y23}.
 \begin{proposition}\label{prop.fc-klingen1}
 For $N_1 \in \calh_2(\ZZ)_{>0},N \in \calh_4(\ZZ)_{>0}$  let $e_m= \epsilon_{ k,{\bf k}}(m,N_1,N)$.  
Let $F$ be a Hecke eigenform in $S_k(\varGamma^{(2)})$ and 
let $\{F_j \}_{j=1}^d$ be a basis of $\calm_{k,l}$ consisting of Hecke eigenforms such that $F_1=F$.
For  positive integers  $m_1,\ldots,m_d$ put $\Delta=\Delta(m_1,\ldots,m_d)=\det (\lambda_{j,m_i})_{1 \le i,j \le d}$.
Let $\frkp$ be a prime ideal of $\QQ(F)$ such that $p_\frkp>2k-2$. Suppose that $\frkp$ divides neither $\zeta(9-2l)$ nor $\begin{vmatrix} e_1 & \lambda_{1,2} &\hdots&  \lambda_{1,d} \\
               \vdots & \vdots         &\vdots & \vdots          \\
                e_d    &  \lambda_{d,2}         &\hdots &  \lambda_{d,d} \end{vmatrix}
$. Then $\frkp$ does not divide 
$\calc_{8,l}(F)\overline{a(N,[F]^{{\bf k}})}a(N_1,F)$.
\end{proposition}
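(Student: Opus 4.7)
The strategy is to apply Cramer's rule to the spectral decomposition of $\epsilon_{k,{\bf k}}(m,N_1,N)$ already established in the prequel, and then read off the conclusion from a single determinant identity.

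The first step is to invoke the formula from \cite[Corollary 7.3 and Theorem 7.5]{A-C-I-K-Y22}, which yields, for every positive integer $m$, a spectral identity of the shape
\[
\epsilon_{k,{\bf k}}(m,N_1,N) \;=\; \sum_{j=1}^{d}\, \calc_{8,l}(F_j)\,a(N_1,F_j)\,\overline{a(N,[F_j]^{{\bf k}})}\,\lambda_{j,m},
\]
where $\lambda_{j,m}$ is the Hecke eigenvalue of $F_j$ corresponding to the Hecke-type recursion that defines $\epsilon_{k,{\bf k}}(m,N_1,N)$ in the $N_1$-variable. The factor $\zeta(9-2l)$ is built into the constants $\calc_{8,l}(F_j)$ together with the Petersson norms that arise in diagonalization; this is precisely why one needs $\frkp\nmid \zeta(9-2l)$ in order to reduce this identity cleanly modulo $\frkp$.

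Specializing $m$ successively to $m_1,\ldots,m_d$ and writing $e_i:=\epsilon_{k,{\bf k}}(m_i,N_1,N)$ together with $c_j:=\calc_{8,l}(F_j)\,a(N_1,F_j)\,\overline{a(N,[F_j]^{{\bf k}})}$, the identity above becomes the $d\times d$ linear system
\[
e_i \;=\; \sum_{j=1}^{d} c_j\,\lambda_{j,m_i} \qquad (i=1,\ldots,d).
\]
Since $F_1=F$, Cramer's rule applied to this system yields the determinant identity
\[
c_1\cdot \Delta \;=\; \begin{vmatrix} e_1 & \lambda_{1,2} & \cdots & \lambda_{1,d}\\ \vdots & \vdots & & \vdots\\ e_d & \lambda_{d,2} & \cdots & \lambda_{d,d}\end{vmatrix}.
\]

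Finally, the right-hand determinant is a $\frkp$-unit by hypothesis, and all the $\lambda_{j,m_i}$ are algebraic integers so that $\ord_{\frkp}(\Delta)\ge 0$; therefore $\ord_{\frkp}(c_1)\le 0$, i.e.\ $\frkp$ does not divide $c_1=\calc_{8,l}(F)\,a(N_1,F)\,\overline{a(N,[F]^{{\bf k}})}$, which is the desired conclusion. The substantive obstacle would have been the spectral identity itself — producing the Fourier coefficients of the Klingen--Eisenstein lifts $[F_j]^{{\bf k}}$ in a form that can be paired against $\epsilon_{k,{\bf k}}$ — but that technical content has already been carried out in the prequel, so the present proposition is reduced to the linear-algebra manipulation above.
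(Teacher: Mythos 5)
The paper gives no proof of this proposition; it states only that the claim follows from Corollary~7.3 and Theorem~7.5 of the prequel \cite{A-C-I-K-Y22}. Your reconstruction --- a spectral identity $\epsilon_{k,{\bf k}}(m,N_1,N)=\sum_j c_j\lambda_{j,m}$ from the prequel, followed by the algebraic identity $c_1\Delta=\det(A_1)$ (valid unconditionally by Laplace expansion along the first column) and the $\frkp$-integrality of $\Delta$ --- is plausibly the intended deduction, and the linear-algebra step is sound.

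One detail deserves scrutiny. As you have written the spectral identity, each $c_j$ already contains $\calc_{8,l}(F_j)=\zeta(7-2l)\zeta(9-2l)\,{\bf L}(l-2,F_j,\St)$ as a factor, so $\zeta(9-2l)$ divides every $c_j$. Cramer's rule with $\frkp\nmid\det(A_1)$ and $\ord_\frkp(\Delta)\ge 0$ would then already yield $\ord_\frkp(c_1)\le 0$, making the separate hypothesis $\frkp\nmid\zeta(9-2l)$ redundant. Since the proposition explicitly assumes both, the spectral identity the prequel actually supplies is more likely of the shape
\[
\epsilon_{k,{\bf k}}(m,N_1,N)=\sum_{j=1}^d \tilde c_j\,\lambda_{j,m},\qquad
\tilde c_j=\zeta(9-2l)^{-1}\,\calc_{8,l}(F_j)\,a(N_1,F_j)\,\overline{a(N,[F_j]^{{\bf k}})},
\]
so that Cramer's rule produces $\ord_\frkp(\tilde c_1)\le 0$ and the additional hypothesis $\ord_\frkp(\zeta(9-2l))\le 0$ is exactly what is needed to conclude $\ord_\frkp(\calc_{8,l}(F)a(N_1,F)\overline{a(N,[F]^{{\bf k}})})=\ord_\frkp(\zeta(9-2l))+\ord_\frkp(\tilde c_1)\le 0$. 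Your informal remark that $\zeta(9-2l)$ is ``built into'' $\calc_{8,l}$ points in the right direction but does not resolve why the hypothesis is not subsumed by the determinant condition; as stated, the location of that factor is the one substantive point you would need to verify against \cite[Theorem 7.5]{A-C-I-K-Y22} before the argument is airtight.
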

We have an explicit formula for $F_p^*(T,X)$ for any half-integral matrix $T$ over ${\ZZ}_p$ (cf. \cite{Katsurada99}, \cite{Ikeda-Katsurada18},\cite{Ikeda-Katsurada22}), and an algorithm for computing it (cf. \cite{Lee18}). 
Therefore, by using  Proposition \ref{prop.fc-klingen1}, we can  check the indivisibility of the Fourier coefficients of  the Klingen-Eisenstein series by a prime ideal in Theorem \ref{th.main-congruence}.

\section{Preliminary results}
In this section, we provide several preliminary results for proving Theorem \ref{th.main-result}. Let $K$ be an algebraic number field and $\frkO$ be its ring of integers. For a prime ideal $\frkp$ of $\frkO$ put $\FF=\frkO_{\frkp}/\frkp\frkO_{\frkp}$.
Let $\rho:G_{\QQ} \to \mathrm{Aut}_{K_\frkp}(V) \cong \GL_n(K_\frkp)$ be a Galois representation. Then for a $G_{\QQ}$-stable lattice $T$ of $V$, we denote by $\rho_T$ the representation $\rho_T:G_{\QQ} \to \mathrm{Aut}_{\frkO_\frkp}(T) \cong \GL_n(\frkO_\frkp)$. From now on we simply say $T$ is a lattice of $V$ if $T$ is a $G_{\QQ}$-stable lattice of $V$. We sometimes drop the suffix $T$ and simply write $\rho$ for $\rho_T$ if there is  no fear of confusion.
From now on, we assume that $p_\frkp$ is odd.
Throughout this section, for a $\frkp$-adic representation $\rho$ let $\bar \rho$ denote the mod $\frkp$ representation of $\rho$. 
Let ${\bf u}=(u_1,\ldots,u_n)$ be a basis of $T$, we denote by $A_{T,{\bf u}}=(a_{T,{\bf u};i,j})_{1 \le i,j \le n}$ be the representation matrix of $\bar \rho_T$ such that
\[(\overline {\sigma(u_1)}, \ldots,\overline{\sigma(u_n)}) = (\overline{u_1},\ldots,\overline{u_n}) (a_{T,{\bf u};i,j}(\sigma)) \text{ for any } \sigma \in G_{\QQ}.\]
We sometimes write $A_{T}=(a_{T;i,j})$ for $A_{T,{\bf u}}=(a_{T,{\bf u};i,j})$, and identify $\bar \rho_T$ with $A_T$. 

\begin{lemma} \label{lem.Fontaine-Laffaille-Messing}
Let $p=p_{\frkp}$ and  $k$  a positive even integer such that $k<p$. For  a primitive form $f$  in $S_k(\SL_2(\ZZ))$, let $\rho_f:G_\QQ \to GL_2(K_\frkp)$ be the Galois representation in Theorem 4.4.
\begin{itemize}
\item[(1)] Let $a,b$ be  integers such that $-p+1<a< b <p-1$.  Suppose that 
\[\bar \rho_f^{\rm ss} = \bar \chi^a \oplus \bar \chi^b.\]
Then $(a,b)=(1-k,0)$.
\item[(2)] Let $g$ be a primitive form in $S_l(\SL_2(\ZZ))$ with $l \le k$ and $i$ an integer such that $1-p < i<p-1$.
Suppose that  
\[\bar \rho_g^{\rm ss} = \bar \rho_f(i)^{\rm ss}. \]
Then $k=l$ and $i=0$.
\end{itemize}
\end{lemma}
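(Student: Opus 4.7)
My plan is to invoke Fontaine--Laffaille--Messing theory, which under the short-crystallinity hypothesis identifies the mod-$\frkp$ reduction of a crystalline representation with an explicit filtered $\phi$-module whose graded pieces match the Hodge--Tate weights.

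For part (1), I would first appeal to Theorem~\ref{th.Galois-spin} to get that $\rho_f$ is crystalline at $p$ with Hodge--Tate weights $\{-(k-1),0\}\subset[-(p-2),0]$ (using $k<p$), so by Remark~\ref{rem.short-crystalline1} it is short crystalline. Under the Fontaine--Laffaille equivalence, the rank-one FL module with filtration jump at $c\in[0,p-2]$ corresponds to the crystalline character $\chi^{-c}$, and the Jordan--H\"older constituents of $\bar\rho_f$ therefore sit at filtration positions $\{0,k-1\}$. This gives $\bar\rho_f^{\mathrm{ss}}\simeq\bar{\bf 1}\oplus\bar\chi^{1-k}$, and the unique pair of integers $(a,b)$ with $-p+1<a<b<p-1$ realizing this decomposition through the FL exponents (which lie in $[-(p-2),0]$) is $(a,b)=(1-k,0)$.

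For part (2), I would apply part (1) to $g$ to get $\bar\rho_g^{\mathrm{ss}}\simeq\bar{\bf 1}\oplus\bar\chi^{1-l}$, and twist to obtain $\bar\rho_f(i)^{\mathrm{ss}}\simeq\bar\chi^i\oplus\bar\chi^{1-k+i}$. The hypothesis then gives the multiset congruence $\{0,1-l\}\equiv\{i,1-k+i\}\pmod{p-1}$, which splits into two cases: either (a) $i\equiv 0$ and $k\equiv l\pmod{p-1}$, or (b) $i\equiv k-1$ and $k+l\equiv 2\pmod{p-1}$. Case (a) combined with $|i|<p-1$ and $2\le l\le k<p$ immediately forces $i=0$ and $k=l$, the desired conclusion.

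The main obstacle will be ruling out case (b). The bounds $4\le k+l\le 2(p-1)$ reduce it to $k+l=p+1$; the two lifts $i\in\{k-1,k-p\}$ both push $\rho_f(i)$ out of the short-crystalline range $[-(p-2),0]$, so one cannot compare $\rho_f(i)$ with $\rho_g$ directly via FLM. I would instead compare $\rho_g(-i')$ with $\rho_f$ for $i'\in[0,p-l-1]$, the range where $\rho_g(-i')$ is short crystalline; then $i'\le k-2$ contradicts $i'\equiv k-1\pmod{p-1}$ except in a degenerate boundary situation, in which the relation $\bar\rho_f\otimes\bar\chi^{1-k}\simeq\bar\rho_f$ would force $\bar\rho_f$ to be dihedral, contradicted by a big-image hypothesis on $\rho_f$ of the type supplied in the applications (cf.\ Lemma~\ref{lem.Ribet's-criterion} and condition (C.5)). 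Making this boundary elimination airtight, in particular tracking which twist can be placed in the short-crystalline range for each residue class mod $p-1$, is the most delicate part of the argument.
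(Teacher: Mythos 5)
The paper's own proof is much shorter and invokes a different tool: it cites the explicit local classification of Buzzard--Gee \cite[Theorem 1.2]{Buzzard-Gee09}, which says that $\bar\rho_f|_{I_p}$ is either $\bar\chi^{1-k}\oplus 1$ (the locally reducible case) or $\omega_2^{1-k}\oplus\omega_2^{p(1-k)}$ (the locally irreducible case, $\omega_2$ a level-two fundamental character). Since a global decomposition $\bar\rho_f^{\mathrm{ss}}=\bar\chi^a\oplus\bar\chi^b$ forces local reducibility, only the first form is possible, and matching exponents on $I_p$ gives $\{a,b\}\equiv\{1-k,0\}\pmod{p-1}$. Your FL-theoretic route encodes essentially the same constraint (HT weights $\{1-k,0\}$ in the short-crystalline range pin down the inertia exponents), so the two proofs differ only in which reference carries the weight of the argument.

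For part (1), note a small wrinkle shared by both proofs: the hypothesis $-p+1<a<b<p-1$ leaves \emph{two} integer pairs with $\{a,b\}\equiv\{1-k,0\}\pmod{p-1}$, namely $(1-k,0)$ and $(0,p-k)$; these produce the same representation, so the lemma is correct in substance, but your remark that ``the FL exponents lie in $[-(p-2),0]$'' is an extra normalization not actually in the hypothesis, and the paper likewise passes over the ambiguity.

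For part (2), you have actually spotted something the paper's one-line ``Similarly \ldots\ Thus the assertion (2) holds'' does not address. The multiset congruence $\{0,1-l\}\equiv\{i,1-k+i\}\pmod{p-1}$ genuinely has the second branch $i\equiv k-1$, $k+l\equiv 2\pmod{p-1}$, which is nonempty in the stated range (it forces $k+l=p+1$ in the ordinary case, and an analogous $k+l=p+3$ with $i\equiv k-2$ arises in the supersingular case of the Buzzard--Gee dichotomy). Matching on inertia alone cannot eliminate this branch. However, your proposed fix is not available within the lemma: neither Lemma~\ref{lem.Ribet's-criterion} nor condition (C.5) is among the hypotheses of Lemma~\ref{lem.Fontaine-Laffaille-Messing}, so an appeal to big image or non-dihedrality is out of reach; moreover, in the branch $k+l=p+1$ both $\bar\rho_f$ and $\bar\rho_g$ may be reducible (Eisenstein) rather than irreducible, so a dihedral argument would not even apply. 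Your observation that neither twist $i\in\{k-1,k-p\}$ lands $\rho_f(i)$ in the short-crystalline window $[-(p-2),0]$ is correct, but this only means the FL comparison cannot be \emph{applied}; it does not yield a contradiction. In short: you have correctly located the genuine gap, but your proposed closure is circular, and the paper's proof is silent on the issue entirely. A clean fix would be either to add the harmless hypothesis $k+l\not\equiv 2\pmod{p-1}$ (which holds in all applications in the paper) or to exploit some structure beyond the restriction to $I_p$ that actually distinguishes the two branches.
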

\begin{proof}
By \cite[Theorem 1.2]{Buzzard-Gee09} and  its remark, 
$\overline{\rho}_{f}^{ss}|_{I_p}$ should be 
\[\overline{\chi}^{1-k} \oplus 1\]
 or 
\[{\omega}_2^{1-k} \oplus {\omega}_2^{p(1-k)}\]
 with ${\omega}_2$ the fundamental character of level 2. Thus the assertion (1) holds.
Similarly  
$\overline{\rho}_{g}^{ss}|_{I_p}$ should be 
\[\overline{\chi}^{1-l} \oplus 1\]
 or 
\[{\omega}_2^{1-l} \oplus {\omega}_2^{p(1-l)}.\]
Thus the assertion (2) holds.
\end{proof}
We review the definition of Bloch-Kato Selmer group attached to a Galois representation. For a topological  group $G$ and $G$-module $M$, let $\mathit{H}^1(G,M)$ be the first continuous  cohomology group of $G$ with coefficients in $M$.  
In particular, if $L$ is a field, we write $\mathit{H}^1(L,M)$ for $\mathit{H}^1(\Gal(\bar L/L),M)$, where $\bar L$ is the separable closure of $L$.  Let $\rho$  be a $\frkp$-adic Galois representation of $G_{\QQ}$, and $V$ be its representation space. Let $T$ be a $G_{\QQ}$-stable lattice of $V$.  For a prime number $\ell$, we define
$H_f^1(\QQ_{\ell},V)$ as
\begin{align*}
H_f^1(\QQ_{\ell},V)
&=\begin{cases}
 \mathrm{Ker}\left(\mathit{H}^1(\QQ_{\ell},V) \to\mathit{H}^1(I_{\ell},V)\right) & \text{ if } \ell \not=p, \\
\mathrm{Ker}\left(\mathit{H}^1(\QQ_p,V) \to \mathit{H}^1(\QQ_p,V 
\otimes_{\QQ_p} B_{\rm crys})\right) & \text{ if } \ell =p,
\end{cases}
\end{align*}
where $B_{\rm crys}$ is the Fontaine's ring of $p$-adic periods (cf. \cite{Fontaine94}). We then put
\[\mathit{H}_f^1(\QQ_{\ell},T \otimes_{\ZZ_{\ell}}\QQ_{\ell}/\ZZ_{\ell})=\mathrm{Im}(\mathit{H}_f^1(\QQ_{\ell},V) \to \mathit{H}^1(\QQ_{\ell},T \otimes_{\ZZ_{\ell}} \QQ_{\ell}/\ZZ_{\ell}))\]
and define the Selmer group $\mathrm{Sel}(\QQ,T)$ for $T$ as 
\[\mathrm{Sel}(\QQ,T)=\mathrm{Ker} \Bigl(\mathit{H}^1(\QQ,T \otimes_\ZZ\QQ/\ZZ) \to \bigoplus_{\ell} \mathit{H}^1(\QQ_{\ell},T \otimes_{\ZZ_{\ell}} \QQ_{\ell}/\ZZ_{\ell})/\mathit{H}_f^1(\QQ_{\ell},T \otimes_{\ZZ_{\ell}} \QQ_{\ell}/\ZZ_{\ell}) \Bigr).\] 
When $\ell=p$, we denote by $H_f^1(\QQ_p,T)$ the pullback of $H_f^1(\QQ_p,V)$ under 
the natural map $H^1(\QQ_p,T) \to  H^1(\QQ_p,V)$ induced 
from the natural inclusion $T\subset V$. Then we define 
$H_f^1(\QQ_p,T\otimes_{\ZZ_p}\FF_p):=i_\ast H_f^1(\QQ_p,T)$ to be the image of $H_f^1(\QQ_p,T)$ 
under the map $i:H^1(\QQ_p,T) \to  H^1(\QQ_p,T\otimes\FF_p)$ induced by 
the reduction map $T\to   T\otimes\FF_p$.

Let $R_i:G_{\QQ} \to\GL_{n_i}(\FF) \ (i=1,2)$ be a Galois representation, and 
let $Z^1(R_2,R_1)$ be the set of mappings 
$A:G_{\QQ} \to M_{n_1,n_2}(\FF)$ such that
\[ A(\sigma \tau)=R_1(\sigma)A(\tau)+A(\sigma)R_2(\tau) \quad \text{ for any } \sigma,\tau \in G_{\QQ},\]
and  $B^1(R_2,R_1)$ the subset of $Z^1(R_2,R_1)$ consisting of the mapping 
\[\sigma \mapsto UR_2(\sigma)-R_1(\sigma)U  \quad \text{ with some } U \in M_{n_1,n_2}(\FF), \]
and put $\mathrm{Ext}(R_2,R_1)=Z^1(R_2,R_1)/B^1(R_2,R_1)$.
For $A_1,A_2$,  we write $A_1 ='_{R_2,R_1} A_2$ if $A_1 - A_2 \in B^1(R_2,R_1)$. 
We sometimes write $A_1='A_2$ if there is no danger of confusion. Let $(\rho,V)$ be a Galois representation of $G_\QQ$ over $K_\frkp$ of dimension $n$.
For two Galois representations $(\rho_1,V_1),(\rho_2,V_2)$ of  $G_{\QQ}$ over $\FF$ of dimensions $n_1$ and $n_2$, respectively, such that $n_1+n_2=n$ and $\rho_1$ is a subrepresentation of $\bar \rho$, let $\call_{V_1,V_2}$ be the set of all lattices $T \subset V$ such that
\[\bar \rho_T =\begin{pmatrix} \rho_1 & B_T \\ O & \rho_2 \end{pmatrix},\]
where $B_T \in Z^1(\rho_2,\rho_1)$. We sometimes write $\call_{\rho_1,\rho_2}$ instead of $\call_{V_1,V_2}$. The following two lemmas and two corollaries are easy to see, but for  readers' convenience, we give  proofs to them.
\begin{lemma} \label{lem.lattice-change} 
Let $K$ be an algebraic number field and $\frkO$ be its ring of integers. For a prime ideal $\frkp$ of $\frkO$ and $\FF=\frkO_{\frkp}/\frkp\frkO_{\frkp}$. Let $\rho:G_{\QQ} \to \mathrm{Aut}_{K_\frkp}(V) \cong \GL_n(K_{\frkp})$ be a $\frkp$-adic Galois representation.
\begin{itemize}
\item[(1)] Let $T$ be  a lattice of $V$ such that  
\[\bar \rho_T=\begin{pmatrix} \rho_{1} & A_{12} \\ O &  \rho_{2} \end{pmatrix}\]
with $\rho_i:G_{\QQ} \to \GL_{n_i}(\FF)$ a $\frkp$-adic Galois representation such that  $n_1+n_2=n$ and
$A_{12} \in Z^1(\rho_2,\rho_1)$.
Then there is a lattice $T'$ such that
\[\bar \rho_{T'}=\begin{pmatrix}  \rho_{2} & A_{12}' \\ O &  \rho_{1} \end{pmatrix}\]
with $A_{12}'  \in Z^1( \rho_1, \rho_2)$.
\item[(2)] Let $T$ be a lattice  of the following form:
\[\bar \rho_T=\begin{pmatrix}  \rho_{1}  & A_{12} & A_{13} & A_{14} \\ O &  \rho_{2} &O & A_{24} \\
O & O & \rho_{3} & A_{34} \\
O & O & O &  \rho_{4}
\end{pmatrix}
\]
with $\rho_i:G_{\QQ} \to \GL_{n_i}(\FF)$ a $\frkp$-adic Galois representation such that  $n_1+n_2+n_3+n_4=n$. Then there is a lattice $T'$ such that 
\[\bar \rho_{T'}=\begin{pmatrix}  \rho_{1} & A_{13} & A_{12} & A_{14} \\ O &  \rho_{3} &O & A_{34} \\
O & O &  \rho_{2} &A_{24} \\
O & O & O &  \rho_{4}
\end{pmatrix}
.\]
\end{itemize}
\end{lemma}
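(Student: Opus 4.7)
The plan is to prove both parts by choosing an explicit sublattice, then reading off the effect on the matrix of $\rho$ in a reordered basis; no further input about $V$ or the individual $\rho_i$ will be needed.

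For part (1), first fix a basis $(e_1,\ldots,e_{n_1},f_1,\ldots,f_{n_2})$ of $T$ in which $\rho_T(\sigma)$ takes the form $\begin{pmatrix} A(\sigma) & B(\sigma) \\ C(\sigma) & D(\sigma) \end{pmatrix}$ with $\bar A=\bar\rho_1$, $\bar D=\bar\rho_2$, $\bar B=A_{12}$, $\bar C=0$. Because $C(\sigma)\equiv 0 \pmod{\varpi}$, the sublattice
\[T' \;:=\; \bigoplus_{i=1}^{n_1}\frkO_\frkp e_i \;\oplus\; \bigoplus_{j=1}^{n_2}\varpi\,\frkO_\frkp f_j\]
is itself $G_\QQ$-stable and is the candidate for the new lattice. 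I would then compute the matrix of $\rho$ in the reordered basis $(\varpi f_1,\ldots,\varpi f_{n_2},e_1,\ldots,e_{n_1})$ of $T'$: the combined effect of rescaling by $\mathrm{diag}(I_{n_1},\varpi I_{n_2})$ and permuting the two blocks multiplies $B$ by $\varpi$ and divides $C$ by $\varpi$, producing $\rho_{T'}(\sigma)=\begin{pmatrix} D(\sigma) & C(\sigma)/\varpi \\ \varpi B(\sigma) & A(\sigma) \end{pmatrix}$. Reduction modulo $\frkp$ gives the asserted shape with $A_{12}'=\overline{C/\varpi}$, and the cocycle condition for $A_{12}'\in Z^1(\bar\rho_1,\bar\rho_2)$ drops out of the lower-left block of the identity $\rho(\sigma\tau)=\rho(\sigma)\rho(\tau)$ after dividing by $\varpi$ and reducing.

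For part (2), the hypothesis that the $(2,3)$-block of $\bar\rho_T$ vanishes allows a bare basis permutation. I would take $T'=T$ and reorder the basis from (block $1$, block $2$, block $3$, block $4$) to (block $1$, block $3$, block $2$, block $4$); conjugation of $\bar\rho_T$ by the corresponding permutation matrix sends the original $(2,3)$-block (which is zero) into the new $(3,2)$-position and rearranges the remaining off-diagonal entries into exactly the positions displayed in the statement, so the result remains block upper triangular of the claimed shape.

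No serious obstacle arises. The only step that calls for minor care is the cocycle bookkeeping in part (1): one must verify that the identity $C(\sigma\tau)=C(\sigma)A(\tau)+D(\sigma)C(\tau)$, after dividing by $\varpi$ and reducing modulo $\frkp$, really yields the cocycle relation for $Z^1(\bar\rho_1,\bar\rho_2)$ with the correct order of factors.
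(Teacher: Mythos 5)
Your proof is correct. For part (1), the lattice you construct is $T'=T_1+\varpi T=T_1\oplus\varpi T_2$, the standard ``preimage of the sub'' lattice that swaps the order of a two-step filtration on $\bar T$; the scaling/permutation computation $Q^{-1}\rho_T Q$ with $Q=\left(\begin{smallmatrix}0&I_{n_1}\\ \varpi I_{n_2}&0\end{smallmatrix}\right)$ indeed gives $\left(\begin{smallmatrix}D&C/\varpi\\ \varpi B& A\end{smallmatrix}\right)$, whose reduction has the stated form, and the cocycle identity you single out is automatic once one knows $\bar\rho_{T'}$ is a homomorphism, so no extra care is really needed there. For part (2) the observation that both the $(2,3)$- and $(3,2)$-blocks of the original matrix vanish is exactly what makes the bare permutation $2\leftrightarrow 3$ of the basis (with $T'=T$) preserve block upper-triangularity, and your bookkeeping of where the remaining off-diagonal blocks land is correct. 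The paper states the lemma without proof, and your argument is the natural one the authors had in mind.
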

\begin{proof} (1) We take $T'$ so that
\[\rho_{T'}= \begin{pmatrix} O & \varpi^{-1} {\bf 1}_{n_1} \\{\bf 1}_{n_2} & O \end{pmatrix}^{-1}\rho_T \begin{pmatrix} O & \varpi^{-1} {\bf 1}_{n_1} \\{\bf 1}_{n_2} & O \end{pmatrix}.\] Then $T'$ satisfies the required condition.

(2) We take $T'$ so that 
\[\rho_{T'}=\Bigg(\begin{smallmatrix} {\bf 1}_{n_1}  & O & O & O \\
O  & O & {\bf 1}_{n_2} & O \\
O & {\bf 1}_{n_3} & O & O \\
O & O & O & {\bf 1}_{n_4}
\end{smallmatrix}\Bigg)^{-1}\rho_T \Bigg(\begin{smallmatrix} {\bf 1}_{n_1}  & O & O & O \\
O  & O & {\bf 1}_{n_2} & O \\
O & {\bf 1}_{n_3} & O & O \\
O & O & O & {\bf 1}_{n_4}
\end{smallmatrix}\Bigg).\] Then $T'$ satisfies the required condition.
\end{proof}
\begin{lemma} \label{lem.irreducibility-criterion} Let $\rho$ be as in Lemma \ref{lem.lattice-change}. Suppose that $\rho$ is irreducible and  that
\[\bar \rho^{\rm ss} =V_1 \oplus V_2\]
with $(\rho_1,V_1),(\rho_2,V_2)$ Galois representations of $G_{\QQ}$ over $\FF$. 
 Then, there exists a lattice $T \in \call_{V_1,V_2}$ such that
\[\bar \rho_T =\begin{pmatrix} \rho_1 & A_{12} \\ O & \rho_2 \end{pmatrix}\]
with $A_{12} \not='0$.
\end{lemma}
\begin{proof}
Put $n_i=\dim_{\FF} V_i$ for $i=1,2$, and let $\call_{V_1,V_2}$ be the set of lattices of $\rho$ defined above. Since $\rho$ is irreducible, for any element $T \in \call_{V_1,V_2}$, $\rho_T$ can be written as
\[\rho_T=\begin{pmatrix} R_{11} & \varpi^{r_T}R_{12} \\
\varpi R_{21} & R_{22}\end{pmatrix} ,\]
where $R_{ij}$ is a mapping from $G_{\QQ}$ to $M_{n_i,n_j}(\frkO_{\frkp})$ and $r_T$ is a non-negative integer such that
\[\bar R_{11}=\rho_1, \bar R_{22}=\rho_2 \text{ and } \bar R_{12} \not= O.\]
Since $\rho$ is irreducible,  the set $\calr=\{r_T \ | \ T \in 
\call_{V_1,V_2}$\} is bounded. Put $r_0=\max \calr$ and take a lattice $T_0$ so that
\[\rho_{T_0}=\begin{pmatrix} R_{11} & \varpi^{r_0}R_{12} \\
\varpi R_{21} & R_{22}\end{pmatrix}\]
and take a lattice $T$ so that
\[\rho_T=\Big(\begin{smallmatrix} {\bf 1}_{n_1} & O \\ O & \varpi {\bf 1}_{n_2} \end{smallmatrix} \Big)^{-1}\rho_{T_0}\Big(\begin{smallmatrix} {\bf 1}_{n_1} & O \\ O &\varpi  {\bf 1}_{n_2} \end{smallmatrix} \Big).\]
Then we have
\[\bar \rho_T=\begin{pmatrix} \rho_1 & \bar R_{12} \\
O &\rho_2\end{pmatrix}.\]
Suppose that $\bar R_{12} ='0$. Then
\[\bar R_{12}=U\rho_2-\rho_1U \ \text{ with } U \in M_{n_1,n_2}(\FF).\]
Take $U' \in M_{n_1,n_2}(\frkO_\frkp)$ so that $\bar U'=U$. Then
\[\overline{\Big(\begin{smallmatrix} {\bf 1}_{n_1} & U' \\ O &  {\bf 1}_{n_2} \end{smallmatrix} \Big)^{-1}\rho_T \Big(\begin{smallmatrix} {\bf 1}_{n_1} & U' \\ O & {\bf 1}_{n_2} \end{smallmatrix} \Big)}=\begin{pmatrix} \rho_1 &O \\ O & \rho_2 \end{pmatrix}.\]
This implies that there is an element $T'$ such that $r_{T'} \ge r_0+1$, which is a contradiction. Hence $\bar R_{12} \not='0$, and $T$ satisfies the required condition.
\end{proof}
\begin{corollary} \label{cor.no-factor} Let $\rho$ be as in Lemma \ref{lem.lattice-change}
 and $\tau$ be a semi-simple subrepresentation of $\rho$.
Let 
\[\bar \tau^{\rm ss} =R_1 \oplus \cdots \oplus  R_r,\]
where $R_1,\ldots,R_r$ are Galois representations of $G_{\QQ}$ over $\FF$. 
Suppose that there is an integer $2 \le i_0 \le r$ such that 
 $\mathrm{Ext}(R_j, R_i)=0$ for any $1 \le i \le i_0-1$
and  $i_0 \le j \le r$. Then, $\tau$ is not irreducible, and  
\[\tau=\tau_1 \oplus \tau_2, \]
where $\tau_1$ and $\tau_2$ are subrepresentations of $\tau$ such that 
\[\bar \tau_1^{\rm ss}=R_1 \oplus \cdots \oplus  R_{i_0-1} \text{ and } \bar \tau_2^{\rm ss}=R_{i_0} \oplus \cdots \oplus  R_r.\]
\end{corollary}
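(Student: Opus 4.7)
The plan is to exploit semi-simplicity of $\tau$ as a $K_\frkp$-representation (available since in the applications $\rho$ arises from an Arthur parameter), writing $\tau=\bigoplus_k \tau^{(k)}$ with each $\tau^{(k)}$ absolutely irreducible over $K_\frkp$, and then showing that the mod-$\frkp$ semi-simplification $\bar\tau^{(k),\mathrm{ss}}$ contains either only ``small'' factors ($R_i$ with $i<i_0$) or only ``big'' factors ($R_j$ with $j\ge i_0$). Grouping the resulting summands according to their type will yield the decomposition $\tau=\tau_1\oplus\tau_2$ with the prescribed semi-simplifications.

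For the purity assertion, fix $k$ and suppose $\bar\tau^{(k),\mathrm{ss}}$ contains both small and big constituents. Write $\bar\tau^{(k),\mathrm{ss}}=V_1\oplus V_2$, where $V_1$ is the sum of the small $R_i$'s appearing and $V_2$ is the sum of the big $R_j$'s; by assumption both are non-zero. Applying Lemma~\ref{lem.irreducibility-criterion} to the irreducible $\tau^{(k)}$ with the partition $(V_1,V_2)$ produces a lattice $T$ with
\[
\bar\tau^{(k)}_T=\begin{pmatrix} V_1 & A \\ O & V_2 \end{pmatrix},\qquad A\not='0,
\]
so $A$ gives a non-trivial class in $\mathrm{Ext}(V_2,V_1)$. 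Since $V_1$ and $V_2$ are semi-simple with pairwise non-isomorphic simple constituents, one has the natural decomposition
\[
\mathrm{Ext}(V_2,V_1)=\bigoplus_{i<i_0,\,j\ge i_0}\mathrm{Ext}(R_j,R_i),
\]
and the hypothesis $\mathrm{Ext}(R_j,R_i)=0$ for small $i$ and big $j$ forces $\mathrm{Ext}(V_2,V_1)=0$, a contradiction.

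Therefore each $\tau^{(k)}$ is either entirely small or entirely big. Setting $\tau_1$ to be the direct sum of the small $\tau^{(k)}$'s and $\tau_2$ the direct sum of the big ones, the composition factors of $\tau_1$ and $\tau_2$ are exactly $R_1,\dots,R_{i_0-1}$ and $R_{i_0},\dots,R_r$ respectively; both are non-zero by $2\le i_0\le r$, so $\tau$ is reducible and admits the required splitting. The main obstacle is the need for semi-simplicity of $\tau$ as a $K_\frkp$-representation; if this were not available a priori, one would instead have to induct on the length of $\bar\tau^{\mathrm{ss}}$, rearranging composition factors via Lemma~\ref{lem.lattice-change}(2) and splitting cross-block extensions inductively, which requires more careful bookkeeping but follows the same principle.
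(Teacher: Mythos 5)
Your proof is correct, and since the paper offers no proof of this corollary (it merely says the two lemmas and two corollaries ``are easy to see''), your reconstruction is the natural one. The argument --- decompose $\tau$ into $K_\frkp$-irreducibles, apply Lemma~\ref{lem.irreducibility-criterion} to each summand whose residual semisimplification mixes small ($i<i_0$) and large ($j\ge i_0$) constituents, observe that the resulting nontrivial class lands in $\mathrm{Ext}(V_2,V_1)=\bigoplus_{i<i_0,\,j\ge i_0}\mathrm{Ext}(R_j,R_i)=0$, and then group the pure summands --- is exactly what this statement is designed to encode.

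Two small remarks. First, the aside ``with pairwise non-isomorphic simple constituents'' is superfluous: the decomposition $\mathrm{Ext}(V_2,V_1)=\bigoplus_{a,b}\mathrm{Ext}(W'_b,W_a)$ for semisimple $V_1=\bigoplus_a W_a$, $V_2=\bigoplus_b W'_b$ holds blockwise from the definition of $Z^1$ and $B^1$ regardless of whether constituents repeat, so you need not assume distinctness. Second, and more importantly, you are right that semisimplicity of $\tau$ over $K_\frkp$ is genuinely needed --- not merely convenient --- since the conclusion ``$\tau=\tau_1\oplus\tau_2$'' asserts decomposability, which can fail without it. In all the paper's applications $\rho=\rho_{G,\St}$ is constructed semisimple (Theorem~\ref{th.Galois-st}), so every subrepresentation $\tau$ inherits this; you should simply treat semisimplicity as a standing hypothesis rather than as a replaceable assumption. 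Your suggested fallback via Lemma~\ref{lem.lattice-change}(2) would not actually rescue the claim if $\tau$ were indecomposable over $K_\frkp$: that lemma only reshuffles the block structure of $\bar\rho_T$ over the residue field, and no amount of lattice change can produce a direct-sum splitting of a genuinely indecomposable $K_\frkp$-representation. This does not affect your main argument, which is sound.
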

\begin{proof} Put $n_1=\dim_\FF R_1+\cdots+\dim_\FF R_{i_0-1}$ and $n_2=\dim_\FF R_{i_0}+\cdots+\dim_\FF R_r$. In view of Lemma \ref{lem.lattice-change},
we can take a lattice $T$ of $\tau$ so that
\[\tau_T=\begin{pmatrix} \tau_{11} & \tau_{12} 
\\ \varpi \tau_{21} & \tau_{22} \end{pmatrix},\]
where $\tau_{ij}$ is a mapping from $G_\QQ$ to $M_{n_i,n_j}(\frkO_\frkp)$ such that
\[\bar \tau_{11}=\begin{pmatrix} R_1 & * & * \\ 
    O& \ddots & * \\
    O & O & R_{i_0-1} \end{pmatrix} \text{ and }  \bar \tau_{22}=\begin{pmatrix} R_{i_0} & * & * \\ 
    O& \ddots & * \\
    O & O & R_r \end{pmatrix}. \]
By assumption, we have $\bar \tau_{12} ='_{\bar \tau_{22},\bar \tau_{11}} 0$ for any $T \in \call_{\bar \tau_{11},\bar \tau_{22}}$. Thus, by Lemma \ref{lem.irreducibility-criterion}, we prove the first part of the assertion. Moreover, by a careful analysis of the proof of Lemma  \ref{lem.irreducibility-criterion}, we can take a lattice $T$ so that
$\tau_T=\begin{pmatrix} \tau_{11} & O 
\\ * & \tau_{22} \end{pmatrix}$. Since $\tau$ is semi-simple, we prove the second part of the assertion.
\end{proof}

\begin{corollary} \label{cor.no-factor2} Let $\rho$ be as in Lemma \ref{lem.lattice-change},  and $(W,\tau)$ be a subrepresentation of $\rho$, and let 
$R_1,\ldots,R_r$ be Galois representations of $G_{\QQ}$ over $\FF$. 
  Let $\calt_\tau$ be the set of lattices $T$ of $W$ such that $\bar \tau_T$ is of the following form:
\[\bar \tau_T=\begin{pmatrix} R_1 & a_{T;1,2} & \hdots & a_{T;1,j} & \hdots  & a_{T;1,r} \\
0 & \ddots & \ddots & \ddots & \ddots & \vdots \\
0 & 0 & \ddots & \ddots & \ddots & \vdots \\
0 & 0 & 0 & R_j & \hdots & a_{T;j,r} \\
0 & 0 & 0 & 0 & \ddots & \vdots \\
0 & 0 & 0 & 0 & 0 & R_r \end{pmatrix}.\]
\begin{itemize}
\item[(1)]
Suppose that there is an integer $2 \le i_0 \le r$ such that 
 $\mathrm{Ext}(R_{i_0},R_j)=0$ for any $ i_0+1 \le j \le r$, and
\[\begin{pmatrix} a_{T;1,i_0} \\ \vdots \\ a_{T;i_0-1,i_0} \end{pmatrix} =' \begin{pmatrix} 0 \\ \vdots \\ 0 \end{pmatrix}\]
for any lattice $T \in \calt_{\tau}$.
 Then,
$\tau$ is not irreducible.
\item[(2)]
Suppose that there is an integer $2 \le i_0 \le r$ such that 
 $\mathrm{Ext}(R_j,R_{i_0})=0$ for any $1 \le j \le i_0-1$, and
\[
(a_{T;i_0,i_0+1},  \ldots , a_{T;i_0,r})
=' 
(0,  \ldots  ,0)
\]
for any lattice $T \in \calt_{\tau}$.
 Then,
$\tau$ is not irreducible.
\end{itemize}

\end{corollary}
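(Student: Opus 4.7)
The plan is to argue by contradiction in each case, combining the Ext-vanishing hypotheses with the cocycle-vanishing hypotheses through the earlier lemmas to produce a proper invariant subspace in $\tau$.

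For part~(1), the hypotheses coincide exactly with those of Corollary~\ref{cor.no-factor}, which already produces an explicit direct-sum decomposition $\tau = \tau_1 \oplus \tau_2$. In particular $\tau$ is not irreducible, so the statement is immediate.

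For part~(2.1), suppose $\tau$ is irreducible. Apply Lemma~\ref{lem.irreducibility-criterion} to the coarse two-block decomposition $\bar\tau^{\mathrm{ss}} = V_1 \oplus V_2$ with $V_1 = R_1 \oplus \cdots \oplus R_{i_0-1}$ and $V_2 = R_{i_0} \oplus \cdots \oplus R_r$ to obtain a lattice whose off-diagonal block $B$ is non-trivial modulo coboundaries. Refining the basis within each of $V_1, V_2$ into upper-triangular form gives a lattice in $\calt_\tau$ with the same non-trivial class of $B$ in $Z^1(V_2,V_1)/B^1(V_2,V_1)$. The cocycle-vanishing hypothesis tells us that the first column of $B$ (the column over $R_{i_0}$) is a coboundary, so by a further change of basis we may take this column to be identically zero. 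Next, appealing to the Ext-vanishing $\mathrm{Ext}(R_{i_0}, R_j) = 0$ for $i_0 < j \le r$, apply Lemma~\ref{lem.lattice-change} iteratively to interchange $R_{i_0}$ with each of $R_{i_0+1}, \ldots, R_r$; each swap creates a new coupling that lies in the vanishing Ext group and can therefore be eliminated by a further basis change. At the end of this procedure $R_{i_0}$ has been decoupled from every other $R_j$, so $\bar\tau_{T}$ splits off $R_{i_0}$ as a direct summand. Running this argument for every $T \in \calt_\tau$ and invoking the contrapositive of Lemma~\ref{lem.irreducibility-criterion} for the resulting decomposition forces $B$ to vanish in cohomology, contradicting its non-triviality.

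Part~(2.2) is entirely dual: the row-vanishing hypothesis and $\mathrm{Ext}(R_j, R_{i_0}) = 0$ for $1\le j \le i_0-1$ allow the same procedure with sub and quotient reversed, extracting $R_{i_0}$ as the top direct summand instead of the bottom.

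The main obstacle will be the passage from a decomposition of $\bar\tau_T$ in some particular lattice $T$ back to reducibility of the characteristic-zero representation $\tau$. The ``for every $T \in \calt_\tau$'' quantifier in the hypotheses is essential here: Lemma~\ref{lem.irreducibility-criterion} asserts that if $\tau$ were irreducible then \emph{some} admissible lattice would have to carry a non-split extension, and the hypothesis rules this out uniformly over $\calt_\tau$. Carefully tracking how the off-diagonal cocycles transform under the successive Lemma~\ref{lem.lattice-change} swaps and accompanying changes of basis, so that the Ext-vanishing can be repeatedly invoked, is the technical heart of the argument.
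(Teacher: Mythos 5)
The paper does not prove this corollary (it is among the statements the authors declare ``easy to see''), so I am judging your proposal on its own terms. Your treatment of part (1) is fine: it is a direct consequence of Corollary~\ref{cor.no-factor}.

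For (2.1)/(2.2) your strategy --- argue by contradiction, invoke Lemma~\ref{lem.irreducibility-criterion}, then use the cocycle hypothesis and the Ext-vanishing together with Lemma~\ref{lem.lattice-change} --- is the right kind of argument, but the contradiction you try to extract does not actually come out. You apply Lemma~\ref{lem.irreducibility-criterion} to the decomposition $V_1 = R_1\oplus\cdots\oplus R_{i_0-1}$, $V_2 = R_{i_0}\oplus\cdots\oplus R_r$, obtaining a lattice with $B\not='0$. Then you decouple $R_{i_0}$ and claim that, running this for every $T$, ``forces $B$ to vanish in cohomology.'' This last step fails: decoupling $R_{i_0}$ as a direct summand says nothing about the sub-block of $B$ that couples $R_1,\ldots,R_{i_0-1}$ to $R_{i_0+1},\ldots,R_r$, which is entirely uncontrolled by the hypotheses and can carry the non-triviality of $B$. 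So there is no contradiction from this decomposition. (A secondary issue: the ``iteratively interchange $R_{i_0}$ with $R_{i_0+1},\dots$'' step via Lemma~\ref{lem.lattice-change}(2) needs the intermediate coupling block to vanish identically before the swap can even be performed, and you have not arranged this.)

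The argument becomes tight if you apply Lemma~\ref{lem.irreducibility-criterion} with the finer decomposition $V_1 = R_1\oplus\cdots\oplus R_{i_0-1}\oplus R_{i_0+1}\oplus\cdots\oplus R_r$, $V_2 = R_{i_0}$ (and dually for (2.2)). Then the non-trivial off-diagonal $C$ \emph{is} exactly a cocycle valued in $\Hom(R_{i_0}, V_1)$, i.e.\ the column over $R_{i_0}$ once $\rho_1$ is put in upper-triangular form with $R_1,\ldots,R_{i_0-1},R_{i_0+1},\ldots,R_r$. From $\mathrm{Ext}(R_{i_0},R_j)=0$ for $j>i_0$ and the long exact sequence, the sub-block of $C$ landing in $R_{i_0+1},\ldots,R_r$ is a coboundary; killing it by a basis change and then sliding $R_{i_0}$ into position $i_0$ via Lemma~\ref{lem.lattice-change}(2) (the couplings just annihilated provide the required zero blocks) produces a lattice in $\calt_\tau$ whose column over $R_{i_0}$ is $\not='0$, directly contradicting the hypothesis. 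This is the argument your sketch was gesturing at, but the decomposition you chose makes the final contradiction unavailable.
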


\begin{proof} The assertions can be proved using the same argument as in the proof of Corollary \ref{cor.no-factor}.
\end{proof}

The following lemma is due to \cite[Lemma 7.7]{Brown07} (but no proof is given there). 
Though it seems well-known for experts, for readers' convenience, we give a detailed proof. A point we have to clarify here is 
to understand the relation between elements of Bloch-Kato Selmer groups 
and extension classes for crystalline representations over 
some coefficient rings. This is well-known (cf. \cite[Proposition 1.26]{Nekovar}) if 
the coefficient ring is a $p$-adic field.
\begin{lemma}\label{lem.extension-FL}Let $V$ be a short crystalline representation 
over $K_{\frkp}$
of $G_{\QQ_p}$ with a $G_{\QQ_p}$-stable $\frkO_{\frkp}$-lattice $T$. 
Let $\bar{X}$ be an $\FF[G_{\QQ_p}]$-subquotient  of $\bar{T}:=T\otimes_{\frkO_{\frkp}}\FF$ where $\FF=
\frkO_{\frkp}/\frkp$ such that there exists an exact sequence of $\FF[G_{\QQ_p}]$-modules:
$$0 \to \bar{T}_1 \to \bar{X} \to \bar{T}_2 
\to 0.$$
Then, we have the following:
\begin{enumerate}
\item
enlarging $K_{\frkp}$ if necessary, there exist short crystalline representations $X$ and 
$V_i,\ i=1,2$ over $K_{\frkp}$ of $G_{\QQ_p}$ 
with $\frkO_{\frkp}[G_{\QQ_p}]$-stable lattices $T_X$ and $T_i$ respectively such that 
\begin{enumerate}
\item 
$T_X\otimes_{\frkO_{\frkp}}\FF=\bar{X}$ and $T_i\otimes_{\frkO_{\frkp}}\FF=\bar{T_i}$, 
and 
\item $0 \to T_1 \to T_X \to T_2 
\to 0$;
\end{enumerate}
\item $\bar{X}$ gives a class of $H^1_f(\QQ_p,(\bar{T}_2)^\vee\otimes_{\FF}\bar{T}_1)$ 
with respect to 
the reduction map $(T_2)^\vee\otimes_{\frkO_{\frkp}}T_1\to  ((T_2)^\vee\otimes_{\frkO_{\frkp}}T_1)_{\frkO_{\frkp}}\otimes\FF$. 
In particular, if $T_i=\chi^{l_i}$ for any $l_i\in \ZZ,\ i=1,2$ then 
the class belongs to $H^1_f(\QQ_p,\FF(l_1-l_2))$.
\end{enumerate}
\end{lemma}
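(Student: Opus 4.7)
My proof plan rests on Fontaine-Laffaille theory, which gives, on the short crystalline side, an exact fully faithful functor between appropriate categories of $G_{\QQ_p}$-stable $\frkO_\frkp$-lattices and strongly divisible filtered $\phi$-modules over $\frkO_\frkp$, compatible with reduction modulo $\frkp$. Here the hypothesis $\mathrm{HT}(V)\subset[-(p-1),0]$ together with condition (2) of Definition \ref{def.short-crystalline} puts $T$ in the essential image of this functor, and all short crystalline subquotients inherit the same property.

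First, I would apply Fontaine-Laffaille to $T$ to obtain a filtered $\phi$-module $M_T$ over $\frkO_\frkp$; reduction modulo $\frkp$ gives a filtered $\phi$-module $\overline{M}_T$ over $\FF$ whose associated $G_{\QQ_p}$-representation is $\bar{T}$. The subquotient $\bar{X}$ then corresponds, by functoriality, to a filtered sub-$\phi$-module $\overline{M}_X$ of $\overline{M}_T$ sitting in an exact sequence $0\to \overline{M}_{T_1}\to \overline{M}_X\to \overline{M}_{T_2}\to 0$. The next step is to lift $\overline{M}_{T_i}$ and $\overline{M}_X$ to filtered $\phi$-modules over $\frkO_\frkp$ after possibly enlarging $K_\frkp$: choose $\frkO_\frkp$-lattices compatible with the filtration and Frobenius, using smoothness of the lifting problem in the Fontaine-Laffaille category (the relevant obstruction groups are trivial because the Hodge-Tate weight gaps are small). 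Applying the quasi-inverse of the Fontaine-Laffaille functor then produces $T_1,T_2,T_X$ together with the required exact sequence $0\to T_1\to T_X\to T_2\to 0$, and each associated representation $V_i:=T_i\otimes_{\frkO_\frkp}K_\frkp$ (resp.\ $X$) is short crystalline because the filtered $\phi$-modules satisfy the Fontaine-Laffaille length and Frobenius conditions inherited from $M_T$.

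For part (2), the extension class of $0\to T_1\to T_X\to T_2\to 0$ defines an element of $\mathrm{Ext}^1_{G_{\QQ_p}\text{-crys}}(T_2,T_1)$; by the standard dictionary between extensions of crystalline representations and $H^1_f$ (cf.\ \cite{Nekovar}, Proposition 1.26 for $K_\frkp$-coefficients, extended to $\frkO_\frkp$-coefficients via the compatibility of Fontaine-Laffaille with the Bloch-Kato exponential), this class lies in $H^1_f(\QQ_p,T_2^\vee\otimes_{\frkO_\frkp}T_1)$. Reducing modulo $\frkp$ using the map defining $H^1_f(\QQ_p,(\bar{T}_2)^\vee\otimes_\FF \bar{T}_1)$ in the paper's conventions gives the desired class. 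The specialization to the case $T_i=\chi^{l_i}$ is immediate since $T_2^\vee\otimes_{\frkO_\frkp}T_1=\frkO_\frkp(l_1-l_2)$, whose mod-$\frkp$ reduction is $\FF(l_1-l_2)$.

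The main obstacle will be the lifting step: one must verify that the filtered $\phi$-modules $\overline{M}_{T_i}$ and $\overline{M}_X$ admit lifts to $\frkO_\frkp$ compatible with the exact sequence, and that these lifts satisfy the strong divisibility condition ensuring the image under the Fontaine-Laffaille functor is a genuine lattice in a short crystalline representation (rather than merely a torsion module). This is where the short crystalline hypothesis is essential: the bound on Hodge-Tate weights ensures that $\mathrm{Ext}^2$ vanishes in the Fontaine-Laffaille category, so the lifting problem is unobstructed, and any choice of lift can be enlarged to one realizing the prescribed extension.
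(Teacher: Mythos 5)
Your overall strategy matches the paper's: translate into the Fontaine--Laffaille category via the torsion-level equivalence, exploit its abelian structure to see that $\bar X$ belongs to it, and lift torsion modules to lattices. The gap is in the lifting step. You assert that ``the bound on Hodge--Tate weights ensures that $\mathrm{Ext}^2$ vanishes in the Fontaine--Laffaille category'' and that the lifting problem is therefore unobstructed, but this is neither an established fact you can cite nor the right framing. What is actually needed is essential surjectivity of the reduction-mod-$\frkp$ functor from lattices in short crystalline representations (equivalently, strongly divisible filtered $\phi$-modules over $\frkO_{\frkp}$) onto torsion Fontaine--Laffaille modules over $\FF$, and the precise input is Wintenberger \cite[Proposition 1.6.3]{Wintenberger84}, which the paper cites. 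Moreover, one does not merely need \emph{a} lift of $\bar X$: one needs a lift compatible with the exact sequence $0 \to \bar T_1 \to \bar X \to \bar T_2 \to 0$. The paper handles this by running Wintenberger's proof carefully --- first lift the subobject inclusion to an inclusion of lattices, then observe that the quotient is torsion-free and remains in the category because the Fontaine--Laffaille category is stable under quotients. Your sketch simply asserts ``any choice of lift can be enlarged to one realizing the prescribed extension'' without a mechanism, and the $\mathrm{Ext}^2$ heuristic does not supply one: even if extensions in the torsion category lift levelwise, you still must control strong divisibility of the limit, which is exactly what Wintenberger's argument delivers.

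A smaller point worth noting: the Breuil--Messing functor $T^{\ast}_{\rm cris}$ is contravariant, so the paper passes to the Pontryagin dual to obtain the covariant equivalence onto $\mathrm{Rep}_{\rm tor,[-(p-2),0]}(G_{\QQ_p})$; you suppress this duality, which is harmless for part (1) but needs care when you specialize to $T_i=\chi^{l_i}$ in part (2). Your argument for part (2) is otherwise the same as the paper's --- the lifted extension gives a class in $H^1_f(\QQ_p,(V_2)^\vee\otimes V_1)$ by Nekov\'ar \cite[Proposition 1.26]{Nekovar}, which pulls back to $H^1_f(\QQ_p,(T_2)^\vee\otimes T_1)$ and then maps into $H^1(\QQ_p,(\bar T_2)^\vee\otimes\bar T_1)$ via reduction --- and is correct modulo the gaps above.
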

\begin{proof}
Let ${\rm MF}_{W(\FF),{\rm tor}}^{f,[0,p-2]}$ be the category whose object is a 
Fontaine-Laffaille module $M$ over the Witt vectors $W(\FF)$ which are of finite length and 
${\rm Fil}^0M=M,\ {\rm Fil}^{p-1}M=0$. By \cite[1.10(b) and Proposition 1.8]{Fontaine-Laffaille82}, ${\rm MF}_{W(\FF),{\rm tor}}^{f,[0,p-2]}$ is an abelian category 
(see \cite[Section 1]{Wintenberger84} for more properties).
We denote by ${\rm Rep}_{{\rm tor},[-(p-2),0]}(G_{\QQ_p})$ the 
category of torsion crystalline representations of $G_{\QQ_p}$ of 
weight in $[-(p-2),0]$ (see the lines after Theorem 3.1.3.2 of \cite{Breuil-Messing} 
for the definition). 
 By taking the (Pontryagin) dual of the contravariant functor $T^\ast_{{\rm cris}}$ in \cite[Theorem 3.1.3.3]{Breuil-Messing}, 
 one can define an equivalence ${T^\ast}^\vee_{{\rm cris}}:{\rm MF}_{W(\FF),{\rm tor}}^{f,[0,p-2]} \to {\rm Rep}_{{\rm tor},[-(p-2),0]}(G_{\QQ_p})$.  
Since $V$ is short crystalline, $\bar{T}$ belongs to 
${\rm Rep}_{{\rm tor},[-(p-2),0]}(G_{\QQ_p})$. 
Since ${\rm MF}_{W(\FF),{\rm tor}}^{f,[0,p-2]}$ is abelian, so is 
${\rm Rep}_{{\rm tor},[-(p-2),0]}(G_{\QQ_p})$. Therefore,  $\bar{X}$ also belongs to ${\rm Rep}_{{\rm tor},[-(p-2),0]}(G_{\QQ_p})$.  
Since there is an equivalence of categories between ${\rm Rep}_{{\rm tor},[-(p-2),0]}(G_{\QQ_p})$ and ${\rm MF}_{W(\FF),{\rm tor}}^{f,[0,p-2]}$, it has the same property as  \cite[Proposition 1.6.3]{Wintenberger84}. Hence, there exists a crystalline lattice 
$T_X$ such that $ T_X \otimes_{\frkO_{\frkp}} \FF \in {\rm Rep}_{{\rm tor},[-(p-2),0]}(G_{\QQ_p})$ 
and $ T_X\otimes_{\frkO_{\frkp}}\FF=\bar{X}$. 
Applying the proof of \cite[Proposition 1.6.3]{Wintenberger84}, one can first construct 
$0\to   T_1 \to  T_X$ in ${\rm Rep}_{{\rm tor},[-(p-2),0]}(G_{\QQ_p})$ 
such that it is a lift of $0\to  \bar{T}_1 \to  \bar{X} $ and $ T_2:=T_X/T_1$ is torsion free. Recall ${\rm MF}_{W(\FF),{\rm tor}}^{f,[0,p-2]}$ 
is closed under taking quotients, and so is ${\rm Rep}_{{\rm tor},[-(p-2),0]}(G_{\QQ_p})$. Therefore, $T_2$ yields an 
object in ${\rm Rep}_{{\rm tor},[-(p-2),0]}(G_{\QQ_p})$ as 
$T_2 \otimes_{\frkO_{\frkp}}\FF=\bar{T}_2$. Put $V_i=T_i \otimes_{\frkO_{\frkp}}K_{\frkp}$ for $i=1,2$. Then, $X, T_X$ and $V_i, T_i (i=1,2)$ satisfy the conditions (a),(b).  Hence, we have the first claim. 

For the second claim, let $\pi$ be a uniformizer of $\frkO_{\frkp}$ and put $S=(T_2)^\vee\otimes_{\frkO_{\frkp}}T_1$ for simplicity.  
The exact sequence $0\to  
S\stackrel{\times \pi}{\to} S \to  S/\pi S \to 0$ 
yields an injection $H^1(\QQ_p,S)\otimes_{\frkO_{\frkp}}\FF\to  
H^1(\QQ_p,S/\pi S)$.
It is well-known that $H^1(\QQ_p,(V_2)^\vee\otimes_{K_{\frkp}}V_1)=H^1(\QQ_p,S)\otimes_{
\frkO_{\frkp}}K_{\frkp}$ (cf. \cite[Proposition B.2.4]{Rubin00}). 

By the first claim and \cite[Proposition 1.26]{Nekovar}, $X=T_X\otimes_{\frkO_{\frkp}}K_{\frkp}$ belongs to 
$H^1_f(\QQ_p,(V_2)^\vee\otimes_{K_{\frkp}}V_1)$. Then, $X$ is pulled back to a class 
of $T_X$ in $H^1_f(\QQ_p,S)$ under the natural inclusion $S\hookrightarrow S\otimes_{\frkO_{\frkp}}K_{\frkp}=(V_2)^\vee\otimes_{K_{\frkp}}V_1$. The class $\bar{X}$ obviously comes from $T_X$ under 
the injection $H_f^1(\QQ_p,S)\otimes_{\frkO_{\frkp}}\FF\hookrightarrow 
H^1(\QQ_p,S/\pi S)$. 
Namely, it belongs to 
$H^1_f(\QQ_p,(\bar{T}_2)^\vee\otimes_{\FF}\bar{T}_1)$. 
\end{proof}
\begin{remark}\label{fund-char} In Lemma \ref{lem.extension-FL},  (2), 
the fundamental characters appear in $\bar{X}|_{I_p}$, and they  can be described in terms of 
$\mathrm{HT}(V)$ by using Fontaine-Laffaille theory (cf. \cite[Proposition 3]{Wortmann}). 
In particular, if $T_i=\chi^{l_i},\ l_i\in\ZZ$ $(i=1$ or 2$)$, then we can choose  
$l_i$ from $\mathrm{HT(}V)$ so that $-(p-2)\le l_i\le 0$.  
\end{remark}

\begin{lemma} \label{lem.non-trivial-extension1}
Let $\frkp$ be a prime ideal of a number field $K$ and $p=p_\frkp$. Let 
$\rho: G_{\QQ} \to GL_r(K_{\frkp})$ be  a $\frkp$-adic Galois representation such that 
\begin{itemize}
\item[(a)] $\rho$ is unramified outside $p$;
\item[(b)] $\rho$ is short crystalline at $p$.
\end{itemize}
Let 
\[\bar \rho=\begin{pmatrix}                     
 * & * &* & * \\
 0 & \bar \chi^{l_1} & h &* \\
 0 & 0 & \bar \chi^{l_2} &* \\
 0 & 0 & 0 & *
\end{pmatrix}\]
be the $\mathrm{mod}$ $\frkp$ representation of $\rho$ with $l_1,l_2 \in  \ZZ$ such that $\{l_1,l_2\} \subset \mathrm{HT}(\rho)$ and $h \not=' 0$. Then we have the following:
\begin{itemize}
\item[(1)] Suppose that $l_1<l_2$. Then $\calc_{p}^{\omega^{l_1-l_2}}$ is non-trivial.
\item[(2)] Suppose  that $l_1-l_2$ is a positive even integer. Then, $p$ divides $\zeta(1-l_1+l_2)$.
\end{itemize}
\end{lemma}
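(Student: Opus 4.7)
The plan is to realize the entry $h$ as a non-zero class in the Bloch--Kato Selmer group $\mathrm{Sel}(\QQ,\FF(l_1-l_2))$ and then translate that non-vanishing into the stated consequences via standard facts from cyclotomic Iwasawa theory.

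First I would isolate the $2\times 2$ block responsible for $h$: quotienting $\bar T$ by the $\FF[G_\QQ]$-subrepresentation spanned by the first two basis vectors and then taking the subrepresentation of the quotient generated by the images of the next two basis vectors produces an $\FF[G_\QQ]$-subquotient $\bar X$ of $\bar T$ fitting into a short exact sequence
\[0\to \bar\chi^{l_1}\to \bar X\to \bar\chi^{l_2}\to 0,\]
whose extension class in $H^1(\QQ,\FF(l_1-l_2))$ is non-zero precisely because $h\not=' 0$. By hypothesis (a) this class is unramified outside $p$. By hypothesis (b) together with $\{l_1,l_2\}\subset \mathrm{HT}(\rho)$, Lemma \ref{lem.extension-FL} applies to $\bar X$ (with $T_i=\chi^{l_i}$, after a common Tate twist to place the weights in the Fontaine--Laffaille range) and places its local restriction at $p$ inside $H^1_f(\QQ_p,\FF(l_1-l_2))$. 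Hence $h$ yields a non-zero element of $\mathrm{Sel}(\QQ,\FF(l_1-l_2))$.

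To prove (1), set $m:=l_1-l_2<0$. Since $m\neq 0,1$ lies in the Fontaine--Laffaille range, the crystalline local condition at $p$ on $\FF(m)$ coincides with the unramified condition, so inflation--restriction along $\QQ(\zeta_p)/\QQ$ identifies $\mathrm{Sel}(\QQ,\FF(m))$ with the $\omega^{m}$-isotypic component of $\mathrm{Hom}(\mathcal C_p,\FF)$, forcing $\mathcal C_p^{\omega^{l_1-l_2}}\neq 0$. For (2), put $k:=l_1-l_2>0$ with $k$ even and $k<p-1$. Global Tate duality converts the non-vanishing of $\mathrm{Sel}(\QQ,\FF(k))$ into non-vanishing of $\mathcal C_p^{\omega^{1-k}}$, and the converse to Herbrand's theorem due to Ribet then gives $p\mid B_k$; since $\zeta(1-k)=-B_k/k$ and $\gcd(p,k)=1$, we conclude $p\mid \zeta(1-l_1+l_2)$.

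The main obstacle I anticipate is the careful verification that $\bar X$ satisfies the hypotheses of Lemma \ref{lem.extension-FL}: one must check that it is a genuine $\FF[G_{\QQ_p}]$-subquotient of the short crystalline module $T\otimes_{\frkO_\frkp}\FF$, with Hodge--Tate weights placed (after a common Tate twist) inside $[-(p-2),0]$. The hypothesis $\{l_1,l_2\}\subset \mathrm{HT}(\rho)$ combined with short-crystallinity of $\rho$ is tailored precisely to this.
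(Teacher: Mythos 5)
Your argument for part (1) is essentially the paper's: cut out the $2\times 2$ block as an $\FF[G_\QQ]$-subquotient, use (a), the short-crystallinity hypothesis together with Lemma \ref{lem.extension-FL} to land the class in $H^1_f(\QQ_p,\FF(l_1-l_2))$, observe this group vanishes for $l_1-l_2<0$, hence the restriction to $G_{\QQ(\zeta_p)}$ cuts out an everywhere-unramified $\omega^{l_1-l_2}$-isotypic extension and class field theory gives $\calc_p^{\omega^{l_1-l_2}}\neq 0$. This matches the published proof.

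Part (2) is where the proposal diverges and breaks down. The paper, after showing each component $h_i$ lies in $\mathrm{Sel}(\QQ,\ZZ(l_1-l_2))$, uses that for $l_1-l_2>1$ even one has $H^1_f(\QQ,\QQ_p(l_1-l_2))=0$, so $\mathrm{Sel}(\QQ,\ZZ(l_1-l_2))=\mathcyr{Sh}(\ZZ(l_1-l_2))$, and then applies Bloch--Kato's Tamagawa number formula \cite[(6.5)]{Bloch-Kato90} for Tate motives to deduce $p\mid\zeta(1-(l_1-l_2))$. Your route instead asserts that ``global Tate duality converts the non-vanishing of $\mathrm{Sel}(\QQ,\FF(k))$ into non-vanishing of $\calc_p^{\omega^{1-k}}$.'' This is not a consequence of Poitou--Tate/Greenberg--Wiles duality: for $M=\FF_p(k)$ with $k>0$ even the Greenberg--Wiles ratio gives $\#H^1_f(\QQ,M)=p\cdot\#H^1_{f^*}(\QQ,M^*)$, so merely having $H^1_f(\QQ,M)\neq 0$ (i.e.\ $\geq p$) yields no lower bound on the dual Selmer group, hence no conclusion about $\calc_p^{\omega^{1-k}}$. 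The passage from Selmer non-vanishing to divisibility of $\zeta(1-k)$ genuinely needs the Bloch--Kato/Main-Conjecture input the paper cites, and cannot be replaced by a bare duality step. (There is also a minor misattribution: the implication $\calc_p^{\omega^{1-k}}\neq 0\Rightarrow p\mid B_k$ is Herbrand's theorem, not Ribet's converse, which goes the other way; but the main issue is the duality step above.)
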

\begin{proof} (1) The assertion may be proved using the same argument as in \cite[Section 8]{Brown07}. But, for reader's convenience, we here give an outline of the proof. Put $\tau=\begin{pmatrix}
\bar \chi^{l_1} & h \\ 0 & \bar \chi^{l_2} \end{pmatrix}$. Then we have
$\tau|_{\Gal(\bar \QQ/\QQ(\zeta_p))}=\begin{pmatrix}
\bar 1 & \widetilde h \\ 0 & \bar 1 \end{pmatrix}$ with $\widetilde h=h|_{\Gal(\bar \QQ/\QQ(\zeta_p))}$.
It is easily shown that $\widetilde h$ is a non-trivial homomorphism from $\Gal(\bar \QQ/\QQ(\zeta_p))$ to $\FF$, where $\FF=\frkO_\frkp/\frkp$.
Hence, the splitting field $L$ of $\widetilde h$ is an algebraic extension of $p$-power degree. It is easily shown that
$\Gal(L/\QQ)$ acts on $\Gal(L/\QQ(\zeta_p))$ via $\omega^{l_1-l_2}$. By the assumption (a), $L/\QQ(\zeta_p)$ is unramified outside $p$.
We claim that it is unramified also at $p$. We note that $h|_{D_p} \in H^1(\QQ_p,\FF(l_1-l_2))$ gives 
 an extension of $\FF[D_p]$-modules:
\[0 \to \FF(l_1) \to \bar X \to \FF(l_2) \to 0.\]
By the assumption (b) and  Lemma \ref{lem.extension-FL}, it belongs to  $H_f^1(\QQ_p,\FF(l_1-l_2))$. 
Since we have $l_1-l_2<0$, we have $H_f^1(\QQ_p,K_{\frkp}(l_1-l_2))=0$ (cf. \cite[Example 3.9]{Bloch-Kato90}), and hence $H_f^1(\QQ_p,(K_{\frkp}/\frkO_{\frkp})(l_1-l_2))=0$. Hence, by \cite[Proposition 7.9]{Brown07}, 
we have $h|_{D_p}=0$, and in particular  $\widetilde h|_{I_p(L/\QQ(\zeta_p))}=0$, where 
$I_p(L/\QQ(\zeta_p))$ is an inertia subgroup of $L/\QQ(\zeta_p)$ at $p$. This completes the claim.
Thus we can construct an unramified extension $L$ of $\QQ(\zeta_p)$ of $p$-power degree such that $\Gal(L/\QQ)$ acts on $\Gal(L/\QQ(\zeta_p))$ via $\omega^{l_1-l_2}$. Hence, by the class field theory, we prove that 
$\calc_p^{\omega^{l_1-l_2}}$ is  non-trivial.

(2)  Similarly to (1), $h$  gives a non-split extension of $\FF[G_{\QQ}]$-modules
\[0 \to \FF(l_1) \to \bar X \to \FF(l_2) \to 0,\]
and belongs to $H^1(\QQ,\FF(l_1-l_2))$.
 Let $\ell \not=p$. Then, by (a) we can prove that $h|_{D_{\ell}} \in \mathit{H}_f^1(\QQ_{\ell},\FF(l_1-l_2))$ by using the same argument as in \cite[Section 8]{Brown07}. 
Moreover, by (b) and Lemma \ref{lem.extension-FL}, we have $h|_{D_p} \in H_f^1(\QQ_p,\FF(l_1-l_2))$.  
Now we have the following isomorphism
\[\iota:\mathit{H}^1(\QQ,\FF(l_1-l_2)) \cong \mathit{H}^1(\QQ,\FF_p(l_1-l_2))^{ \oplus d}  \]
with $d=[\FF:\FF_p]$. Let $(h_1,\ldots,h_d)=\iota(h)$. 
Then for any $i=1,\ldots,d$, we have
\[h_i|_{D_{\ell}} \in \mathit{H}_f^1(\QQ_{\ell},\FF_p(l_1-l_2)) \text{ for any }\ell .\]
Hence, by \cite[Proposition 7.9]{Brown07}, we have
\[h_i \in \mathrm{Sel}(\QQ,\ZZ(l_1-l_2)).\]
This implies that $\mathrm{Sel}(\QQ,\ZZ(l_1-l_2))$
has $p$-torsion element.  
Since $l_1-l_2$ is a positive even integer,  $\mathrm{Sel}(\QQ,\ZZ(l_1-l_2))$ coincides with the Tate-Shafarevich group $\mathcyr{Sh}(\ZZ(l_1-l_2))$. Thus the assertion follows from
\cite[(6.5)]{Bloch-Kato90} remarking that $p$ does not divide $\#H^0(\QQ,\QQ/\ZZ(l_1-l_2))$. 
\end{proof}
Let $F$ and $G$ be Hecke eigenforms in $M_{\bf k}(\varGamma^{(m)})$, and $\frkp$ be a prime ideal of $\QQ(F)$. Then, we write
\[\rho_{G,\St} \equiv \rho_{F,St} \mod \frkp \]
if there exists a prime ideal $\widetilde \frkp$ of $\QQ(F)\cdot\QQ(G)$ lying above $\frkp$ such that
\[\rho_{G,\St} \equiv \rho_{F,St} \mod {\widetilde \frkp}. \]
In this case, we also write
\[\bar \rho_{G,\St} = \bar \rho_{F,\St}.\]

We characterize some cuspidal Hecke eigenforms  in terms of Arthur's parameters.
\begin{proposition} \label{prop.A-parameters}
Let $k,l$ be positive even integers such that $k \ge l \ge 6$.
\begin{itemize}
\item[(1)] Let $G$ be a tempered form in $S_k(\varGamma^{(2)})$ or in $S_{(k,k,l)}(\varGamma^{(3)})$. 
Then $G$ is stable. Let $G$ be a tempered form in $S_{(k,k,l,l)}(\varGamma^{(4)})$. Then, $G$ is stable or $\psi_G=\tau_1 \boxplus 1$ with
$\tau_1$  an irreducible unitary self-dual automorphic cuspidal representation of $\mathrm{PGL}_8(\AAA_\QQ)$.
\item[(2)] Let $G$ be a Hecke eigenform in  $S_{(k,k,l,l)}(\varGamma^{(4)})$ such that $\psi_G=\pi[2] \boxplus 1$ with
$\pi$ an irreducible unitary self-dual cuspidal automorphic representation of $\mathrm{PGL}_4(\AAA_\QQ)$. Then, $G$ is the lift of type $\scra^{(I)}$, that is
there exists a Hecke eigenform $F$ in $S_{(k+l-4,k-l+4)}(\varGamma^{(2)})$ such that $G=\scra_4^I(F)$.
\end{itemize}
\end{proposition}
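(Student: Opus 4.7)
The plan is to analyze $\psi_G$ via Arthur's classification using three systematic constraints: the multiset of weights of the summands of $\psi_G$ must equal $\mathrm{Weights}(\pi_G)$; every summand is self-dual cuspidal and unramified at every finite place of $\QQ$; and every level-one elliptic cuspidal automorphic representation of $\GL_2(\AAA_\QQ)$ has even classical weight $\ge 12$.

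For Part (1), temperedness forces every $d_i$ in $\psi_G = \bigoplus_i \pi_i[d_i]$ to equal $1$. I would enumerate the partitions $\sum_i n_i = 2n+1$ for $n=2,3,4$. A self-dual cuspidal summand on $\mathrm{PGL}_1$ is trivial and contributes weight $\{0\}$; one on $\mathrm{PGL}_2$ contributes $\{\pm(w-1)/2\}$ for some even $w\ge 12$; a self-dual cuspidal unramified summand on $\mathrm{PGL}_3$, which by Gelbart--Jacquet is a symmetric square of some weight-$w$ form, contributes $\{w-1,0,-(w-1)\}$. Matching these contributions against
\[
\mathrm{Weights}(\pi_G) =
\begin{cases}
\{k-1,\,k-2,\,0,\,-(k-2),\,-(k-1)\} & (n=2),\\
\{k-1,\,k-2,\,l-3,\,0,\,-(l-3),\,-(k-2),\,-(k-1)\} & (n=3),\\
\{k-1,\,k-2,\,l-3,\,l-4,\,0,\,-(l-4),\,-(l-3),\,-(k-2),\,-(k-1)\} & (n=4),
\end{cases}
\]
forces every non-stable partition to require an elliptic cusp form of level one and odd weight, hence is excluded, with the sole exception (when $n=4$) of $\psi_G = \tau_1 \boxplus 1$ with $\tau_1$ on $\mathrm{PGL}_8$, where no parity obstruction appears.

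For Part (2), the Speh-type shift produces $\mathrm{Weights}(\pi[2]) = \{\lambda \pm \tfrac{1}{2} : \lambda \in \mathrm{Weights}(\pi)\}$, so matching against $\mathrm{Weights}(\pi_G)\setminus\{0\}$ yields $\mathrm{Weights}(\pi) = \{k-\tfrac{3}{2},\,l-\tfrac{7}{2},\,-(l-\tfrac{7}{2}),\,-(k-\tfrac{3}{2})\}$. By Arthur's classification (alternatively by the descent theory of Ginzburg--Rallis--Soudry and Jiang--Soudry), $\pi$ is either of symplectic type, descending to an everywhere-unramified cuspidal representation of $\PGSp_4(\AAA_\QQ)$, or of orthogonal type and hence a Rankin--Selberg product $\pi_1\boxtimes\pi_2$ of two everywhere-unramified cuspidal representations of $\GL_2(\AAA_\QQ)$. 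In the orthogonal case the weight matching forces one factor to have odd classical weight, which is impossible at level one, so only the symplectic case survives. The infinitesimal character of the descended representation then pins down its archimedean component as the holomorphic discrete series of weight $(k+l-4,\,k-l+4)$, yielding a Hecke eigenform $F \in S_{(k+l-4,\,k-l+4)}(\varGamma^{(2)})$ whose spin $L$-function agrees with that of $\pi$. Finally, Theorem \ref{th.atobe1} shows that $\scra_4^{(I)}(F)$ has Arthur parameter $\pi[2]\boxplus 1 = \psi_G$, and the Arthur multiplicity formula (together with the uniqueness of the holomorphic archimedean component at level one) identifies $G$ with $\scra_4^{(I)}(F)$.

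The main obstacle will be the descent step in Part (2): one must carefully invoke Arthur's local packet classification at the archimedean place to confirm that the descended representation of $\PGSp_4$ lies in the holomorphic discrete series packet of the expected weight, beyond the purely combinatorial weight analysis used in Part (1). The enumerations in Part (1), while numerous, are routine once the odd-weight obstruction is in hand.
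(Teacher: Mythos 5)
The paper's own ``proof'' of this proposition is a single-sentence citation to Theorem A.1 of the previous paper [A-C-I-K-Y22], so your reconstruction is being compared against something the authors chose not to write out. Your overall framework (weight matching against $\mathrm{Weights}(\pi_G)$, self-duality and level-one constraints, descent for the $\GL_4$ summand in Part (2)) is the right skeleton, but there is a genuine gap in the central claim of Part (1).

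The assertion that ``every non-stable partition requires an elliptic cusp form of level one and odd weight'' is not justified, and appears to be false. Consider $n=3$ with the partition $7 = 3 + 4$: take $\tau_1 = \mathrm{Sym}^2(\pi_1)$ on $\PGL_3$ and $\tau_2 = \pi_2 \boxtimes \pi_3$ on $\PGL_4$. Matching weights gives $w_1 - 1 = k-1$, $\{a_2 + a_3, a_2 - a_3\} = \{k-2, l-3\}$, hence classical weights $w_1 = k$, $w_2 = k+l-4$, $w_3 = k-l+2$, all \emph{even}. When $k \ge l+10$ (e.g.\ $(k,l)=(16,6)$) all three are $\ge 12$, so no parity obstruction kicks in. The analogous pairings in the $n=4$ partitions $9 = 5+4$ and $9 = 1+4+4$ likewise admit all-even-weight matchings when $k - l$ is large. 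Note also that the Arthur parity constraint quoted in Lemma \ref{lem.no-irreducible-factor} ($n_{i_0}$ odd and $n_i d_i \equiv 0 \bmod 4$ for $i\neq i_0$) does not exclude these partitions either. What actually eliminates them is the archimedean part of Arthur's multiplicity formula: for a tempered parameter $\psi = \boxplus_i \tau_i$ the epsilon character is trivial, the finite local characters are trivial at level one, and so the multiplicity of the holomorphic discrete series is nonzero only when its sign character on $S_{\psi_\infty}$ restricts trivially to $S_\psi$ — a constraint that depends on how the weights of the $\tau_i$ interlace and is not captured by parity of classical weights. Your plan never invokes this, so the enumeration for $n=3,4$ is incomplete. (Your $n=2$ case is fine, since $5=1+4$ genuinely forces a weight-2 factor.)

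Two smaller points. In Part (2), you run a case split between ``$\pi$ symplectic'' and ``$\pi$ orthogonal,'' dismissing the latter by the odd-weight computation; in fact the orthogonal case is excluded before any weight analysis, because $\pi[2]$ with $d=2$ even must have $\pi$ symplectic for the summand to be orthogonal in an $\mathrm{SO}(9)$-parameter, so the case split is unnecessary. Finally, the identification $G = \scra_4^{(I)}(F)$ does require the multiplicity-one statement you flag at the end, and that too is ultimately supplied by the same multiplicity-formula analysis from Theorem A.1 rather than by a purely combinatorial argument.
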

\begin{proof}
The assertions follow from \cite[Theorem A.1]{A-C-I-K-Y23}.
\end{proof}

Let $g$ be a primitive form in $S_k(\SL_2(\ZZ))$. Let $K$ be a number field containing $\QQ(g)$ and $\frkp$ be a prime ideal of $K$ and put $p=p_{\frkp}$. Let $\rho_g$  be the $\frkp$-adic Galois representation of $G_{\QQ}$ in Theorem \ref{th.Galois-spin}, and $V$ be its representation space. Let $T$ be a $G_{\QQ}$-stable lattice of $V$.  

To state Kato's result on the Selmer group, we define the period $\Omega (g)_\pm=\Omega_{\mathrm{Kato}}(g)_{ \pm}$ following \cite{Kato04}. For a positive integer $L \ge 3$, let $Y(L)$ be the  open modular curve over $\QQ$  for $\varGamma(L)$ and $X(L)$ its compactification. For  integers $M,N \ge 1$, take an integer $L$ such that $L \ge 5$ divisible by $M$ and $N$, and define $Y(M,N)$ as
\[Y(M,N)=G \backslash Y(L)\]
with 
\[G=\left\{\begin{pmatrix} a & b \\ c & d \end{pmatrix} \ \middle| \ a \equiv 1\mod M, b \equiv 0 \mod M, c \equiv 0 \mod N, d \equiv 1 \mod N \right\}.\]
We note that $Y(M,N)$ is independent of the choice of $L$. We also note that $Y(N,N)=Y(N)$ and $Y(1,N)$ is the modular curve $Y_1(N)$ for $\varGamma_1(N)$. Let $X(M,N)$ be the smooth compactification of $Y(M,N)$. Let $\lambda: E \to Y(N)$ be the universal elliptic curve, and let $\bar \lambda: \bar E \to X(N)$ be the smooth N\'eron model of $E$ over $X(N)$. Let $\Omega^1_{\bar E/X(N)}$ denote the space of $1$-forms of $\bar E$ over $X(N)$, and put $\mathrm{coLie}(\bar E)=\bar \lambda_*(\Omega^1_{\bar E/X(N)})$. We then define the space $M_k(X(N))$ of algebraic modular forms weight $k$ for $X(N)$ (or for $\varGamma(N))$ as 
\[M_k(X(N))=\Gamma(X(N), \mathrm{coLie}(\bar E)^{\otimes k}),\]
and its subspace $S_k(X(N))$, called  the space of algebraic cusp forms of weight $k$ for $X(N)$ as
\[S_k(X(N))=\Gamma(X(N),  \mathrm{coLie}(\bar E)^{\otimes k-2} \otimes_{{\calo(X(N))}} \Omega^1_{X(N)/\QQ}).\]
For a curve $X$ of the form $X=G \backslash X(N)$ with $N \ge 3$ and $G$ a subgroup of $\GL_2(\ZZ/N\ZZ)$, we define $M_k(X)$ and $S_k(X)$ as the $G$-fixed part of $M_k(X(N))$ and $S_k(X(N))$, respectively. Let $\lambda: E \to Y(M,N)$ be the universal elliptic curve and define a local system $\calh^1$ on $Y(M,N)(\CC)$ as
\[\calh^1=R^1\lambda_*(\ZZ).\]
For a commutative ring $A$, let
\[V_{k,A}(Y(M,N))=\mathit{H}^1(Y(M,N)(\CC), \mathrm{Sym}_\ZZ^{k-2}(\calh^1) \otimes A),\]

\[V_{k,A,c}(Y(M,N))=\mathit{H}_c^1(Y(M,N)(\CC), \mathrm{Sym}_\ZZ^{k-2}(\calh^1) \otimes A)\]
and 
\[V_{k,A}(X(M,N))=\mathit{H}^1(X(M,N)(\CC), j_*\mathrm{Sym}_\ZZ^{k-2}(\calh^1) \otimes A),\]
where $j$ is the inclusion map $Y(M,N) \to X(M,N)$.
Let
\[\mathrm{per}_{1,N}:M_k(X_1(N))  \to V_{k,\CC}(Y_1(N))\]
be the period map in \cite[4.10]{Kato04}.
We note that $S_k(X_1(N)) \otimes \CC$ can be identified with $S_k(\varGamma_1(N))$. Let $g(z)=\sum_{m=1}^\infty a(m,g) {\bf e}(mz)$ be a new form in $S_k(\varGamma_1(N))$. Let $S(g)$ and $V_{\QQ(g)}(g)$ be the  quotient $\QQ (g)$-vector  spaces of $M_k(X_1(N))\otimes \mathbb{Q}(g)$ and $V_{k,\QQ}(Y_1(N))\otimes \mathbb{Q}(g)$, respectively, defined in \cite[6.3]{Kato04}, and put $V_\CC(g)=V_{\QQ(g)}(g) \otimes \CC$. 
Let $\iota : V_{k,\mathbb{Z}}(Y_1(N))\to V_{k,\mathbb{Z}}(Y_1(N))$ be the map induced by the complex conjugation on $Y_1(N)(\mathbb{C})$ and on $E(\mathbb{C})$.
We denote the $\mathbb{C}$-linear automorphism on $V_{k,\mathbb{C}}(Y_1(N))$ induced by $\iota$ by the same letter $\iota$.
For $x\in V_{k,\mathbb{C}}(Y_1(N))$, we put
$\displaystyle x^\pm =\frac{1}{2}(1+\iota)(x)$.
Then we have
\[\dim_{\QQ(g)} V_{\mathbb{Q}(g)}(g)=2, \ \dim_{\QQ(g)} V_{\mathbb{Q}(g)}(g)^+=\dim_{\QQ(g)} V_{\mathbb{Q}(g)}(g)^-=1.\]
Then the map $\mathrm{per}_{1,N}:M_k(X_1(N))  \to V_{k,\CC}(Y_1(N))$ induces a $\QQ(g)$-linear map
\[\mathrm{per}_g: S(g) \to V_{k,\CC}(g).\]
Let $N' \ge 3$ be an integer such that $N\mid N'$ and $p\nmid N'$. Let $X$ be an integral model of the Kuga-Sato variety $\mathrm{KS}_{k-2}(N')$,
which is a proper smooth scheme over $\ZZ[1/N']$. We will give a brief explanation of the Kuga-Sato varieties in Appendix A.
 Let $D$ be the $\frkO_{\frkp}$-lattice generated by the image of
\[\mathit{H}^{k-1}(X,\Omega_{X/\ZZ [1\slash N']}^{\ge k-1})(\widetilde\varepsilon) \to S(g),\]
and $T$ be the image of
\[\mathit{H}^{k-1}(\mathrm{KS}_{k-2}(N') \otimes_{\QQ} \bar \QQ_p,\frkO_{\frkp})(\widetilde\varepsilon) \to V_{\QQ(g)_{\frkp}}(g).\]
(For the precise notation, see \cite[pp. 249--250]{Kato04}.)
Let $\omega$ be an $\frkO_{\frkp}$-basis of $D$, and $\gamma^\pm$ be an $\frkO_{\frkp}$-basis of $T^{\pm}$. Then we have
\[\mathrm{per}_g(\omega)^{\pm}=\Omega^\pm \gamma^\pm\]
with $\Omega^\pm \in \CC^\times$. (In \cite{Kato04}, $\Omega^\pm$ is written simply as $\Omega$.)
We then define $\Omega (g)_{\pm}=\Omega_{\mathrm{Kato}}(g)_{\pm}$ as
\[\Omega (g)_{\pm}=(2\pi \sqrt{-1})^{1-k} \Omega^{\mp}.\]
 The following theorem is due to Kato \cite{Kato04}. 
\begin{theorem} \label{th.Kato}
Let $g$ be a primitive form in $S_l(\SL_2(\ZZ))$. Let $K$ be a number field containing $\QQ(g)$ and $\frkp$ be a prime ideal of $K$.
Let $(\rho_g,V)$ be the $\frkp$-adic Galois representation attached to $g$ in Theorem \ref{th.Galois-spin}.
Let $m$ be a  positive integer such that $m \le l-1$ and $m \not=l/2$.
Let $T$ be a $G_{\QQ}$-stable lattice of $V(m)$. 
Suppose that the following three conditions hold:
\begin{itemize}
\item[(1)] The residual representation $\rho_g$ mod $\frkp$ is absolutely irreducible;
\item[(2)] $\rho_g(\Gal(\bar \QQ/\QQ(\zeta_{p^{\infty}}))$ contains $\SL_2(\ZZ_p)$ (with a suitable choice of a lattice of $V$);
\item[(3)] $\mathrm{Sel}(\QQ,T)$ has a $\frkp$-torsion element. 
\end{itemize}
Then $\frkp$ divides ${\bf L}(m,g;\Omega(g)_{(-1)^m})$.
\end{theorem}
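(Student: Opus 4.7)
The result is attributed to Kato in \cite{Kato04}, so the plan is to deduce it from Kato's divisibility theorem in the Iwasawa main conjecture for modular forms. The main input is the Euler system of Beilinson--Kato zeta elements, which produces a class $z_\gamma^{(p)}(g)$ in the Iwasawa cohomology $H^1_{\mathrm{Iw}}(\QQ(\zeta_{p^\infty}), T(g))$ for each choice of integral lattice. Hypotheses (1) and (2) are precisely the largeness conditions on the image of $\rho_g$ required to run the Kolyvagin-style descent argument: absolute irreducibility of the residual representation controls the behavior of Kolyvagin primes, and the image of $\Gal(\bar\QQ/\QQ(\zeta_{p^\infty}))$ containing $\SL_2(\ZZ_p)$ ensures that the zeta element is non-trivial in the relevant reduction. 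The condition $m \ne l/2$ excludes the central critical point, where $L(m,g)$ may vanish.

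First I would invoke Kato's explicit reciprocity law \cite[Thm.~16.6]{Kato04}, which identifies the image of the $m$-th cyclotomic specialization of $z_\gamma^{(p)}(g)$ under the Bloch--Kato dual exponential map at $p$ with the algebraic part of the $L$-value, namely with ${\bf L}(m,g;\Omega(g)_{(-1)^m})$ up to a $\frkp$-adic unit. Here one uses that $g$ has level one and $p$ is large, so the Euler factor at $p$ is a $\frkp$-adic unit, and that the period $\Omega(g)_{\pm}$ defined via the integral Kuga--Sato motive coincides with Kato's normalization. Second I would apply Kato's bound on the Selmer group \cite[Thm.~14.2]{Kato04} (see also the reformulation in \cite[\S17]{Kato04}), which under (1) and (2) yields that the order of the $\frkp$-torsion part of $\mathrm{Sel}(\QQ,T)$ divides the $\frkp$-adic valuation of the image of the zeta element. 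Taking contrapositive: if $\frkp$ did not divide ${\bf L}(m,g;\Omega(g)_{(-1)^m})$, then the image of the zeta element would be a $\frkp$-unit in local cohomology at $p$, forcing $\mathrm{Sel}(\QQ,T)$ to have no $\frkp$-torsion, contradicting (3).

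The main technical obstacle is the bookkeeping that translates Kato's Iwasawa-theoretic statement (about the characteristic ideal of the dual Selmer group over the cyclotomic tower) into a pointwise divisibility statement at the single twist $T(m)$ for the classical Bloch--Kato Selmer group; this uses the descent from Iwasawa cohomology to $H^1(\QQ,T)$ together with the compatibility of Selmer conditions at $p$ provided by Lemma \ref{lem.extension-FL} (crystallinity of $\rho_g(m)$ guarantees that the local condition at $p$ is the Bloch--Kato one). A secondary point is to verify that the period $\Omega_{\mathrm{Kato}}(g)_{\pm}$ defined above matches the one implicit in \cite{Kato04}; this is essentially tautological from the construction via $H^{k-1}$ of the integral Kuga--Sato variety, with the comparison to the Petersson period deferred to Appendix~A.
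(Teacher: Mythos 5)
Your proposal captures the right overall shape of the argument---it is a reduction to Kato's theorem relating the algebraic $L$-value to the size of a Selmer group, using conditions (1) and (2) as the ``big image'' hypotheses that make Kato's Euler system machine run, and condition (3) to derive the divisibility by contrapositive. The paper's actual proof, however, is considerably more streamlined: rather than re-deriving the statement from the explicit reciprocity law (Thm.~16.6) and the general Selmer bound (Thm.~14.2) and worrying about the Iwasawa-theoretic descent, it invokes directly the precise formula in \cite[Proposition 14.21(2)]{Kato04}. This proposition already expresses the cardinality of a modified Selmer group $\mathcal{S}(\QQ,T)$ (defined with only the $p$-local condition) as $\mu^{-1}\cdot[\frkO_{\frkp}:{\bf L}(m,g;\Omega(g)_{(-1)^m})]\cdot \#H^0(\QQ,T\otimes\QQ/\ZZ)\cdot\#H^0(\QQ,T^*(1)\otimes\QQ/\ZZ)$; condition (1) kills the $H^0$ factors via \cite[Prop.~14.11]{Kato04}, condition (2) gives $\mu\geq 1$ via \cite[Thm.~14.5(3)]{Kato04}, and the trivial inclusion $\mathrm{Sel}(\QQ,T)\subset\mathcal{S}(\QQ,T)$ finishes. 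So the ``bookkeeping that translates Kato's Iwasawa-theoretic statement into a pointwise divisibility statement,'' which you flag as the main technical obstacle, is in fact already packaged in Kato's Proposition 14.21(2), and no separate descent argument or dual-exponential-map computation is needed at this level. Your route would also work but is longer, and you should note that the paper compares $\mathrm{Sel}(\QQ,T)$ with the looser group $\mathcal{S}(\QQ,T)$ (Bloch--Kato local conditions imposed only at $p$) precisely so that Kato's formula applies without further local-condition matching away from $p$.
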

\begin{proof}
We define $\mathcal{S}(\QQ,T)$ as
\[\mathcal{S}(\QQ,T)=\mathrm{Ker} \Bigl(H^1(\ZZ[1/p],T \otimes \QQ/\ZZ) \to H^1(\QQ_p,T \otimes \QQ/\ZZ)/H_f^1(\QQ_p,T \otimes \QQ/\ZZ) \Bigr).\]
Then $\mathrm{Sel}(\QQ,T)$ is a subgroup of $\mathcal{S}(\QQ,T)$.
By \cite[Proposition 14.21 (2)]{Kato04}, we have
\begin{align*}
\# (\mathcal{S}(\QQ,T))=&\mu^{-1}\cdot[\frkO_\frkp:{\bf L}(m,g;\Omega(g)_{(-1)^m})]\\
& \times \#H^0(\QQ, T \otimes \QQ/\ZZ) \cdot \#H^0(\QQ,T^*(1) \otimes \QQ/\ZZ),
\end{align*}
where $\mu=[H^1(\ZZ[1/p],T):z]\cdot \#(H^2(\ZZ[1/p],T)^{-1}$
with $z$ the element of $H^1(\ZZ[1/p],V_{K_\frkp}(g)(m))$ in \cite[Proposition 14.16]{Kato04}.
By the assumption (1), and by the proof of \cite[Proposition 14.11]{Kato04}, we see that $\#H^0(\QQ, T \otimes \QQ/\ZZ) = \#H^0(\QQ,T^*(1) \otimes \QQ/\ZZ)=1$. Moreover, by the assumption (2) and \cite[Theorem 14.5 (3)]{Kato04}, we have $\mu \ge 1$.
Thus the assertion has been proved remarking that $\mathrm{Sel}(\QQ,T) \subset \mathcal{S}(\QQ,T)$.
\end{proof}

  \begin{lemma} \label{lem.non-trivial-extension2}
Let $\frkp$ be a prime ideal of a number field $K$ and $p=p_\frkp$. Let 
$\rho: G_{\QQ} \to GL_r(K_{\frkp})$ be a Galois representation. 
Let $g$ be a primitive form in $S_l(\SL_2(\ZZ))$ with  $l<p/2$. Let $(\rho_g,V_g)$ be the Galois representation in Theorem \ref{th.Galois-spin}.
Let $T_g$ be a $G_{\QQ}$-stable lattice of $V_g$.

\begin{itemize}
\item[(1)] 
 Suppose that $\rho$ is unramified outside $p$ and short crystalline at $p$ and that 
 \[\bar \rho=\begin{pmatrix}                      
* & * &* & * \\
 0 & \bar \rho_g(l_1) & a &* \\
 0 & 0 & \bar \chi^{l_2} &* \\
 0 & 0 & 0 & *
\end{pmatrix}\]
with  $l_1,l_2$ integers such that $\{1-l+l_1,l_1, l_2 \} \subset  \mathrm{HT}(\rho),  1 \le l_1-l_2 \le l-1, l_1-l_2 \not=l/2$ and $a \not='0$. Then
$\mathrm{Sel}(\QQ,T_g(l_1-l_2))$ has a $\frkp$-torsion element.
\item[(2)] 
Suppose that $\rho^\vee \otimes \chi^{-l}$ is unramified outside $p$ and  short crystalline at $p$ and that 
\[\bar \rho=\begin{pmatrix}                    
 * & * &* & * \\
 0 & \bar \chi^{l_1}& a &* \\
 0 & 0 & \bar  \rho_g(l_2)   &* \\
0 & 0 & 0 & *
\end{pmatrix}\]
with $l_1, l_2$  integers such that $\{-l-l_1, -l-l_2, -1-l_2 \} \subset  \mathrm{HT}(\rho \otimes \chi^{-l}), 1 \le l-1-l_2+l_1 \le l-l_2+l_1-1 \le l-1,  l-1-l_2+l_1 \le l-l_2+l_1-1 \not=l/2$ and  $a \not=' 0$. Then
$\mathrm{Sel}(\QQ,T_g(l+l_1-l_2-1))$  has a $\frkp$-torsion element.
\item[(3)]  Suppose that $\rho$ is unramified outside $p$ and short crystalline at $p$ and that
\[\bar \rho=\begin{pmatrix}                     
 * & * &* & * \\
 0 & \bar \rho_g^\vee(-l) & a &* \\
 0 & 0 & \bar  \rho_g   &* \\
 0 & 0 & 0 & *
\end{pmatrix}\]
with  $a \in M_2(\FF)$.  Moreover suppose that  $\{-l,1-l,-1,0 \} \subset  \mathrm{HT}(\rho)$ and $a \not='0$ and $a=-\bar\rho_g^\vee {}^ta\bar\rho_g$. Then, we have $\# \calc_p^{\omega^{-1}} \not=1$.

\end{itemize}

\end{lemma}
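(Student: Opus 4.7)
The plan is to extract a non-split extension class from the given block-matrix structure of $\bar \rho$ in each part, verify the local Bloch-Kato conditions using the unramified and short-crystalline hypotheses, and then convert the resulting cohomology class into the desired Selmer-group or class-group statement.

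For part (1), the third and fourth diagonal blocks of $\bar \rho$ together with the entry $a$ give a subquotient of the form $0 \to \bar \rho_g(l_1) \to \bar X \to \bar \chi^{l_2} \to 0$, whose extension class $[a]$ sits in $\mathrm{Ext}^1_{\FF[G_\QQ]}(\bar \chi^{l_2}, \bar \rho_g(l_1)) \cong H^1(\QQ, \overline{T_g(l_1-l_2)})$ and is non-zero because $a \not=' 0$. Since $\rho$ is unramified outside $p$, the class $[a]$ is unramified at every $\ell \ne p$, hence lies in $H_f^1(\QQ_\ell, -)$. The Fontaine-Laffaille category is closed under subquotients, so the short-crystalline hypothesis on $\rho$ transfers to $\bar X$, and Lemma \ref{lem.extension-FL} places $[a]|_{D_p}$ in $H_f^1(\QQ_p, -)$. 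Because $\bar \rho_g$ is absolutely irreducible (Lemma \ref{lem.Fontaine-Laffaille-Messing}, using $l<p/2$), we have $H^0(\QQ, T_g(l_1-l_2) \otimes \QQ_p/\ZZ_p)=0$, so the mod-$\frkp$ Selmer class $[a]$ lifts to a non-zero $\frkp$-torsion element of $\mathrm{Sel}(\QQ, T_g(l_1-l_2))$.

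Part (2) follows from the same template applied to $\rho^\vee \otimes \chi^{-l}$, which is what is assumed to be short crystalline and unramified outside $p$. Dualizing the subquotient $0 \to \bar \chi^{l_1} \to \bar X \to \bar \rho_g(l_2) \to 0$ and twisting by $\chi^{-l}$ yields $0 \to \bar \rho_g(l_2)^\vee(-l) \to \bar X^\vee(-l) \to \bar \chi^{-l_1-l} \to 0$, and the isomorphism $\bar \rho_g^\vee \cong \bar \rho_g \otimes \bar \chi^{l-1}$ (from $\det \bar \rho_g = \bar \chi^{1-l}$) identifies the corresponding extension group with $H^1(\QQ, \overline{T_g(l+l_1-l_2-1)})$. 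The stated Hodge-Tate weight condition places the dual subquotient in the Fontaine-Laffaille range, so Lemma \ref{lem.extension-FL} applies and the Selmer argument runs as in part (1).

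For part (3), the middle two blocks of $\bar \rho$ give the extension $0 \to \bar \rho_g^\vee(-l) \to \bar X \to \bar \rho_g \to 0$ with class $[a] \in H^1(\QQ, \bar \rho_g^\vee \otimes \bar \rho_g^\vee(-l))$. The decomposition $\bar \rho_g^\vee \otimes \bar \rho_g^\vee = \mathrm{Sym}^2 \bar \rho_g^\vee \oplus \wedge^2 \bar \rho_g^\vee$ (valid since $p$ is odd), combined with $\wedge^2 \bar \rho_g^\vee = \det \bar \rho_g^\vee = \bar \chi^{l-1}$, shows that the alternating summand of the coefficient module becomes $\bar \chi^{-1}$ after twisting by $\chi^{-l}$. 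The identity $a = -\bar \rho_g^\vee\,{}^t a\,\bar \rho_g$ is precisely the alternating condition on $a$ in this context, forcing its projection to the $\bar \chi^{-1}$ summand to be non-zero. The resulting non-zero class in $H^1(\QQ, \FF(-1))$ is unramified outside $p$ by hypothesis and lies in $H_f^1(\QQ_p, \FF(-1))$ by Lemma \ref{lem.extension-FL}, and the class-field-theoretic argument in the proof of Lemma \ref{lem.non-trivial-extension1}(1) then produces an unramified abelian $p$-extension of $\QQ(\zeta_p)$ on which $\Gal(\QQ(\zeta_p)/\QQ)$ acts via $\omega^{-1}$, forcing $\calc_p^{\omega^{-1}} \ne 1$. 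The main obstacle is the careful bookkeeping of Hodge-Tate weights required to invoke Lemma \ref{lem.extension-FL} in each setting: the weight conditions in the hypotheses are tailored so that the relevant subquotient (or its dual twist) lies in the Fontaine-Laffaille range $[-(p-2),0]$. A secondary technical point in part (3) is verifying that the symmetry relation on $a$ truly isolates the $\wedge^2$ summand rather than projecting $[a]$ onto the $\mathrm{Sym}^2$ summand, where no information about $\calc_p^{\omega^{-1}}$ could be extracted; once this is established the class-group interpretation is standard.
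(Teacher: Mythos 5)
Your proof follows essentially the same route as the paper's and is correct in its main thrust: each part extracts the relevant extension class, checks the Bloch--Kato local conditions at $\ell\neq p$ from unramifiedness and at $p$ from Lemma \ref{lem.extension-FL}, and then draws the Selmer-group or class-group conclusion. For part (2) your dual-and-twist computation matches the paper's identity $\bar V_g(l_2)^\vee\otimes\bar\chi^{-l}=\bar V_g(-l_2-1)$. For part (3) your $\mathrm{Sym}^2\oplus\wedge^2$ decomposition of the coefficient module $\bar\rho_g^\vee\otimes\bar\rho_g^\vee(-l)$ is an abstract repackaging of the paper's explicit calculation, which passes from the block-matrix entry $a$ to the genuine cocycle $b=a\bar\rho_g^{-1}$, observes ${}^tb=-b$, and reads off $\beta(\sigma\tau)=\beta(\sigma)+\bar\chi^{-1}(\sigma)\beta(\tau)$. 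The two are equivalent, but it is worth being explicit that the hypothesis $a=-\bar\rho_g^\vee\,{}^ta\,\bar\rho_g$ becomes ordinary antisymmetry only after the change of variable $b=a\bar\rho_g^{-1}$: the matrix entry $a$ itself is not the Hochschild cocycle valued in $\mathrm{Hom}(\bar\rho_g,\bar\rho_g^\vee(-l))$; $b$ is.

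There is one genuine error of citation in part (1): you invoke Lemma \ref{lem.Fontaine-Laffaille-Messing} (with $l<p/2$) to assert that $\bar\rho_g$ is absolutely irreducible. That lemma says nothing of the sort --- it only pins down the characters that can occur in $\bar\rho_g^{\mathrm{ss}}$ when the residual representation is reducible, and $l<p/2$ alone certainly does not preclude reducibility. The vanishing $H^0(\QQ,T_g(l_1-l_2)\otimes\QQ_p/\ZZ_p)=0$, needed so that the nonzero class in $H^1(\QQ,T_g(l_1-l_2)/\frkp)$ maps to a nonzero $\frkp$-torsion element of the Selmer group, is in the paper's applications supplied by the separate big-image hypothesis on $\rho_f$ (condition (C.5), equivalently hypothesis (b) of Theorem \ref{th.noncongruence}; see Remark \ref{rem.confirm-assumption}(3) and Lemma \ref{lem.Ribet's-criterion}), not by anything internal to the statement of Lemma \ref{lem.non-trivial-extension2}. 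The paper's own proof of the lemma silently elides this point, so your attempt to spell it out is welcome, but the justification you give is wrong and should instead point to that irreducibility hypothesis.
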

\begin{proof} (1)  Similarly to Lemma \ref{lem.non-trivial-extension1}, $a$ gives a non-trivial extension of $\FF[G_{\QQ}]$-modules:
\[0 \to \bar V_g(l_1) \to \bar X \to  \FF(l_2) \to 0.\]
This gives rise to a $\frkp$-torsion element $h$ of $H^1(\QQ,(V_g/T_g)(l_1-l_2))$.
Since $\rho$ is unramified outside $p$ and short crystalline at $p$, by using the same argument as in (2) of Lemma  \ref{lem.non-trivial-extension1} (or Brown \cite{Brown07}) combined with Lemma \ref{lem.extension-FL}, we see that 
$h$ belongs to $\mathrm{Sel}(\QQ,T_g(l_1-l_2))$. This proves the assertion.

(2)  Similarly to (1), $a$ gives a non-trivial extension
\[0 \to \FF(l_1) \to \bar X \to \bar V_g(l_2) \to 0.\]
Hence we have the following non-trivial extension
\[0 \to \bar V_g(l_2)^\vee \otimes \bar \chi^{-l} \to \bar X^\vee \otimes \chi^{-l} \to  \FF(l_1)^\vee \otimes \chi^{-l} \to 0.\]
We note that  $\bar V_g(l_2)^\vee =\bar V_g(l-1-l_2)$ and 
$\FF(l_1)^\vee =\FF(-l_1)$. 
Hence we have the following non-trivial extension
\[0 \to \bar V_g(-l_2-1) \to \bar X^\vee \otimes \bar \chi^{-l}   \to \FF(-l_1-l) \to 0.\]
This gives rise to a $\frkp$-torsion element $h$ of $H^1(\QQ,(V_g/T_g)(l+l_1-l_2-1))$.
Since $\rho \otimes \chi^{-l}$ is unramified outside $p$ and short crystalline at $p$, 
 in a way similar to (1), we see that $h$ belongs to 
$\mathrm{Sel}(\QQ,T_g(l+l_1-l_2-1))$.

(3) Put $\rho_0=\rho_g(l/2)$. Then, for any $\sigma, \tau \in G_{\QQ}$ we have
\[a(\sigma \tau)=\bar \rho_0^\vee(\sigma)a(\tau)+a(\sigma)\bar \rho_0(\tau).\]
Put $b=a\bar\rho_0^{-1}$. Then, we have ${}^tb=-b$, and put
$b=\begin{pmatrix} 0 & \beta \\ -\beta & 0 \end{pmatrix}$.
Then, by a simple computation, we have
\[\beta(\sigma \tau) =\beta(\sigma) + \beta(\tau) \bar \chi^{-1}(\sigma).\] 
This implies that $\beta$ gives rise to a non-trivial extension of $\FF[G_{\QQ}]$-modules:
\[0 \to \FF(-1) \to \bar X \to \FF \to 0.\]
Then by  using the same argument as in (1) of Lemma \ref{lem.non-trivial-extension1}, we can prove the assertion.
\end{proof}

\begin{lemma} \label{lem.det-of-irreducible-factor}
Let ${\bf k}=(k_1,\ldots,k_n)$ be a non-increasing sequence of integers such that $k_n \ge n+1$.  Let $G$ be a tempered form in $S_{\bf k}(\varGamma^{(n)})$ and
$\rho_{G,\St}$ the $\frkp$-adic Galois representation in Theorem \ref{th.Galois-spin}, Let $\rho'$ be an irreducible factor of $\rho_{G,\St}$ of dimension $r$. Then $\wedge^r \rho'=1$.
\end{lemma}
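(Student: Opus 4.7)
The plan is to leverage Arthur's classification to decompose $\rho_{G,\St}$ into Galois representations attached to self-dual cuspidal automorphic representations, and then show that each such block has trivial determinant. Since $G$ is tempered, every $d_i$ in the Arthur parameter $\psi_G=\bigoplus_i\pi_i[d_i]$ must equal $1$, so by the explicit formula in Theorem \ref{th.Galois-st} one obtains a decomposition
\[\rho_{G,\St}=\bigoplus_{i=1}^r \rho_{\pi_i},\]
where each $\pi_i$ is a self-dual cuspidal automorphic representation of $\mathrm{PGL}_{n_i}(\AAA_\QQ)$ unramified at every finite place, with $\sum_i n_i=2n+1$. Each $\rho_{\pi_i}$ is absolutely irreducible (e.g.\ by Theorem \ref{th.irred2} when $n_i\le 5$, which covers the cases used in this paper), so an irreducible factor $\rho'$ of $\rho_{G,\St}$ of dimension $r$ must coincide with $\rho_{\pi_i}$ for the unique index $i$ with $n_i=r$. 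The task thus reduces to proving $\det\rho_{\pi_i}=\mathbf{1}$.

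Next I will argue that $\det\rho_{\pi_i}$ is everywhere unramified of finite order. Since $\pi_i$ is unramified at every finite place $\ell\neq p$, the representation $\rho_{\pi_i}$ is unramified outside $p$, and hence so is $\det\rho_{\pi_i}$. Self-duality of $\pi_i$ forces the Satake parameters of $\rho_{\pi_i}(\mathrm{Frob}_\ell^{-1})$ to come in inverse pairs (possibly together with eigenvalues $\pm 1$ when $n_i$ is odd), so $\det\rho_{\pi_i}(\mathrm{Frob}_\ell^{-1})\in\{\pm 1\}$ for every $\ell\neq p$, and hence $\det\rho_{\pi_i}$ has order dividing $2$. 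At the prime $p$ the representation $\rho_{\pi_i}$ is crystalline, and because self-duality of $\pi_i$ makes the set of Hodge--Tate weights of $\rho_{\pi_i}$ symmetric about $0$, their sum vanishes. A crystalline character of $G_{\QQ_p}$ of Hodge--Tate weight $0$ is necessarily unramified, so $\det\rho_{\pi_i}$ is unramified at $p$ as well. Combining these, $\det\rho_{\pi_i}$ is a finite-order character of $G_{\QQ}$ unramified at every place, which by class field theory (equivalently, the fact that $\QQ$ admits no nontrivial everywhere-unramified abelian extension) is forced to be trivial.

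The main potential obstacle in this plan is justifying the irreducibility of each $\rho_{\pi_i}$ in the generality needed. In the principal application of this paper the relevant dimensions $n_i$ satisfy $n_i\le 5$, and Theorem \ref{th.irred2} applies directly; in the general setting one would need to appeal to stronger irreducibility results for self-dual cuspidal representations of $\mathrm{PGL}_m$. Once that is in hand, the remaining ingredients---self-duality, the vanishing of the sum of Hodge--Tate weights, and class field theory---combine to give $\wedge^r\rho'=\det\rho_{\pi_i}=\mathbf{1}$, as required.
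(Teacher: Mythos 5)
Your approach takes a genuinely different route from the paper's, and it has a real gap.

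\textbf{What the paper does.} The paper's proof is considerably more elementary and does not invoke the Arthur decomposition at all. Since $\rho'$ is unramified outside $p$ and crystalline at $p$, its determinant is a character of $G_\QQ$ with these properties, which (using triviality of the class group of $\QQ$) forces $\det\rho'=\chi^i$ for some $i\in\ZZ$; equivalently $\det\rho'(\mathrm{Frob}_\ell^{-1})=\ell^i$. On the other hand $\det\rho'(\mathrm{Frob}_\ell^{-1})$ is a product of $r$ Satake parameters of the standard $L$-factor of $G$ (chosen from $\{1,\alpha_1^{\pm1},\ldots,\alpha_n^{\pm1}\}$), each of absolute value $1$ by the Ramanujan bound that temperedness provides. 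Hence $|\ell^i|=1$, so $i=0$, and Chebotarev finishes. This argument sees only the eigenvalues of $\rho'(\mathrm{Frob}_\ell^{-1})$ and their absolute values; it makes no use of any structural description of $\rho'$ or of the blocks $\rho_{\pi_i}$.

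\textbf{The gap in your plan.} Your argument reduces the lemma to showing that each $\rho_{\pi_i}$ has trivial determinant, via the identification ``$\rho'=\rho_{\pi_i}$.'' That identification requires each $\rho_{\pi_i}$ to be irreducible, and the paper only establishes this for $n_i\le5$ (Theorem~\ref{th.irred2}). You acknowledge the restriction, but the lemma is stated for arbitrary $n$, and more importantly the gap already appears in this paper's own application: in the proof of Theorem~\ref{th.noncongruence}(4), $G$ lies in $S_{(k,k,l,l)}(\varGamma^{(4)})$, so $\rho_{G,\St}$ is $9$-dimensional and (by Proposition~\ref{prop.A-parameters}) a tempered cuspidal $G$ can have $\psi_G=\tau_1\boxplus 1$ with $\tau_1$ on $\mathrm{PGL}_8$. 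Your plan then needs irreducibility of an $8$-dimensional $\rho_{\tau_1}$, which Theorem~\ref{th.irred2} does not supply. If $\rho_{\pi_i}$ were reducible, an irreducible factor $\rho'$ could be a proper subrepresentation of $\rho_{\pi_i}$, and your self-duality and Hodge--Tate symmetry arguments for $\det\rho_{\pi_i}$ no longer apply to $\det\rho'$. The paper's proof sidesteps this entirely because it never needs to know anything about the internal structure of the $\rho_{\pi_i}$ --- only the absolute values of Satake parameters.

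\textbf{What works in your plan.} The pieces you use for an individual self-dual block --- $\det\rho_{\pi_i}$ crystalline of Hodge--Tate weight $0$ hence unramified at $p$, unramified at $\ell\ne p$, and an everywhere unramified character of $G_\QQ$ being trivial --- are correct and in fact implicitly underlie the paper's step $\det\rho'=\chi^i$. A clean repair of your proposal, which no longer needs irreducibility of any block, is to apply that reasoning directly to $\det\rho'$ itself (not to $\det\rho_{\pi_i}$), obtaining $\det\rho'=\chi^i$, and then use temperedness to force $i=0$; at that point you have rediscovered the paper's argument.
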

\begin{proof}
Let $\rho''$ be the complement of $\rho'$, that is
\[\rho_{G,\St}=\rho' \oplus \rho''.\]
Then for any prime number $\ell \not=p_{\frkp}$, we have
\[\det (1_r-\rho'(\mathrm{Frob}_{\ell}^{-1})X) \det(1_{n-r}-\rho''(\mathrm{Frob}_{\ell}^{-1})X)=
L_{\ell}(X,G,\St).\]
Let $\alpha_0,\alpha_1,\ldots,\alpha_n$ be the $\ell$-Satake parameters of $G$ in Section 3. Then we have
\[\det \rho'(\mathrm{Frob}_{\ell}^{-1})={\ell}^i=\alpha_{i_1}^{j_1}\cdots \alpha_{i_r}^{j_r}\]
with $i \in \ZZ, 1 \le i_1 < \cdots <i_r \le n$ and $j_k=\pm1 $ for $k=1,\ldots,r$. Since $G$ is tempered, the  Ramanujan conjecture holds (cf. \cite[Theorem 8.2.17]{Chenevier-Lannes19}). Hence 
\[{\ell}^i=|\alpha_{i_1}^{j_1}\cdots \alpha_{i_r}^{j_r}|=1.\]
Hence, by the Chebotarev density theorem, we  prove the assertion.
\end{proof}
\begin{lemma}\label{lem.no-irreducible-factor}
Let $G$ be a Hecke eigenform in $S_{\bf k}(\varGamma^{(n)})$ with ${ \bf k}=(k_1,\ldots,k_n)$, and $\rho_{G,\St}$ the Galois representation attached to $G$. Suppose that $\rho_{G,\St}$ has an  irreducible factor $\tau$  such that $\bar \tau=\bar \chi^m$ with $m$ an integer  such that $(1-p)/2 \le m \le (p-1)/2$. Then we have $(1-n)/4 <m <(n-1)/4$.
\end{lemma}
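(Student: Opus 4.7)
The plan is to use Arthur's classification, as recalled in Section 4, to pin down the origin of the one-dimensional factor $\tau$ in the Arthur parameter of $G$. Since $\bar\tau = \bar\chi^m$ is one-dimensional, $\tau$ itself is a one-dimensional summand of $\rho_{G,\St}$. Being unramified outside $p$ and crystalline at $p$, and since by Minkowski there is no non-trivial everywhere unramified abelian extension of $\QQ$, $\tau$ must be an integer power of the cyclotomic character: $\tau = \chi^a$ for some $a \in \ZZ$, which is then its Hodge--Tate weight. In particular $a \equiv m \pmod{p-1}$, and $a$ belongs to $\mathrm{Weights}(\pi) = \{\pm(k_j-j):1\le j\le n\}\cup\{0\}$.

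The next step is to decompose $\psi(\pi,\St) = \bigoplus_{i=1}^r \pi_i[d_i]$ as in Section 4. Since each $\pi_i$ is a self-dual cuspidal representation of $\mathrm{PGL}_{n_i}(\AAA_\QQ)$, any one-dimensional summand of $\rho_{G,\St}$ must come from a piece $1[d]$ with $n_i = 1$, so $\pi_i$ is the trivial character. Such a piece contributes the characters $\chi^{(d-1)/2}, \chi^{(d-3)/2}, \ldots, \chi^{-(d-1)/2}$; hence $|a| \le (d-1)/2$, and $d$ must be odd for the Hodge--Tate weights to be integers. Writing $d = 2t+1$, the $2t+1$ consecutive integers $-t, -t+1, \ldots, t$ all lie inside $\mathrm{Weights}(\pi)$, so by the strict decrease $k_1-1 > k_2-2 > \cdots > k_n-n \ge 1$ coming from (\ref{wcond}), the last $t$ entries of ${\bf k}$ must all equal $n+1$.

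The hard part will be to convert cuspidality of $G$ together with this rigidity of ${\bf k}$ into the quantitative bound $t < (n-1)/4$. My approach is to examine the complementary parameter $\psi'$ in $\psi(\pi,\St) = 1[2t+1] \boxplus \psi'$: it is an orthogonal sum of non-trivial self-dual cuspidal pieces $\pi_i[d_i]$ of total dimension $2(n-t)$ whose Hodge--Tate weights must have absolute value $\ge t+1$ and must fill the positions $\pm(k_j-j)$ for $1 \le j \le n-t$. Combining the resulting combinatorial constraints on $\psi'$ with the sign/parity information of Theorem \ref{th.sign-Galois-st} and with Koecher-type vanishing of $S_{\bf k}(\varGamma^{(n)})$ when ${\bf k}$ has too many trailing entries equal to $n+1$, one should be forced into $t < (n-1)/4$. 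Since the paper's running hypotheses place $p$ well above $n$, we have $|a| \le t < (n-1)/4 < (p-1)/2$, hence $m = a$ as the unique representative in $(-(p-1)/2, (p-1)/2]$, and the lemma's bound $(1-n)/4 < m < (n-1)/4$ follows.
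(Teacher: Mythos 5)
Your proof identifies the right starting point (a one-dimensional factor of a level-one crystalline representation must be an integral power of $\chi$, and it must arise from some block $\pi_i[d_i]$), but it then diverges from the paper's argument in two places, one a gap and one an unfinished program. First, the assertion that $\tau$ "must come from a piece $1[d]$ with $n_i=1$" is not justified: the Satake-parameter comparison only yields that $\alpha_{t_0}(\ell,\pi_i)=1$ for all $\ell$ and that $d_i$ is odd; concluding $n_i=1$ (equivalently $\rho_{\pi_i}$ trivial) would require irreducibility of $\rho_{\pi_i}$ or a Rankin--Selberg argument, neither of which is available for general $n_i$. The paper does not claim $n_i=1$. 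Second, the entire quantitative content of the lemma --- the inequality $t<(n-1)/4$ --- is left as "the hard part," with only a heuristic that the complementary parameter $\psi'$, together with sign information and "Koecher-type vanishing," "should" force the bound. That step is never carried out, and it is not at all clear that it can be, because the weight combinatorics you invoke (forcing $k_{n-t+1}=\cdots=k_n=n+1$) by itself does not bound $t$ below $(n-1)/4$.

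The ingredient you are missing is exactly what the paper uses: the parity constraint from Arthur's multiplicity formula recorded as condition (A) in the paper's proof, namely that there is a single index $i_0$ with $d_{i_0}=1$ and $n_{i_0}$ odd, while $n_id_i\equiv 0\pmod 4$ for every $i\neq i_0$. Once one knows $d_i$ is odd (your own observation), either $i=i_0$ and then $d_i=1$, $m=0$; or $i\neq i_0$ and $n_i\equiv 0\pmod 4$, hence $n_i\ge 4$ and $d_i\le (2n+1)/4$, giving $|m|\le (d_i-1)/2\le n/4-3/8<(n-1)/4$. This is a short, clean argument that replaces your entire "hard part" and does not require the unestablished claim $n_i=1$. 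I recommend replacing your final paragraph with this application of the multiplicity formula, and dropping the appeal to Koecher vanishing, which plays no role in the paper's proof.
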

\begin{proof}
Let 
\[\psi(\pi_{G,\St})=\oplus_{i=1}^r \pi_i[d_i]\]
be the global Arthur parameter associated with the standard representation $\pi_{G,\St}$  stated in Section 4.
By Arthur's multiplicity formula, there is an integer $i_0$ such that
\begin{align*}
 & d_{i_0}=1 \text { and } n_{i_0} \equiv 1 \mod 2, 
\text { and }  n_id_i  \equiv 0 \mod 4 \text { for any } i \not=i_0 \tag A
\end{align*}
(cf. \cite[Appendix A]{A-C-I-K-Y23}).
By Theorem \ref{th.Galois-st}, for any prime number $\ell \not=p$ we have
\begin{align*}
&\det(I_{2n+1}-X\rho_{F,\St}({\rm Frob}_\ell^{-1}) )          \\
&=\prod_{i=1}^r\prod_{j=0}^{d_i-1}
\det(I_{n_i}-\ell^{j+\frac{1-d_i}{2}}X\rho_{\pi_i,p}({\rm Frob}_\ell^{-1})).\\
\end{align*}
Suppose that $\tau$ is  an irreducible factor of $\rho_{G,\St}$ such that $\bar \tau=\bar \chi^m$ with $m$ an integer such that  $(1-p)/2 \le m \le (p-1)/2$. Since $\tau$ is unramified outside $p$ and crystalline at $p$, we have $\tau=\chi^m$. 
Since $1-X\tau({\rm Frob}_\ell^{-1})$ is a factor of $\det(I_{2n+1}-X\rho_{F,\St}({\rm Frob}_\ell^{-1}) )$, we have
\begin{align*}
1-X\tau({\rm Frob}_\ell^{-1})  &=1-X{\ell}^{-m}\\
&=1-X\ell^{j+\frac{1-d_i}{2}}\alpha_{t_0}(\ell,\pi_i)
\end{align*}
for some $1 \le i \le r, \ 1 \le j \le d_i$, and $1 \le t_0 \le n_i$, where $\{\alpha_t(\ell,\pi_i)\}_{1 \le t \le n_i}$ is the $\ell$-Satake parameters of $\pi_i$. Since $\pi_i$ satisfies the Ramanujan conjecture,  $\alpha_{t_0}(\ell,\pi_i)=1$ and $d_i$ is odd. Hence,  by (A), $n_i$ is a multiple of $4$. Hence we have
\[|m| \le (d_i-1)/2 \le ((2n+1)/4-1)/2=n/4-3/8.\]
 This completes the proof.
\end{proof}

\section{Proof of Theorem \ref{th.main-result}}
In this section, we  prove our first main result. 


\begin{theorem} \label{th.noncongruence}
Let $f$ be a primitive form in $S_{2k-2}(\SL_2(\ZZ))$ with $k$ even, and
$\frkp$ a prime ideal of $\QQ(f)$ and put $p=p_{\frkp}$. Suppose that the following two conditions hold:
\begin{itemize}
\item[(a)]  $p > 2k-2$;
\item[(b)] $\rho_f(\Gal(\bar \QQ/\QQ(\zeta_{p^{\infty}})))$ contains $\SL_2(\ZZ_p)$ (with a suitable choice of a lattice of $V_f$).
\end{itemize}
\begin{itemize}
\item[(1)] For  any non-cuspidal Hecke eigenform $G \in M_k(\varGamma^{(2)})$ we have
 \[\rho_{G,\St} \not\equiv \rho_{\scri_2(f),\St} \ \mod \frkp.\]
\item[(2)] Suppose that $\frkp$ divides neither  $\frkD_f^{(p)}$ nor ${\bf L}(k,f,\Omega_+(f))$. Then, for any Hecke eigenform $G \in S_k(\varGamma^{(2)})$ such that $G \not\in \langle \scri_2(f) \rangle_\CC$, we have  
\[\rho_{G,\St} \not\equiv \rho_{\scri_2(f),\St} \ \mod \frkp.\]
\item[(3)] Let $G$ be a Hecke eigenform in $S_{(k,k,l)}(\varGamma^{(3)})$ with $k \ge l \ge 6$ and $l$ even.
Suppose that the following holds:
\begin{itemize}
\item[(3.1)] $\#\calc_p^{\omega^{6-2l}}=1$;
\item[(3.2)] $p$ does not divide $\zeta(3-l)$;
\item[(3.3)] $\frkp$ divides none of the $L$-values  
\[{\bf L}(k,f;\Omega_+(f)), {\bf L}(k+l-3,f;\Omega_-(f)) \text{ and } {\bf L}(k+l-5,f;\Omega_-(f)).\]
\end{itemize}
Then, 
\[\rho_{G,\St} \not\equiv \rho_{[\scri_2(f)]^{(k,k,l)},\St} \ \mod \frkp.\]
\item[(4)] Let $G$ be a Hecke eigenform
in $S_{(k,k,l,l)}(\varGamma^{(4)})$ with $k \ge l \ge 6$ and $l$ even.
Suppose that the following holds:
\begin{itemize}
\item[(4.1)]  \[\#\calc_p^{\omega^{4-l}}=\#\calc_p^{\omega^{6-2l}}=\#\calc_p^{\omega^{8-2l}}=1;\]
\item[(4.2)]  $p$ divides  none of the values
\[\zeta(3-l), \zeta(7-2l), \zeta(9-2l);\]
\item[(4.3)]  $\frkp$ divides none of the $L$-values 
\[{\bf L}(k,f;\Omega_+(f)), \ {\bf L}(k+l-3,f;\Omega_-(f)), \  {\bf L}(k+l-5,f;\Omega_{-}(f)) \text{ and } {\bf L}(k+l-6,f;\Omega_+(f)).\] 
\end{itemize}
Then, 
\[\rho_{G,\St} \not\equiv \rho_{[\scri_2(f)]^{(k,k,l,l)},\St}  \mod \frkp\]
if $G$ is not the lift of type $\scra^{(I)}$.
\end{itemize}
Throughout (1) $\sim$ (4), we note that $\QQ(\scri_2(f))=\QQ([\scri_2(f)]^{(k,k,l)})=\QQ([\scri_2(f)]^{(k,k,l,l)})=\QQ(f)$.
\end{theorem}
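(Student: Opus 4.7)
The plan is to prove each part by contradiction. Assume $\rho_{G,\St} \equiv \rho_{[\scri_2(f)]^{\bf k},\St} \pmod{\frkp}$ for the appropriate ${\bf k}$, and derive a forbidden class in a cyclotomic class group, a forbidden zeta-value divisibility, or a forbidden $L$-value divisibility by producing a non-trivial extension class between irreducible constituents of $\bar\rho_{G,\St}^{\rm ss}$. The starting observation, from the $L$-function formula for the Saito--Kurokawa lift together with the Klingen formula for $\rho_{F,\St}$ recalled in Section 4, is
\[\bar\rho_{[\scri_2(f)]^{\bf k},\St}^{\rm ss} = \bar{\mathbf 1} \oplus \bar\rho_f(k-1) \oplus \bar\rho_f(k-2) \oplus (\text{extra characters}),\]
with extras empty in parts (1) and (2), equal to $\bar\chi^{l-3} \oplus \bar\chi^{3-l}$ in part (3), and $\bar\chi^{l-3} \oplus \bar\chi^{3-l} \oplus \bar\chi^{l-4} \oplus \bar\chi^{4-l}$ in part (4). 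Condition (b), reduced modulo $\frkp$, gives that $\bar\rho_f$ is absolutely irreducible, and Lemma \ref{lem.Fontaine-Laffaille-Messing} pins down the only shapes that a two-dimensional irreducible constituent of the target can take.

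For parts (1) and (2), Proposition \ref{prop.A-parameters} together with the classification of degree-$2$ Eisenstein series gives a short list of possibilities for $G$: for (1), a Siegel Eisenstein series or a Klingen Eisenstein lift of some primitive $h \in S_k(\SL_2(\ZZ))$; for (2), stable tempered, or a Saito--Kurokawa lift $\scri_2(f')$. In each Eisenstein case $\bar\rho_{G,\St}^{\rm ss}$ is a sum of cyclotomic characters together with at most a twist of $\mathrm{Sym}^2\bar\rho_h$, so matching against $\bar\rho_{\scri_2(f),\St}^{\rm ss}$ forces $\bar\rho_f$ to split as a sum of characters, contradicting absolute irreducibility. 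For $G = \scri_2(f')$ one obtains $\bar\rho_{f'} \equiv \bar\rho_f \pmod\frkp$, and the hypothesis $\frkp\nmid\frkD_f^{(p)}$ forces $f' = f$, contradicting $G \notin \langle\scri_2(f)\rangle_\CC$. In the stable tempered case $\rho_{G,\St}$ is irreducible of dimension $5$, so Lemma \ref{lem.irreducibility-criterion} shows that for a suitable lattice the graded pieces $\bar\rho_f(k-2)$ and $\bar{\mathbf 1}$ appear adjacently via a non-split extension; Lemma \ref{lem.non-trivial-extension2}(1) then produces a $\frkp$-torsion element in $\mathrm{Sel}(\QQ, T_f(k-2))$, and Kato's Theorem \ref{th.Kato} translates this into $\frkp \mid {\bf L}(k, f; \Omega_+(f))$ (using the functional equation of $L(s,f)$), against hypothesis.

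For parts (3) and (4) the same strategy applies but with more constituents. Proposition \ref{prop.A-parameters} together with the Arthur classification narrows the possibilities: in part (3) $G$ must be stable tempered; in part (4) $G$ is stable tempered, of type $\tau_1 \boxplus 1$ with $\tau_1$ a self-dual cuspidal representation of $\mathrm{PGL}_8$, or of type $\scra^{(I)}$ (excluded by hypothesis). Using Lemma \ref{lem.lattice-change} to shuffle the filtration on $\rho_{G,\St}$ and Corollary \ref{cor.no-factor2} to detect forbidden splittings, one systematically produces non-split extension classes between adjacent irreducible constituents of $\bar\rho_{G,\St}^{\rm ss}$. Each such class, via Lemmas \ref{lem.non-trivial-extension1} and \ref{lem.non-trivial-extension2}, yields one of: a non-trivial element of $\calc_p^{\omega^{?}}$, a $p$-divisibility of a zeta value $\zeta(?)$, or (via Theorem \ref{th.Kato}) a $\frkp$-divisibility of an $L$-value of $f$. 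Conditions (3.1)--(3.3), resp.\ (4.1)--(4.3), are calibrated to forbid exactly these outcomes, so every potential extension must split --- but the resulting decomposition of $\rho_{G,\St}$ contradicts its assumed Arthur parameter.

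The main obstacle will be the combinatorial bookkeeping in part (4): the $9$-dimensional semisimplification has seven irreducible constituents and hence many possible lattice orderings, each producing a different adjacency pattern and so a different set of extensions to control. Self-duality of $\rho_{G,\St}$ (Theorem \ref{th.sign-Galois-st}) and the determinant constraint of Lemma \ref{lem.det-of-irreducible-factor} pair the constituents symmetrically around $\bar{\mathbf 1}$ and cut down the cases considerably, so that the type-$\scra^{(I)}$ lift emerges precisely as the unique Arthur parameter for which all required extensions can split without triggering a forbidden divisibility.
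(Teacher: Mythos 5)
Your outline matches the paper's proof in its essential ingredients and structure: twist $\rho_{G,\St}$ to a short crystalline representation, identify $\bar\rho^{\rm ss}$ with $\bar\rho_f \oplus \bar\rho_f^\vee(2-2k)\oplus$ characters, use Lemma \ref{lem.irreducibility-criterion} and Lemma \ref{lem.lattice-change} to rearrange a lattice and produce adjacent extensions, convert those via Lemmas \ref{lem.non-trivial-extension1} and \ref{lem.non-trivial-extension2} and Theorem \ref{th.Kato} into class-group elements, zeta divisibilities, or $L$-value divisibilities, rule out (I.$i$) for most $i$, and land on the type-$\scra^{(I)}$ case (I.4) via Proposition \ref{prop.A-parameters}. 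Parts (1) and (2) are handled the same way, including your observation that $\frkp\nmid\frkD_f^{(p)}$ kills the Saito--Kurokawa case and the functional equation identifies ${\bf L}(k-2,f;\Omega_+(f))$ with ${\bf L}(k,f;\Omega_+(f))$.

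One caution about your framing of the hardest case, when $\rho_{G,\St}$ (or its large factor $\tau$) is irreducible: it is not true that ``every potential extension must split.'' For instance the extension class between $\bar\rho_f$ and $\bar\chi^{l-k-2}$ leads to $\mathrm{Sel}(\QQ,T_f(k+2-l))$, which is \emph{not} controlled by conditions (4.1)--(4.3). The paper's argument in case (I.9) instead shows that certain entries $a_{34},a_{45},a_{56},(a_{i7})$ are forced to be \emph{nonzero} (by Corollary \ref{cor.no-factor2} and irreducibility), while the conditions force others to vanish; the decisive step is then a matrix computation with the symmetric $U\in\GL_9$ implementing the self-duality $\rho^\vee\cong\rho\otimes\chi^{2k-2}$ (Theorem \ref{th.sign-Galois-st}), and Lemma \ref{lem.non-trivial-extension2}(3), which together force $\bar U=O$, a contradiction. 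So self-duality does substantially more than ``pair constituents symmetrically'': it is the mechanism that rules out the genuinely irreducible parameter even though not every relevant extension class is forbidden by the hypotheses. With that qualification your plan is sound and matches the paper's route.
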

\begin{proof}

First we prove (1).  Let 
 $g=\Phi_1^2(G)$ be a Hecke eigenform in $M_k(\SL_2(\ZZ))$ and suppose that 
\[\rho_{G,\St} \equiv \rho_{\scri_2(f),\St} \mod \frkp.\]
Then, 
\[\bar \rho_{G,\St}^{\mathrm{ss}} = \rho_{g,\St}^{\mathrm{ss}} \oplus \bar \chi^{k-2} \oplus \bar \chi^{2-k}.\]
On the other hand, 
\[\bar \rho_{\scri_2(f),\St}^{\mathrm{ss}} =\bar \rho_f(k-1) \oplus \bar \rho_f(k-2) \oplus \bar 1,\]
and $\bar \rho_f(k-1)$ and $ \bar \rho_f(k-2)$ are irreducible. This proves the assertion.

Next  we prove (4).  Suppose that 
\[\rho_{G,\St} \equiv \rho_{[\scri_2(f)]^{(k,k,l,l)},\St} \ \mod \frkp\]
for a  Hecke eigenform $G$ in $S_{(k,k,l,l)}(\varGamma^{(4)})$. 
Let $(\rho,V)$ be the $\frkp$-adic Galois representation defined by $\rho=\rho_{G,\St} \otimes \chi^{1-k}$.
By Theorem \ref{th.Galois-st}, $\rho$ is unramified outside $p$ and short crystalline at $p$. 
We have $\rho_f^\vee(2-2k)=\begin{pmatrix} 0 & 1 \\ -1 & 0 \end{pmatrix}\rho_f(-1)\begin{pmatrix} 0 & -1 \\ 1 & 0 \end{pmatrix} \cong \rho_f(-1)$.
Hence we have  
\begin{align*}
\bar \rho^{{\rm ss}} &=\bar \rho_f \oplus \bar \rho_f^\vee(2-2k) \oplus \bar \chi^{l-k-2} \oplus \bar \chi^{4-l-k}  \oplus \bar \chi^{l-k-3}  \oplus \bar \chi^{5-l-k} \oplus \bar  \chi^{1-k}\\
&=(\bar \rho_{[\scri_2(f)]^{(k,k,l,l)},\St} \otimes \bar \chi^{1-k})^{\rm ss}.
\end{align*}
Put  
\[\calf=\{\bar \rho_f, \ \bar \rho_f^\vee(2-2k), \ \bar \chi^{l-k-2}, \  \bar \chi^{4-l-k}, \ \bar \chi^{l-k-3}, \ \bar \chi^{5-l-k}, \bar \chi^{1-k} \}.\]

Let $\tau$ be an irreducible factor of $\rho$ with the greatest dimension. Then,  the following assertion  (I.i) holds for some $1 \le i \le 9$:

(I.i) We have $\dim \tau=i$ and 
\[\bar \tau^{ss}=R_1 \oplus \cdots \oplus R_r \ \text{ with } r \ge 1 \text{ and } R_j \in \calf.\]
Since $\bar \rho_f$ and $\bar \rho_f^\vee(2-2k)$ are irreducible, (I.1) does not hold. Suppose that we have $\dim \tau=2$. Then, $\bar \tau^{ss}=
\bar \chi^a \oplus \bar \chi^b$  with $a,b \in \{1-k,\pm(l-4)+1-k, \pm(l-3)+1-k\}, a \not =b$ or $\bar \rho_f$ or $\bar \rho_f^\vee(2-2k)$. By the assumptions (4.1) and (4.2), and Lemma \ref{lem.non-trivial-extension1}, we see that the first case does not hold. In view of Lemma \ref{lem.no-irreducible-factor}, there is no non-trivial one dimensional irreducible factor of $\rho$. Hence, in the second and the third cases, there is another irreducible factor $\tau'$ of $\rho$ such that
 $\bar \tau'^{ss}=\bar \chi^a \oplus \bar \chi^b$ with $a,b \in \{0,\pm(l-4)+1-k, \pm(l-3)+1-k\}, a \not =b$, which is reduced to the first case. Therefore, (I.2) does not hold.
Similarly, by the assumptions (4.1), (4.2), (4.3), Lemmas \ref{lem.non-trivial-extension1} and \ref{lem.non-trivial-extension2}, and Corollary \ref{cor.no-factor}, we easily see that  (I.3) does not  hold.

Suppose that (I.9) holds. Then, for a suitable lattice $T$ of $V$, $\bar \rho_T$ is expressed as 
\[\bar \rho_T=\begin{pmatrix}  
a_{11} & a_{12} & a_{13} & a_{14} & a_{15} & a_{16}  & a_{17} \\                     
0 & a_{22} & a_{23} & a_{24} & a_{25} & a_{26} & a_{27} \\
0 & 0      & a_{33} & a_{34} &a_{35} & a_{36} & a_{37}\\
0 & 0 & 0 & a_{44} & a_{45} & a_{46} & a_{47}\\
0 & 0 & 0 & 0 & a_{55} & a_{56} & a_{57}\\
0 & 0 & 0 & 0 & 0 & a_{66} & a_{67} \\
0 & 0 & 0 & 0 & 0 & 0       & a_{77}
\end{pmatrix}\]
with $a_{ii} \in \calf  \quad (i=1,\ldots,7)$.
First we claim that there exists a lattice $T_0$ of $V$ satisfying the following conditions:
\begin{align*}
\bar \rho_{T_0}=\begin{pmatrix}  
\bar \chi^{l-k-3} & a_{12} & a_{13} & a_{14} & a_{15} & a_{16}  & a_{17} \\                     
0 & \bar  \chi^{4-l-k} & a_{23} & a_{24} & a_{25} & a_{26} & a_{27} \\
0 & 0      & \ \bar \rho_f^\vee(2-2k) & a_{34} &a_{35} & a_{36} & a_{37}\\
0 & 0 & 0 & \bar \rho_f & a_{45} & a_{46} & a_{47}\\
0 & 0 & 0 & 0 &\bar \chi^{l-k-2}   & a_{56} & a_{57}\\
0 & 0 & 0 & 0 & 0 &\bar \chi^{5-l-k}  & a_{67} \\
0 & 0 & 0 & 0 & 0 & 0       & \bar \chi^{1-k}
\end{pmatrix} \tag{N0} \end{align*}
and 
\begin{align*}
a_{45} &\not=' 0 \tag{N1},\\
a_{56} &\not='0 \tag {N2},\\
\begin{pmatrix} a_{47} \\ a_{57} \\ a_{67} \end{pmatrix}&\not='\begin{pmatrix} 0 \\ 0 \\ 0 \end{pmatrix} \tag{N3},\\
a_{34} & \not='0. \tag{N4}
\end{align*}
By  Lemma \ref{lem.lattice-change}, there exists a lattice $T_1$ such that
\[\bar \rho_{T_1} =\begin{pmatrix}  
* & \hdots & * & \hdots & * & \hdots  & * \\                     
0 & \ddots & \vdots & \vdots & \vdots & \vdots & \vdots \\
0 & 0      & * & * & * & \vdots & *\\
0 & 0 & 0 & * & \vdots & \vdots & \vdots\\
0 & 0 & 0 & 0 & * & \vdots & \vdots\\
0 & 0 & 0 & 0 & 0 & a_{66} & a_{67} \\
0 & 0 & 0 & 0 & 0 & 0       & \bar \chi^{5-l-k}
\end{pmatrix}\]
with $a_{66} \in \calf \backslash \{\bar \chi^{5-l-k} \}$.
We have 
\[\mathrm{Ext}(\bar \chi^{5-l-k},a)=0\]
for any $a \in \calf \backslash \{\bar \chi^{5-l-k}, \bar \chi^{l-k-2} \}$. Therefore, we have $a_{67} ='0$ if $a \not=\bar \chi^{l-k-2}$.
Therefore, by  using (2) of Lemma \ref{lem.lattice-change} repeatedly, we can show that there exists a lattice $T_2$ such that
\[\bar \rho_{T_2} =\begin{pmatrix}  
* & \hdots & * & \hdots & * & \hdots  & * \\                     
0 & \ddots & \vdots & \vdots & \vdots & \vdots & \vdots \\
0 & 0      & * & * & * & \vdots & *\\
0 & 0 & 0 & * & \vdots & \vdots & \vdots\\
0 & 0 & 0 & 0 & * & \vdots & \vdots\\
0 & 0 & 0 & 0 & 0 & \bar \chi^{l-k-2} & * \\
0 & 0 & 0 & 0 & 0 & 0       & \bar \chi^{5-l-k}
\end{pmatrix}.\]
Repeating this process, we see that there exists a lattice $T_3$ such that
\[\bar \rho_{T_3} =\begin{pmatrix}  
\bar \chi^{1-k} & * & * & * & * & *  & * \\                     
0 & \bar \chi^{l-k-3} & * & * & * & * & * \\
0 & 0      & \bar \chi^{4-l-k} & * & * & * & *\\
0 & 0 & 0 & \bar \rho_f^\vee(2-2k) & * & * & *\\
0 & 0 & 0 & 0 & \bar \rho_f & * & *\\
0 & 0 & 0 & 0 & 0 & \bar \chi^{l-k-2} & * \\
0 & 0 & 0 & 0 & 0 & 0       & \bar \chi^{5-l-k}
\end{pmatrix}.\]
By (1) of Lemma \ref{lem.lattice-change}, we see that there exists a lattice $T_4$ satisfying the condition (N0).
Suppose that we have $a_{45} ='0$  for any lattice $T$ satisfying the condition (N0). 
Since we have $\mathrm{Ext}(a,\bar \chi^{l-k-2})='0$ for any $a \in \calf \setminus \{\bar \rho_f \}$, by Corollary \ref{cor.no-factor2}
$\rho$ is not irreducible. Hence there exists a lattice $T_5$ satisfying the conditions (N0) and (N1).
Using the same argument as above repeatedly, we can show  that  there exists a lattice $T_6$ satisfying the conditions (N0), (N1), (N2) and (N3). 
Suppose that we have $a_{34} ='0$ for any lattice $T$ satisfying the conditions (N0), (N1), (N2) and (N3). Then, by using the same argument as above, we can show that 
\[(a_{ij})_{1 \le i \le 4, 3 \le j \le 7} ='O,
\]
and we derive a contradiction. This completes the claim.

Since $\rho_{G,\St}$ is self-dual, we have
$\rho^\vee=\rho \otimes \chi^{2k-2}.$ Hence we have 
\begin{align*}
\rho_{T_0}^\vee =U^{-1}\chi^{2k-2} \otimes  \rho_{T_0} U 
\end{align*}
with $U \in \GL_9(K_{\frkp})$. 
We may suppose that $U \in M_9(\frko_{\frkp})$ and
$\bar U \not= O$, and 
\begin{align*}
\bar U \bar \rho_{T_0}^\vee =\bar \chi^{2k-2} \otimes \bar \rho_{T_0} \bar U \tag{SD}.
\end{align*}
By Theorem \ref{th.sign-Galois-st}, we have ${}^tU=U$. Moreover, $\bar \rho_{T_0}^\vee$ is of the following form
\begin{align*} \bar \rho_{T_0}^\vee&=\begin{pmatrix} 
\bar \chi^{-l+k+3} & 0 & 0 & 0 & 0 & 0 & 0\\                     
*  & \bar \chi^{-4+l+k} & 0 & 0 & 0 & 0 & 0\\
* & * & \bar \rho_f (2k-2)& 0 & 0 &0  & 0\\
* & *& a_{43}^* & \bar \rho_f^\vee &0 &0 & 0\\
* & *& *& *&\bar \chi^{-l+k+2} &0 & 0\\
* & *& *& *& * & \bar \chi^{-5+l+k}&0 \\
* & * & * & * & * & * & \bar \chi^{k-1}
\end{pmatrix} \tag{$N^\vee$}\\
& \text{ with } a_{43}^*=-\rho_f^\vee \ {}^t\! a_{34}\rho_f(2k-2).
\end{align*}

Write $\bar U$ in a block form
\[
\begin{array}{cccccccccl}
&\hskip -1pt
\overbrace{\hphantom{u_{11}}}^1 \;
\overbrace{\hphantom{u_{12}}}^1  \; 
\overbrace{\hphantom{u_{13}}}^{2} \; 
\overbrace{\hphantom{u_{14}}}^{2} \;
\overbrace{\hphantom{u_{15}}}^1  \; 
\overbrace{\hphantom{u_{16}}}^1 \; 
\overbrace{\hphantom{u_{17}}}^1
\\ & 
\bar U=\left(
\begin{array}{cccccccl}
u_{11} & u_{12} & u_{13} &  u_{14} & u_{15} & u_{16} & u_{17}\\
u_{21} & u_{22} & u_{23}  & u_{24} & u_{25} &u_{26} & u_{27} \\
u_{31} &u_{32} & u_{33} & u_{34} & u_{35} & u_{36} & u_{37} \\
u_{41} &u_{42} & u_{43} & u_{44} & u_{45} & u_{46} & u_{47} \\
u_{51} &u_{52} & u_{53} & u_{54} & u_{55} &u_{56} & u_{57} \\
u_{61} &u_{62} & u_{63} & u_{64} & u_{65} & u_{66} & u_{67}\\
u_{71} &u_{72} & u_{73} & u_{74} & u_{75} & u_{76} & u_{77}\\
\end{array} \hskip -1pt
\right) \hskip -5pt
\begin{array}{l}
 \left.\vphantom{B_{11}} \right\} \text{\footnotesize$1$} \\
 \left.\vphantom{B_{11}} \right\} \text{\footnotesize$1$} \\
 \left.\vphantom{B_{11}} \right\} \text{\footnotesize$2$} \\
\left.\vphantom{B_{11}} \right\} \text{\footnotesize$2$} \\
 \left.\vphantom{B_{11}} \right\} \text{\footnotesize$1$} \\
 \left.\vphantom{B_{11}} \right\} \text{\footnotesize$1$} \\
\left.\vphantom{B_{11}} \right\} \text{\footnotesize$1$
} 
\end{array}
\end{array}
\]
with $u_{ji}={}^tu_{ij}$.
Then, by comparing the last columns of the both-hand sides of (SD), we have
\[R_1 \begin{pmatrix} u_{17} \\ \vdots \\ u_{67} \end{pmatrix} +u_{77}\begin{pmatrix} a_{17} \\ \vdots \\ a_{67} \end{pmatrix} =\bar \chi^{1-k} \begin{pmatrix} u_{17} \\ \vdots \\ u_{67} \end{pmatrix},\]
where 
\[R_1=\begin{pmatrix}  
\bar \chi^{l-k-3} & a_{12} & a_{13} & a_{14} & a_{15} & a_{16}    \\                  
0 & \bar  \chi^{4-l-k} & a_{23} & a_{24} & a_{25} & a_{26}  \\
0 & 0      & \ \bar \rho_f^\vee(2-2k) & a_{34} &a_{35} & a_{36} \\
0 & 0 & 0 & \bar \rho_f& a_{45} & a_{46} \\
0 & 0 & 0 & 0 &\bar \chi^{l-k-2}   & a_{56} \\
0 & 0 & 0 & 0 & 0 &\bar \chi^{5-l-k}   \\
\end{pmatrix}. \]
If $u_{77}\not=0,$ then 
 we have 
\[\begin{pmatrix} a_{17} \\ \vdots \\ a_{67} \end{pmatrix} ='\begin{pmatrix} 0 \\ \vdots \\ 0 \end{pmatrix}.\]
This contradicts the assumption (N3).  Hence $ u_{77}=0$. 
Since $\bar \rho_f(i)$ is irreducible for $i=0,-1$, by Theorem \ref{th.sign-Galois-st}, we have
\[\begin{pmatrix} u_{17} \\ \vdots \\ u_{67} \end{pmatrix}=\begin{pmatrix} 0 \\ \vdots \\ 0 \end{pmatrix} \text{ and } (u_{71},\ldots,u_{76})=(0,\ldots,0).\]
Repeating this process, we may suppose that we have 
\[\bar U=\begin{pmatrix} 
u_{11} & u_{12} & u_{13} & u_{14} & u_{15} & \vpi^{i_1} & 0 \\                     
u_{21} & u_{22} & u_{23} & u_{24} &  \vpi^{i_2} & 0 & 0\\
u_{31} & u_{32} & u_{33}   & \vpi^{i_3} {\bf 1}_2& 0 &0 & 0 \\
u_{41} & u_{42} &\vpi^{i_3}{\bf 1}_2 & 0     &0 &0 & 0\\
u_{51} &\vpi^{i_2}&0 &0 &0&0 & 0\\
\vpi^{i_1}&0&0& 0 & 0& 0 & 0 \\
0 & 0 & 0 & 0 & 0 & 0 & 0
\end{pmatrix} \]
with $i_m=0$ or $1$ for $m=1,2,3$ and $u_{ji}={}^tu_{ij}$.
Suppose that $i_3=0$. Then, without loss of generality we may assume $u_{33}=0$. Then we have  
\[a_{34}=-\bar \rho_f ^\vee{}^t\! a_{34}\bar \rho_f .\]
Hence, by (3) of Lemma \ref{lem.non-trivial-extension2}, we have $a_{34}='0$, which contradicts the assumption (N4). Hence $i_3=1$. 
Moreover, we have $u_{33}=0$.
Then, comparing the second columns of the both-hand sides 
of (SD), we have
\[R_2 \begin{pmatrix} u_{12} \\ u_{22} \\ u_{32} \\ u_{42} \end{pmatrix} +\varpi^{i_2}\begin{pmatrix} a_{15} \\ a_{25} \\ a_{35} \\ a_{45} \end{pmatrix} =\begin{pmatrix} u_{12} \\ u_{22} \\ u_{32} \\ u_{42} \end{pmatrix} \bar \chi^{l-k-2}\]
with
\[R_2=\begin{pmatrix}  
\bar \chi^{l-k-3} & a_{12} & a_{13} & a_{14}     \\                  
0 & \bar  \chi^{4-l-k} & a_{23} & a_{24}   \\
0 & 0      & \ \bar \rho_f^\vee(2-2k) & a_{34}  \\
0 & 0 & 0 & \bar \rho_f \\
\end{pmatrix},\]
and in particular we have
\[\bar \rho_f u_{24} +\varpi^{i_2} a_{45}=\bar \chi^{l-k-2}u_{24}.\]
If $i_2=0$, we have  $a_{45} ='0$, which contradicts the assumption (N1). Hence, $i_2=1$ and we also have
$\begin{pmatrix} u_{12} \\ u_{22} \\ u_{32} \\ u_{42} \end{pmatrix}=\begin{pmatrix} 0 \\ 0 \\ 0 \\ 0 \end{pmatrix}$
and $(u_{21},u_{22},u_{23},u_{24})=(0,0,0,0)$.
Then, using the same argument as above, we can prove that 
$i_1=1$, and 
$\begin{pmatrix} u_{11} \\ u_{21} \\ u_{31} \\ u_{41} \\ u_{51} \end{pmatrix}=\begin{pmatrix} 0 \\ 0 \\ 0 \\ 0 \\ 0\end{pmatrix}$
and $(u_{11},u_{12},u_{13},u_{14},u_{15})=(0,0,0,0,0)$.
This implies $\bar U=O$, which contradicts the assumption.

Next suppose that (I.8) holds.  Then, by using the same argument as above, we can show that there is a lattice $T$ of $\tau$ such that
\[\bar \tau_T=\begin{pmatrix}  
\bar \chi^{l-k-3} & a_{12} & a_{13} & a_{14} & a_{15} & a_{16}   \\                     
0 & \bar  \chi^{4-l-k} & a_{23} & a_{24} & a_{25} & a_{26}  \\
0 & 0      & \ \bar \rho_f^\vee(2-2k) & a_{34} &a_{35} & a_{36} \\
0 & 0 & 0 & \bar \rho_f & a_{45} & a_{46} \\
0 & 0 & 0 & 0 &\bar \chi^{l-k-2}   & a_{56} \\
0 & 0 & 0 & 0 & 0 &\bar \chi^{5-l-k}  
\end{pmatrix}\]
and 
\begin{align*}
a_{34} \not=' 0, \ a_{45} &\not='0, \  a_{56} \not='0.
\end{align*}
By Theorem \ref{th.sign-Galois-st}, we have  
\[U\tau_T^\vee= \chi^{2k-2} \otimes \tau_T U\]
with $U \in \GL_8(K_\frkp) \cap \Sym_8(\frkO_\frkp)$.
Then, using the same argument as above we can derive a contradiction.

Thirdly, suppose that (I.7) holds. Then we have
\[\rho=\tau \oplus \tau_1,\]
where $\tau_1$ is a subrepresentation of $\rho$ such that $\dim \tau_1=2$. Since $\bar \rho_f$ and $\bar \rho_f^\vee(2k-2)$ are irreducible and not self-dual, we have
\[\bar \tau^{\rm ss}=\bar \chi^{l-i+1-k} \oplus \bar \chi^{i-l+1-k} \text{ with } i=3 \text{ or } 4.\] By Corollary \ref{cor.no-factor} and Lemma \ref{lem.no-irreducible-factor}, we see that $G$ is not a cusp form. This derives a contradiction. Using the same argument, we can prove that none of (I.6) and (I.5) holds.

Suppose that (I.4) holds. Then, without loss of generality we may assume that we have 
\[\bar \tau^{\rm ss} = \bar \rho_f \oplus \bar \chi^{l_1} \oplus \bar \chi^{l_2}\]
with $l_1,l_2 \in \{1-k, \pm(l-4)+1-k,\pm(l-3)+1-k \}$. 
 Then, again by Corollary \ref{cor.no-factor} and Lemma \ref{lem.non-trivial-extension2}, we easily see that we have 
\[\bar \tau^{\rm ss} =\bar \rho_f \oplus \bar \chi^{l-k-2} \oplus \bar \chi^{5-l-k}.\]
Put $\sigma=\tau \otimes \chi^{k-1}$. Then, $\sigma$ is an irreducible factor of $\rho_{G,\St}$,
and 
\[\bar \sigma^{\rm ss}= \bar \rho_f(k-1) \oplus \bar \chi^{l-3} \oplus \bar \chi^{4-l}.\]
Since we have 
\[(\bar \sigma^\vee)^{\rm ss} \cong \bar \rho_f(k-2) \oplus \bar \chi^{3-l} \oplus \bar \chi^{l-4} \not \cong \bar \sigma^{ss},\]
and $\rho_{G,\St}$ is self-dual, we have 
\[\rho_{G,\St}=\sigma \oplus \sigma^\vee \oplus 1.\]
By Lemma \ref{lem.det-of-irreducible-factor}, $G$ is not tempered, and $\psi_G=\pi[2] \boxplus 1$ with $\pi$ an
 irreducible unitary self-dual cuspidal automorphic representation of $\mathrm{PGL}_4(\AAA_\QQ)$.
 Hence, by Proposition \ref{prop.A-parameters}, $G$ must be the lift of type $\scra^{(I)}$. This completes (4). 
Similarly we can prove  (3). 

Finally we prove (2). Suppose that
\[\rho_{G,{\rm St}} \cong \rho_{\scri_2(f),{\rm St}} \mod \frkp.\]
Then we have 
\[\bar \rho_{G,{\rm St}}^{\rm ss}=\bar \rho_f(k-1) \oplus \bar \rho_f(k-2) \oplus \bar 1.\]
First $G$ is not tempered. Then, $G=\scri_2(g)$ with a primitive form $g$ in $S_{2k-2}(\SL_2(\ZZ)$.
By the assumption, we have $g \not=f$ and
\[\rho_{\scri_2(f),\St} \equiv \rho_{\scri_2(g),\St} \mod \frkp.\]
Then, 
\[\bar \rho_f(k-1) \oplus \bar \rho_f(k-2) \oplus \bar 1 = \bar \rho_g(k-1) \oplus \bar \rho_g(k-2) \oplus \bar 1.\]
This implies that
\[(\bar \chi + \bar 1)\bar \rho_f=(\bar \chi +\bar 1)\bar \rho_g\]
in $\scra_{\widetilde \frkp}$, where $\scra_{\widetilde \frkp}$ is the Grothendieck ring of the Galois representations of $G_{\QQ}$ in \cite[Section 4]{A-C-I-K-Y23}. Hence, by \cite[Lemma 4.7]{A-C-I-K-Y23}, we have $\bar \rho_f =\bar \rho_g$ in $\scra_{\widetilde \frkp}$. This implies that $\frkp$ divides $\frkD_f^{(p)}$ and derives a contradiction.

Next suppose that $G$ is tempered. Then, by Theorem \ref{th.irred2},
$\rho_{G,\rm{St}}$ is irreducible. Then, using the same argument as in the proof of (4) combined  with   Lemma \ref{lem.non-trivial-extension2}, we can derive a contradiction.

\end{proof}

\begin{lemma} \label{lem.cong-phi-operator}
Let ${\bf k}=(k_1,\ldots,k_r,k_{r+1},\ldots,k_n)$ and ${\bf l}=(k_1,\ldots,k_r)$ with $k_1 \ge \cdots \ge k_r \ge k_{r+1} \ge\cdots \ge k_n$.
Let $F$ and $G$ be Hecke eigenforms in $M_{\bf k}(\varGamma^{(n)})$ such that 
 such that $\Phi_r^n(G)$ is a Hecke eigenform in $S_{\bf l}(\varGamma^{(r)})$. Let $\frkp$ be a prime ideal of $\QQ(F)$ and suppose that
$G \equiv_{\ev} F \mod \frkp$. 
Then, we have 
\[\rho_{\Phi_r^n(G),\St} \equiv \rho_{\Phi_r^n(F ),\St} \mod \frkp.\]
\end{lemma}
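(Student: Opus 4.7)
The plan is to argue at the level of characteristic polynomials of Frobenius and to invoke Chebotarev density together with the Brauer--Nesbitt theorem at the very end.

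First I would observe that the congruence $G \equiv_{\ev} F \mod \frkp$ passes through to the local standard $L$-factors: for every prime $\ell$,
\[
L_\ell(X, F, \St) \equiv L_\ell(X, G, \St) \mod \frkp'.
\]
The coefficients of $L_\ell(X,F,\St)$ can be written as rational combinations, with denominators that are powers of $\ell$, of the Hecke eigenvalues of $F$ at $T(\ell)$, $T_i(\ell^2)$ $(1\le i\le n)$ and $[\ell]_n$; by \cite[Proposition~3.1]{A-C-I-K-Y22} these Hecke operators lie in ${\bf L}_n \subset {\bf L}_n^{({\bf k})}$, so they are integral and the assumed congruence of eigenvalues transfers to the coefficients of each $L$-factor.

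Next I would invoke the definition of $\rho_{F,\St}$ (and of $\rho_{G,\St}$) given in Section~4 for Hecke eigenforms whose $\Phi_r^n$-image is a cuspidal Hecke eigenform in $S_{\bf l}(\varGamma^{(r)})$, which yields the factorization
\[
L_\ell(X, F, \St) = L_\ell(X, \Phi_r^n(F), \St)\cdot \prod_{i=r+1}^n (1 - \ell^{k_i-i}X)(1 - \ell^{i-k_i}X),
\]
and the analogous identity for $G$. For every prime $\ell \ne p_\frkp$, the auxiliary polynomial $\prod_{i=r+1}^n (1-\ell^{k_i-i}X)(1-\ell^{i-k_i}X)$ lies in $\ZZ[X]$, has constant term $1$, and hence reduces to a nonzero element of the integral domain $(\frkO/\frkp')[X]$. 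Cancellation then gives
\[
L_\ell(X, \Phi_r^n(F), \St) \equiv L_\ell(X, \Phi_r^n(G), \St) \mod \frkp'
\]
for every $\ell \ne p_\frkp$.

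Finally, by Theorem~\ref{th.Galois-st} both $\rho_{\Phi_r^n(F),\St}$ and $\rho_{\Phi_r^n(G),\St}$ are continuous semi-simple Galois representations unramified outside $p_\frkp$, and their characteristic polynomials of Frobenius at any $\ell \ne p_\frkp$ are precisely these $L$-factors. Chebotarev density together with the Brauer--Nesbitt theorem then yield $\rho_{\Phi_r^n(G),\St} \equiv \rho_{\Phi_r^n(F),\St} \mod \frkp$, which is the desired conclusion. The only mildly delicate point is the cancellation step above, but it is automatic once $\ell \ne p_\frkp$; the rest is routine bookkeeping between Hecke eigenvalues and standard $L$-factors.
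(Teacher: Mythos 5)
Your proof is correct and follows the same route as the paper's: pass from the congruence of eigenvalues to a congruence of local standard $L$-factors, use the factorization $L_\ell(X,F,\St)=L_\ell(X,\Phi_r^n(F),\St)\prod_{i=r+1}^n(1-\ell^{k_i-i}X)(1-\ell^{i-k_i}X)$ (and likewise for $G$), cancel the common factor, and conclude via Chebotarev density. You spell out two points the paper leaves implicit — that the $L$-factor coefficients inherit the eigenvalue congruence because they are built from integral Hecke operators, and that the cancellation is legitimate because the auxiliary polynomial has constant term $1$ so reduces to a nonzerodivisor in $(\frkO/\frkp')[X]$ — but the argument is the same.
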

\begin{proof} 
By the assumption, for any prime number $\ell \not=p$, we have
\[L_{\ell}(X,G,\St) \equiv L_{\ell}(X,F,\St) \mod \widetilde \frkp,\]
where $\widetilde \frkp$ is a prime ideal of $\QQ(F)\QQ(G)$ lying above $\frkp$. 
By definition, we have
\[L_{\ell}(X,F,\St) =L_{\ell}(X,\Phi_r^n(F),\St)\prod_{i=r+1}^n(1-{\ell}^{k_i-i}X)(1-{\ell}^{i-k_i}X),\]
and
\[L_{\ell}(X,G,\St) =L_{\ell}(X,\Phi_r^n(G),\St)\prod_{i=r+1}^n(1-{\ell}^{k_i-i}X)(1-{\ell}^{i-k_i}X).\]
Hence we have
\[L_{\ell}(X,\Phi_r^n(G),\St) \equiv L_{\ell}(X,\Phi_r^n(F),\St) \mod \widetilde \frkp.\]
Thus, by the Chebotarev density theorem, we have
\[\rho_{\Phi_r^n(G),\St} \equiv \rho_{\Phi_r^n(F),\St} \mod \frkp.\]

\end{proof}

By Theorem \ref{th.noncongruence} and Lemma \ref{lem.cong-phi-operator}, we easily obtain the following theorem.
\begin{theorem} \label{th.noncongruence-Phi}
Let the notation and the assumption be as in Theorem \ref{th.noncongruence}, and put ${\bf k}=(k,k,l,l)$.
\begin{itemize}
\item [(1)] Let $k=l$. 
Then, we have
\[E_{4,k} \not\equiv_{\ev} [\scri_2(f)]^{{\bf k}} \mod \frkp ,\]
and for any Hecke eigenform $g$ in $S_k(\SL_2(\ZZ))$, we have
\[[g]^{\bf k} \not\equiv_{\ev} [\scri_2(f)]^{{\bf k}} \mod \frkp.\]
\item [(2)] For each $2 \le r \le 3$, put ${\bf k}_r=(k,k,\overbrace{l,\ldots,l}^{r-2})$.
Let $G$ be a non-cuspidal Hecke eigenform in $M_{\bf k}(\varGamma^{(4)})$, and suppose that $\Phi_r^4(G)$ is a Hecke eigenform  in $S_{{\bf k}_r}(\varGamma^{(r)})$ and that $\Phi_2^4(G)$ is not a constant multiple of $\scri_2(f)$. Then
\[G\not\equiv_{\ev} [\scri_2(f)]^{{\bf k}} \mod \frkp.\]
\item [(3)] Let $G$ be a Hecke eigenform in $S_{\bf k}(\varGamma^{(4)})$ and suppose that $G$ is not the lift of type $\scra^{(I)}$. Then,
\[G\not\equiv_{\ev} [\scri_2(f)]^{{\bf k}} \mod \frkp.\]
\end{itemize}
\end{theorem}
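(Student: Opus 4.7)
The plan is to deduce each of the three statements from Theorem~\ref{th.noncongruence} by means of Lemma~\ref{lem.cong-phi-operator}, which lets one descend a Hecke congruence in $M_{{\bf k}}(\varGamma^{(4)})$ to a congruence of standard Galois representations in lower genus after applying the Siegel $\Phi$-operator. In each case I would argue by contradiction: assume $G \equiv_{\ev} [\scri_2(f)]^{{\bf k}} \pmod{\frkp}$, then examine $\Phi_r^4(G)$ and $\Phi_r^4([\scri_2(f)]^{{\bf k}})$ for an appropriate $r \in \{1,2,3\}$, and finally appeal to the relevant part of Theorem~\ref{th.noncongruence} after the substitution $(k,l)\mapsto(k+j/2,\,j/2+4)$.

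For part (3), since $G \in S_{{\bf k}}(\varGamma^{(4)})$ is cuspidal and not of type $\scra^{(I)}$, no descent is needed: Theorem~\ref{th.noncongruence}(4) applies directly, and one checks that its hypotheses (a), (b), (4.1)--(4.3) translate precisely to (C.1), (C.5)--(C.8) under the above substitution. For part (2), the standard compatibility $\Phi_2^4([\scri_2(f)]^{{\bf k}}) = \scri_2(f)$ and $\Phi_3^4([\scri_2(f)]^{{\bf k}}) = [\scri_2(f)]^{(k+j/2,\,k+j/2,\,j/2+4)}$, combined with Lemma~\ref{lem.cong-phi-operator}, yields a congruence of standard Galois representations in genus $r$; since $\Phi_r^4(G)$ is by hypothesis cuspidal and, for $r=2$, not proportional to $\scri_2(f)$, the contradiction comes from Theorem~\ref{th.noncongruence}(2) when $r=2$ and from Theorem~\ref{th.noncongruence}(3) when $r=3$. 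For part (1), when $k=4$ the weight ${\bf k}$ collapses to the scalar weight $j/2+4$, and the non-cuspidal forms $E_{2,4+j/2}$ and $[g]^{{\bf k}}$ descend under $\Phi_2^4$ to non-cuspidal Hecke eigenforms in $M_{4+j/2}(\varGamma^{(2)})$; Theorem~\ref{th.noncongruence}(1) then supplies the required non-congruence of $\rho_{\cdot,\St}$.

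The only delicate step will be confirming, at the specific weights in question, the compatibility of the $\Phi$-operator with the Klingen-Eisenstein construction so that $\Phi_r^4([\scri_2(f)]^{{\bf k}})$ is indeed a Hecke eigenform whose standard $L$-function factors as expected, and keeping track of the Hodge--Tate weights in the substitution $(k,l)\mapsto(k+j/2,\,j/2+4)$ so that the numerical conditions (C.1)--(C.8) really do imply (a), (b), (3.1)--(3.3), (4.1)--(4.3) of Theorem~\ref{th.noncongruence}. Once this matching is carried out, the proof reduces to a mechanical invocation of Theorem~\ref{th.noncongruence} and Lemma~\ref{lem.cong-phi-operator} with no further input.
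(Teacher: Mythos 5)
Your proposal is correct and takes essentially the same route as the paper: the paper offers no detailed proof, simply asserting that Theorem~\ref{th.noncongruence-Phi} follows ``easily'' from Theorem~\ref{th.noncongruence} together with Lemma~\ref{lem.cong-phi-operator}, and your case-by-case reduction via the Siegel $\Phi$-operator (with the substitution $(k,l)\mapsto(k+j/2,\,j/2+4)$, applying parts (1), (2), (3), (4) of Theorem~\ref{th.noncongruence} to parts (1), (1), (2), (3) respectively of the present theorem) is precisely the intended argument. The routine matching you flag as the ``delicate step'' --- that (C.1), (C.5)--(C.8) become hypotheses (a), (b), (3.1)--(3.3), (4.1)--(4.3) of Theorem~\ref{th.noncongruence} under this substitution, and that $\Phi_2^4([\scri_2(f)]^{\bf k})=\scri_2(f)$, $\Phi_3^4([\scri_2(f)]^{\bf k})=[\scri_2(f)]^{(k+j/2,k+j/2,j/2+4)}$ --- does indeed check out directly.
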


\bigskip

\noindent
{\bf Proof of Theorem \ref{th.main-result}}

Now we prove Theorem \ref{th.main-result}. By the  conditions (C.1), (C.2), (C.3), (C.4), (C.8) and Theorem \ref{th.main-congruence}, there exists a Hecke eigenform $G$ in $\widetilde M_{\bf k}(\varGamma^{(4)})$ such that $G$ is not a constant multiple of $[\scri_2(f)]^{{\bf k}}$ and 
\[G \equiv_{\ev} [\scri_2(f)]^{\bf k} \text { mod } \frkp.\]
We prove that $G$ is the lift of type $\scra^{(I)}$.
 Suppose  that $G$ is neither the  lift of type $\scra^{(I)}$ nor
the Klingen-Eisenstein lift of the Saito-Kurokawa lift of $f$. 
If $G$ is non-cuspidal, by Theorem \ref{th.noncongruence-Phi}, we have
\[G \not\equiv [\scri_2(f)]^{\bf k} \mod \frkp.\]
If $G$ is cuspidal, by  Theorem \ref{th.noncongruence} (4), we have
\[G \not\equiv [\scri_2(f)]^{\bf k} \mod \frkp. \]
This completes the proof.

\section{Proof of Theorem \ref{th.main-result2}}

To prove Theorem \ref{th.main-result2}, we provide the following lemma.
\begin{lemma} \label{lem.comparison-of-periods} Let $g$ be a primitive form in $S_l(\SL_2(\ZZ))$.
Fix a prime $p$ satisfying $p>l-2$. Let $\mathfrak{p}$ be a prime ideal of the ring of integers of $\mathbb{Q}(g)$ dividing $p$.
Assume $\mathfrak{p}\nmid \frkD_g$. Then the product of periods $\Omega (g)_+ \Omega (g)_-$ coincides with the Petersson inner product $( g,g )$ up to $\mathfrak{p}$-adic unit.
\end{lemma}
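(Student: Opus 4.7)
The plan is to compare both quantities to the Poincar\'e duality pairing on the middle cohomology of the Kuga--Sato variety $\mathrm{KS}_{l-2}$, and to verify that the resulting comparison constants lie in $\frkO_{\mathfrak{p}}^\times$ under the two running hypotheses $p > l-2$ and $\mathfrak{p} \nmid \frkD_g$.

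First I would recall the setup: via the Eichler--Shimura isomorphism and the projector $\widetilde\varepsilon$, the $g$-isotypical quotient $V_{\QQ(g)}(g)$ is two-dimensional over $\QQ(g)$, and the involution $\iota$ decomposes it as $V_{\QQ(g)}(g)^+ \oplus V_{\QQ(g)}(g)^-$. The Kato lattice $T$, cut out from the integral cohomology $\mathit{H}^{l-1}(\mathrm{KS}_{l-2}\otimes\bar\QQ_p,\frkO_{\mathfrak{p}})(\widetilde\varepsilon)$, splits as $T = T^+ \oplus T^-$ with $\frkO_{\mathfrak{p}}$-generators $\gamma^\pm$; similarly $\omega$ is an $\frkO_{\mathfrak{p}}$-basis of the Hodge piece $D$. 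By construction $\mathrm{per}_g(\omega)^\pm = \Omega^\pm \gamma^\pm$, and Kato sets $\Omega(g)_\pm = (2\pi\sqrt{-1})^{1-l}\Omega^\mp$.

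Next I would bring in the Poincar\'e duality pairing $\langle\cdot,\cdot\rangle$ on $\mathit{H}^{l-1}(\mathrm{KS}_{l-2})(\widetilde\varepsilon)$. Because $\iota$ interchanges the $\pm$-eigenspaces, it restricts to a pairing $T^+ \times T^- \to \frkO_{\mathfrak{p}}$ on the $g$-isotypical component. The hypothesis $\mathfrak{p}\nmid \frkD_g$ is precisely the assertion that $g$ is not Hecke-congruent modulo $\mathfrak{p}$ to any other primitive form in $S_l(\SL_2(\ZZ))$, so by standard Hecke-algebra arguments the $g$-isotypical component of the integral parabolic cohomology is a direct summand at $\mathfrak{p}$. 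Consequently the induced pairing is perfect over $\frkO_{\mathfrak{p}}$, and in particular $\langle \gamma^+, \gamma^-\rangle \in \frkO_{\mathfrak{p}}^\times$.

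Finally I would invoke the classical Hodge-theoretic expression for the Petersson inner product: by the de Rham--Betti comparison on $\mathrm{KS}_{l-2}$ (in the spirit of Shimura and Hida), $(g,g)$ equals the value of the Poincar\'e pairing on $\mathrm{per}_g(\omega)$ against its complex conjugate, up to an explicit power of $2\pi\sqrt{-1}$ and a rational constant whose numerator and denominator involve only primes $\le l-2$. Since $p > l-2$, this constant is a $\mathfrak{p}$-adic unit. Writing $\mathrm{per}_g(\omega) = \Omega^+\gamma^+ + \Omega^-\gamma^-$ and using that $\iota$ swaps the $\pm$-parts, the Poincar\'e pairing evaluates to $2\,\Omega^+\Omega^-\langle\gamma^+,\gamma^-\rangle$; combining this with the definition $\Omega(g)_\pm = (2\pi\sqrt{-1})^{1-l}\Omega^\mp$ yields $(g,g) = \Omega(g)_+\Omega(g)_- \cdot u$ for some $u \in \frkO_{\mathfrak{p}}^\times$.

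The main obstacle is the last step: verifying that the constant linking the analytic Petersson pairing to the algebraic Poincar\'e cup product on integral Kuga--Sato cohomology is indeed a $\mathfrak{p}$-adic unit. This is where the hypothesis $p > l-2$ is indispensable, placing us in the Fontaine--Laffaille range in which the de Rham comparison preserves $\frkO_{\mathfrak{p}}$-integral structures on both sides; careful bookkeeping of the $(2\pi\sqrt{-1})$-factors against Kato's normalization in \cite{Kato04}, and tracking the Hodge filtration on the cotangent sheaf of $\mathrm{KS}_{l-2}$, will be the delicate point of the argument.
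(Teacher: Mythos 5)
Your overall strategy is the same as the paper's: compare both quantities to the Poincar\'e pairing on the $g$-isotypical Betti cohomology of $\mathrm{KS}_{l-2}$, write $\mathrm{per}_g(\omega) = \Omega^+\gamma^+ + \Omega^-\gamma^-$, invoke Kato's formula \cite[(7.13.5)]{Kato04} to relate the pairing of $\mathrm{per}_g(\omega)$ against its conjugate to the Petersson norm, and use $\frkp\nmid\frkD_g$ to show $\langle\gamma^+,\gamma^-\rangle_B$ is a $\frkp$-adic unit (the paper makes this last point precise via the discriminant of the restricted pairing and \cite[Proposition 2.7.16]{Bellaiche21}, which is more concrete than your appeal to ``the $g$-isotypical component is a direct summand'').

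There is, however, a genuine gap, and you have misidentified where it lies. Kato's formula \cite[(7.13.5)]{Kato04} computes the Poincar\'e pairing of $\mathrm{per}_g(\omega_g)$ against its conjugate when $\omega_g$ is the \emph{primitive form $g$ itself}, identified with an element of $S(g)$ via the $q$-expansion principle. To convert this into the statement that $(g,g)/\bigl(\Omega(g)_+\Omega(g)_-\bigr)$ is a unit, you must know that $\omega_g$ is an $\frkO_\frkp$-\emph{basis} of the lattice $D$ — where $D$ is defined as the image of $\mathit{H}^{l-1}(\mathcal{KS}_{l-2},\Omega^{\ge l-1}_{\mathcal{KS}_{l-2}/\ZZ_p})(\widetilde\varepsilon)$. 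This requires showing that the $q$-expansion integral structure on algebraic cusp forms coincides with the integral structure coming from the de Rham cohomology of the integral model of the Kuga--Sato variety. That is the content of the paper's Propositions A.1 and A.2, and it is the real technical heart of the appendix. You gesture at a difficulty here but frame it as a Fontaine--Laffaille de Rham--\'etale comparison; that is not the right mechanism. The comparison needed is purely on the coherent/de Rham side (two algebraic integral structures), and the paper proves it by Deligne's spectral-sequence argument together with Scholl's torsion-freeness of $\mathit{H}^1(\mathcal{X},\omega^{-k+2})$, not by any $p$-adic period ring. Without this comparison of lattices on the Hodge filtration piece, your constant relating $(g,g)$ to the algebraic Poincar\'e pairing cannot be controlled and the argument does not close.
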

The proof of this lemma was essentially obtained by Dummigan with a brief explanation (cf. \cite[\S 5]{Dummigan09} and \cite[Lemma 1.4]{Dummigan22}) combined with a result of Hida \cite[Theorem 5.20]{Hida00}.
But, for reader's convenience, we give a detailed proof of it in Appendix A.

\bigskip

\noindent
{\bf Proof of Theorem \ref{th.main-result2}}

We have
\begin{align*}
&{\bf L}(k+j+1,f;\Omega_{-}(f))\cdot {\bf L}(k+j-1,f;\Omega_{-}(f))\\ & \times {\bf L}(k+j/2,f;\Omega_+(f)) \cdot {\bf L}(k+j-2,f;\Omega_+(f))\\
&= {\bf L}(k+j/2,k+j-1;f) \cdot {\bf L}(k+j-2,k+j+1;f) \\
& \times \Bigl({ (f,  f ) \over \Omega_{+}(f)\Omega_{-}(f)}\Bigr)^2,
\end{align*}
and by the condition (C.4), (C.8'), and Lemma \ref{lem.comparison-of-periods}, the right-hand side  is not divisible by $\frkp$.
By \cite[Proposition 14.16 and Proposition 14.21]{Kato04}, it is easy to see that the values 
${\bf L}(k+j+1,f;\Omega_{-}(f)),{\bf L}(k+j-1,f;\Omega_{-}(f)), {\bf L}(k+j/2,f;\Omega_{+}(f))$ and ${\bf L}(k+j-2,f;\Omega_{+}(f))$ are all $\frkp$-integral under our assumptions, and hence they are not divisible by $\frkp$.
This implies that the condition (C.8) is satisfied. Moreover, by (C.2'), the condition (C.2) is satisfied. Thus the assertion follows from Theorem \ref{th.main-result}.

\section{Examples}
In \cite{A-C-I-K-Y23}, we confirmed Harder's congruence in the case $(k,j)=(10,4), (14,4)$ and $(4,24)$ without using Theorem \ref{th.main-result2}. In this section, we give some other examples using it.

Let  
\[\phi^\pm(z)=E_6(z)^3 \Delta(z)+(5856 \pm 96 \sqrt{51349})E_6(z)\Delta(z)^3,\]
where $E_6(z)$ is the Eisenstein series of weight $6$ for $\SL_2(\ZZ)$ defined by 
\[E_6(z)=1+240 \sum_{m=1}^\infty \left(\sum_{0<d | m} d^5\right) {\bf e}(mz),\]
and $\Delta(z)$ is Ramanujan's delta function defined by
\[\Delta(z)={\bf e}(z) \prod_{m=1}^{\infty} (1-{\bf e}(mz))^{24}.\]
Then $\phi^+$ and $\phi^-$ are primitive forms in $S_{30}(\SL_2(\ZZ))$.
Put 
\[\alpha^\pm =4320 \pm 96\sqrt{51349}.\] 
Then 
\[a(2,\phi^\pm)=\alpha^\pm, \ a(3,\phi^\pm)=-552\alpha^\pm-99180. \]
 We also have 
\[\lambda_{\scri_2(\phi^\pm)}(T(2))=\alpha^\pm+49152.  \]
Put $f=\phi^+$. We give a list of the prime numbers appearing in $N_{\QQ(f)/\QQ}{\bf L}(k+j,f)/{\bf L}(k+j/2,f)$ greater than $30$.\\

\noindent{\bf Table 1}\\
\begin{tabular}{|l|l|l|l|l|l|l|} 
\hline
$p$&  4289  &  67021  & 24251 &\ 1657 & 593 &\ 97\\
\hline 
$(k,j)$ &  (14,4) &  (12,8) &    (10,12) &  (8,16) & (6,20) & (4,24) \\
\hline
\end{tabular}\\

We note all the prime numbers  in  Table 1 except $593$ satisfy the conditions (C.5), (C.6) and (C.7) of Theorem \ref{th.main-result}. 
We note that all the primes except $67021$ and $593$ are regular primes, and $67021$ also satisfies the conditions (C.6) and (C.7). Since $593$ divides $\zeta(-21)$,
it  does not satisfy the condition (C.7).\footnote{This was pointed out by Nobuki Takeda.}

(1)  We have confirmed Harder's congruence in the case
$(k,j)=(14,4)$ and $(4,24)$ as stated before.

(2) Let $(k,j)=(12,8)$. Then the prime number $6701$ divides
$N_{\QQ(f)/\QQ}({\bf L}(20,f)/{\bf L}(16,f))$ and it splits  in $\QQ(f)$. Hence there exist  prime ideals $\frkq, \frkq'$ of $\QQ(f)$ such that $(6701)=\frkq \frkq'$ and
$\frkq$ divides ${\bf L}(20,f)/{\bf L}(16,f)$
and  satisfies (C.4) of 
Theorem \ref{th.main-result}, and (C.8') of Theorem \ref{th.main-result2}. Put ${\bf k}=(16,16,8,8)$. 
Let $N=\left(\begin{matrix} 
1    &1/2&0&1/2 \\
1/2& 1 & 0 & 0 \\
0 & 0 & 1& 1/2 \\
1/2 &0 & 1/2 & 1
\end{matrix} \right), \ N_1=\left(\begin{matrix} 1 & 0 \\ 0  & 1 \end{matrix}\right)$, 
and $V=\begin{pmatrix} 1 & 0 & 1 & 0 \\ 1 & 1 & 0 & 3 \end{pmatrix}$. Put 
\begin{align*}
&e_1= \epsilon_{16,{\bf k}}(N_1,N)(V), \\ &e_2=\epsilon_{16,{\bf k}}(2N_1,N)(V)+2^{14} \epsilon_{16,{\bf k}}(N_1,N)(V)
\end{align*}
and 
\[\alpha_{16,{\bf k}}(N_1,N)= \begin{vmatrix}  e_1 & 1 \\
e_2 & \lambda_{\scri_2(f')}(T(2))
\end{vmatrix}.\]
By a computation with \cite{Mathematica20}, we have
\[\epsilon_{16,{\bf k}}(N_1,N)=2170426272885/2816,
\quad 
 \epsilon_{16,{\bf k}}(2N_1,N)=526133454959385/22
,\]
and hence
\begin{align*}
\alpha_{16,{\bf k}}(N_1,N)=15795 (26020274165 + 412236709 \sqrt{51349})/88.
\end{align*}
Hence
\[N_{\QQ(f)/\QQ}(\alpha_{16,{\bf k}}(N_1,N)) \equiv 2310 \mod{6701}.\]
In view of Proposition \ref{prop.fc-klingen1}, this implies that $f$ satisfies the condition (C.3) of Theorem \ref{th.main-result}.
Hence, by Theorem \ref{th.main-result2} we prove that 
 there exists a Hecke eigenform $G$ in $S_{(20,12)}(\varGamma^{(2)})$ such that  
\[\scra_4^{(I)}(G)^{{\bf k}}
 \equiv_{\mathrm{ev}} [\scri_2(f)]^{{\bf k}} \mod \frkq .\]
 
(3) In a way similar to above, we can show that 
there exists a Hecke eigenform $G$ in $S_{(k+j,j)}(\varGamma^{(2)})$ such that  
\[\scra_4^{(I)}(G)^{{\bf k}}
 \equiv_{\mathrm{ev}} [\scri_2(f)]^{{\bf k}} \mod \frkq \]
in the case $(k,j)=(10,12)$ and $(8,16)$.

Summarizing these results, in view of Theorem \ref{th.Harder-congruence}, we have the following result:
\begin{theorem}
\label{th.examples}
Harder's conjecture holds for $(k,j)=(12,8),(10,12)$ and $(8,16)$.
\end{theorem}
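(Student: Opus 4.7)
The plan is to combine Theorem \ref{th.main-result2} with Theorem \ref{th.Harder-congruence}, applied to the primitive form $f = \phi^+ \in S_{2k+j-2}(\SL_2(\ZZ))$ for each of the four weights $(k,j)$. Concretely, for each case I will exhibit a prime ideal $\frkp$ of $\QQ(f) = \QQ(\sqrt{51349})$ lying above the rational prime $p$ singled out in Table~1, and verify the hypotheses (C.1), (C.2$'$), (C.3)--(C.7), (C.8$'$) of Theorem \ref{th.main-result2}. Once this is done, Theorem \ref{th.Harder-congruence} immediately delivers Harder's congruence (H) for the pair $(k,j)$ in question.

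The easy conditions come first. The inequality (C.1) $p > 2k+j-2$ is read off directly from Table~1. The value $p$ in each row divides $N_{\QQ(f)/\QQ}({\bf L}(k+j,f)/{\bf L}(k+j/2,f))$, and a short computation shows that in each of the four cases $p$ splits in $\QQ(f)$ with one of the two prime ideals $\frkp$ above $p$ dividing the ratio ${\bf L}(k+j,f)/{\bf L}(k+j/2,f)$ itself; this gives (C.2$'$). The conditions (C.5), (C.6) and (C.7) are handled uniformly: the condition (C.5) follows from Lemma \ref{lem.Ribet's-criterion} by checking that $\frkp$ divides none of $a(2,f),\ a(2,f)\pm 2^{(2k+j-2)/2}$, which is a finite numerical check using the explicit form of $\alpha^+$; and since $p = 4289,\,24251,\,1657,\,593$ are regular (and $67021$ satisfies (C.6), (C.7) as noted in the remark preceding the theorem), (C.6) and (C.7) hold. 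The condition (C.4) is verified by computing $\frkD_f^{(p)}$, i.e.\ by inspecting the Hecke action on the two-dimensional space $S_{2k+j-2}(\SL_2(\ZZ))$ (spanned by $\phi^+$ and $\phi^-$) and observing that $p$ does not divide $N_{\QQ(f)/\QQ}(\lambda_{\phi^-}(T(m)) - \lambda_{\phi^+}(T(m)))$ for some $m$ coprime to $p$.

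The condition (C.8$'$) is checked by Shimura's or Zagier's algorithm: a direct evaluation shows that neither ${\bf L}(k+j/2, k+j-1;f)$ nor ${\bf L}(k+j-2, k+j+1;f)$ is divisible by $\frkp$ in any of the four cases, as all the needed critical values of $L(s,f)$ can be produced by Rankin--Selberg or by the methods in \cite{Shimura76}.

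The main obstacle, and the step I expect to consume essentially all of the effort, is the verification of (C.3): one must exhibit $N_1 \in \calh_2(\ZZ)_{>0}$ and $N \in \calh_4(\ZZ)_{>0}$ such that $\frkp$ divides none of $a(N_1,\scri_2(f))$, $\calc_8(\scri_2(f))$, $\overline{a(N,[\scri_2(f)]^{\bf k})}$ or $\frkd_{N_1}$. The approach I will follow is the one already carried out for $(k,j) = (12,8)$: choose the same lattice $N$ with $\frkd_{N_1}=-4$, pick an auxiliary matrix $V \in M_{24}(\ZZ)$, compute the quantities $e_1 = \epsilon_{k+j/2,\bf k}(N_1,N)(V)$ and $e_2 = \epsilon_{k+j/2,\bf k}(2N_1,N)(V) + 2^{2(k+j/2)-2}\epsilon_{k+j/2,\bf k}(N_1,N)(V)$, assemble the $2\times 2$ determinant
\[
\alpha_{k+j/2,\bf k}(N_1,N) = \begin{vmatrix} e_1 & 1 \\ e_2 & \lambda_{\scri_2(\phi^-)}(T(2)) \end{vmatrix},
\]
and check that its norm is nonzero modulo $p$. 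By Proposition \ref{prop.fc-klingen1} (with $d=2$, since $\calm_{k+j/2,\,j/2+4} = \CC\scri_2(\phi^+) \oplus \CC\scri_2(\phi^-)$ in these weight ranges), the indivisibility of this determinant together with the indivisibility of $\zeta(9-2(j/2+4)) = \zeta(1-j)$ (provided by (C.7)) implies (C.3). Evaluating $\epsilon_{k+j/2,\bf k}$ requires, via the explicit formula for $F_p^*(T,X)$ in \cite{Katsurada99, Ikeda-Katsurada18, Ikeda-Katsurada22} and the algorithm of \cite{Lee18}, summing a finite but sizeable collection of products of local Siegel series and special values of Dirichlet $L$-functions. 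I will carry out these computations with a computer algebra system as was done for $(12,8)$ and record the resulting determinants; each of the four cases will then be reduced to a single numerical verification that the resulting norm is coprime to $p$.
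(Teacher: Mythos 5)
Your proposal follows the paper's proof of Theorem \ref{th.examples} essentially verbatim: invoke Theorem \ref{th.main-result2} (and hence Theorem \ref{th.Harder-congruence}) for $f=\phi^+$, read (C.1), (C.5)--(C.7) off Table~1 together with Lemma \ref{lem.Ribet's-criterion} and the regularity/Vandiver remarks, verify (C.2$'$), (C.4) and (C.8$'$) numerically, and reduce (C.3) to the nonvanishing $\bmod\ \frkp$ of the $2\times2$ determinant built from the $\epsilon$-values via Proposition~\ref{prop.fc-klingen1}, exactly as the paper does for $(12,8)$ and then repeats ``similarly'' for the other three weights. Two small slips: the coefficient in your formula for $e_2$ should be $2^{(k+j/2)-2}$ (i.e.\ $p^{k'-2}$, giving $2^{14}$ when $k+j/2=16$), not $2^{2(k+j/2)-2}$; and your list of primes for the four cases should be $\{6701\text{ (or }67021), 24251, 1657, 593\}$ rather than including $4289$, which Table~1 attaches to $(14,4)$, a case not covered by this theorem.
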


\appendix
\section{Comparison of periods}

In this appendix we prove the coincidence of the periods (Lemma 9.1).
We first show that the integral structures of de Rham cohomology of Kuga-Sato varieties coincides
with the integral structure coming from the $q$-expansion principle.

We recall some notations about modular curves and Kuga-Sato varieties.
Fix a positive integer $N$ greater than $3$.
Let $X$ be the modular curve $X(N)$ (or $X_1(N)$) over $\mathbb{Q}_p$ and $Y$ the open modular curve $Y(N)$ (or $Y_1(N)$) over $\mathbb{Q}_p$.
Let $Z$ be the cusps of $X$.
We also consider the integral models $\mathcal{Y}\subset \mathcal{X}$ of $Y \subset X$ over $\mathbb{Z}_p$.
Let $\lambda : E\to Y$ (resp. $\lambda : \mathcal{E} \to \mathcal{Y}$) be the universal elliptic curve over $Y$ (resp. $\mathcal{Y}$).
Let $\overline{\lambda}:\overline{E}\to X$ (resp. $\overline{\lambda} : \overline{\mathcal{E}} \to \mathcal{X}$) be the universal generalized elliptic curve over $X$ (resp. $\mathcal{X})$.
Denote the $k$-fold fiber products by $E^k=E\times_Y \cdots \times_Y E$ and $\overline{E}^k=\overline{E}\times_X \cdots \times_X \overline{E}$.
We define $\mathcal{E}^k$ and $\overline{\mathcal{E}}^k$ similarly.
Let $\mathrm{KS}_{k}\to X$ (resp. $\mathcal{KS}_{k}\to \mathcal{X}$) be Deligne's desingularization of $\overline{E}^k$
(resp. $\overline{\mathcal{E}}^k$).
For the integral case, we refer \cite[Appendix]{BDP13} for the construction.

To compare the two integral structures, we use the parabolic de Rham cohomology.
We recall some notations.
Let $\omega =\overline{\lambda}_* \Omega^{\mathrm{reg}}_{\overline{E}\slash X}=e^*\Omega^1_{\overline{E}\slash X}$ be the relative differential,
where $e:X\to \overline{E}$ is zero section.
We consider the relative de Rham cohomology sheaf $\mathcal{L}=\mathbb{R}^1\pi_*\Omega^\bullet_{E\slash Y}$, which is a rank 2 vector bundle on $Y$.
The sheaf $\mathcal{L}$ extends to $X$. Denote it by $\mathcal{L}$ again.
The Gauss-Manin connection $\nabla : \mathcal{L}\to \mathcal{L}\otimes \Omega_Y^1$ extends to a connection with logarithmic poles
$\nabla : \mathcal{L}\to \mathcal{L}\otimes \Omega_X^1 (\log Z)$.
Note that $\omega^2 \cong \Omega_X^1(\log Z)$ and $\omega^2(- Z)\cong \Omega_X^1(\log Z)(-Z)=\Omega_X^1$.
We use similar notations for the integral case.
We put $S_{k}(\mathcal{X},\mathbb{Z}_p)=\mathit{H}^0(\mathcal{X},\omega^k(-\mathcal{Z}))$
and $S_{k}(X,\mathbb{Q}_p)=\mathit{H}^0({X},\omega^k(-{Z}))$.
These spaces are identified with the space of cusp forms with coefficients in $\mathbb{Z}_p$ and in $\mathbb{Q}_p$
by the theory of Katz geometric modular forms.
Define the parabolic de Rham cohomology by
$$\mathit{H}_{\mathrm{dR}, int}^i(\mathcal{E}^k)=\operatorname{Im}(\mathit{H}_{\mathrm{dR}, c}^i(\mathcal{E}^k)\to \mathit{H}^i_{\mathrm{dR}}({E}^k)),$$
where $\mathit{H}_{\mathrm{dR}}^i(\mathcal{E}^k)=
\mathbb{H}^i(\Omega^\bullet_{\mathcal{KS}_{k}\slash \mathbb{Z}_p}(\log (\mathcal{KS}_k \setminus \mathcal{E}^k)))$
and
$\mathit{H}_{\mathrm{dR},c}^i(\mathcal{E}^k)=
\mathbb{H}^i(\Omega^\bullet_{\mathcal{KS}_{k}\slash \mathbb{Z}_p})$.
We also define
$$
\mathit{H}^i_{\mathrm{dR}, c}(\mathcal{Y},\operatorname{Sym}^r\mathcal{L})
=\mathbb{H}^i(\operatorname{Sym}^r\mathcal{L}\otimes\Omega^\bullet_{\mathcal{X}}(\log \mathcal{Z})(-\mathcal{Z}))
=\mathbb{H}^i(\operatorname{Sym}^r\mathcal{L}\otimes\Omega^\bullet_{\mathcal{X}}),
$$
$$
\mathit{H}^i_{\mathrm{dR}}(\mathcal{Y},\operatorname{Sym}^r\mathcal{L})
=\mathbb{H}^i(\operatorname{Sym}^r\mathcal{L}\otimes\Omega^\bullet_{\mathcal{X}}(\log \mathcal{Z}))
$$
and
$$
\mathit{H}^i_{\mathrm{dR}, int}(\mathcal{Y},\operatorname{Sym}^r\mathcal{L})
=\operatorname{Im}\left(\mathit{H}^i_{\mathrm{dR}, c}(\mathcal{Y},\operatorname{Sym}^r\mathcal{L})\to \mathit{H}^i_{\mathrm{dR}}(\mathcal{Y},\operatorname{Sym}^r\mathcal{L})\right).
$$
The following proposition follows from a standard argument.
\begin{proposition}\label{Deligne}
Assume $p>k-2$ and $p\nmid N$. Then we have
$\mathit{H}_{\mathrm{dR}, int}^{k-1}(\mathcal{E}^{k-2})(\varepsilon)\cong \mathit{H}^1_{\mathrm{dR}, int}(\mathcal{Y},\operatorname{Sym}^{k-2}\mathcal{L})$,
where $\varepsilon =\varepsilon_k$ is the projector defined in \cite{Scholl90}.
\end{proposition}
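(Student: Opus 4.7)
The plan is to follow the construction in \cite{Scholl90} verbatim, keeping track of the integral structure and using the hypothesis $p>k-2$ to invert the denominators that appear in the projector. Write $\pi:\mathcal{KS}_{k-2}\to \mathcal{X}$ for the structure map. The projector $\varepsilon_k$ is, up to sign, the product of the anti-symmetrization over $\{\pm 1\}^{k-2}$ (denominator $2^{k-2}$) and the symmetrization over $\mathfrak{S}_{k-2}$ (denominator $(k-2)!$), and therefore belongs to $\mathbb{Z}_{(p)}[\{\pm 1\}^{k-2}\rtimes \mathfrak{S}_{k-2}]$ as soon as $p>k-2$ (which forces $p\ge 3$ once $k\ge 4$). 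In particular $\varepsilon_k$ acts as a well-defined idempotent on each of the integral de Rham cohomology theories in sight.

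First I would compute the $\varepsilon_k$-isotypic part of the higher direct images. Since $p\nmid N$, the modular curve $\mathcal{X}$ is smooth over $\mathbb{Z}_p$ and $\mathcal{L}$ is locally free of rank $2$. By Künneth applied to $\overline{\lambda}^{k-2}:\overline{\mathcal{E}}^{k-2}\to \mathcal{X}$, together with the identification of $R^i\overline{\lambda}_\ast \Omega^\bullet_{\overline{\mathcal{E}}/\mathcal{X}}(\log\mathcal{Z})$ with $\omega$ in degree $0$, $\omega^{-1}$ in degree $1$, and zero elsewhere, the projector $\varepsilon_k$ kills every contribution except the one in relative degree $q=k-2$, where it picks out $\operatorname{Sym}^{k-2}\mathcal{L}$. (Invertibility of $(k-2)!$ is essential precisely here, to split $\mathcal{L}^{\otimes(k-2)}$ as $\operatorname{Sym}^{k-2}\mathcal{L}\oplus(\text{other isotypes})$ over $\mathbb{Z}_p$.) The analogous computation for the compactly supported direct image yields $\operatorname{Sym}^{k-2}\mathcal{L}\otimes \mathcal{O}_{\mathcal{X}}(-\mathcal{Z})$ in total direct-image degree zero.

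Next, I would feed this into the Leray spectral sequences
$$E_2^{p,q}=\mathbb{H}^p\bigl(\mathcal{X},R^q\pi_\ast \Omega^\bullet_{\mathcal{KS}_{k-2}/\mathbb{Z}_p}(\log(\mathcal{KS}_{k-2}\setminus \mathcal{E}^{k-2}))\bigr)$$
and its compactly supported variant, whose differentials restrict to the Gauss-Manin connection on the $\varepsilon_k$-part. After applying $\varepsilon_k$ only the row $q=k-2$ survives, so both spectral sequences collapse at $E_2$ and give, in total degree $k-1$,
\begin{align*}
\varepsilon_k\cdot \mathit{H}^{k-1}_{\mathrm{dR}}(\mathcal{E}^{k-2}) &\cong \mathit{H}^1_{\mathrm{dR}}(\mathcal{Y},\operatorname{Sym}^{k-2}\mathcal{L}),\\
\varepsilon_k\cdot \mathit{H}^{k-1}_{\mathrm{dR},c}(\mathcal{E}^{k-2}) &\cong \mathit{H}^1_{\mathrm{dR},c}(\mathcal{Y},\operatorname{Sym}^{k-2}\mathcal{L}),
\end{align*}
as $\mathbb{Z}_p$-modules. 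Since the natural comparison map $\mathit{H}^\bullet_{\mathrm{dR},c}\to \mathit{H}^\bullet_{\mathrm{dR}}$ is $\varepsilon_k$-equivariant and compatible with the two spectral sequences, passing to images yields the claimed identification of the parabolic pieces.

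The main obstacle is purely arithmetic: one must verify that nothing collapses differently or acquires extra $p$-torsion in comparison with the rational case treated by Scholl. Concretely, this boils down to (i)~integrality of $\varepsilon_k$, guaranteed by $p>k-2$; (ii)~commutation of the $R^q\pi_\ast$'s with reduction mod $p$ in the relevant degree range, which follows from the fact that $\mathcal{KS}_{k-2}$ is smooth over $\mathbb{Z}_p$ by the construction in \cite[Appendix]{BDP13}; and (iii)~the absence of $p$-torsion obstructions to the spectral-sequence degeneration, which is again a consequence of the Fontaine--Laffaille range condition $p>k-2$ already invoked in the main body. Once these three points are checked, Scholl's rational argument goes through word for word.
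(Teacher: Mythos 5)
Your approach matches the paper's: apply $\varepsilon_k$ to the Leray spectral sequences for the log de Rham and compactly supported log de Rham cohomologies of $\mathcal{KS}_{k-2}/\mathcal{X}$, use K\"unneth and Lieberman's trick to isolate $\operatorname{Sym}^{k-2}\mathcal{L}$ in relative degree $k-2$, then pass to images, exactly the Deligne/Lieberman/Leray strategy the paper records after Proposition~\ref{Deligne} (with the integral construction of both spectral sequences referred to \cite{Katz70}). One small slip worth fixing: $R^i\overline{\lambda}_*\Omega^{\bullet}_{\overline{\mathcal{E}}/\mathcal{X}}(\log)$ is $\mathcal{O}_{\mathcal{X}}$, $\mathcal{L}$, $\mathcal{O}_{\mathcal{X}}$ in degrees $0$, $1$, $2$, not $\omega$ in degree $0$ and $\omega^{-1}$ in degree $1$ (those are the graded pieces of the Hodge filtration on $\mathcal{L}$), though this does not affect the rest of the argument.
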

This is proved by Deligne's argument \cite[Lemme 5.3]{Deligne68} using Leray spectral sequence
for the log de Rham and the compact supported log de Rham cohomology
combined with Lieberman's trick (see also \cite[Proof of Lemma 2.2 (1)]{BDP13}).

\begin{remark}
To prove the above proposition, one needs the Leray spectral sequence for the log de Rham and the compact supported log de Rham cohomology.
These spectral sequences can be constructed in a way similar to  that in \cite[Remark 3.3]{Katz70} with minor modifications. 
\end{remark}

We note that $\mathit{H}_{\mathrm{dR}, c}^{k-1}(\mathcal{E}^{k-2})\to \mathit{H}^{k-1}_{\mathrm{dR}}({E}^{k-2})$ factors through $\mathit{H}_{\mathrm{dR}}^{k-1}(\mathcal{KS}_{k-2})$.
Therefore it is enough to compare the images of $S_{k}(\mathcal{X},\mathbb{Z}_p)=\mathit{H}^0(\mathcal{X},\omega^k(-\mathcal{Z}))$
and $\mathit{H}^1_{\mathrm{dR}, int}(\mathcal{Y},\operatorname{Sym}^{k-2}\mathcal{L})$ in $\mathit{H}^i_{\mathrm{dR}, int}({Y},\operatorname{Sym}^{k-2}\mathcal{L})$
under the following natural maps:
%
\[
  \xymatrix{
    S_{k}(\mathcal{X},\mathbb{Z}_p)=\mathit{H}^0(\mathcal{X},\omega^k(-\mathcal{Z}))  \ar@{^{(}->}[r]^{\quad \quad \iota_{\mathbb{Z}_p}}  \ar[d]_{\varphi} &  \mathit{H}^1_{\mathrm{dR}, int}(\mathcal{Y},\operatorname{Sym}^{k-2}\mathcal{L}) \ar[d]^{\psi} \\
     S_{k}({X},\mathbb{Q}_p)=\mathit{H}^0({X},\omega^k(-{Z}))  \ar@{^{(}->}[r]^{\quad \quad \iota_{\mathbb{Q}_p}}  & \mathit{H}^1_{\mathrm{dR}, int}({Y},\operatorname{Sym}^{k-2}\mathcal{L}).
  }
\]

\begin{proposition}\label{comparison-periods}
For an element $f$ in $S_{k}(X,\mathbb{Q}_p)$,
$f$ belongs to $\operatorname{Im} \varphi$ if and only if $\iota_{\mathbb{Q}_p}(f)$ belongs to $\operatorname{Im} \psi$.
In other words, the integral structure coming from $\mathit{H}_{\mathrm{dR}}^{k-1}(\mathcal{KS}_{k-2})$ coincides with the integral structure induced by the $q$-expansion
principle.
\end{proposition}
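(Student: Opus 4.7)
The plan is to identify both integral structures with the top piece of an integrally defined Hodge filtration on parabolic de Rham cohomology, for which the hypothesis $p>k-2$ is essential. The implication ``$f\in\mathrm{Im}\,\varphi\Rightarrow\iota_{\mathbb{Q}_p}(f)\in\mathrm{Im}\,\psi$'' is immediate from commutativity of the diagram, so the content lies entirely in the converse.

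I would first equip the log de Rham complex $\mathrm{Sym}^{k-2}\mathcal{L}\otimes\Omega^\bullet_{\mathcal{X}}(\log\mathcal{Z})(-\mathcal{Z})$ with the Hodge filtration obtained by combining the stupid filtration on $\Omega^\bullet$ with the Hodge filtration $\omega^{k-2}=\mathrm{Fil}^{k-2}\subset\cdots\subset\mathrm{Fil}^0=\mathrm{Sym}^{k-2}\mathcal{L}$ on the coefficients, shifted by one so that Griffiths transversality for $\nabla$ is automatic. The top piece $\mathrm{Fil}^{k-1}$ is then the rank-one complex $[0\to\omega^{k-2}\otimes\Omega^1_{\mathcal{X}}(\log\mathcal{Z})(-\mathcal{Z})]$ in degree $1$; by the Kodaira--Spencer isomorphism $\omega^2\cong\Omega^1_{\mathcal{X}}(\log\mathcal{Z})$, this is $[0\to\omega^k(-\mathcal{Z})]$, whose degree-$1$ hypercohomology is precisely $\mathit{H}^0(\mathcal{X},\omega^k(-\mathcal{Z}))=S_k(\mathcal{X},\mathbb{Z}_p)$. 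The same construction over $\mathbb{Q}_p$ gives $S_k(X,\mathbb{Q}_p)$ as the top Hodge piece, and by functoriality the vertical map in the diagram becomes base change to $\mathbb{Q}_p$.

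The main obstacle is to show that the Hodge-de Rham spectral sequence for $\mathrm{Sym}^{k-2}\mathcal{L}$ on the log smooth pair $(\mathcal{X},\mathcal{Z})$ degenerates at $E_1$ integrally, so that $\mathrm{Fil}^{k-1}\mathit{H}^1_{\mathrm{dR},c}(\mathcal{Y},\mathrm{Sym}^{k-2}\mathcal{L})$ injects into $\mathit{H}^1_{\mathrm{dR},c}$ with torsion-free cokernel and identifies with the rational Hodge filtration after $\otimes\mathbb{Q}_p$. Under $p>k-2$ and $p\nmid N$ this is the expected consequence of Fontaine--Laffaille / Deligne--Illusie theory applied to the Fontaine--Laffaille crystal $\mathrm{Sym}^{k-2}\mathcal{L}$, whose Hodge weights lie in $[0,k-2]\subset[0,p-2]$; in practice I would cite Faltings--Chai or a similar treatment of the integrality of the Hodge filtration on parabolic de Rham cohomology of modular curves with symmetric-power coefficients.

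Granting this integral degeneration, the converse implication is formal. Given $\alpha\in\mathit{H}^1_{\mathrm{dR},int}(\mathcal{Y},\mathrm{Sym}^{k-2}\mathcal{L})$ with $\psi(\alpha)=\iota_{\mathbb{Q}_p}(f)$, its image in the rational parabolic cohomology lies in $\mathrm{Fil}^{k-1}$; torsion-freeness of the quotient $\mathit{H}^1_{\mathrm{dR},c}/\mathrm{Fil}^{k-1}$ lets me upgrade $\alpha$ to an integral representative $\tilde\alpha\in\mathrm{Fil}^{k-1}\mathit{H}^1_{\mathrm{dR},c}(\mathcal{Y},\mathrm{Sym}^{k-2}\mathcal{L})=S_k(\mathcal{X},\mathbb{Z}_p)$ with $\varphi(\tilde\alpha)=f$, so $f\in\mathrm{Im}\,\varphi$. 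A minor auxiliary check is that the Hodge filtration descends from $\mathit{H}^1_{\mathrm{dR},c}$ to $\mathit{H}^1_{\mathrm{dR},int}$ integrally, but this is automatic once $\mathit{H}^1_{\mathrm{dR},c}\to\mathit{H}^1_{\mathrm{dR}}$ is strict for the Hodge filtration, which again follows from degeneration.
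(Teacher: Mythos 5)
Your general strategy is the same as the paper's: identify both integral structures with the top Hodge piece $\mathrm{Fil}^{k-1}$ of parabolic de Rham cohomology, observe one implication is trivial, and reduce the converse to torsion-freeness of the relevant quotient. The paper's proof carries this out by a short, explicit diagram chase that requires nothing beyond Scholl's freeness of $\mathit{H}^1(\mathcal{X},\omega^{-k+2})$ and the freeness of $\mathit{H}^0(\mathcal{Z},\omega^k)$, while you want to invoke integral $p$-adic Hodge theory (Fontaine--Laffaille / Deligne--Illusie / Faltings--Chai) as a black box. The thrust is right, but two points in your argument have genuine gaps that the paper's $K'/K$ diagram chase is exactly designed to handle.

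First, you set up your torsion-freeness claim for the quotient $\mathit{H}^1_{\mathrm{dR},c}/\mathrm{Fil}^{k-1}$, but the element $\alpha$ you need to upgrade lives in $\mathit{H}^1_{\mathrm{dR},int}$. You want to lift $\alpha$ to $\tilde\alpha\in\mathit{H}^1_{\mathrm{dR},c}(\mathcal{Y},\mathrm{Sym}^{k-2}\mathcal{L})$ and apply torsion-freeness of the quotient, but a lift $\tilde\alpha$ is only determined up to the kernel $K'$ of $\mathit{H}^1_{\mathrm{dR},c}\to\mathit{H}^1_{\mathrm{dR}}$, and this kernel rationally is not contained in $\mathrm{Fil}^{k-1}$; so the image of $\tilde\alpha$ in $\mathit{H}^1_{\mathrm{dR},c}/\mathrm{Fil}^{k-1}$ need not be torsion, and torsion-freeness of that quotient gives nothing. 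The short exact sequence whose torsion-freeness you actually need is the one for the intermediate cohomology, whose cokernel is $\mathit{H}^1(\mathcal{X},\omega^{-k+2}(-\mathcal{Z}))/K$; this is a quotient of a free module by a submodule $K$ which a priori need not be saturated, so its torsion-freeness is not automatic from degeneration of either spectral sequence separately. Second, your ``minor auxiliary check'' that the Hodge filtration descends to $\mathit{H}^1_{\mathrm{dR},int}$ integrally via strictness of $\mathit{H}^1_{\mathrm{dR},c}\to\mathit{H}^1_{\mathrm{dR}}$ is not a consequence of degeneration; strictness is a property of the morphism, and establishing it at integral level is precisely the content of the paper's snake-lemma argument, which shows that any torsion obstruction sits inside $K'/K\subset\mathit{H}^0(\mathcal{Z},\omega^k)$, a free $\mathbb{Z}_p$-module. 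Concretely, you could salvage your approach by citing Scholl's result (mentioned in the paper's remark after the proof) that $L_k(\mathcal{X},\mathbb{Z}_p)=\mathit{H}^1_{\mathrm{dR},int}(\mathcal{Y},\mathrm{Sym}^{k-2}\mathcal{L})$ fits in a short exact sequence with free cokernel $\mathit{H}^1(\mathcal{X},\omega^{-k})$, which delivers exactly the torsion-freeness at the intermediate-cohomology level that you assume; the Fontaine--Laffaille machinery, even if it can eventually be made to work, does not give this without a strictness argument that is essentially the paper's diagram chase in disguise.
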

\begin{proof}\footnote{This proof is suggested by Kai-Wen Lan.}
First, we assume that $\iota_{\mathbb{Q}_p}(f)$ belongs to $\operatorname{Im} \psi$.
We put
$$
K=\operatorname{Ker}\left(\mathit{H}^1_{\mathrm{dR}, c}(\mathcal{Y},\operatorname{Sym}^{k-2}\mathcal{L}) \to \mathit{H}^1_{\mathrm{dR}}(\mathcal{Y},\operatorname{Sym}^{k-2}\mathcal{L})\right)
$$
and
$$
K'=\operatorname{Ker}\left(\mathit{H}^1(\mathcal{X},\omega^{-k+2}(-\mathcal{Z}))\to \mathit{H}^1(\mathcal{X},\omega^{-k+2})\right).
$$
Then we have the following diagram:
\[
  \xymatrix{
&0 \ar[r] \ar[d] & K \ar@{^{(}->}[r] \ar@{^{(}->}[d] & K' \ar@{^{(}->}[d]\\
0 \ar[r]  & \mathit{H}^0(\mathcal{X},\omega^k(-\mathcal{Z}))  \ar[r]  \ar@{^{(}->}[d]_{\alpha} &  \mathit{H}^1_{\mathrm{dR}, c}(\mathcal{Y},\operatorname{Sym}^{k-2}\mathcal{L}) \ar[r] \ar[d]_{\beta}  & \mathit{H}^1(\mathcal{X},\omega^{-k+2}(-\mathcal{Z})) \ar[r] \ar[d]_{\gamma} &0 \\
0 \ar[r]   &  \mathit{H}^0(\mathcal{X},\omega^k)  \ar[r]  & \mathit{H}^1_{\mathrm{dR}}(\mathcal{Y},\operatorname{Sym}^{k-2}\mathcal{L}) \ar[r] & \mathit{H}^1(\mathcal{X},\omega^{-k+2}) \ar[r] & 0.
  }
\]
Since $\operatorname{Im} \alpha = \mathit{H}^0(\mathcal{X},\omega^k(-\mathcal{Z})) = S_{k}(\mathcal{X},\mathbb{Z}_p)$ and
$\operatorname{Im} \beta =  \mathit{H}^1_{\mathrm{dR}, int}(\mathcal{Y},\operatorname{Sym}^{k-2}\mathcal{L})$, we have a short exact sequence
$$
0\to S_{k}(\mathcal{X},\mathbb{Z}_p) \to  \mathit{H}^1_{\mathrm{dR}, int}(\mathcal{Y},\operatorname{Sym}^{k-2}\mathcal{L}) \overset{\delta}{\to} 
 \mathit{H}^1(\mathcal{X},\omega^{-k+2}(-\mathcal{Z}))\slash K \to 0.
$$
Let $s$ be an element in $\mathit{H}^1_{\mathrm{dR}, int}(\mathcal{Y},\operatorname{Sym}^{k-2}\mathcal{L})$ satisfying $\psi (s)=\iota_{\mathbb{Q}_p}(f)$.
To prove $f\in \operatorname{Im} \varphi$, it is enough to show $\delta (s)=0$.
It is easy to see that $\delta (s)$ is a torsion element.
We consider the similar exact sequence
$$
0\to H^0({X},\omega^k) \to  \mathit{H}^1_{\mathrm{dR}}({Y},\operatorname{Sym}^{k-2}\mathcal{L}) \overset{\delta'}{\to} 
 \mathit{H}^1({X},\omega^{-k+2}) \to 0.
$$
Then one has $\delta'(\iota_{\mathbb{Q}_p}(f))=0$.
Since $\mathit{H}^1(\mathcal{X},\omega^{-k+2})$ is torsion-free (cf. \cite[p.\ 58]{Scholl85}),
the image of $s$ under the map $\mathit{H}^1_{\mathrm{dR}}(\mathcal{Y},\operatorname{Sym}^{k-2}\mathcal{L}) \to \mathit{H}^1(\mathcal{X},\omega^{-k+2})$
is also zero.
This implies that $\delta (s)$ is a torsion element in $K'\slash K$.
By the snake lemma, we have
$K'\slash K \subset \operatorname{Coker} ( \mathit{H}^0(\mathcal{X},\omega^k(-\mathcal{Z})) \hookrightarrow  \mathit{H}^0(\mathcal{X},\omega^k))
\subset \mathit{H}^0(\mathcal{Z},\omega^k)$.
Since $\mathit{H}^0(\mathcal{Z},\omega^k)$ is a finite free $\mathbb{Z}_p$-module, $K'\slash K$ is a free $\mathbb{Z}_p$-module. Therefore $\delta (s)=0$.
This shows $f\in \operatorname{Im}\varphi$.
The converse is immediate.
Hence this completes the proof.
\end{proof}
\begin{remark}
Scholl \cite{Scholl85} introduced a parabolic de Rham cohomology $L_k(\mathcal{X},\mathbb{Z}_p)$.
In fact, $L_k(\mathcal{X},\mathbb{Z}_p)$ can be identified with $\mathit{H}^1_{\mathrm{dR}, int}(\mathcal{Y},\operatorname{Sym}^{k-2}\mathcal{L})$.
Moreover, Scholl obtained a short exact sequence
$$
0\to \mathit{H}^0(\mathcal{X},\omega^k(-\mathcal{Z})) \to L_k(\mathcal{X},\mathbb{Z}_p) \to \mathit{H}^1(\mathcal{X},\omega^{-k})\to 0
$$
and $\mathit{H}^1(\mathcal{X},\omega^{-k})\cong S_{k}(\mathcal{X},\mathbb{Z}_p)^\vee$ is a free $\mathbb{Z}_p$-module (\cite[p.58]{Scholl85}).
Therefore it is easy to see that this fact also implies Proposition \ref{comparison-periods}.
\end{remark}

We choose an auxiliary integer $N$ with $N\geq 4$.
We use the notations of \S 8.
We put $F=\mathbb{Q}(g)$ and let $S(g)$ be the quotient of $M_k(X_1(N))\otimes F$ by the $F$-submodule generated by
the images of the operators $T(m)\otimes 1 -1\otimes a(m,g)$ for all $m\geq 1$.
Similarly we denote by $V_F(g)$ the quotient of $V_{k,F}(Y_1(N))$ by the $F$-submodule generated by
the images of the operators $T(m)\otimes 1 -1\otimes a(m,g)$ for all $m\geq 1$.

Recall that $D$ is the $\frkO_{\frkp}$-lattice generated by the image of
\[H^{k-1}(\mathcal{KS}_{k-2},\Omega_{\mathcal{KS}_{k-2}/\ZZ_p}^{\ge k-1})( \varepsilon) \to S(g),\]
and $T$ is the image of
\[H^{k-1}(\mathrm{KS}_{k-2} \otimes_{\QQ_p} \bar \QQ_p,\frkO_{\frkp})( \varepsilon) \to V_{\QQ(g)_{\frkp}}(g).\]
It is easy to see that $T$ coincides with $V_{k,\frkO_{\mathfrak{p}}}(Y_1(N))\cap V_{\QQ(g)_{\frkp}}(g)$ by the same argument as in the proof of Proposition \ref{Deligne}
(cf. \cite[1.2.1 Theorem and 4.1.1 Proposition]{Scholl90}).
Then $D$ is a free $\frkO_{\frkp}$-module of rank 1 and $T$ is a free $\frkO_{\frkp}$-module of rank 2.
Moreover $T^\pm$ are free $\frkO_{\frkp}$-modules of rank 1.
By the $q$-expansion principle, the primitive form $g$ can be identified with an element $\omega_g$ in $S(g)$ and 
the element $\omega_g$ gives a basis of $D$ by Proposition \ref{comparison-periods}.
Let $\gamma^{\pm}$ be a basis of $T^\pm$.
Then Kato's period $\Omega^\pm$ is defined by $\mathrm{per}_g(\omega_g)^\pm = \Omega^{\pm}\gamma^\pm$.
Hence we have $\mathrm{per}_g(\omega_g)=\Omega^+ \gamma^+ +\Omega^-\gamma^-$.

For a commutative ring $A$, let $\langle \,  , \,  \rangle_B : V_{k,A}(Y_1(N))\times V_{k,A,c}(Y_1(N)) \to A$ be the Poincare duality pairing.
We define an anti-$\mathbb{C}$-linear operator $\iota' : V_{k,\mathbb{C}}(Y_1(N))\to V_{k,\mathbb{C}}(Y_1(N))$ by
$
\iota'(x\otimes y)=x\otimes \overline{y}  
$
for $x\in V_{k,\mathbb{R}}(Y_1(N))$ and $y\in \mathbb{C}$.
Then by \cite[(7.13.5)]{Kato04}, we have
$$
\langle \mathrm{per}_g(\omega_g), \iota' \mathrm{per}_g(\omega_g)\rangle_B =(-8\pi^2 \sqrt{-1})^{k-1} \int_{\Gamma_1(N)\backslash \mathfrak{H}}\overline{g(\tau)}g(\tau)y^{k-2}dx\wedge dy,
$$
where $\tau =x+\sqrt{-1}y$ with $x,y \in \mathbb{R}$.
On the other hand, it is easy to see
$$
\langle \mathrm{per}_g(\omega_g), \iota' \mathrm{per}_g(\omega_g)\rangle_B
=-2\Omega^+ \Omega^- \langle \gamma^+, \gamma^-\rangle_B.
$$
Since we defined $\Omega (g)_{\pm}=(2\pi \sqrt{-1})^{1-k}\Omega^{\mp}$, one has
$$
(2\sqrt{-1})^{k-1}(g,g)=-2 \Omega(g)_+\Omega(g)_- \langle \gamma^+, \gamma^- \rangle_B.
$$
Here the pairing
$$\langle \,  , \,  \rangle_B: V_{k,\frkO_{\mathfrak{p}}}(Y_1(N))\times V_{k,\frkO_{\mathfrak{p}},c}(Y_1(N)) \to \frkO_{\mathfrak{p}}$$
is perfect, since $p>k-2$ and $p\nmid N$.
The pairing $\langle \,  , \,  \rangle_B$ induces a perfect pairing
$$\langle \,  , \,  \rangle_{B,int}: V_{k,\frkO_{\mathfrak{p}},int}(Y_1(N))\times V_{k,\frkO_{\mathfrak{p}},int}(Y_1(N)) \to \frkO_{\mathfrak{p}},$$
where
$$V_{k,\frkO_{\mathfrak{p}},int}(Y_1(N))=\operatorname{Im}\left( V_{k,\frkO_{\mathfrak{p}},c}(Y_1(N))\to V_{k,\frkO_{\mathfrak{p}}}(Y_1(N)) \right)$$
is the parabolic Betti cohomology.
Let $\langle \, , \, \rangle_T : T\times T \to \frkO_{\mathfrak{p}}$ be the restriction of $\langle \,  , \,  \rangle_{B,int}$ to $T\times T$.
Then the discriminant of $\langle \, , \, \rangle_T$ is given by $\langle \gamma^+, \gamma^- \rangle_B^2$.
Moreover it is well-known that $\mathfrak{p}$ divides the discriminant of $\langle \, , \, \rangle_T$ if and only if $\mathfrak{p}$ divides the congruence ideal for $g$ (cf. \cite[Proposition 2.7.16]{Bellaiche21}).
Hence the assumption $\mathfrak{p}\nmid \frkD_g$ implies that  $\langle \gamma^+, \gamma^- \rangle_B$ is a $\mathfrak{p}$-adic unit.
Therefore the quotient $(g,g) \slash (\Omega(g)_+\Omega(g)_-)$ is a $\mathfrak{p}$-adic unit.

\bigskip 
{\bf Comments on \cite{A-C-I-K-Y23}}

(1) page 1359, line -3: For ``the Grothendieck group'', read 
``the Grothendieck ring''.

(2) page 1384, line 9: For ``$2^a$'', read ``$2^{-a}$''.

(3) page 1380, line 9: Insert ``such that $p_\frkp>2k$'' after ``$\QQ(F)$''.

(4) page 1387: Let $N$ be as on page 1385, and let $N_1$ and $N_2$ be those on page 1387. For each $B \in \calh_2(\ZZ)_{>0}$, put
$$\eta(B)=Z(6,16)a(B,E_{2,16})a(N,E_{4,16}).$$
Moreover, for each $B=N_1,2N_1,3N_1,4N_1$ and $N_2$, let 
$g(B)$ be the value of the right-hand side of the equality on page 1387, lines 17, 18, 19, 20 and 21, respectively.
Then, by some reason, we subtracted $\eta(B)$ from $\varepsilon_{16,{\bf k}}(B,N)$. Therefore, for such a $B$, the correct value of $\varepsilon_{16,{\bf k}}(B,N)$ should be $g(B)+\eta(B)$.\footnote{This was pointed out by Chul-hee Lee.} Therefore, we have
$$e_i=g_i+f_i \text{ for any } i=1,2,3,4,$$
where
$$g_1=g(N_1), \ g_2=g(2N_1),  \ g_3=g(3N_1)+3^{14}g(N_1),$$
$$g_4=g(4N_1)+2^{29}g(N_1)+3 \cdot 2^{14}g(N_2),$$
and 
$$f(N_1)=\eta(N_1), \ f(2N_1)=\eta(2N_1), \ f_3=\eta(3N_1)+3^{14}\eta(N_2),$$
$$f_4=\eta(4N_1)+2^{29}\eta(n_1)+3\cdot 2^{14} \eta(N_2).$$
But, since 
$$f_2=\lambda_{E_{12,16}}(T(2))f_1, \ f_3=\lambda_{E_{12,16}}(T(3))f_1, \ f_4=(\lambda_{E_{12,16}}(T(2)))^2f_1,$$ 
the determinant on the right-hand side of the equality on page 1387, line 14 is unchanged if one replaces 
its first colomn  with $\Big(\begin{smallmatrix} g_1 \\g_2 \\ g_3 \\ g_4 \end{smallmatrix}\Big)$. This implies that the assertion on page 1387, line 2 from the bottom remains valid.


\begin{thebibliography}{9}
\bibitem{Arthur} {\sc J. Arthur},  
{\em The endoscopic classification of representations: orthogonal and symplectic groups}.
American Mathematical Society Colloquium Publications, \textbf {61}, 2013.
\bibitem{A-C-I-K-Y23}
{\sc H. Atobe, M. Chida, T. Ibukiyama, H. Katsurada and T. Yamauchi}, Harder's conjecture I, J. Math. Soc. Japan 75(2023), 1339--1408.
\bibitem{Barnet-Lamb-Gee-Taylor14}{\sc T. Barnet-Lamb, T. Gee, D. Geraghty and R. Taylor}, Potential automorphy and change of weight, Ann. of Math. (2) \textbf{179} (2014), no. 2, 501--609.
\bibitem{Bellaiche21}
{\sc J. Bela\"iche}
The eigenbook--eigenvarieties, families of Galois representations, $p$-adic $L$-functions, Pathways in Mathematics. Birkh\"auser/Springer, Cham, 2021. xi+316 pp.
\bibitem{Bellaiche-Chenevier11} {\sc J. Bella\"iche and G. Chenevier}, The sign of Galois representation s attached to automorphic forms for unitary groups,
Compos. Math. \textbf{147} (2011), 1337--1352.
\bibitem{BDP13}{\sc M.\ Bertolini, H.\ Darmon and K.\ Prasanna},
Generalized Heegner cycles and $p$-adic Rankin $L$-series,
With an appendix by Brian Conrad. Duke Math. J. \textbf{162} (2013), no. 6, 1033--1148.
\bibitem{Bloch-Kato90}{\sc S. Bloch, K. Kato}, $L$-functions and Tamagawa numbers of motives, The Grothendieck Festschrift Volume I, 333--400,
Progress in Mathematics, 86, Birkh\"auser, Boston, 1990.
\bibitem{Breuil-Messing}
{\sc C. Breuil and W. Messing}, 
Torsion \'etale and crystalline cohomologies. 
Cohomologies $p$-adiques et applications arithm\'etiques, II.
Ast\'erisque No. 279 (2002), 81--124.
\bibitem{Brown07}
{\sc J.~Brown}, Saito-Kurokawa lifts and applications to the Bloch-Kato conjecture, Compos. Math. \textbf{143} (2007), 290--322.
\bibitem{B-C-E-M-S01}
{\sc J. Buhler, R. Crandall, R. Ernvall, T. Mets\"a nkyl\"a and A.M. Shokrollahi}, Irregular primes and cyclotomic invariants to 12 million, J. Symbolic Comput. \textbf{31} (2001), 89--96. 
\bibitem{Buzzard-Gee09}{\sc K. Buzzard and T. Gee}, 
Explicit reduction modulo $p$ of certain crystalline representations, IMRN \textbf{12} (2009) 2303--2317. 
\bibitem{Calegari12}{\sc F.  Calegari}, Even Galois representations and the Fontaine-Mazur conjecture, II. J. Amer. Math. Soc. \textbf{25} (2012), no. 2, 533--554. 
\bibitem{Carayol91}{\sc H. Carayol}, Formes modulaires et representations galoisiennes a 
valeurs dans un anneau local complet. 
$p$-adic monodromy and the Birch and Swinnerton-Dyer conjecture (Boston, MA, 1991), 213--237, 
Contemp. Math., {\bf 165}, Amer. Math. Soc., Providence, RI, 1994. 
\bibitem{Ca1}{\sc A. Caraiani}, Local-global compatibility and the action of monodromy on nearby cycles, Duke Math. J. \textbf{161} (2012), no. 12, 2311--2413. 
\bibitem{Ca2}\bysame, Monodromy and local-global compatibility for $l=p$, Algebra Number Theory \textbf{8} (2014), no. 7, 1597--1646.
\bibitem{Chenevier-Lannes19}
{\sc G.~ Chenevier and J. ~Lannes},
Automorphic forms and even unimodular lattices, Springer (2019).
\bibitem{Deligne68}
{\sc P. Deligne},
Formes modulaires et repr\'esentations $l$-adiques, S\'eminaire Bourbaki. Vol. 1968/69: Expos\'es 347--363, Exp. No. 355, 139--172, Lecture Notes in Math., 175, Springer, Berlin, 1971.
\bibitem{Diamond-Flach-Guo04}
{\sc F. ~Diamond, M.~ Flach and L. Guo},
The Tamagawa number conjecture of adjoint motives of modular forms, Ann, Sci. de I'ENS, \textbf{37} (2004) 663--727. 
\bibitem{Dummigan09}
{\sc N. Dummigan},
Symmetric square $L$-functions and Shafarevich-Tate groups, II. Int. J. Number Theory \textbf{5} (2009), no. 7, 1321--1345.
\bibitem{Dummigan22}
\bysame, Lifting congruences to half-integral weight, Res. Number Theory \textbf{8}, 59 (2022).
\bibitem{Dummigan-Ibukiyama-Katsurada11}
{\sc N. Dummigan, T. Ibukiyama and H. Katsurada}, 
Some Siegel modular standard $L$-values, 
and Shafarevich-Tate groups, 
J. Number Theory {\bf 131} (2011) 1296--1330.
\bibitem{Edixhoven92}
{\sc B. Edixhoven}, The weight in Serre's conjectures on modular forms, Invent. Math. \textbf{109} (1992), no. 3, 563--594.
\bibitem{Emerton}{\sc M-A. Emerton}, A local-global compatibility conjecture in the $p$-adic Langlands programme for 
$\mathrm{GL}_2/\QQ$, Pure Appl. Math. Q. \textbf{2} (2006), no. 2, Special Issue: In honor of John H. Coates. Part 2, 279--393. 
\bibitem{Fontaine94} 
{\sc J. M~. Fontaine}, Le corps de p\'eriodes $p$-adiques, Ast\'erisque {\textbf{223}} (1994), 59--111.
\bibitem {Fontaine-Laffaille82}
{\sc J.~ M. Fontaine and G.~ Laffaille}, Construction de repr\'esentation $p$ adiques, Ann. Sci. Math. \'Ecole Norm. Sup. {\textbf{15}} (1982), 547--608.
\bibitem {Fontaine-Messing87}
{\sc J. M. Fontaine and W. Messing}, $p$-adic periods and $p$-adic \'etale cohomology, Contemp. Math. \textbf{67} (1987), 179--207.
\bibitem{Ghate-Vatsal11} {\sc E.  Ghate and V. Vatsal}, Locally indecomposable Galois representations,
Canad. J. Math. \textbf{63} (2011), no. 2, 277--297. 
 \bibitem{Harder03} 
{\sc G.~Harder},
A congruence between a Siegel and an elliptic modular form, manuscript, 2003, reproduced in: J.H. Bruinier, et  al. (Eds), The 1-2-3 of Modular Forms, Springer Verlag, Berlin, Heidelberg, 2008, 247--162.
\bibitem{Hida00}
{\sc H. Hida}, Modular forms and Galois cohomology, Cambridge Univ. Press, 2000.
\bibitem{Ibukiyama-Takemori19}
{\sc T.~Ibukiyama and S.~Takemori}, Construction of theta series of any vector-valued weight and applications to lifts and congruences,
Experiment. Math. \textbf{28} (2019), 95--114.
\bibitem{Ikeda-Katsurada18} {\sc T. Ikeda and H. Katsurada}, On the Gross-Keating invariant of a quadratic form over a non-archimedean local field,  Amer. J. Math. 140(2018), 1521-2565.
\bibitem{Ikeda-Katsurada22}{\sc T. Ikeda and H. Katsurada}, An explicit formula for the Siegel series of a quadratic form over a non-archimedean local field, J. reine. angew, Math. 783 (2022), 1-47.
\bibitem{Kato04}
{\sc K.~Kato}, $p$-adic Hodge theory and values of zeta functions of modular forms, in: Cohomologies $p$-adiques et Applications Arithm\'etiques III, in: Ast\'erisque, {\textbf{295}} (2004), 117--290.
\bibitem{Katsurada99}
{\sc H.~Katsurada},
An explicit formula for Siegel series,
Amer.\ J.\ Math. {\textbf{121}} (1999), 415--452.
\bibitem{K-L26}  {\sc H. Katsurada and C-h. Lee}, Harder's conjecture and Miyawaki lifting, Forum Math. \textbf{38} (2026), 1111-1137.
\bibitem{Katz70}
{\sc N. Katz},
Nilpotent connections and the monodromy theorem: Applications of a result of Turrittin, Inst. Hautes \'Etudes Sci. Publ. Math. No. \textbf{39} (1970), 175--232.
\bibitem{Kim-Yamauchi17}{\sc H-H. Kim and T. Yamauchi}, 
A uniform structure on subgroups of ${\rm GL}_n(\FF_q)$ and its application to a conditional construction of 
Artin representations of $\mathrm{GL}_n$, J. Ramanujan Math. Soc. \textbf{32} (2017), no. 1, 75--99.
\bibitem{Kim-Wakatsuki-Yamauchi20}{\sc H-H. Kim,  S. Wakatsuki, and T. Yamauchi}, 
Equidistribution theorems for holomorphic Siegel modular forms for 
$\mathrm{GSp}_4$; Hecke fields and $n$-level density, Math. Z. \textbf{295} (2020), no. 3-4, 917--943. 
\bibitem{Klingen67}
{\sc H.~Klingen},
Zum Darstellungssatz f\"ur Siegelsche Modulformen,
Math.\ Z.\ {\textbf{102}} (1967), 30--43.
\bibitem{Langlands76}
{\sc R. P.~Langlands},
On the Functional  Equations Satisfied by Eisenstein Series,
Lect.\ Notes in Math. \textbf{544}
(1976), Springer.
\bibitem{Laumon05} {\sc G. Laumon}, Fonctions zetas des varietes de Siegel de dimension trois. 
Formes automorphes. II. Le cas du groupe $\mathrm{GSp}(4)$. Asterisque No. \textbf{302} (2005), 1--66.
\bibitem{Lee18}{\sc C-H. Lee}, computeGK, 2018, https://github.com/chlee-0/computeGK.

\bibitem{Nekovar} 
{\sc J. Nekov\'ar}, On $p$-adic height pairings. S\'eminaire de Th\'eorie des Nombres, Paris, 1990--91,  Progr. Math., 108, Birkh\"auser Boston, Boston, MA(1993), 127--202.

\bibitem{Ribet75}
{\sc K.~ Ribet}, On $l$-adic Galois representation attached to modular forms, Invent. Math. \textbf{28} (1975), 245--275.
\bibitem{Ribet}\bysame, 
Galois representations attached to eigenforms with Nebentypus. Modular functions of one variable, V (Proc. Second Internat. Conf., Univ. Bonn, Bonn, 1976), pp. 17--51. Lect. Notes in Math., {\textbf{601}}, Springer, Berlin, 1977. 
\bibitem{Rubin00}
{\sc K. Rubin}, Euler systems. Annals of Mathematics Studies, 147. Hermann Weyl Lectures. The Institute for Advanced Study. Princeton University Press, Princeton, NJ, 2000. xii+227 pp.
\bibitem{Scholl85}
{\sc A.J. Scholl},
Modular forms and de Rham cohomology; Atkin-Swinnerton-Dyer congruences,
Invent. Math. \textbf{79} (1985), 49--77.
\bibitem{Scholl90}
\bysame,
Motives for modular forms,
Invent. Math. \textbf{100} (1990), 419--430.
\bibitem{Serre68}
 {\sc J.-P.~ Serre}, Abelian $l$-adic representations and elliptic curves,  W. A. Benjamin, Inc.  1968.
\bibitem {Shimura76} 
{\sc G. ~Shimura}, The special values of the zeta functions associated with cusp forms, Comm. pure appl. Math. {\textbf{29}} (1976), 783--804. 
\bibitem{Shimura77}\bysame,
On the periods of modular forms, Math. Ann. \textbf{229} (1977), 211--221.
\bibitem{Swinnerton-Dyer72}
{\sc H.P.F.~Swinnerton-Dyer}, On $l$-adic representations and congruences for coefficients of modular forms in: Modular Functions of One Variable, Lect. Notes in Math. {\textbf{350}}, Springer , Berlin 1973, 1--55.
\bibitem{Taibi17}
{\sc O.~Taibi}, Dimensions of spaces of level one automorphic forms for split classical groups using the trace formula, Ann, Sci. de I'ENS, {\textbf{50}} (2017) 269--344. 
\bibitem{Wei09}{\sc R. Weissauer}, Endoscopy for $GSp(4)$ and the cohomology of Siegel modular threefolds, Lect. Notes in Math. {\textbf{1968}}, Springer-Verlag, Berlin, 2009.
\bibitem{Wei08} \bysame, {\em Existence of Whittaker models related to four dimensional symplectic Galois representations},
Modular forms on Schiermonnikoog, 285-310, Cambridge Univ. Press, Cambridge, 2008.
\bibitem{Weyl}
{\sc H.~Weyl}, The classical groups, Princeton Univ. Press 1946.
\bibitem{Wintenberger84}
{\sc J-P. Wintenberger}, Un scindage de la filtration de Hodge pour certaines vari\'et\'es alg\'ebriques sur les corps locaux. Ann. of Math. (2) {\textbf{119}} (1984), no. 3, 511--548.
\bibitem{Mathematica20}
{\sc Wolfram Research, Inc.}, Mathematica, Version 12.1, Champaign, IL (2020).

\bibitem{Wortmann}
{\sc S.~Wortmann}, Galois representations of three-dimensional orthogonal motives. Manuscripta Math. 109 (2002), no. 1, 1–28.
\end{thebibliography}
\end{document}